\documentclass[a4paper, 12pt, leqno]{amsart}
\usepackage{amsmath}
\usepackage{amscd}
\usepackage{amssymb}
\usepackage{amsthm}
\usepackage{amsfonts}
\usepackage{latexsym}
\newtheorem{lemma}{{\sc Lemma}}[section]
\newtheorem{corollary}[lemma]{{\sc Corollary}}
\newtheorem{proposition}[lemma]{{\sc Proposition}}
\newtheorem{theorem}[lemma]{{\sc Theorem}}
\newtheorem{remark}[lemma]{{\sc Remark}}

\numberwithin{equation}{section}

\def\Ga{{\mathfrak{a}}}
\def\Gb{{\mathfrak{b}}}

\def\Gf{{\mathfrak{f}}}
\def\Gg{{\mathfrak{g}}}
\def\Gh{{\mathfrak{h}}}
\def\Gk{{\mathfrak{k}}}
\def\Gl{{\mathfrak{l}}}
\def\Gm{{\mathfrak{m}}}
\def\Gn{{\mathfrak{n}}}

\def\Gs{{\mathfrak{s}}}

\def\BA{{\mathbb{A}}}
\def\BB{{\mathbb{B}}}
\def\BC{{\mathbb{C}}}
\def\BF{{\mathbb{F}}}

\def\BZ{{\mathbb{Z}}}

\def\CB{{\mathcal B}}

\def\CH{{\mathcal H}}
\def\CO{{\mathcal O}}
\def\CI{{\mathcal I}}
\def\CJ{{\mathcal J}}

\def\CR{{\mathcal R}}

\def\Ad{\mathop{\rm Ad}\nolimits}

\def\deru{\partial}

\def\End{\mathop{\rm{End}}\nolimits}

\def\Hom{\mathop{\rm Hom}\nolimits}

\def\Image{\mathop{\rm Im}\nolimits}
\def\Ker{\mathop{\rm Ker\hskip.5pt}\nolimits}

\begin{document}
\title[Manin triples and quantum groups]
{Manin triples and differential operators on quantum groups}
\author{Toshiyuki TANISAKI}
\dedicatory{To Jiro Sekiguchi on his 60th birthday}
\thanks
{
The author was partially supported by the Grants-in-Aid for Scientific Research, Challenging Exploratory Research No.\ 21654005
from Japan Society for the Promotion of Science.
}
\address{
Department of Mathematics, Osaka City University, 3-3-138, Sugimoto, Sumiyoshi-ku, Osaka, 558-8585 Japan}
\email{tanisaki@sci.osaka-cu.ac.jp}
\subjclass[2000]{20G05, 17B37, 53D17}
\begin{abstract}
Let $G$ be a simple algebraic group over $\BC$.
By taking the quasi-classical limit of the ring of differential operators on the corresponding quantized algebraic group at roots of 1 we obtain a Poisson manifold  $\Delta G\times K$, where
$\Delta G$ is the subgroup of $G\times G$ consisting of the diagonal elements, and $K$ is a certain subgroup of $G\times G$.
We show that this Poisson structure coincides with the one introduced by Semenov-Tyan-Shansky geometrically in the framework of Manin triples.
\end{abstract}
\maketitle

\section{Introduction}
In this paper we will explicitly compute the Poisson bracket of a certain Poisson manifold arising from the ring of differential operators on a quantized algebraic group at roots of 1.
This result will be a foundation in 
the author's recent works regarding the Beilinson-Bernstein type localization theorem for representations of quantized enveloping algebras at roots of 1 (see \cite{TDF}, \cite{TDF2}).

Let $G$ be a simple algebraic group over $\BC$ with Lie algebra $\Gg$.
Take Borel subgroups $B^+$ and $B^-$ of $G$ such that $H=B^+\cap B^-$ is a maximal torus of $G$.
Set $N^\pm=[B^\pm,B^\pm]$.
We define a subgroup $K$ of $G\times G$ by
\[
K=\{
(tx,t^{-1}y)\mid t\in H, x\in N^+, y\in N^-\}\subset B^+\times B^-\subset G\times G.
\]
Let $\zeta\in\BC^\times$ be a primitive $\ell$-th root of 1, where $\ell$ is an odd positive integer satisfying certain conditions depending on $\Gg$, and let $U_\zeta$ be the De Concini-Kac type quantized enveloping algebra  of $\Gg$ at $\zeta$.
It is expected that there exists a certain correspondence between representations of $U_\zeta$ and modules over the ring $D_{\CB_\zeta}$ of differential operators on the quantized flag manifold $\CB_\zeta$.
Since $D_{\CB_\zeta}$ is closely related to the ring $D_{G_\zeta}$ of  differential operators on
the quantized algebraic group $G_\zeta$, it is an important step in establishing the expected correspondence 
to investigate  the ring $D_{G_\zeta}$ in detail.
Note that $D_{G_\zeta}$ is nothing but the Heisenberg double $\BC[G_\zeta]\otimes U_\zeta$ of the Hopf algebras $\BC[G_\zeta]$ and $U_\zeta$, where $\BC[G_\zeta]$ is the coordinate algebra of $G_
\zeta$.
We have natural central embeddings
$
\BC[G]\subset\BC[G_\zeta]$, 
$
\BC[K]\subset U_\zeta
$
of Hopf algebras, 
and hence $G$ and $K$ become Poisson algebraic groups.
By De Concini-Procesi \cite{DP} and De Concini-Lyubashenko \cite{DL}
these Poisson algebraic group structures  of $G$ and $K$  turn out to be the ones defined geometrically from the Manin triple 
$(G\times G,\Delta G, K)$, where $\Delta G$ is the subgroup of $G\times G$ consisting of diagonal elements.
The aim of the present paper is to give a description of the Poisson algebra structure of $\BC[G]\otimes \BC[K]$ induced by the central embedding 
\begin{equation}
\label{eq:c-emb}
\BC[G]\otimes \BC[K]\subset
\BC[G_\zeta]\otimes U_\zeta
\end{equation}
of algebras.

Let
$(\Ga,\Gm,\Gl)$ be a Manin triple over $\BC$.
Assume that we are given a connected algebraic group
$A$ with Lie algebra $\Ga$ and connected closed subgroups $M$ and $L$ of $A$ with Lie algebras $\Gm$ and $\Gl$ respectively.
Then Semenov-Tyan-Shansky \cite{STS1}, \cite{STS2} 
showed that $A$ has a natural structure of Poisson manifold.
Hence by considering the pull-back with respect to the local isomorphism $M\times L\to A\,\,((m,l)\mapsto ml)$ the manifold $M\times L$ also turns out to be a Poisson manifold.
\begin{theorem}
The Poisson structure of $G\times K$ induced from the central embedding \eqref{eq:c-emb} coincides with the one defined geometrically from the Manin triple $(G\times G,\Delta G,K)$.
\end{theorem}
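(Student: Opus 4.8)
The plan is to compute both Poisson brackets on $\BC[G]\otimes\BC[K]=\BC[G\times K]$ on algebra generators and compare. Since a Poisson bracket is a biderivation, it suffices to match $\{f_1,f_2\}$ when $f_1,f_2$ run over generators of the two tensor factors; this splits into three families: $f_1,f_2\in\BC[G]$; $f_1,f_2\in\BC[K]$; and the mixed case $f_1\in\BC[G]$, $f_2\in\BC[K]$. To make the quasi-classical limit precise I would fix an integral form $D_{G,R}$ of the Heisenberg double $\BC[G_q]\,\#\,U_q$ over a discrete valuation ring $R$ (a localization, or a ramified cover of a localization, of $\BC[q^{\pm1}]$) with uniformizer $\varpi$ and residue field $\BC$, so that $D_{G_\zeta}=D_{G,R}/\varpi D_{G,R}$, with integral forms of $\BC[G_q]$ and $U_q$ sitting inside it as subalgebras. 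For $f_1,f_2$ in the central subalgebra $\BC[G]\otimes\BC[K]$ one picks lifts $\tilde f_1,\tilde f_2\in D_{G,R}$, notes that $[\tilde f_1,\tilde f_2]\in\varpi\,D_{G,R}$, and puts $\{f_1,f_2\}=\varpi^{-1}[\tilde f_1,\tilde f_2]\bmod\varpi$. I would also pin down the two central pieces concretely: $\BC[G]\subset\BC[G_q]$ via the quantum Frobenius morphism, and $\BC[K]\subset U_q$ as the De Concini--Kac central subalgebra, generated at $q=\zeta$ by the $\ell$-th powers of the root vectors and of the $K_i^{\pm1}$, whose spectrum is the group $K=\{(tx,t^{-1}y)\}$.

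For the first two families the key computations are already in the literature. Since $\BC[G_q]$ (resp.\ $U_q$) is a \emph{subalgebra} of the Heisenberg double, the bracket of two elements of $\BC[G]$ (resp.\ $\BC[K]$) computed inside $D_{G_\zeta}$ agrees with the one computed inside $\BC[G_\zeta]$ (resp.\ $U_\zeta$) alone; by De Concini--Procesi \cite{DP} and De Concini--Lyubashenko \cite{DL}, as recalled in the Introduction, these are precisely the standard Poisson--Lie structures on $G$ and on $K$ attached to the Manin triple $(\Gg\oplus\Gg,\Delta\Gg,\Lie(K))$. On the geometric side, the Semenov-Tyan-Shansky structure on $A=G\times G$ is compatible with the two factorizations $A=(\Delta G)\cdot K=K\cdot(\Delta G)$, in the sense that the partially defined projections $\Delta G\times K\to\Delta G$ and $\Delta G\times K\to K$ induced by these factorizations are Poisson onto the Poisson--Lie groups $\Delta G\cong G$ and $K$; hence the restrictions of the pulled-back structure to the two tensor factors of $\BC[\Delta G]\otimes\BC[K]$ are again these Poisson--Lie structures. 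Thus the first two families agree, and only the mixed bracket remains.

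The mixed bracket $\{f_1,f_2\}$ with $f_1\in\BC[G]$, $f_2\in\BC[K]$ is the essential point, and the step I expect to be the main obstacle. On the quantum side one uses the smash-product relation of the Heisenberg double: for a lift $\tilde z$ of an $\ell$-th-power generator $z\in\BC[K]$ one has $\tilde z\,\tilde f_1=\sum(\tilde z_{(1)}\rightharpoonup\tilde f_1)\,\tilde z_{(2)}$, so that $\varpi^{-1}[\tilde f_1,\tilde z]\bmod\varpi$ is governed by the $\varpi$-linear part of $\Delta(\tilde z)$ --- controlled by the vanishing orders at $q=\zeta$ of the Gaussian binomial coefficients $\binom{\ell}{k}_{q_i}$ --- together with the leading term of the $U_q$-action on the quantum Frobenius image of $\BC[G]$. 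This has to be organized so that the normalization of the invariant form, the Frobenius twist and the valuation are kept mutually consistent; I would carry it out first on the big cell and then extend by density, and I would reduce the combinatorics to the rank-one subalgebras, the rank-two ones entering only through the Serre-type terms. In parallel one must unwind the Semenov-Tyan-Shansky bivector on $A$ --- the one built from the canonical element of the Manin triple $(\Gg\oplus\Gg,\Delta\Gg,\Lie(K))$ as in \cite{STS1}, \cite{STS2} --- in the coordinates $(\Delta g,k)$ coming from the local isomorphism $\Delta G\times K\to G\times G$: there the mixed bracket becomes the pairing of a left-invariant $\Lie(\Delta G)$-gradient of $f_1$ against an $\Ad$-twisted right-invariant $\Lie(K)$-gradient of $f_2$, i.e.\ the infinitesimal form of the dressing action obtained by commuting $K$ past $\Delta G$ inside $G\times G$. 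Matching these two descriptions --- the quantum one, extracted from the $q$-binomial asymptotics and the Frobenius kernel, against the geometric one, extracted from the dressing action --- is the heart of the proof; by the biderivation property it suffices to check the match on a single pair of generators in each rank-one subalgebra, after which it propagates to all of $\BC[G]\otimes\BC[K]$.
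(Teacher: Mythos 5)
Your overall strategy is the one the paper follows: dispose of the $\BC[G]$--$\BC[G]$ and $\BC[K]$--$\BC[K]$ brackets by citing \cite{DP}, \cite{DL}, and reduce the mixed bracket to a computation on generators inside the Heisenberg double, extracting the first-order term of the coproduct of the central lifts from the vanishing orders of $q$-binomial coefficients at $\zeta$. However, two points are not merely deferred labor but require ideas absent from your outline. First, your reduction ``it suffices to check the match on a single pair of generators in each rank-one subalgebra, after which it propagates'' is not justified by the biderivation property alone: the \emph{algebra} generators of $\BC[K]$ are the functions attached to \emph{all} positive roots $\beta$ (the images of $A_\beta^\ell$, $B_\beta^\ell$ under the identification with $U_\zeta$'s center), and the coproducts of $E_\beta^\ell$, $F_\beta^\ell$ for non-simple $\beta$ are not reachable by a rank-one computation. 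The paper instead uses that $\hat{\chi}_\lambda, a_i, b_i$ ($i\in I$) generate $\BC[K]$ only as a \emph{Poisson} algebra, and proves a propagation statement: if the two brackets agree for $\varphi_1,\varphi_2$ against every $h\in\Image({}^t\xi)$, they agree for $\varphi_1\varphi_2$ and for $\{\varphi_1,\varphi_2\}$. The second of these forces one first to upgrade the identity from $h\in\BC[G]$ to arbitrary $f\in\BC[G]\otimes\BC[K]$ in the first slot (via the Leibniz rule and Theorem \ref{th:DPG}) so that the Jacobi identity can be applied; this loop is the one non-obvious reduction in the proof and you would need to supply it.

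Second, the decisive verification is only announced, not performed: one must actually compute $\Delta(F_i^\ell)$ modulo the ideals $\CI\otimes\CJ$, isolate the single surviving term $[\ell]!_{q_i}K_i^{-\ell}\otimes F_i^{(\ell)}$, and check that $(q_i-q_i^{-1})^\ell[\ell]!_{q_i}/\ell(q^\ell-q^{-\ell})$ specializes to $(\alpha_i,\alpha_i)/2$ (and similarly for $E_i^\ell K_i^{-\ell}$ and $K_{\ell\lambda}$). On the geometric side, note that the mixed component of the pulled-back Semenov-Tyan-Shansky tensor is the \emph{constant} pairing $\delta_{(m,l)}(L^*_a,R^*_\xi)=\rho(a,\xi)$ of Proposition \ref{prop:PoissonML}, with no $\Ad$-twist; your expectation of an ``$\Ad$-twisted right-invariant gradient'' overcomplicates this side, and the targets are simply $\{h,\hat{\chi}_{\lambda}\}'=-\tfrac12 L_{H_\lambda}(h)\hat{\chi}_{\lambda}$ and $\{h,a_i\hat{\chi}_{-\alpha_i}\}'=-\tfrac{(\alpha_i,\alpha_i)}2L_{e_i}(h)\hat{\chi}_{-\alpha_i}$, $\{h,b_i\hat{\chi}_{-\alpha_i}\}'=-\tfrac{(\alpha_i,\alpha_i)}2L_{f_i}(h)\hat{\chi}_{-\alpha_i}$. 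With the propagation argument supplied and these computations carried out, your plan becomes the paper's proof.
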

As explained above, the coincidence of the two Poisson brackets 
\[
\BC[G\times K]\times\BC[G\times K]\to \BC[G\times K]
\]
is already known for the parts $\BC[G]\times\BC[G]\to\BC[G]$ and $\BC[K]\times\BC[K]\to\BC[K]$ by \cite{DP}, \cite{DL}.
Hence we will be only concerned with the mixed part of the Poisson bracket between $\BC[G]$ and $\BC[K]$.
We point out that a closely related result in the case of  $\zeta=1$ for general Manin triples already appeared in \cite{STS2}.

In \cite{STS2} it is noted that the Poisson manifold $L$ associated to a Manin triple $(\Ga,\Gm,\Gl)$ can also be recovered as a Hamiltonian reduction with respect to the action of $M$ on $M\times L$.
In order to pass from $D_{G_\zeta}$ to $D_{\CB_\zeta}$ we need to consider Hamiltonian reduction for more general situation.
As a result we obtain the following.
\begin{proposition}
\label{prop:H-red}
The varieties
\begin{align*}
\overline{Y}=&\{(N^-g,(k_1,k_2))\in (N^-\backslash G)\times K
\mid
gk_1k_2^{-1}g^{-1}\in HN^-
\},\\
\overline{Y}_t=&\{(B^-g,(k_1,k_2)\in (B^-\backslash G)\times K
\mid
gk_1k_2^{-1}g^{-1}\in tN^-
\}\qquad(t\in H)
\end{align*}
turn out to be Poisson manifolds with respect to the Poisson tensors induced from that of $G\times K$.
Moreover, the Poisson tensors of  $\overline{Y}$ and $\overline{Y}_t$ are non-degenerate.
Hence they are symplectic manifolds.
\end{proposition}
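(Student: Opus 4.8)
The plan is to realize $\overline Y$ and $\overline Y_t$ as Hamiltonian reductions of $G\times K$ — this is the ``more general'' Hamiltonian reduction alluded to above. Recall first that $G\times K$, being (the quasi-classical limit of) the Heisenberg double $D_{G_\zeta}=\BC[G_\zeta]\otimes U_\zeta$, has everywhere non-degenerate Poisson tensor, i.e.\ it is symplectic — just as the Heisenberg double of any Poisson-Lie group is a symplectic manifold. Moreover, as noted in \cite{STS2}, the group $\Delta G$ acts on $G\times K$ by a Poisson action; under the identification of the Theorem this action is $\gamma\cdot(g,(k_1,k_2))=(\gamma g,(k_1,k_2))$, its moment map is the projection $\mathrm{pr}_K\colon G\times K\to K$, and the reduction $(G\times K)/\Delta G$ gives back $K$ with the Poisson structure coming from the Manin triple.

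First I would restrict to the diagonal subgroups $\Delta N^-\subset\Delta B^-\subset\Delta G$: since $\Gb^-$ is a sub-Lie-bialgebra of $\Gg$ and $\Gn^-$ a coisotropic subalgebra, the corresponding left-translation actions on $G\times K$ are again Poisson actions admitting reductions. Composing $\mathrm{pr}_K$ with the natural projections of $K$ onto the duals of $N^-$ and of $B^-$, and using the equivariance of moment maps for left translations under conjugation of the value, I would check — and here the explicit formula for the bracket between $\BC[G]$ and $\BC[K]$ furnished by the Theorem is exactly what is needed — that, up to the natural identifications of the duals with $N^+$ and $B^+$, these moment maps are
\[
\mu_{N^-}(g,(k_1,k_2))=\big[\,N^+\text{-component of }gk_1k_2^{-1}g^{-1}\,\big],
\]
\[
\mu_{B^-}(g,(k_1,k_2))=\big[\,B^+\text{-component of }gk_1k_2^{-1}g^{-1}\,\big],
\]
the components referring to the big cell decomposition $N^+HN^-$ of $G$. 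Then the conditions $gk_1k_2^{-1}g^{-1}\in HN^-$ and $gk_1k_2^{-1}g^{-1}\in tN^-$ become exactly $\mu_{N^-}=e$ and $\mu_{B^-}=t$, so that $\overline Y=\mu_{N^-}^{-1}(e)/N^-$ and $\overline Y_t=\mu_{B^-}^{-1}(t)/B^-$ are the Hamiltonian reductions at these values; in particular they inherit Poisson structures.

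Finally, the left-translation action of $N^-$ (resp.\ $B^-$) on $G\times K$ is visibly free, and the quotients $(N^-\backslash G)\times K$ and $(B^-\backslash G)\times K$ are smooth; so to get non-degeneracy it suffices to verify that $e$ (resp.\ $t$, for every $t\in H$) is a regular value, equivalently that $\mu_{N^-}^{-1}(e)$ (resp.\ $\mu_{B^-}^{-1}(t)$) is a smooth coisotropic submanifold of the symplectic manifold $G\times K$ whose characteristic foliation is the $N^-$- (resp.\ $B^-$-) orbit foliation. Granting this, the Marsden-Weinstein reduction theorem yields that $\overline Y$ and $\overline Y_t$ are symplectic, of dimensions $2\dim B^+$ and $2\dim N^+$ respectively. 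The main obstacle is exactly this verification: computing $\mu_{N^-}$ and $\mu_{B^-}$ from the explicit bracket and establishing the rank/transversality statement uniformly in $t\in H$ — a concrete but somewhat delicate computation. Alternatively, one may avoid the reduction machinery: using the Theorem, write out the Poisson bivector of $G\times K$ explicitly, check directly that it is tangent to the equations cutting out $\overline Y$ and $\overline Y_t$ and descends to the stated quotients, and then compute the rank of the resulting bivector at a single conveniently chosen point, concluding by homogeneity; in that approach the obstacle becomes that rank computation.
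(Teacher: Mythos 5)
Your overall strategy---realizing $\overline{Y}$ and $\overline{Y}_t$ as reductions of $\Delta G\times K$ by the left-translation actions of $N^-$ and $B^-$---is the same as the paper's, but as written the proposal has two genuine problems. First, the opening claim that the Poisson tensor of $G\times K$ is everywhere non-degenerate is false: the paper shows in Section 2.4 that $\delta_{((g,g),(k_1,k_2))}$ is non-degenerate precisely when $gk_1k_2^{-1}g^{-1}\in N^+HN^-$, so $\Delta G\times K$ is only generically symplectic (the Heisenberg double is globally symplectic only when the factorization $A=ML$ is global, which fails here). This is repairable, since $HN^-$ and $tN^-$ are contained in $N^+HN^-$ and hence $Y$ and $Y_t$ lie in the symplectic locus, but you would need to say so. More seriously, the left-translation action of $N^-$ (resp.\ $B^-$) does not preserve this Poisson structure, so the classical Marsden--Weinstein theorem does not apply as stated; you would have to set up the Poisson-Lie (group-valued) moment map formalism and actually prove your formulas for $\mu_{N^-}$ and $\mu_{B^-}$. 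The paper instead uses a bespoke reduction statement (Propositions \ref{prop:H-red0} and \ref{prop:Hamil-red}) which requires only that the bivector restricted to the annihilator $(Y\times\Gs)^\perp$ of the orbit directions be invariant; checking that weaker condition reduces to the elementary observation that $(\Delta\Gn^-)^\perp\cap\Gk$ is a Lie subalgebra of $\Gk$.

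Second, and decisively, the step you defer---``computing $\mu_{N^-}$ and $\mu_{B^-}$ \dots and establishing the rank/transversality statement''---is the actual content of the proof. In the paper this is the computation, for $(g_1,g_2)\in\tilde{Y}$, that
\[
{\rm rad}
\left(\tilde{\delta}_{(g_1,g_2)}|_{(\Gn^-)^\perp\times(\Gn^-)^\perp}\right)
=\{R^*_{(u,\Ad(g_2g_1^{-1})(u))}\mid u\in\Gn^-\}
=((T\tilde{Y})_{(g_1,g_2)})^\perp,
\]
which uses the decomposition $\Gg\oplus\Gg=\Delta\Gg\oplus\Ad(g_1,g_2)(\Gk)$ valid on $\Image\Phi$. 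This single identity simultaneously shows that the conormal directions lie in the radical (so the bracket descends to $N^-\backslash Y$) and that nothing else does (so the reduced tensor is non-degenerate); the case of $Y_t$ is analogous. Without this computation your proposal establishes neither the existence of the induced Poisson structure nor its non-degeneracy, so it remains a plan rather than a proof.
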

In fact the Poisson manifold arising from
the Poisson structure of the center of $D_{\CB_\zeta}$ coincides with $\overline{Y}$ above (see \cite{TDF}).
The non-degeneracy of the Poisson tensor plays a crucial role in  the argument of \cite{TDF}.

The contents of this paper is as follows.
In Section 2 we recall the definition of the Poisson structure due to Semenov-Tyan-Shansky, and show that the technique of the Hamiltonian reduction works for certain cases.
The case of the typical Manin triple $(\Gg\oplus\Gg,\Delta\Gg,\Gk)$ is discussed in detail.
In Section 3 we give a summary of some of the known results on quantized enveloping algebras at roots of 1 due to Lusztig \cite{L2}, De Concini-Kac \cite{DK}, De Concini-Lyubashenko \cite{DL}, De Concini-Procesi \cite{DP}, Gavarini \cite{Gav}.
In Section 4 we show that the Poisson structure arising from the algebra of differential operators acting on quantized coordinate algebra of $G$ at roots of 1 coincides with the one coming from the typical Manin triple.

\section{Poisson structures arising from Manin triples}
\subsection{Manin triples}
We first recall standard facts on Poisson structures (see e.g.\ \cite{Drinfeld}, \cite{DP}).
A commutative associative algebra $\CR$ over $\BC$ equipped with a bilinear map
$\{\,,\,\}:\CR\times \CR\to \CR$
is called a Poisson algebra if it satisfies
\begin{itemize}
\item[(a)]
$\{a,a\}=0\quad(a\in \CR)$,
\item[(b)]
$\{a,\{b,c\}\}+\{b,\{c,a\}\}+\{c,\{a,b\}\}=0\quad(a, b, c\in \CR)$,
\item[(c)]
$\{a,bc\}=b\{a,c\}+\{a,b\}c\quad(a, b, c\in \CR)$.
\end{itemize}
A map $F:\CR\to \CR'$ between Poisson algebras $\CR$, $\CR'$ is called a homomorphism of Poisson algebras if it is a homomorphism of associative algebras and satisfies
$F(\{a_1,a_2\})=\{F(a_1),F(a_2)\}$ for any $a_1, a_2\in \CR$.
The tensor product $\CR\otimes_\BC \CR'$ of two Poisson algebras $\CR$, $\CR'$ over $\BC$ is equipped with a canonical Poisson algebra structure given by 
\begin{align*}
&(a_1\otimes b_1)(a_2\otimes b_2)=a_1a_2\otimes b_1b_2,\\
&\{a_1\otimes b_1,a_2\otimes b_2\}=\{a_1,a_2\}\otimes b_1b_2+a_1a_2\otimes \{b_1,b_2\}
\end{align*}
for $a_1, a_2\in \CR$, $b_2, b_2\in \CR'$.
A commutative Hopf algebra $\CR$  over a field $\BC$ equipped with a bilinear map
$
\{\,,\,\}:\CR\times \CR\to \CR
$
is called a Poisson Hopf algebra if it is a Poisson algebra and the comultiplication $\CR\to \CR\otimes_\BC \CR$ is a homomorphism of Poisson algebras (in this case the counit $\CR\to\BC$ and the antipode $\CR\to \CR$ become automatically a homomorphism and an anti-homomorphism of Poisson algebras respectively).

For a smooth algebraic variety $X$ over $\BC$ let
$\CO_X$ (resp.\ $\Theta_X$, $\Omega_X$) be the sheaf of regular functions (resp.\ vector fields, 1-forms). 
We denote the tangent and the cotangent bundles of $X$ by $TX$ and $T^*X$ respectively.
A smooth affine algebraic variety $X$ over $\BC$ is called a Poisson variety if we are given a bilinear map
$
\{\,,\,\}:\BC[X]\times\BC[X]\to\BC[X]
$
so that $\BC[X]$ is a Poisson algebra.
In this case $\{f, g\}(x)$ for $f, g\in\BC[X]$ and $x\in X$ depends only on $df_x, dg_x$, and hence we have $\delta\in\Gamma(X,\bigwedge^2\Theta_X)$ (called the Poisson tensor of the Poisson variety $X$)
such that 
\[
\{f, g\}(x)=\delta_x(df_x, dg_x).
\]
Consequently we also have the notion of Poisson variety which is not necessarily affine.

Let $S$ be a linear algebraic group over $\BC$ with Lie algebra $\Gs$.
For $a\in\Gs$ we define vector fields
$R_a, L_a\in\Gamma(S,\Theta_S)$ by
\begin{align*}
(R_a(f))(s)&=\frac{d}{dt}f(\exp(-ta)s)|_{t=0}
&(f\in\CO_S,\,s\in S),\\
(L_a(f))(s)&=\frac{d}{dt}f(s\exp(ta))|_{t=0}
&(f\in\CO_S,\,s\in S).
\end{align*}
For $\xi\in\Gs^*$ we also define 1-forms $L^*_\xi, R^*_\xi\in\Gamma(S,\Omega_S)$ by
\[
\langle L^*_\xi, L_a\rangle=
\langle R^*_\xi, R_a\rangle=
\langle \xi, a\rangle
\qquad(a\in\Gs).
\]
For $s\in S$ we define $\ell_s:S\to S$ by $\ell_s(x)=sx$.

A linear algebraic group $S$ over $\BC$ is called a Poisson algebraic group if we are given a bilinear map
$
\{\,,\,\}:\BC[S]\times\BC[S]\to\BC[S]
$
so that $\BC[S]$ is a Poisson Hopf algebra.
Let $\delta$ be the Poisson tensor of $S$ as a Poisson variety, and define $\varepsilon:S\to \bigwedge^2\Gs$ by 
$(d\ell_s)(\varepsilon(s))=\delta_s$ for $s\in S$.
Here, we identify the tangent space $(TS)_1$ at the identity element $1\in S$ with $\Gs$ by $L_a\leftrightarrow a\,\,(a\in\Gs)$.
By differentiating $\varepsilon$ at $1$ we obtain a linear map $\Gs\to\bigwedge^2\Gs$.
It induces an alternating bilinear map $[\,,\,]:\Gs^*\times\Gs^*\to\Gs^*$.
Then this $[\,,\,]$ gives a Lie algebra structure of $\Gs^*$.
Moreover, the following bracket product gives a Lie algebra structure of $\Gs\oplus\Gs^*$:
\[
[(a,\varphi),(b,\psi)]
=([a,b]+\varphi b-\psi a, a\psi-b\varphi+[\varphi,\psi]).
\]
Here, $\Gs\times \Gs^*\ni(a,\varphi)\to a\varphi\in\Gs^*$ and 
$\Gs^*\times \Gs\ni(\varphi, a)\to \varphi a\in\Gs$ are the coadjoint actions of $\Gs$ and $\Gs^*$ on $\Gs^*$ and $\Gs$ respectively.
In other words $(\Gs\oplus\Gs^*,\Gs,\Gs^*)$ is a Manin triple with respect to the symmetric bilinear form $\rho$ on $\Gs\oplus\Gs^*$ given by 
$\rho((a,\varphi),(b,\psi))=\varphi(b)+\psi(a)$.
We say that $(\Ga,\Gm,\Gl)$ is a Manin triple with respect to a symmetric bilinear form $\rho$ on $\Ga$ if 
\begin{itemize}
\item[(a)]
$\Ga$ is a finite-dimensional Lie algebra,
\item[(b)]
$\rho$ is $\Ga$-invariant and non-degenerate,
\item[(c)]
$\Gm$ and $\Gl$ are subalgebras of $\Ga$ such that $\Ga=\Gm\oplus\Gl$ as a vector space,
\item[(d)]
$\rho(\Gm,\Gm)=\rho(\Gl,\Gl)=\{0\}$.
\end{itemize}

Conversely, for each Manin triple we can associate a Poisson algebraic group 
 by reversing the above process as follows.
 Let $(\Ga,\Gm,\Gl)$ be a Manin triple with respect to a bilinear form $\rho$ on $\Ga$ and let $M$ be a linear algebraic group with Lie algebra $\Gm$.
Denote by
$
\pi_\Gm:\Ga\to\Gm,\,\,
\pi_\Gl:\Ga\to\Gl
$
the projections with respect to the direct sum decomposition $\Ga=\Gm\oplus\Gl$.
We sometimes identify $\Gm^*$ and $\Gl^*$ with $\Gl$ and $\Gm$ respectively via the non-degenerate bilinear form
$
\rho|_{\Gm\times\Gl}:\Gm\times\Gl\to\BC.
$
Hence we have also a natural identification
\begin{equation}
\label{eq:a-a-star}
\Ga^*=(\Gm\oplus\Gl)^*\cong\Gm^*\oplus\Gl^*
\cong\Gl\oplus\Gm=\Ga.
\end{equation}
For $m\in M$ we denote by $\Ad(m):\Ga\to\Ga$ the adjoint action.
Then we have the following (see e.g.\ \cite{Drinfeld}, \cite{DP}).
\begin{proposition}
\label{prop:deltaM}
The algebraic group $M$ is endowed with a structure of Poisson algebraic group whose Poisson tensor $\delta^M$ is given by
\begin{align*}
&
\delta^M_m(L^*_\xi,L^*_\eta)
=\rho(\pi_\Gm(\Ad(m)(\xi)),\Ad(m)(\eta))
&(\xi,\eta\in\Gl=\Gm^*),\\
&
\delta^M_m(R^*_\xi,R^*_\eta)
=-\rho(\pi_\Gm(\Ad(m^{-1})(\xi)),\Ad(m^{-1})(\eta))
&(\xi,\eta\in\Gl=\Gm^*)
\end{align*}
for $m\in M$.
\end{proposition}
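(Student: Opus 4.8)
The plan is to reduce the statement to a direct verification at the level of the left-invariant trivialization of the Poisson tensor. By the correspondence between Manin triples and Poisson--Lie groups recalled above (see \cite{Drinfeld}, \cite{DP}), $\Gm$ carries a Lie bialgebra structure whose cobracket $\delta_\Gm\colon\Gm\to\bigwedge^2\Gm$ is, with the conventions fixed above, dual to the Lie bracket of $\Gl$ under $\Gm^*\cong\Gl$, and $M$ carries a Poisson algebraic group structure whose Poisson tensor $\delta^M$ is the unique multiplicative bivector field with $d_1\delta^M=\delta_\Gm$ --- uniqueness because $M$ is generated by its one-parameter subgroups. Writing $\delta^M_m=(d\ell_m)\varepsilon(m)$ for a function $\varepsilon\colon M\to\bigwedge^2\Gm$, so that $\delta^M_m(L^*_\xi,L^*_\eta)=\langle\varepsilon(m),\xi\wedge\eta\rangle$ for $\xi,\eta\in\Gl=\Gm^*$, it suffices to prove that
\[
\langle\varepsilon(m),\xi\wedge\eta\rangle:=\rho(\pi_\Gm(\Ad(m)\xi),\Ad(m)\eta)\qquad(\xi,\eta\in\Gl)
\]
defines a genuine $\bigwedge^2\Gm$-valued $1$-cocycle on $M$ (for the adjoint action) whose derivative at $1$ is $\delta_\Gm$.

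First I would check that the right-hand side is alternating in $\xi,\eta$ and really determines an element of $\bigwedge^2\Gm$. Writing $\Ad(m)\xi=X+X'$ and $\Ad(m)\eta=Y+Y'$ with $X,Y\in\Gm$ and $X',Y'\in\Gl$, the $\Ad$-invariance of $\rho$ together with $\rho(\Gm,\Gm)=\rho(\Gl,\Gl)=\{0\}$ gives $\rho(X,Y')+\rho(X',Y)=0$, hence $\rho(\pi_\Gm(\Ad(m)\xi),\Ad(m)\eta)=\rho(X,Y')=-\rho(\pi_\Gm(\Ad(m)\eta),\Ad(m)\xi)$, which is the required antisymmetry; since the expression involves only the pairing against $\xi,\eta\in\Gl$, it does define $\varepsilon(m)\in\bigwedge^2\Gm$, and clearly $\varepsilon(1)=0$.

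The core of the argument is the cocycle identity
\[
\varepsilon(m_1m_2)=\varepsilon(m_2)+\Ad(m_2^{-1})\varepsilon(m_1),
\]
which is precisely the multiplicativity of $\delta^M$, i.e.\ the assertion that the comultiplication of $\BC[M]$ is a homomorphism of Poisson algebras. To prove it I would substitute $\Ad(m_1m_2)=\Ad(m_1)\Ad(m_2)$ in the formula for $\langle\varepsilon(m_1m_2),\xi\wedge\eta\rangle$, decompose $\Ad(m_2)\xi$ and $\Ad(m_2)\eta$ into their $\Gm$- and $\Gl$-components, and use repeatedly that $\Ad(M)$ preserves the subalgebra $\Gm$ and that $\rho$ annihilates $\Gm\times\Gm$ and $\Gl\times\Gl$; the expression then collapses to $\langle\varepsilon(m_2),\xi\wedge\eta\rangle+\langle\varepsilon(m_1),(\pi_\Gl\Ad(m_2)\xi)\wedge(\pi_\Gl\Ad(m_2)\eta)\rangle$. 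It remains to observe that, with respect to the pairing $\rho|_{\Gm\times\Gl}$, the transpose of $\Ad(m)|_\Gm$ is the map $\xi\mapsto\pi_\Gl(\Ad(m^{-1})\xi)$ on $\Gl$ --- again a one-line consequence of the invariance of $\rho$ --- so that the second term equals $\langle\Ad(m_2^{-1})\varepsilon(m_1),\xi\wedge\eta\rangle$, giving the cocycle identity. I expect this computation, namely the bookkeeping of which factor $\Ad$ acts through and of the $\Gm/\Gl$ and dual identifications, to be the only real obstacle, though it is not deep.

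Finally I would differentiate $t\mapsto\langle\varepsilon(\exp(ta)),\xi\wedge\eta\rangle=\rho(\pi_\Gm(\Ad(\exp(ta))\xi),\Ad(\exp(ta))\eta)$ at $t=0$; using $\pi_\Gm\xi=0$ and the invariance of $\rho$ this gives $\langle(d_1\varepsilon)(a),\xi\wedge\eta\rangle=\rho([a,\xi],\eta)=\rho(a,[\xi,\eta])=\langle\delta_\Gm(a),\xi\wedge\eta\rangle$, hence $d_1\delta^M=\delta_\Gm$, and by the uniqueness noted above $\delta^M$ is indeed the Poisson tensor of $M$, which proves the first formula. For the second formula one computes, under the identification $T^*_mM\cong\Gm^*=\Gl$, $\beta\mapsto\beta\circ(d\ell_m)$ (so that $L^*_\xi|_m\mapsto\xi$), that $R^*_\xi|_m\mapsto-\pi_\Gl(\Ad(m^{-1})\xi)$; therefore
\[
\delta^M_m(R^*_\xi,R^*_\eta)=\langle\varepsilon(m),(\pi_\Gl\Ad(m^{-1})\xi)\wedge(\pi_\Gl\Ad(m^{-1})\eta)\rangle=\langle\Ad(m)\varepsilon(m),\xi\wedge\eta\rangle
\]
by the transpose identity, and the special case $(m_1,m_2)=(m,m^{-1})$ of the cocycle identity gives $\Ad(m)\varepsilon(m)=-\varepsilon(m^{-1})$, whence $\delta^M_m(R^*_\xi,R^*_\eta)=-\langle\varepsilon(m^{-1}),\xi\wedge\eta\rangle=-\rho(\pi_\Gm(\Ad(m^{-1})\xi),\Ad(m^{-1})\eta)$, as claimed.
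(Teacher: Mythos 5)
Your proof is correct; note that the paper offers no proof of this proposition at all --- it is stated as a known fact with a pointer to \cite{Drinfeld} and \cite{DP} --- so there is nothing in the text to compare against, and what you have written is a sound reconstruction of the standard argument. The computations carrying the content all check out: the antisymmetry of $\varepsilon(m)$ (from $\rho(\Ad(m)\xi,\Ad(m)\eta)=\rho(\xi,\eta)=0$ together with the isotropy of $\Gm$ and $\Gl$), the cocycle identity $\varepsilon(m_1m_2)=\varepsilon(m_2)+\Ad(m_2^{-1})\varepsilon(m_1)$ (the cross term $\rho(\pi_\Gm(\Ad(m_1)u'),\Ad(m_1)v)$ dies because $\Ad(m_1)$ preserves $\Gm$, and the transpose identity $(\Ad(m)|_{\Gm})^{t}=\pi_\Gl\circ\Ad(m^{-1})|_{\Gl}$ converts the remaining term correctly), the identity $\langle (d_1\varepsilon)(a),\xi\wedge\eta\rangle=\rho(a,[\xi,\eta])$, and the sign bookkeeping $R^*_\xi|_m\mapsto-\pi_\Gl(\Ad(m^{-1})\xi)$ followed by $\Ad(m)\varepsilon(m)=-\varepsilon(m^{-1})$ for the right-invariant formula. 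Two points deserve a flag, neither fatal. First, both your uniqueness argument and the $\Ad(M)$-invariance of $\rho$ (which a priori is only $\ad(\Gm)$-invariance) require $M$ to be connected; the proposition says only ``linear algebraic group'', but connectedness holds in every application in the paper, so you should state the hypothesis. Second, you never verify the Jacobi identity for $\delta^M$: your argument produces a multiplicative bivector field whose intrinsic derivative at $1$ is the cobracket dual to the bracket of $\Gl$, and you then need the standard theorem that such a field on a connected group is automatically Poisson (equivalently, you are combining existence from \cite{Drinfeld} with your uniqueness statement). That is a legitimate appeal to the cited literature --- the paper does the same for the entire proposition --- but it is the one ingredient your computation does not itself supply.
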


\subsection{Semenov-Tyan-Shansky Poisson structure}
Let $(\Ga,\Gm,\Gl)$ be a Manin triple over $\BC$ with respect to a bilinear form $\rho$ on $\Ga$.
We assume that we are given a connected algebraic group $A$ and its closed connected subgroups $M$ and $L$ with Lie algebras $\Ga, \Gm, \Gl$ respectively.
Define an alternating bilinear form $\omega$ on $\Ga$ by
\[
\omega(a+b, a'+b')=\rho(a,b')-\rho(b,a')
\qquad(a, a'\in\Gm, b, b'\in\Gl).
\]
Denote the adjoint action of $A$ on $\Ga$ by
$\Ad:A\to GL(\Ga)$.
\begin{proposition}
[Semenov-Tyan-Shansky \cite{STS1}, \cite{STS2}]
\label{prop:STS}
The smooth affine variety $A$ is endowed with a structure of Poisson variety whose Poisson tensor $\tilde{\delta}$ is given by
\[
\tilde{\delta}_g(L^*_\xi,L^*_\eta)
=\frac12
(
\omega(\Ad(g)(\xi),\Ad(g)(\eta))+\omega(\xi,\eta)
)
\qquad
(\xi, \eta\in\Ga^*, g\in A).
\]
Here, we identify $\Ga$ with $\Ga^*$ via \eqref{eq:a-a-star}.
\end{proposition}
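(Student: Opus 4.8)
The plan is to verify the Poisson‑algebra axioms (a)--(c) for the bracket $\{f,g\}(x)=\tilde\delta_x(df_x,dg_x)$ attached to $\tilde\delta$. Since the right‑hand side of the displayed formula is algebraic in $g$ (both $\omega$ and $\Ad$ are) and the left‑invariant $1$‑forms $L^*_\xi$ trivialize $\Omega_A$, the prescription defines a regular section $\tilde\delta\in\Gamma(A,\bigwedge^2\Theta_A)$ and $\{f,g\}$ a regular function. Then (a) and (c) are automatic: (a) because $\tilde\delta_x$ is alternating, and (c) because any bracket of the shape $\delta(df,dg)$ obeys the Leibniz rule. The whole content is thus the Jacobi identity (b), which for a bracket defined by a bivector field is equivalent to the vanishing of the Schouten--Nijenhuis bracket $[\tilde\delta,\tilde\delta]\in\Gamma(A,\bigwedge^3\Theta_A)$.

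First I would transport $\tilde\delta$ into the left‑invariant frame. Let $\varpi\in\bigwedge^2\Ga$ be the bivector corresponding to $\omega$ under the identification $\Ga\cong\Ga^*$ of \eqref{eq:a-a-star}, and let $\varpi^L$ (resp.\ $\varpi^R$) be the left‑invariant (resp.\ right‑invariant) bivector field on $A$ with value $\varpi$ at $1$. Using $L_a|_g=a$ and $R_a|_g=-\Ad(g^{-1})a$ in the left‑invariant frame together with the $\Ad$‑invariance of $\rho$, one checks directly that
\[
\tilde\delta=\tfrac12\,(\varpi^L+\varpi^R).
\]
Since the Schouten bracket is symmetric and bi‑additive on bivector fields, $[\tilde\delta,\tilde\delta]=\tfrac14\bigl([\varpi^L,\varpi^L]+2[\varpi^L,\varpi^R]+[\varpi^R,\varpi^R]\bigr)$. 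Left‑ and right‑invariant vector fields commute, $[L_a,R_b]=0$, and every term occurring in $[\varpi^L,\varpi^R]$ carries exactly one such commutator, so $[\varpi^L,\varpi^R]=0$. Because $a\mapsto L_a$ and $a\mapsto R_a$ are both Lie algebra homomorphisms (the sign in the definition of $R_a$ is chosen precisely for this), $[\varpi^L,\varpi^L]$ and $[\varpi^R,\varpi^R]$ are the left‑ resp.\ right‑invariant trivector fields attached to the algebraic Schouten square $C:=[\varpi,\varpi]\in\bigwedge^3\Ga$; evaluating at $1$ and using $R_a|_1=-a$ gives value $C$ for the first and $-C$ for the second. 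Propagating by invariance, in the left‑invariant frame
\[
[\tilde\delta,\tilde\delta]_g=\tfrac14\bigl(C-\Ad(g^{-1})^{\otimes 3}(C)\bigr)\qquad(g\in A),
\]
which vanishes identically if and only if $C$ is $\Ad(A)$‑invariant, i.e.\ (as $A$ is connected) $\ad_\Ga$‑invariant.

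It therefore remains to show that $C=[\varpi,\varpi]$ is $\ad_\Ga$‑invariant, and this is the one genuinely computational point; it is exactly here that the Manin triple axioms enter. Choosing dual bases $\{e_i\}$ of $\Gm$ and $\{f_i\}$ of $\Gl$ with $\rho(e_i,f_j)=\delta_{ij}$ one has $\varpi=\pm\sum_i e_i\wedge f_i$; expanding $[\varpi,\varpi]$ by the Leibniz rule for the algebraic Schouten bracket and using that $\Gm$ and $\Gl$ are isotropic subalgebras in $\rho$‑duality, the contributions reorganize into a constant multiple of the Cartan $3$‑tensor $\varphi\in\bigwedge^3\Ga$ given by $\varphi(x,y,z)=\rho([x,y],z)$, which is totally antisymmetric and, by the $\Ad$‑invariance of $\rho$, $\ad_\Ga$‑invariant. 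Equivalently, in the language of the Drinfeld double attached to the Manin triple: $\varpi$ is, up to sign and scalar, the skew part $r-r^{21}$ of the canonical element $r=\sum_i e_i\otimes f_i\in\Ga\otimes\Ga$, and the invariance of $C$ is the assertion that $r-r^{21}$ satisfies the modified classical Yang--Baxter equation with $\ad_\Ga$‑invariant defect, which follows from the classical Yang--Baxter equation for $r$ together with the $\ad_\Ga$‑invariance of $r+r^{21}$. Granting this, $[\tilde\delta,\tilde\delta]=0$ and (b) holds. I expect the structure‑constant identification of $[\varpi,\varpi]$ with the Cartan tensor (equivalently, the modified Yang--Baxter equation for $r-r^{21}$) to be the main obstacle; everything else is formal manipulation of left‑ and right‑invariant multivector fields.
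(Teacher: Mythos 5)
The paper does not prove Proposition \ref{prop:STS}: it is quoted from Semenov-Tyan-Shansky \cite{STS1}, \cite{STS2} and used as a black box, so there is no in-paper argument to compare yours against line by line. Judged on its own, your reduction is the standard proof and is sound. The identity $\tilde{\delta}=\tfrac12(\varpi^L+\varpi^R)$ is correct: with the paper's conventions one has $R_a|_g=-L_{\Ad(g^{-1})a}|_g$, the two signs cancel in the pairing against $L^*_\xi\wedge L^*_\eta$, and the $\Ad$-invariance of $\rho$ converts $\Ad(g^{-1})^*$ into $\Ad(g)$, producing exactly the term $\omega(\Ad(g)\xi,\Ad(g)\eta)$; this is consistent with the reformulation $\tilde{\delta}_g(L^*_a,L^*_b)=\rho(a,(\pi_\Gl-\Ad(g^{-1})\pi_\Gm\Ad(g))(b))$ displayed after the proposition. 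The vanishing of the cross term $[\varpi^L,\varpi^R]$ and the identification of $[\varpi^L,\varpi^L]$, $[\varpi^R,\varpi^R]$ with invariant trivector fields of values $\pm[\varpi,\varpi]$ are also correct, so Jacobi does reduce to the $\ad_\Ga$-invariance of $[\varpi,\varpi]$ (note, as you implicitly use, that this criterion is the same for $\varpi^L+\varpi^R$ as for $\varpi^L-\varpi^R$, since the cross term dies either way). The one step you leave as a sketch --- that for $\varpi=\sum_i e_i\wedge f_i$ built from $\rho$-dual bases of the isotropic subalgebras $\Gm$ and $\Gl$, the Schouten square $[\varpi,\varpi]$ is a multiple of the $\ad$-invariant Cartan trivector $(x,y,z)\mapsto\rho([x,y],z)$ --- is precisely the modified classical Yang--Baxter equation for the canonical $r$-matrix of a Manin triple; it is a genuine computation that a complete write-up must include, but it is a standard one (it is the quasitriangularity of the Drinfeld double), it uses only the axioms (b)--(d) of a Manin triple, and you have correctly isolated it as the sole remaining content. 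So the proposal has the right architecture and no wrong turns; it is an outline of the accepted proof rather than an alternative to anything in the paper.
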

Note that we can rewrite $\tilde{\delta}$ in terms of $\rho$ as 
\begin{align*}
\tilde{\delta}_{g}(R^*_a,R^*_b)
=&\rho(a,(-\pi_\Gm+\Ad(g)\pi_\Gl\Ad(g^{-1}))(b))\\
=&\rho(a,(\pi_\Gl-\Ad(g)\pi_\Gm\Ad(g^{-1}))(b)),\\
\tilde{\delta}_{g}(L^*_a,L^*_b)=
&\rho(a,(-\pi_\Gm+\Ad(g^{-1})\pi_\Gl\Ad(g))(b))\\
=&\rho(a,(\pi_\Gl-\Ad(g^{-1})\pi_\Gm\Ad(g))(b))
\qquad(g\in A, a, b\in\Ga).
\end{align*}
Consider the map
\begin{equation}
\Phi:M\times L\to A\qquad((m,l)\mapsto ml).
\end{equation}
Since $\Phi$ is a local isomorphism, we obtain a Poisson structure of $M\times L$ whose Poisson tensor $\delta$ is the pull-back of $\tilde{\delta}$ with respect to $\Phi$.
Let us give a concrete description of $\delta$.
By Proposition \ref{prop:deltaM} $M$ is endowed with a structure of Poisson algebraic group.
By the symmetry of the notion of a Manin triple $L$ is also a Poisson algebraic group whose Poisson tensor $\delta^L$ is given by
\begin{align*}
&
\delta^L_l(L^*_\xi,L^*_\eta)
=\rho(\pi_\Gl(\Ad(l)(\xi)),\Ad(l)(\eta))
&(l\in L, \xi,\eta\in\Gm=\Gl^*),\\
&
\delta^L_l(R^*_\xi,R^*_\eta)
=-\rho(\pi_\Gl(\Ad(l^{-1})(\xi)),\Ad(l^{-1})(\eta))
&(l\in L, \xi,\eta\in\Gm=\Gl^*).
\end{align*}
By a standard computation we have the following.
\begin{proposition}
\label{prop:PoissonML}
The Poisson tensor $\delta$ is given by
\[
\delta_{(m,l)}:
((T^*M)_m\oplus(T^*L)_l)\times((T^*M)_m\oplus(T^*L)_l)\to\BC
\]
for $(m,l)\in M\times L$
with
\begin{align}
\label{eq:PML1}
&\delta_{(m,l)}|_{(T^*M)_m\times(T^*M)_m}=\delta^M_m,\\
\label{eq:PML2}
&\delta_{(m,l)}|_{(T^*L)_l\times(T^*L)_l}=\delta^L_l,\\
\label{eq:PML3}
&\delta_{(m,l)}(L^*_a,R^*_\xi)=\rho(a,\xi)
&
(a\in\Gl=\Gm^*, \xi\in\Gm=\Gl^*).
\end{align}
\end{proposition}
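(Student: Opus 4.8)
The plan is to compute the pull-back tensor $\delta=\Phi^*\tilde\delta$ directly from its definition and to read off \eqref{eq:PML1}--\eqref{eq:PML3}. Since $\Phi$ is a local isomorphism, for $(m,l)\in M\times L$ and covectors $\alpha,\beta\in(T^*(M\times L))_{(m,l)}$ we have $\delta_{(m,l)}(\alpha,\beta)=\tilde\delta_{ml}\bigl(((d\Phi_{(m,l)})^*)^{-1}\alpha,((d\Phi_{(m,l)})^*)^{-1}\beta\bigr)$, so everything comes down to identifying the linear map $d\Phi_{(m,l)}$. Identifying $(T(M\times L))_{(m,l)}$ with $\Gm\oplus\Gl$ and $(TA)_{ml}$ with $\Ga$ via left translation, the factorization $m\exp(ta)\,l\exp(tb)=ml\cdot\exp(t\Ad(l^{-1})(a))\exp(tb)$ gives $d\Phi_{(m,l)}(a,b)=\Ad(l^{-1})(a)+b$ for $(a,b)\in\Gm\oplus\Gl$; since $\Ad(l)$ preserves $\Gl$ this is invertible, which is precisely why $\Phi$ is a local isomorphism. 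Taking transposes with respect to the identifications $\Gm^*\cong\Gl$, $\Gl^*\cong\Gm$, $\Ga^*\cong\Ga$ coming from $\rho$ (cf.\ \eqref{eq:a-a-star}), a short computation yields $(d\Phi_{(m,l)})^*(c)=(\pi_\Gl(\Ad(l)(c)),\pi_\Gm(c))$ for $c\in\Ga$, and inverting shows that the covector of $M\times L$ determined by $(\xi,\eta)\in\Gl\oplus\Gm$ --- that is, $L^*_\xi$ on the $M$-factor together with $L^*_\eta$ on the $L$-factor --- is sent to $L^*_{c(\xi,\eta)}$ on $A$ with $c(\xi,\eta)=\Ad(l^{-1})\bigl(\xi+\pi_\Gm(\Ad(l)(\eta))\bigr)$.

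I would then plug $c(\xi_1,\eta_1)$ and $c(\xi_2,\eta_2)$ into the second form of the Semenov-Tyan-Shansky tensor recorded above, $\tilde\delta_{ml}(L^*_a,L^*_b)=\rho\bigl(a,(\pi_\Gl-\Ad((ml)^{-1})\pi_\Gm\Ad(ml))(b)\bigr)$, and expand. Using the $\Ad$-invariance of $\rho$, the relations $\rho(\Gm,\Gm)=\rho(\Gl,\Gl)=\{0\}$, and the facts that $\Ad(l)$ preserves $\Gl$ and $\Ad(m)$ preserves $\Gm$, one finds after regrouping a closed expression for $\delta_{(m,l)}$ on pairs of left-invariant covectors whose three blocks are: the part depending only on the $M$-variables $\xi_1,\xi_2$, which turns out to be independent of $l$ and to equal $\delta^M_m(L^*_{\xi_1},L^*_{\xi_2})$ of Proposition~\ref{prop:deltaM}; the part depending only on the $L$-variables $\eta_1,\eta_2$, which is independent of $m$ and equals $\delta^L_l(L^*_{\eta_1},L^*_{\eta_2})$; and the cross part, which comes out as $\rho(\eta_1,\Ad(l^{-1})(\xi_2))-\rho(\Ad(l^{-1})(\xi_1),\eta_2)$. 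The first two are exactly \eqref{eq:PML1} and \eqref{eq:PML2}. For \eqref{eq:PML3} I would convert the $L$-factor covector from left- to right-invariant normalization by the identity $R^*_\xi|_l=L^*_{-\pi_\Gm(\Ad(l^{-1})(\xi))}|_l$ on $L$ (itself an easy consequence of $R_a|_l=L_{-\Ad(l^{-1})(a)}|_l$); the factor $\Ad(l)$ produced by this substitution cancels the $\Ad(l^{-1})$ in the cross part above and leaves the constant pairing $\rho(a,\xi)$, as asserted.

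The main obstacle I anticipate is the bookkeeping in the expansion of the second step, and in particular verifying the two cancellations that make the $(T^*M)_m\times(T^*M)_m$-block independent of $l$ and the $(T^*L)_l\times(T^*L)_l$-block independent of $m$. The decisive point is that $\Ad(m)$ preserves $\Gm$ --- so that $\Ad(m^{-1})\pi_\Gm\Ad(m)$ restricts to the identity on $\Gm$ --- which annihilates every interference term in which an $L$-variable is fed into the $\Ad(m)$-twisted projection; together with $\Ad(l)\Gl=\Gl$ and the isotropy of $\rho$ this forces the off-block terms to vanish. Since the block-diagonal parts \eqref{eq:PML1} and \eqref{eq:PML2} are already known from \cite{DP}, \cite{DL}, one could shorten the bookkeeping by taking these for granted and carrying the expansion only far enough to extract \eqref{eq:PML3}; specializing to $l=1$ (resp.\ $m=1$), where $\Phi$ restricts to the inclusion $M\hookrightarrow A$ (resp.\ $L\hookrightarrow A$), is a convenient check on signs and on the chosen normalizations.
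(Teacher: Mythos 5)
Your computation is correct: the formula $d\Phi_{(m,l)}(a,b)=\Ad(l^{-1})(a)+b$, the resulting preimage $c(\xi,\eta)=\Ad(l^{-1})\bigl(\xi+\pi_{\Gm}(\Ad(l)(\eta))\bigr)$, the identification of the diagonal blocks with $\delta^M_m$ and $\delta^L_l$ via $\rho(\pi_{\Gm}(P),Q)=-\rho(P,\pi_{\Gm}(Q))$ for $P,Q\in\Ad(m)(\Gl)$, and the left-to-right conversion $R^*_\xi|_l=L^*_{-\pi_{\Gm}(\Ad(l^{-1})(\xi))}|_l$ all check out and yield \eqref{eq:PML1}--\eqref{eq:PML3}. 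The paper omits this argument entirely ("by a standard computation"), and what you have written is exactly that standard computation, so there is no divergence of method to report.
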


As noted in \cite{STS2} the Poisson tensors $\tilde{\delta}$ and $\delta$ are non-degenerate at generic points, and hence 
some open subsets of $A$ and $M\times L$ turn out to be symplectic manifolds.
We give below the condition on the point of $A$ and $M\times L$  so that the Poisson tensor is non-degenerate.

\begin{lemma}
\label{lem:non-deg}
\begin{itemize}
\item[{\rm(i)}]
Let $g\in A$.
Then 
$\tilde{\delta}_g$ is non-degenerate if and only if 
\[
\Ad(g)(\Gl)\cap\Gm=\Ad(g)(\Gm)\cap\Gl=\{0\}.
\]
\item[{\rm(ii)}]
Let $(m,l)\in M\times L$.
Then we have
\begin{align*}
\dim{\rm rad}\,\delta_{(m,l)}
=\dim(\Gl\cap\Ad(ml)(\Gm)).
\end{align*}
Especially, 
$\delta_{(m,l)}$ is non-degenerate if and only if
\[
\Ad(m^{-1})(\Gl)\cap\Ad(l)(\Gm)=\{0\}.
\]
\end{itemize}
\end{lemma}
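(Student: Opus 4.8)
The plan is to translate the non-degeneracy question into linear algebra on $\Ga$ by means of the reformulation of $\tilde{\delta}$ in terms of $\rho$. Set $T_g:=\pi_\Gl-\Ad(g)\pi_\Gm\Ad(g^{-1})\in\End(\Ga)$, so that, under the identification $\Ga^*=\Ga$ of \eqref{eq:a-a-star}, the rewritten formula reads $\tilde{\delta}_g(R^*_a,R^*_b)=\rho(a,T_g(b))$ for $a,b\in\Ga$. Since the $R^*_a$ $(a\in\Ga)$ span $(T^*A)_g$ and $\rho$ is non-degenerate, the radical of the alternating form $\tilde{\delta}_g$ is carried by $(\Image T_g)^{\perp}$, and hence $\dim\rad\,\tilde{\delta}_g=\dim\Ga-\dim\Image T_g=\dim\Ker T_g$. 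So both parts of the lemma come down to computing $\Ker T_g$.

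The key step is to produce the short exact sequence
\[
0\longrightarrow \Gm\cap\Ad(g)(\Gl)\longrightarrow \Ker T_g\xrightarrow{\ \pi_\Gl\ }\Gl\cap\Ad(g)(\Gm)\longrightarrow 0 .
\]
That $\pi_\Gl$ sends $\Ker T_g$ into $\Gl\cap\Ad(g)(\Gm)$ is immediate from $T_g(b)=0\Rightarrow\pi_\Gl(b)=\Ad(g)\pi_\Gm\Ad(g^{-1})(b)$; surjectivity holds because any $y\in\Gl\cap\Ad(g)(\Gm)$ satisfies $\pi_\Gl(y)=y=\Ad(g)\pi_\Gm\Ad(g^{-1})(y)$ and so lies in $\Ker T_g$; and the kernel of $\pi_\Gl|_{\Ker T_g}$ is exactly $\Gm\cap\Ad(g)(\Gl)$, because $b\in\Ker T_g$ with $\pi_\Gl(b)=0$ forces $b\in\Gm$ and $\pi_\Gm\Ad(g^{-1})(b)=0$, i.e.\ $b\in\Ad(g)(\Gl)$, and conversely every such $b$ is annihilated by $T_g$. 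Hence $\dim\Ker T_g=\dim(\Gl\cap\Ad(g)(\Gm))+\dim(\Gm\cap\Ad(g)(\Gl))$, which vanishes precisely when both $\Ad(g)(\Gm)\cap\Gl$ and $\Ad(g)(\Gl)\cap\Gm$ are zero; this is assertion (i).

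For (ii), since $\Phi$ is a local isomorphism its differential at $(m,l)$ is a linear isomorphism, so $\delta_{(m,l)}$ is the transport of $\tilde{\delta}_{ml}$ along the induced isomorphism $(d\Phi_{(m,l)})^*$ of cotangent spaces; in particular $\dim\rad\,\delta_{(m,l)}=\dim\rad\,\tilde{\delta}_{ml}=\dim\Ker T_{ml}$. The point is that when $g=ml$ one of the two summands above automatically disappears: since $l\in L$ and $\Lie L=\Gl$ we have $\Ad(l)(\Gl)=\Gl$, whence $\Ad(ml)(\Gl)=\Ad(m)(\Gl)$ and
\[
\Gm\cap\Ad(ml)(\Gl)=\Ad(m)\bigl(\Ad(m^{-1})(\Gm)\cap\Gl\bigr)=\Ad(m)(\Gm\cap\Gl)=\{0\}
\]
because $\Ga=\Gm\oplus\Gl$. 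Thus $\dim\rad\,\delta_{(m,l)}=\dim(\Gl\cap\Ad(ml)(\Gm))$, and applying $\Ad(m^{-1})$ together with $\Ad(ml)(\Gm)=\Ad(m)\Ad(l)(\Gm)$ converts the vanishing of this intersection into $\Ad(m^{-1})(\Gl)\cap\Ad(l)(\Gm)=\{0\}$. (One could instead read $\rad\,\delta_{(m,l)}$ directly off the explicit formulas in Proposition \ref{prop:PoissonML}, but that route needs heavier bookkeeping with the $L^*$/$R^*$ bases.)

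The main obstacle I anticipate is not a single hard computation but the careful separation of the two transversality conditions $\Ad(g)(\Gm)\cap\Gl$ and $\Ad(g)(\Gl)\cap\Gm$ inside the exact sequence, and the recognition that exactly one of them collapses upon specializing $g$ to a product $ml$ with $m\in M$, $l\in L$; once the radical is identified with $\Ker T_g$, everything else is routine.
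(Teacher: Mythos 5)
Your proof is correct, and its skeleton is the same as the paper's: both reduce everything to the kernel of the endomorphism $F=-\pi_\Gm+\Ad(g)\pi_\Gl\Ad(g^{-1})$ of $\Ga$ (your $T_g=\pi_\Gl-\Ad(g)\pi_\Gm\Ad(g^{-1})$ is literally the same operator, since $\pi_\Gl=1-\pi_\Gm$), using non-degeneracy of $\rho$ to identify $\rad\tilde{\delta}_g$ with $\Ker F$. Where you diverge is in how the kernel is computed. The paper handles (i) by an if-and-only-if argument: assuming $F$ is an isomorphism it first extracts the decomposition $\Ga=\Gm\oplus\Ad(g)(\Gl)$ from surjectivity and a dimension count, and only under that transversality hypothesis identifies $\Ker F$ with $\Gl\cap\Ad(g)(\Gm)$; part (ii) then cites "the proof of (i)" after checking that $\Gm\cap\Ad(ml)(\Gl)=\{0\}$ holds automatically. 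You instead establish the unconditional short exact sequence
\[
0\longrightarrow \Gm\cap\Ad(g)(\Gl)\longrightarrow \Ker F\xrightarrow{\ \pi_\Gl\ }\Gl\cap\Ad(g)(\Gm)\longrightarrow 0,
\]
which I have checked is exact, giving $\dim\Ker F=\dim(\Gl\cap\Ad(g)(\Gm))+\dim(\Gm\cap\Ad(g)(\Gl))$ for \emph{every} $g\in A$. This is a genuine (if modest) strengthening: it yields (i) and the radical formula in (ii) simultaneously from one identity, avoids the paper's two-step "surjective $\Rightarrow$ direct sum $\Rightarrow$ kernel" dance, and makes transparent exactly which of the two transversality conditions collapses when $g=ml$. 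The remaining steps --- $\Ad(l)(\Gl)=\Gl$, $\Ad(m^{-1})(\Gm)=\Gm$, transporting the radical along the local isomorphism $\Phi$, and converting $\Gl\cap\Ad(ml)(\Gm)=\{0\}$ into $\Ad(m^{-1})(\Gl)\cap\Ad(l)(\Gm)=\{0\}$ --- agree with the paper.
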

\begin{proof}
(i) Set $F=-\pi_\Gm+\Ad(g)\pi_\Gl\Ad(g^{-1}):\Ga\to\Ga$ for simplicity.
By definition $\tilde{\delta}_g$ is non-degenerate if and only if $F$ is an isomorphism. 

Assume that $F$ is an isomorphism.
Since $F$ is surjective, we must have $\Ga=\Gm+\Ad(g)(\Gl)$ by the definition of $F$.
By $\dim\Ga=\dim\Gm+\dim\Gl$ we have $\Ga=\Gm\oplus\Ad(g)(\Gl)$
and $\Gm\cap\Ad(g)(\Gl)=0$.
Then
\[
\Ker F
=\{a\in\Ga\mid \pi_\Gm(a)=\Ad(g)\pi_\Gl\Ad(g^{-1})(a)=0\}
=\Gl\cap\Ad(g)(\Gm).
\]
Hence the injectivity of $F$ implies $\Gl\cap\Ad(g)(\Gm)=\{0\}$.

Assume $\Ad(g)(\Gl)\cap\Gm=\Ad(g)(\Gm)\cap\Gl=\{0\}$.
By $\Ad(g)(\Gl)\cap\Gm=\{0\}$ we have
$\Ga=\Gm\oplus\Ad(g)(\Gl)$.
Then $\Ker F=\Gl\cap\Ad(g)(\Gm)=\{0\}$.
Hence $F$ is an isomorphism.

(ii) For $g=ml$ we have
\[
\Ad(g)(\Gl)\cap\Gm=
\Ad(m)(\Ad(l)(\Gl)\cap\Ad(m^{-1})(\Gm))
=\Ad(m)(\Gl\cap\Gm)=\{0\}.
\]
Hence by the proof of (i) we obtain
\begin{align*}
&\dim{\rm rad}\,\delta_{(m,k)}
=\dim\Ker(-\pi_\Gm+\Ad(g)\pi_\Gl\Ad(g^{-1}))\\
=&\dim(\Gl\cap\Ad(g)(\Gm)).
\end{align*}
\end{proof}
\begin{corollary}
\begin{itemize}
\item[{\rm(i)}]
The Poisson structure of $A$ induces a symplectic structure of 
the open subset 
\[
\tilde{U}=\{
g\in A\mid
\Ad(g)(\Gl)\cap\Gm=\Ad(g)(\Gm)\cap\Gl=\{0\}
\}
\]
of $A$
\item[{\rm(ii)}]
The Poisson structure of $M\times L$ induces a symplectic structure of 
the open subset 
\[
U:=\{(m,l)\in M\times L\mid
\Ad(m^{-1})(\Gl)\cap\Ad(l)(\Gm)=\{0\}
\}
\]
of $M\times L$.
\end{itemize}
\end{corollary}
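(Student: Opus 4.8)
The plan is to read off both assertions from Lemma~\ref{lem:non-deg} combined with the elementary fact that a Poisson variety whose Poisson tensor is non-degenerate at every point carries a canonical symplectic structure. So the real work is (a) to see that $\tilde U$ and $U$ are genuinely open and non-empty, and (b) to recall the passage from a non-degenerate Poisson bivector to a closed $2$-form.

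First I would check that $\tilde U$ and $U$ are Zariski open and non-empty. For openness, note that, in the notation of the proof of Lemma~\ref{lem:non-deg}, the condition $g\in\tilde U$ is equivalent to the invertibility of the linear map $F_g=-\pi_\Gm+\Ad(g)\pi_\Gl\Ad(g^{-1})\in\End(\Ga)$, whose matrix entries with respect to a fixed basis of $\Ga$ are regular functions on $A$; hence $\tilde U$ is the complement of the zero locus of $\det F_g\in\BC[A]$, and is therefore open. Similarly $U$ is the non-vanishing locus on $M\times L$ of the regular function attached in the same way to $ml$. Non-emptiness is immediate, since the Manin-triple axiom $\Ga=\Gm\oplus\Gl$ forces $\Gm\cap\Gl=\{0\}$, so that $1\in\tilde U$ and $(1,1)\in U$; alternatively one may invoke the remark of \cite{STS2} that $\tilde\delta$ and $\delta$ are non-degenerate at generic points.

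Finally, on $\tilde U$ the Poisson bivector $\tilde\delta$ is a nowhere-degenerate section of $\bigwedge^2\Theta$ by Lemma~\ref{lem:non-deg}(i), so the bundle map $\Omega_{\tilde U}\to\Theta_{\tilde U}$ it induces is an isomorphism; its inverse is a nowhere-degenerate $2$-form $\tilde\omega\in\Gamma(\tilde U,\bigwedge^2\Omega_{\tilde U})$, and by the standard dictionary the Jacobi identity (axiom (b) of a Poisson algebra) for $\{\,,\,\}$ is equivalent to $d\tilde\omega=0$, so $(\tilde U,\tilde\omega)$ is symplectic; this gives (i). For (ii) one argues identically using Lemma~\ref{lem:non-deg}(ii), or one simply observes that $\Ad(ml)(\Gm)\cap\Gl=\Ad(m)\bigl(\Ad(l)(\Gm)\cap\Ad(m^{-1})(\Gl)\bigr)$, which together with the computation $\Ad(ml)(\Gl)\cap\Gm=\{0\}$ from the proof of Lemma~\ref{lem:non-deg}(ii) shows $U=\Phi^{-1}(\tilde U)$, so (ii) follows from (i) because $\Phi$ is a local isomorphism. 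The only point requiring any care is the translation between the Jacobi identity and the closedness of the reciprocal $2$-form, and this is entirely routine; I would cite it (e.g.\ \cite{Drinfeld}) rather than reprove it.
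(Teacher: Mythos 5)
Your proof is correct and follows exactly the route the paper intends: the corollary is stated without proof as an immediate consequence of Lemma \ref{lem:non-deg}, with openness, non-emptiness, and the passage from a nowhere-degenerate Poisson bivector to a closed $2$-form left implicit as routine. Your verification that $U=\Phi^{-1}(\tilde U)$ and your explicit openness argument via $\det F_g$ are sound fillings-in of those implicit steps, not a different method.
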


\subsection{A variant of Hamiltonian reduction}
Let $X$ be a Poisson variety with Poisson tensor $\delta$ and let $S$ be a connected linear algebraic group acting on the algebraic variety $X$ (we do not assume that $S$ preserves the Poisson structure of $X$).
Assume also that we are given an $S$-stable smooth subvariety $Y$ of $X$ on which $S$ acts locally freely.
Denote by $\Gs$ the Lie algebra of $S$.

For $y\in Y$ the linear map
\[
\Gs\ni a\mapsto\deru_a\in(TY)_y,\qquad
(\deru_af)(y)=\frac{d}{dt}f(\exp(-ta)y)|_{t=0}
\]
is injective by the assumption.
Hence we may regard
$\Gs\subset(TY)_y$ for $y\in Y$.
This gives an embedding
\[
Y\times\Gs\subset TY\subset(TX|_Y)
\]
of vector bundles on $Y$.
Correspondingly, we have
\[
T^*_YX\subset(Y\times\Gs)^\perp\subset(T^*X|_Y)
\]
where
\[
(Y\times\Gs)^\perp
=\{v\in(T^*X|_Y)\mid\langle v,Y\times\Gs\rangle=0\},
\]
and $T^*_YX$ denotes the conormal bundle.

By restricting
$\delta\in\Gamma(\wedge^2(TX))$ to $Y$ we obtain
$\delta|_Y\in\Gamma(\wedge^2(TX|_Y))$.
For $y\in Y$ restricting the anti-symmetric bilinear form $(\delta|_Y)_y$ on 
$(T^*X)_y$ to $((Y\times\Gs)^\perp)_y$ we obtain an anti-symmetric bilinear form $\hat{\delta}_y$ on $((Y\times\Gs)^\perp)_y$.
Then we have $\hat{\delta}\in\Gamma(\wedge^2((TX|Y)/(Y\times\Gs)))$.
Denote the action of 
$g\in S$ by $r_g:X\to X$.
Then for $y\in Y$ the isomorphism 
$(dr_g)_y:(TX)_y\to(TX)_{gy}$
induces
\[
(dr_g)_y:(TY)_y\to(TY)_{gy},\qquad
(dr_g)_y:\Gs\ni a\mapsto\Ad(g)(a)\in\Gs,
\]
where $\Gs$ is identified with subspaces of $(TY)_y$ and $(TY)_{gy}$.
In particular, $S$ naturally acts on $\Gamma(\wedge^2((TX|Y)/(Y\times\Gs)))$.

\begin{proposition}
\label{prop:H-red0}
Assume that $\hat{\delta}$ is $S$-invariant and $(T^*_YX)_y\subset{\rm rad}(\hat{\delta}_y)$ for any $y\in Y$.
Then the quotient space $S\backslash Y$ admits a natural structure of Poisson variety as follows.
Let 
$\varphi, \psi$ be functions on $S\backslash Y$, and let $\tilde{\varphi}, \tilde{\psi}$ be the corresponding $S$-invariant functions on $Y$.
Take extensions $\hat{\varphi}, \hat{\psi}$ of $\tilde{\varphi}, \tilde{\psi}$ to $X$ (not necessarily $S$-invariant).
Then $\{\hat{\varphi}, \hat{\psi}\}|_Y$ is $S$-invariant and does not depend on the choice of $\hat{\varphi}, \hat{\psi}$.
We define $\{\varphi,\psi\}$ to be the function corresponding to $\{\hat{\varphi}, \hat{\psi}\}|_Y$.

Moreover, if we have $(T^*_YX)_y={\rm rad}(\hat{\delta}_y)$ for any $y\in Y$, then the Poisson tensor of $S\backslash Y$ is non-degenerate.
Hence $S\backslash Y$ turns out to be a symplectic variety.
\end{proposition}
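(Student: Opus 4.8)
The plan is to establish the statement in three stages: first that the prescription for $\{\varphi,\psi\}$ is well defined (i.e.\ $\{\hat\varphi,\hat\psi\}|_Y$ does not depend on the choice of extensions and is $S$-invariant, so that it descends to $S\backslash Y$); second that the resulting bracket satisfies the Poisson axioms; and third that under the stronger hypothesis $(T^*_YX)_y={\rm rad}(\hat\delta_y)$ the induced Poisson tensor is everywhere non-degenerate. Throughout, since $S$ acts locally freely, the quotient $S\backslash Y$ is smooth and the construction is local on it, so it suffices to work with $S$-invariant regular functions on $Y$.

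The core is a pointwise computation at $y\in Y$. If $f$ is an $S$-invariant function on $Y$ with extension $\hat f$ to $X$, then for $a\in\Gs$, viewed in $(TY)_y$, one has $\langle d\hat f_y,a\rangle=(\deru_a f)(y)=0$; hence $d\hat f_y\in((Y\times\Gs)^\perp)_y$. Two extensions differ, along $Y$, by a function vanishing on $Y$, so $d\hat f_y-d\hat f'_y\in(T^*_YX)_y$. Since $\delta_y$ restricts to $\hat\delta_y$ on $((Y\times\Gs)^\perp)_y$ and $(T^*_YX)_y\subset{\rm rad}(\hat\delta_y)$ by hypothesis,
\[
\{\hat\varphi,\hat\psi\}(y)=\delta_y(d\hat\varphi_y,d\hat\psi_y)=\hat\delta_y(d\hat\varphi_y,d\hat\psi_y),
\]
which is unchanged if $\hat\varphi$ is replaced by another extension, and by skew-symmetry also if $\hat\psi$ is. For $S$-invariance, fix $g\in S$; since $Y$ is $S$-stable and $\tilde\varphi$ is $S$-invariant, the covector ${}^t(dr_g)_y(d\hat\varphi_{gy})$ agrees with $d\hat\varphi_y$ on $(TY)_y$, hence differs from it by an element of $(T^*_YX)_y$, and similarly for $\hat\psi$. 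Using the $S$-invariance of $\hat\delta$ and again $(T^*_YX)_y\subset{\rm rad}(\hat\delta_y)$ one gets $\hat\delta_{gy}(d\hat\varphi_{gy},d\hat\psi_{gy})=\hat\delta_y(d\hat\varphi_y,d\hat\psi_y)$, i.e.\ $\{\hat\varphi,\hat\psi\}(gy)=\{\hat\varphi,\hat\psi\}(y)$; thus $\{\hat\varphi,\hat\psi\}|_Y$ descends to $S\backslash Y$.

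For the Poisson axioms I would simply inherit them from $\BC[X]$. Skew-symmetry is clear; for the Leibniz rule one observes that if $\hat\varphi_1,\hat\varphi_2$ extend $\tilde\varphi_1,\tilde\varphi_2$ then $\hat\varphi_1\hat\varphi_2$ extends $\tilde\varphi_1\tilde\varphi_2$, and one restricts the Leibniz identity on $X$ to $Y$. For the Jacobi identity one exploits the independence of the choice of extension just proved: $\{\hat\varphi,\hat\psi\}$ is itself an admissible extension of the $S$-invariant function $\{\hat\varphi,\hat\psi\}|_Y$, so $\{\{\varphi,\psi\},\chi\}$ corresponds to $\{\{\hat\varphi,\hat\psi\},\hat\chi\}|_Y$; summing over cyclic permutations and invoking the Jacobi identity on $X$ gives $0$.

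For the last assertion, assume $(T^*_YX)_y={\rm rad}(\hat\delta_y)$ for all $y$. From $0\to(T^*_YX)_y\to(T^*X)_y\to(T^*Y)_y\to 0$ and the fact that $((Y\times\Gs)^\perp)_y$ surjects onto the annihilator of $\Gs\subset(TY)_y$ with kernel $(T^*_YX)_y$, together with local freeness (so $\Gs$ is exactly the tangent space to the $S$-orbit through $y$), one identifies $((Y\times\Gs)^\perp)_y/(T^*_YX)_y$ with $(T^*(S\backslash Y))_{\bar y}$, sending $d\hat\varphi_y$ to $d\varphi_{\bar y}$. Under this identification $\hat\delta_y$ induces precisely the Poisson tensor of $S\backslash Y$ at $\bar y$, whose radical is ${\rm rad}(\hat\delta_y)/(T^*_YX)_y=\{0\}$; hence $S\backslash Y$ is symplectic. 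I expect the main obstacle to be the well-definedness step: one must route the two hypotheses — $S$-invariance of $\hat\delta$ and containment of the conormal bundle in its radical — carefully through the conormal/normal bundle sequences. Once that is in place, the Poisson axioms are formal consequences of those on $X$, and the non-degeneracy statement is routine bookkeeping with the same exact sequences.
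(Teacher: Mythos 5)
Your proof is correct and follows essentially the same route as the paper: identify $d\hat f|_Y$ as lying in $(Y\times\Gs)^\perp$ for invariant $f$ and in $T^*_YX$ for the difference of two extensions, then use the hypothesis $(T^*_YX)_y\subset\mathrm{rad}(\hat\delta_y)$ for well-definedness and the $S$-invariance of $\hat\delta$ for equivariance. Your extra detail on the Poisson axioms and the identification $((Y\times\Gs)^\perp)_y/(T^*_YX)_y\cong(T^*(S\backslash Y))_{\bar y}$ merely fills in what the paper dismisses as ``now clear.''
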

\begin{proof}
For $F\in\CO_X$, $\deru\in\Theta_X$, $y\in Y$ we have
$\langle(dF)_y,\deru\rangle=(\deru(F))(y)$, 
and hence
$F|_Y$ is $S$-invariant (resp.\ $F|_Y$ is a locally constant function) if and only if $dF|_Y\in(Y\times\Gs)^\perp$ (resp.\
$dF|_Y\in T^*_YX$).

Take $\varphi, \psi$ and $\tilde{\varphi}, \tilde{\psi}$, $\hat{\varphi}, \hat{\psi}$ as above.
We first show that 
$\{\hat{\varphi}, \hat{\psi}\}|_Y$ does not depend on the choice of 
 $\hat{\varphi}, \hat{\psi}$.
 For that it is sufficient to show that 
 $\{\hat{\varphi}, \hat{\psi}\}|_Y=0$ if $\tilde{\psi}=0$.
 By $d\hat{\varphi}|_Y\in(Y\times \Gs)^\perp$, $d\hat{\psi}|_Y\in T^*_YX$ we have
\[
\{\hat{\varphi},\hat{\psi}\}(y)=\delta_y((d\hat{\varphi})_y,(d\hat{\psi})_y)
=\hat{\delta}_y((d\hat{\varphi})_y,(d\hat{\psi})_y)=0
\]
by the assumption.

Let us show that $\{\hat{\varphi}, \hat{\psi}\}|_Y$ is $S$-invariant.
For $g\in S, y\in Y$ we have
\begin{align*}
&\{\hat{\varphi}, \hat{\psi}\}(gy)
=\hat{\delta}_{gy}((d\hat{\varphi})_{gy},(d\hat{\psi})_{gy})
=\hat{\delta}_{y}(d(\hat{\varphi}\circ r_g)_{y},d(\hat{\psi}\circ r_g)_{y})\\
=&\{\hat{\varphi}\circ r_g,\hat{\psi}\circ r_g\}(y)
\end{align*}
by the $S$-invariance of $\hat{\delta}$.
Since $\tilde{\varphi}, \tilde{\psi}$ are $S$-invariant, we have
$\hat{\varphi}\circ r_g|_Y=\tilde{\varphi}$ and $\hat{\psi}\circ r_g|_Y=\tilde{\psi}$.
Hence the independence of $\{\hat{\varphi}, \hat{\psi}\}|_Y$ on the choice of $\hat{\varphi}, \hat{\psi}$ implies
\[
\{\hat{\varphi}\circ r_g,\hat{\psi}\circ r_g\}(y)
=\{\hat{\varphi},\hat{\psi}\}(y)
\]
for $g\in S$ and $y\in Y$.

The remaining assertions are now clear.
\end{proof}

Now we apply the above general result to our Poisson varieties $M\times L$ and $A$.

Assume that we are given a connected closed subgroup $F$ of $M$.
Let $\Gf$ be the Lie algebra of $F$ and set
$
\Gf^\perp=\{a\in\Ga\mid\rho(\Gf,a)=0\}.
$
The action 
$F\times A\ni(x,g)\mapsto xg\in A$ 
of $F$ on $A$
induces an injection
\[
\Gf\ni a\mapsto R_a\in(TA)_g\qquad(g\in A).
\]
Define a subbundle $(A\times\Gf)^\perp$ of $T^*A$ by
\[
((A\times\Gf)^\perp)_g=\{R^*_c\mid c\in\Gf^\perp\}\subset(T^*A)_g,
\]
and set 
$
\hat{\delta}=\tilde{\delta}|_{(A\times\Gf)^\perp\times (A\times\Gf)^\perp}.
$
\begin{lemma}
\label{lem:F-inv}
If
$\Gf^\perp\cap\Gl$ is a Lie subalgebra of $\Gl$, then $\hat{\delta}$ is $F$-invariant.
\end{lemma}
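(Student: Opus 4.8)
The idea is to rewrite the assertion ``$\hat{\delta}$ is $F$-invariant'' as a concrete identity between the tensors $\tilde{\delta}_g$ at different points $g$, observe that the $g$-dependent part of $\tilde{\delta}$ cancels out, and recognize that what remains is an infinitesimal invariance into which the hypothesis on $\Gf^\perp\cap\Gl$ feeds directly. The group $F$ acts on $A$ by $r_{g'}\colon h\mapsto g'h$; a computation with one-parameter subgroups gives $(dr_{g'})_h(R_b(h))=R_{\Ad(g')(b)}(g'h)$ for $b\in\Ga$, so the codifferential of $r_{g'}$ sends $R^*_c|_{g'h}$ to $R^*_{\Ad(g'^{-1})(c)}|_h$. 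Since $F$ is a group, $\Ad(F)$ preserves $\Gf$, and since $\rho$ is $\Ad$-invariant it then preserves $\Gf^\perp$; hence $R^*_{\Ad(g'^{-1})(c)}$ still lies in $(A\times\Gf)^\perp$ when $c\in\Gf^\perp$. Thus, in the sense of the $F$-action on $\Gamma(\bigwedge^2(\cdots))$ preceding Proposition \ref{prop:H-red0}, $F$-invariance of $\hat{\delta}$ means exactly
\[
\tilde{\delta}_{g'h}(R^*_c,R^*_d)=\tilde{\delta}_h(R^*_{\Ad(g'^{-1})(c)},R^*_{\Ad(g'^{-1})(d)})
\qquad(g'\in F,\ h\in A,\ c,d\in\Gf^\perp).
\]

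\textbf{Step 2: the $g$-dependent term cancels.} Substituting $\tilde{\delta}_g(R^*_a,R^*_b)=\rho(a,(-\pi_{\Gm}+\Ad(g)\pi_{\Gl}\Ad(g^{-1}))(b))$ into both sides and using $\Ad(g'h)=\Ad(g')\Ad(h)$ together with the $\Ad$-invariance of $\rho$, the contributions of $\Ad(g)\pi_{\Gl}\Ad(g^{-1})$ to the two sides coincide. Hence the identity of Step 1 reduces to $\rho(c,\pi_{\Gm}(d))=\rho(\Ad(g')(c),\pi_{\Gm}(\Ad(g')(d)))$ for all $g'\in F$ and $c,d\in\Gf^\perp$; that is, to the $\Ad(F)$-invariance of the bilinear form $B(c,d):=\rho(c,\pi_{\Gm}(d))$ on $\Gf^\perp$.

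\textbf{Step 3: infinitesimal invariance, using the hypothesis.} As $F$ is connected, it suffices to show $B([a,c],d)+B(c,[a,d])=0$ for $a\in\Gf$ and $c,d\in\Gf^\perp$; this is meaningful since $[\Gf,\Gf^\perp]\subseteq\Gf^\perp$ ($\Gf$ is a subalgebra and $\rho$ is $\Ad$-invariant) and $\pi_{\Gl}(\Gf^\perp)\subseteq\Gf^\perp\cap\Gl$ (from $\rho(\Gm,\Gm)=0$ one gets $\rho(\pi_{\Gl}(c),\Gf)=\rho(c,\Gf)=0$). Now $\ad$-invariance of $\rho$ turns the left-hand side into $\rho(c,\pi_{\Gm}([a,d])-[a,\pi_{\Gm}(d)])$; since $\Gm$ is a subalgebra, $[a,\pi_{\Gm}(d)]\in\Gm$, hence $\pi_{\Gm}([a,d])-[a,\pi_{\Gm}(d)]=\pi_{\Gm}([a,\pi_{\Gl}(d)])$; and since $\rho(\Gm,\Gm)=\rho(\Gl,\Gl)=0$ one may replace $c$ by $\pi_{\Gl}(c)$ and discard $\pi_{\Gm}$, getting
\[
B([a,c],d)+B(c,[a,d])=\rho(\pi_{\Gl}(c),[a,\pi_{\Gl}(d)])=-\rho(a,[\pi_{\Gl}(c),\pi_{\Gl}(d)]),
\]
the last equality again by $\ad$-invariance of $\rho$. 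Since $\pi_{\Gl}(c),\pi_{\Gl}(d)\in\Gf^\perp\cap\Gl$, the hypothesis that $\Gf^\perp\cap\Gl$ is a Lie subalgebra of $\Gl$ gives $[\pi_{\Gl}(c),\pi_{\Gl}(d)]\in\Gf^\perp$, and then $\rho(a,[\pi_{\Gl}(c),\pi_{\Gl}(d)])=0$ because $a\in\Gf$. This is the required vanishing.

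\textbf{Main obstacle.} The part needing the most care is the reduction in Steps 1--2: pinning down the precise meaning of ``$F$-invariant'' --- especially the $\Ad(g'^{-1})$-twist on the covectors $R^*_c$ and the fact that it preserves the sub-bundle $(A\times\Gf)^\perp$ --- and then checking that the term $\Ad(g)\pi_{\Gl}\Ad(g^{-1})$ genuinely cancels, so that only the ``constant'' form $B$ is left to analyze. Once the problem is reduced to $\Ad(F)$-invariance of $B$, the computation in Step 3 is short and invokes the subalgebra hypothesis exactly once, at the very end.
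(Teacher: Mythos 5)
Your proof is correct and follows essentially the same route as the paper: both reduce the claim to the $\Ad(F)$-invariance of the form $(c,c')\mapsto\rho(c,\pi_{\Gm}(c'))$ on $\Gf^\perp$ (after observing that the $\Ad(g)\pi_{\Gl}\Ad(g^{-1})$ term is automatically equivariant), pass to the infinitesimal statement using connectedness of $F$, and invoke the subalgebra hypothesis only at the final step to conclude $[\pi_{\Gl}(c),\pi_{\Gl}(d)]\in\Gf^\perp$. The only (immaterial) difference is that the paper handles the infinitesimal identity by splitting along $\Gf^\perp=\Gm\oplus(\Gf^\perp\cap\Gl)$ into three cases, whereas you carry out a single uniform computation via the identity $\rho(c,\pi_{\Gm}(w))=\rho(\pi_{\Gl}(c),w)$.
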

\begin{proof}
By definition $\hat{\delta}_g$ for $g\in A$ is given by
\[
\hat{\delta}_g(R^*_c,R^*_{c'})=
\rho(c,(-\pi_\Gm+\Ad(g)\pi_\Gl\Ad(g^{-1}))(c'))
\qquad(c, c'\in\Gf^\perp).
\]
On the other hand for $x\in F, g\in A$ the isomorphism $(T^*A)_g\cong(T^*A)_{xg}$ induced by the action of $x$ is given by 
\[
(T^*A)_g\cong(T^*A)_{xg}\qquad
(R^*_b\mapsto R^*_{\Ad(x)(b)}).
\]
Hence it is sufficient to show
\[
\rho(\Ad(x)(c),\pi_\Gm\Ad(x)(c'))
=\rho(c,\pi_\Gm(c'))
\qquad(x\in F,\,c, c'\in\Gf^\perp).
\]
Since $F$ is connected, this is equivalent to its infinitesimal counterpart
\[
\rho([a,c],\pi_\Gm(c'))+
\rho(c,\pi_\Gm([a,c']))
=0
\qquad(a\in \Gf,\,c, c'\in\Gf^\perp).
\]
Note that $\Gf^\perp=\Gm\oplus(\Gf^\perp\cap\Gl)$.
If $c\in\Gm$, then we have $[a,c]\in\Gm$ and hence
$\rho([a,c],\pi_\Gm(c'))=
\rho(c,\pi_\Gm([a,c']))=0$.
If $c'\in\Gm$, then
\[
\rho([a,c],\pi_\Gm(c'))+
\rho(c,\pi_\Gm([a,c']))
=
\rho([a,c],c')+
\rho(c,[a,c'])=0
\]
by the invariance of $\rho$.
Hence we may assume that $c, c'\in\Gf^\perp\cap\Gl$.
In this case we have
\begin{align*}
&\rho([a,c],\pi_\Gm(c'))+
\rho(c,\pi_\Gm([a,c']))=
\rho(c,\pi_\Gm([a,c']))
=
\rho(c,[a,c'])\\
=&-\rho([c',c],a)
\in\rho(\Gf^\perp\cap\Gl,\Gf)=0.
\end{align*}
\end{proof}
By Proposition \ref{prop:H-red0} and Lemma \ref{lem:F-inv} 
we have the following.
\begin{proposition}
\label{prop:Hamil-red}
Assume that $\Gf^\perp\cap\Gl$ is a Lie subalgebra of $\Gl$.
Let $V$ be an $F$-stable smooth subvariety of $A$ such that the action of $F$ on $V$ is locally free.
Assume also that for 
$g\in V$ we have
\[
{\rm{rad}}(\hat{\delta}_g)\supset
(T^*_VA)_g.
\]
Then 
$F\backslash V$ has a structure of Poisson variety whose Poisson bracket is defined as follows:
Let $\varphi,\psi$ be functions on $F\backslash V$, and denote by 
$\tilde{\varphi}, \tilde{\psi}$ the corresponding $F$-stable functions on $V$.
Take extensions
$\hat{\varphi}, \hat{\psi}$ of $\tilde{\varphi}, \tilde{\psi}$ respectively  to $A$.
Then 
$\{\hat{\varphi},\hat{\psi}\}|_{{V}}$ is $F$-stable and dose not depend on the choice of $\hat{\varphi}, \hat{\psi}$.
We define 
$\{\varphi,\psi\}$ to be the function on $F\backslash V$ corresponding to $\{\hat{\varphi},\hat{\psi}\}|_{{V}}$.

If, moreover,
\[
{\rm{rad}}(\hat{\delta}_g)=
(T^*_VA)_g
\]
holds for any $g\in V$, then the Poisson tensor of $F\backslash V$ is non-degenerate $($hence $F\backslash V$ turns out to be a symplectic variety$)$.
\end{proposition}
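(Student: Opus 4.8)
The plan is to deduce this directly from Proposition \ref{prop:H-red0}, taking $X=A$ equipped with the Semenov-Tyan-Shansky Poisson tensor $\tilde\delta$ of Proposition \ref{prop:STS}, taking $S=F$ acting on $A$ by left translation $F\times A\ni(x,g)\mapsto xg$, and taking $Y=V$. The first thing to verify is that the data appearing in Proposition \ref{prop:H-red0} really match the data introduced just before the present statement. For $a\in\Gf$ the infinitesimal generator of the left translation action is $\deru_a=R_a$, by the very definition of $R_a$; hence the sub-bundle ``$Y\times\Gs$'' of $TX|_Y$ in the general setup is here the span of $\{R_a\mid a\in\Gf\}$ inside $TA|_V$, and its annihilator in $T^*A|_V$ is $\{R^*_c\mid\langle c,a\rangle=0\ \text{for all }a\in\Gf\}$. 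Since under the identification $\Ga^*=\Ga$ of \eqref{eq:a-a-star} the pairing $\langle c,a\rangle$ equals $\rho(c,a)$, this annihilator is exactly $(A\times\Gf)^\perp|_V$. Consequently the bilinear form obtained by restricting $\tilde\delta|_V$ to $(A\times\Gf)^\perp|_V$ is precisely the $\hat\delta$ occurring in Proposition \ref{prop:H-red0}, and the conormal bundle $T^*_VA$ is the bundle ``$T^*_YX$'' there.

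With this dictionary fixed, the two hypotheses required by Proposition \ref{prop:H-red0} are supplied as follows. The $F$-invariance of $\hat\delta$ is exactly Lemma \ref{lem:F-inv}, which applies because $\Gf^\perp\cap\Gl$ is assumed to be a Lie subalgebra of $\Gl$. The inclusion ${\rm rad}(\hat\delta_g)\supset(T^*_VA)_g$ for $g\in V$ is one of the standing assumptions of the proposition. Since $F$ is connected and acts locally freely on the smooth $F$-stable subvariety $V$, the remaining hypotheses of Proposition \ref{prop:H-red0} hold as well.

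Applying Proposition \ref{prop:H-red0} then yields a Poisson variety structure on $F\backslash V$: for functions $\varphi,\psi$ on $F\backslash V$ with $F$-invariant lifts $\tilde\varphi,\tilde\psi$ to $V$ and arbitrary extensions $\hat\varphi,\hat\psi$ to $A$, the restriction $\{\hat\varphi,\hat\psi\}|_V$ is $F$-invariant and independent of the chosen extensions, and $\{\varphi,\psi\}$ is defined to be its descent to $F\backslash V$; this is exactly the bracket in the statement. Finally, if ${\rm rad}(\hat\delta_g)=(T^*_VA)_g$ holds for every $g\in V$, the last assertion of Proposition \ref{prop:H-red0} gives non-degeneracy of the Poisson tensor on $F\backslash V$, so $F\backslash V$ is a symplectic variety.

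As for where the real work lies: there is no deep obstacle, since the substantive content has already been isolated in Proposition \ref{prop:H-red0} and Lemma \ref{lem:F-inv}, and the proposition is essentially a specialization of that machinery. The one point demanding care is the identification in the first paragraph --- checking that the infinitesimal action of $\Gf$ is by the right-invariant fields $R_a$ with the correct sign, and that the annihilator of the resulting sub-bundle is $(A\times\Gf)^\perp$ via the non-degeneracy of $\rho|_{\Gf^\perp\times\,\cdot}$; a sign slip there would misalign the setup with the general proposition.
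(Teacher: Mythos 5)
Your proposal is correct and follows exactly the route the paper takes: the paper derives this proposition in one line as a direct consequence of Proposition \ref{prop:H-red0} and Lemma \ref{lem:F-inv}, with $X=A$, $S=F$ acting by left translation, and $Y=V$. The identification you spell out in your first paragraph (that $\deru_a=R_a$ and that the annihilator of the $\Gf$-orbit directions is $(A\times\Gf)^\perp$ via $\rho$) is left implicit in the paper but is precisely the dictionary being used.
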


\subsection{A special case}
\label{subsec:special}
Let $G$ be a connected simple algebraic group over $\BC$, and let $H$ be its maximal torus.
We take Borel subgroups $B^+, B^-$ of $G$ such that $H=B^+\cap B^-$, and set $N^{\pm}=[B^{\pm},B^{\pm}]$.
Denote the Lie algebras of $G$, $H$, $B^\pm$, $N^\pm$ by
$\Gg$, $\Gh$, $\Gb^\pm$, $\Gn^\pm$.
Define subalgebras $\Delta\Gg$ and $\Gk$ of $\Gg\oplus\Gg$ by
\begin{align*}
\Delta\Gg&=\{(a,a)\mid a\in\Gg\},\\
\Gk&=\{(h+x,-h+y)\mid h\in\Gh, x\in\Gn^+, y\in\Gn^-\},
\end{align*}
and denote by $\Delta G$, $K$ the connected closed subgroups of $G\times G$ with Lie algebras $\Delta\Gg, \Gk$ respectively.
In particular,
$\Delta G=\{(g,g)\mid g\in G\}.
$
We fix an invariant non-degenerate symmetric bilinear form
$
\kappa:\Gg\times\Gg\to\BC,
$
and define a bilinear form 
$\rho:(\Gg\oplus\Gg)\times(\Gg\oplus\Gg)\to\BC
$
by
\[
\rho((a,b),(a',b'))=\kappa(a,a')-\kappa(b,b').
\]
Then $(\Gg\oplus\Gg,\Delta\Gg,\Gk)$ is a Manin triple with respect to the bilinear form $\rho$.

By Proposition \ref{prop:STS} (resp. Proposition \ref{prop:PoissonML})  we have a Poisson structure of $G\times G$ (resp.\ $\Delta G\times K$) with Poisson tensor $\tilde{\delta}$  (resp.\ ${\delta}$).
Moreover, the Poisson structure of $\Delta G\times K$ is the pull-back of that of $G\times G$ with respect to 
\[
\Phi:\Delta G\times K\to G\times G
\qquad(((g,g),(k_1,k_2))\mapsto(gk_1,gk_2)).
\]

\begin{lemma}
\[
\Image\Phi=\{(g_1,g_2)\in G\times G\mid
g_1^{-1}g_2\in N^+HN^-\}.
\]
\end{lemma}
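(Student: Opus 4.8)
The plan is to compute $\Image\Phi$ directly from the definitions. Recall $\Phi(((g,g),(k_1,k_2)))=(gk_1,gk_2)$, so a pair $(g_1,g_2)\in G\times G$ lies in $\Image\Phi$ precisely when there exist $g\in G$ and $(k_1,k_2)\in K$ with $g_1=gk_1$ and $g_2=gk_2$. Eliminating $g$, this says $g_1^{-1}g_2=k_1^{-1}k_2$ for some $(k_1,k_2)\in K$. Hence the first step is the reduction
\[
\Image\Phi=\{(g_1,g_2)\in G\times G\mid g_1^{-1}g_2\in\{k_1^{-1}k_2\mid (k_1,k_2)\in K\}\}.
\]
Conversely, given such a pair, one sets $g=g_1k_1^{-1}$ and checks $(g,g)\in\Delta G$, $gk_1=g_1$, $gk_2=g_1k_1^{-1}k_2=g_1(g_1^{-1}g_2)=g_2$. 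So it remains to identify the set $\{k_1^{-1}k_2\mid(k_1,k_2)\in K\}$ with $N^+HN^-$.

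For that, I would unwind the definition of $K$. By definition $K=\{(tx,t^{-1}y)\mid t\in H,\ x\in N^+,\ y\in N^-\}$, so for $(k_1,k_2)=(tx,t^{-1}y)\in K$ we get $k_1^{-1}k_2=x^{-1}t^{-1}t^{-1}y=x^{-1}t^{-2}y$. As $x$ ranges over $N^+$, so does $x^{-1}$; as $y$ ranges over $N^-$, so does $y$; and as $t$ ranges over $H$, the element $t^{-2}$ ranges over $H$ as well since $H$ is a torus (squaring is surjective on a torus over $\BC$, or more simply on any connected abelian group that is divisible). Therefore
\[
\{k_1^{-1}k_2\mid(k_1,k_2)\in K\}=\{x'sy\mid x'\in N^+,\ s\in H,\ y\in N^-\}=N^+HN^-.
\]
Combining this with the reduction above gives $\Image\Phi=\{(g_1,g_2)\in G\times G\mid g_1^{-1}g_2\in N^+HN^-\}$, as claimed.

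There is essentially no serious obstacle here; the only point requiring a moment's care is the surjectivity of $t\mapsto t^{-2}$ on $H$, which is where the hypothesis that $H$ is a (connected) torus — rather than an arbitrary abelian group — is used, but this is standard. One could alternatively avoid even this by noting $K\supset\Delta H\cdot(N^+\times N^-)$ is already enough to produce all of $N^+HN^-$ after the $t^{-2}$ substitution, but the torus argument is cleaner. Everything else is a direct manipulation of group elements and the defining description of $K$, $N^\pm$, and $\Delta G$.
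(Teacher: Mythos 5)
Your proof is correct and follows essentially the same route as the paper: eliminate the diagonal factor to reduce to identifying $\{k_1^{-1}k_2\mid(k_1,k_2)\in K\}$ with $N^+HN^-$, and in the converse direction set $g=g_1k_1^{-1}$. The only difference is that you spell out the surjectivity of $(k_1,k_2)\mapsto k_1^{-1}k_2=x^{-1}t^{-2}y$ onto $N^+HN^-$ (via divisibility of the torus $H$), a point the paper's proof leaves implicit when it says ``for $(k_1,k_2)\in K$ with $k_1^{-1}k_2=g_1^{-1}g_2$.''
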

\begin{proof}
We have
\[
(gk_1)^{-1}(gk_2)
=k_1^{-1}k_2\in N^+HN^-.
\]
Assume $g_1^{-1}g_2\in N^+HN^-$.
Then for $(k_1,k_2)\in K$ with
$k_1^{-1}k_2=g_1^{-1}g_2$ we have
\[
(g_1,g_2)=(g_1k_1^{-1},g_2k_2^{-1})(k_1,k_2)\in\Image\Phi.
\]
\end{proof}
\begin{proposition}
$\delta_{((g,g),(k_1,k_2))}$ is non-degenerate if and only if we have
$gk_1k_2^{-1}g^{-1}\in N^+HN^-$.
\end{proposition}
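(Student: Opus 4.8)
The plan is to reduce the statement to Lemma~\ref{lem:non-deg}(ii) and then to a short linear-algebra computation with Borel subalgebras; no quantum input is needed here. Write $g_1=gk_1$, $g_2=gk_2$, so that $(g,g)(k_1,k_2)=(g_1,g_2)$. Applying Lemma~\ref{lem:non-deg}(ii) with $M=\Delta G$, $L=K$ gives $\dim\rad\,\delta_{((g,g),(k_1,k_2))}=\dim\bigl(\Gk\cap\Ad((g_1,g_2))(\Delta\Gg)\bigr)$, so it suffices to decide when $\Gk\cap\Ad((g_1,g_2))(\Delta\Gg)=\{0\}$. First I would rewrite $\Gk=\{(u,v)\in\Gb^+\oplus\Gb^-\mid \mathrm{pr}(u)+\mathrm{pr}(v)=0\}$, where $\mathrm{pr}\colon\Gb^\pm\to\Gh$ denotes the projection with kernel $\Gn^\pm$. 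Since $a\mapsto(\Ad(g_1)(a),\Ad(g_2)(a))$ is an injective linear map with image $\Ad((g_1,g_2))(\Delta\Gg)$, this identifies $\Gk\cap\Ad((g_1,g_2))(\Delta\Gg)$ with $\Ker\phi$, where $\phi\colon W\to\Gh$, $\phi(a)=\mathrm{pr}(\Ad(g_1)(a))+\mathrm{pr}(\Ad(g_2)(a))$, and $W=\Ad(g_1^{-1})(\Gb^+)\cap\Ad(g_2^{-1})(\Gb^-)$.

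Next I would compute $\dim W$. Applying $\Ad(g_1)$ gives $\dim W=\dim\bigl(\Gb^+\cap\Ad(v)(\Gb^-)\bigr)$ with $v=g_1g_2^{-1}=gk_1k_2^{-1}g^{-1}$, and since $\Gb^+\cap\Ad(v)(\Gb^-)$ is the Lie algebra of $B^+\cap vB^-v^{-1}$, which contains a maximal torus, we have $\dim W\ge\dim\Gh$, with equality exactly when $B^+$ and $vB^-v^{-1}$ are opposite. The geometric input is the standard fact that this happens if and only if $v\in N^+HN^-$: if $v=ntm$ with $n\in N^+$, $t\in H$, $m\in N^-$, then $vB^-v^{-1}=nB^-n^{-1}$ and $B^+\cap nB^-n^{-1}=nHn^{-1}$; conversely every Borel opposite to $B^+$ has the form $nB^-n^{-1}$ with $n\in N^+$, so $n^{-1}v$ normalizes $B^-$, hence lies in $B^-=HN^-$, and $v\in N^+HN^-$.

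Granting this I would conclude quickly. If $v\notin N^+HN^-$ then $\dim W>\dim\Gh\ge\dim\Image\phi$, so $\dim\Ker\phi=\dim W-\dim\Image\phi>0$ and $\delta_{((g,g),(k_1,k_2))}$ is degenerate. If $v\in N^+HN^-$, write $v=ntm$ as above; then $\Ad(v)(\Gb^-)=\Ad(n)(\Gb^-)$ and $\Gb^+\cap\Ad(n)(\Gb^-)=\Ad(n)(\Gh)$, so $W=\Ad(g_1^{-1}n)(\Gh)$. For $c\in\Gh$ and $a=\Ad(g_1^{-1}n)(c)$ one computes $\Ad(g_1)(a)=\Ad(n)(c)\in c+\Gn^+$ and $\Ad(g_2)(a)=\Ad(v^{-1}n)(c)=\Ad(m^{-1}t^{-1})(c)=\Ad(m^{-1})(c)\in c+\Gn^-$, whence $\phi(a)=2c$. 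Hence $\phi$ is an isomorphism, $\Ker\phi=\{0\}$, and $\delta_{((g,g),(k_1,k_2))}$ is non-degenerate.

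The one step that is more than bookkeeping is the characterization of when $B^+$ and $vB^-v^{-1}$ are opposite, that is, realizing that the transversality of the two Borel subalgebras entering $W$ is governed precisely by membership of $v=gk_1k_2^{-1}g^{-1}$ in the big cell $N^+HN^-$; I expect this to be the main (though mild) obstacle. The reduction to $\Ker\phi$ via Lemma~\ref{lem:non-deg}(ii) and the closing $\Ad$-computation are routine.
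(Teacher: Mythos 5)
Your proof is correct, and it reaches the conclusion by a genuinely different route from the paper's. Both arguments start from Lemma \ref{lem:non-deg}(ii), which reduces everything to deciding when $\Gk\cap\Ad(g_1,g_2)(\Delta\Gg)=\{0\}$ for $(g_1,g_2)=(gk_1,gk_2)$. The paper then exploits the bi-invariance of $d(g_1,g_2)=\dim(\Gk\cap\Ad(g_1,g_2)(\Delta\Gg))$ under $K\times\Delta G$ to descend to the double coset space $K\backslash(G\times G)/\Delta G$, invokes the standard fact that each double coset contains a representative $(t\dot w,1)$ with $w\in W$, $t\in H$, and checks by a direct computation that $d(t\dot w,1)=0$ exactly when $w=1$; matching the coset of $(g_1,g_2)$ with the condition $g_1g_2^{-1}\in N^+HN^-$ is then again the Bruhat decomposition. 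You instead work pointwise: you identify the radical with $\Ker\phi$ for an explicit linear map $\phi\colon W\to\Gh$ on $W=\Ad(g_1^{-1})(\Gb^+)\cap\Ad(g_2^{-1})(\Gb^-)$, and control $\dim W$ via the standard facts that the intersection of two Borel subgroups contains a maximal torus and that $B^+$ and $vB^-v^{-1}$ are opposite precisely when $v$ lies in the big cell. The underlying geometric input (Bruhat) is the same, but your organization buys a little more: it produces the identity $\dim\rad\,\delta_{((g,g),(k_1,k_2))}=\dim\Ker\phi$ at every point, not just a zero/nonzero criterion, and the computation $\phi(\Ad(g_1^{-1}n)(c))=2c$ in the big-cell case is pleasantly explicit. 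The paper's reduction to representatives is shorter and avoids having to set up $\phi$ at all. Both are complete; your characterization of opposite Borels via $v\in N^+HN^-$ is exactly the ``standard fact'' the paper leaves implicit, so nothing is missing.
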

\begin{proof}
Note that 
\begin{equation}
\label{eq:s1}
\dim{\rm rad}(\delta_{((g,g),(k_1,k_2))})
=\dim(\Gk\cap\Ad(gk_1,gk_2)(\Delta\Gg))
\end{equation}
by Lemma \ref{lem:non-deg}.
In general for $(g_1,g_2)\in G\times G$ set
$
d(g_1,g_2):=\dim(\Gk\cap\Ad(g_1,g_2)(\Delta\Gg)).
$
For
$(k_1,k_2)\in K$ and $(g,g)\in\Delta G$ we have
\[
d((k_1,k_2)(g_1,g_2)(g,g))=d(g_1,g_2),
\]
and hence $d(g_1,g_2)$ is regarded as a function on $K\backslash(G\times G)/\Delta G$.
Denote by $W=N_G(H)/H$ the Weyl group of $G$.
A standard fact on simple algebraic groups tells us that for any $(g_1,g_2)\in G\times G$ there exists some $w\in W$ and $t\in H$ such that 
$K(g_1,g_2)\Delta G\ni (t\dot{w},1)$, where $\dot{w}$ is a representative of $w$.
By
\[
d(t\dot{w},1)=\dim(\Gk\cap\Ad(t\dot{w},1)(\Delta\Gg))
=\dim(\Ad((t\dot{w},1)^{-1})(\Gk)\cap\Delta\Gg),
\]
\[
\Ad((t\dot{w},1)^{-1})(\Gk)
=
\{(w^{-1}h+\dot{w}^{-1}x,-h+y)
\mid
h\in\Gh, x\in\Gn^+, y\in\Gn^-\}
\]
we see easily that $d(t\dot{w},1)=0$ if and only if $w=1$.
The assertion follows from this easily.
\end{proof}
\begin{corollary}
The Poisson structure of $\Delta G\times K$ induces a symplectic structure of the open subset
\[
U:=\{((g,g),(k_1,k_2))\in \Delta G\times K\mid
gk_1k_2^{-1}g^{-1}\in N^+HN^-
\}.
\]
\end{corollary}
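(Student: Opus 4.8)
The plan is to obtain the Corollary as a formal consequence of the preceding Proposition, using two standard facts: first, that the defining condition of $U$ cuts out a Zariski open subset of $\Delta G\times K$; and second, that a Poisson variety whose Poisson tensor is non-degenerate at every point carries a canonical symplectic form, the inverse of the Poisson bivector.

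First I would check that $U$ is open. The assignment $((g,g),(k_1,k_2))\mapsto gk_1k_2^{-1}g^{-1}$ defines a morphism of algebraic varieties $\Psi:\Delta G\times K\to G$. Since $N^+HN^-$ is the big cell of $G$, it is Zariski open in $G$, so $U=\Psi^{-1}(N^+HN^-)$ is open in $\Delta G\times K$ (and nonempty, as it contains $((1,1),(1,1))$). As the notion of Poisson variety in Section~2.1 is not restricted to affine varieties, the restriction of the Poisson bracket makes $U$ a Poisson variety whose Poisson tensor is $\delta|_U$.

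It remains to see that $\delta|_U$ is non-degenerate and that this forces $U$ to be symplectic. Non-degeneracy of $\delta$ at each point of $U$ is exactly the content of the Proposition just proved. Given this, $\delta|_U$ induces an isomorphism of vector bundles $\Omega_U\to\Theta_U$ on $U$; let $\omega\in\Gamma(U,\bigwedge^2\Omega_U)$ be the $2$-form corresponding to the bilinear form on $\Theta_U$ obtained by inverting it. Non-degeneracy of $\omega$ is immediate from non-degeneracy of $\delta|_U$, and the Jacobi identity for the Poisson bracket translates, in the usual way, into the closedness $d\omega=0$; hence $(U,\omega)$ is a symplectic variety. Since all the substance was already packaged in the Proposition, there is no genuine obstacle in this last step: the only point requiring any attention is the (standard) openness of the big cell $N^+HN^-$ in $G$, which is what guarantees that $U$ is open and that restricting the Poisson structure to $U$ makes sense in the first place.
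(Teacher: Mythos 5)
Your proof is correct and follows the same route the paper intends: the corollary is stated without proof there, being an immediate consequence of the preceding proposition once one notes that $U$ is the preimage of the open big cell $N^+HN^-$ under a morphism and that a Poisson variety with everywhere non-degenerate Poisson tensor is symplectic. Your write-up simply makes these two standard points explicit.
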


Set
\begin{align*}
Y&=\{((g,g),(k_1,k_2))\in \Delta G\times K\mid
gk_1k_2^{-1}g^{-1}\in B^-
\}\subset U\subset\Delta G\times K,\\
\tilde{Y}&=\Phi(Y)\subset G\times G.
\end{align*}
Then we have
\begin{equation}
\tilde{Y}
=\{(g_1,g_2)\in G\times G\mid
g_1g_2^{-1}\in B^-, \,g_1^{-1}g_2\in N^+HN^-
\}.
\end{equation}
Moreover, setting
\[
\tilde{Z}=
\{(g,b)\in G\times B^-
\mid
g^{-1}b^{-1}g\in N^+HN^-
\}
\]
we have
\begin{equation}
\label{eq:z1}
\tilde{Y}\cong\tilde{Z}
\qquad
((g_1,g_2)\leftrightarrow(g_1,g_1g_2^{-1}),\quad
(g,b^{-1}g)\leftrightarrow(g,b)).
\end{equation}
Since $N^+HN^-$ is an open subset of $G$, $\tilde{Z}$ is open in $G\times B^-$.
In particular, $\tilde{Z}$ is a smooth variety.
Hence $\tilde{Y}$ is also smooth.
Define an action of
$N^-$ on $G\times G$ by
\begin{align*}
x(g_1,g_2)=(xg_1,xg_2)
\qquad(x\in N^-, (g_1,g_2)\in G\times G).
\end{align*}
Then $\tilde{Y}$ is $N^-$-invariant.
Moreover, \eqref{eq:z1} preserves the action of $N^-$, where the action of $N^-$ on $\tilde{Z}$ is given by
\begin{align*}
x(g,b)=(xg,xbx^{-1})\qquad
(x\in N^-, (g,b)\in\tilde{Z}).
\end{align*}
For 
$C\subset G$ such that 
$C\ni c\mapsto N^-c\in{N^-}\backslash G$ is an open embedding
we have
\begin{align*}
&\{(g,b)\in\tilde{Z}\mid g\in N^-C\}\\
=&\{(yc,yby^{-1})\mid y\in N^-, c\in C, b\in B^-, 
c^{-1}b^{-1}c\in N^+HN^-\}\\
\cong& N^-\times
\{(c,b)\in C\times B^-\mid 
c^{-1}b^{-1}c\in N^+HN^-\},
\end{align*}
and hence the action of $N^-$ on $\tilde{Z}$ is locally free.
Hence we have the following.
\begin{lemma}
\label{lem:z2}
$\tilde{Y}$ is a smooth variety, and 
the action of 
$N^-$ on $\tilde{Y}$ is locally free.
\end{lemma}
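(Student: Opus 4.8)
The two assertions have essentially been prepared by the discussion preceding the statement, so the plan is to assemble them via the isomorphism \eqref{eq:z1}. First I would transport everything along that isomorphism: the map $(g_1,g_2)\mapsto(g_1,g_1g_2^{-1})$ identifies $\tilde Y$ with
\[
\tilde Z=\{(g,b)\in G\times B^-\mid g^{-1}b^{-1}g\in N^+HN^-\},
\]
and a one-line computation shows that it intertwines the diagonal action $x(g_1,g_2)=(xg_1,xg_2)$ of $N^-$ on $\tilde Y$ with the action $x(g,b)=(xg,xbx^{-1})$ of $N^-$ on $\tilde Z$. It therefore suffices to show that $\tilde Z$ is smooth and that this latter action of $N^-$ is locally free.

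For smoothness, I would observe that $\tilde Z$ is precisely the preimage of the open subset $N^+HN^-\subset G$ under the morphism $G\times B^-\to G$, $(g,b)\mapsto g^{-1}b^{-1}g$; since $G\times B^-$ is smooth, $\tilde Z$ is an open subvariety of a smooth variety and hence smooth, which gives the smoothness of $\tilde Y$. For local freeness, I would use that $N^-$ is a connected unipotent group, so that $G\to N^-\backslash G$ is a Zariski-locally trivial $N^-$-torsor; in particular every point of $N^-\backslash G$ lies in an open set of the form $N^-C$ with $C\ni c\mapsto N^-c$ an open embedding. Over such a patch the product decomposition
\[
\{(g,b)\in\tilde Z\mid g\in N^-C\}\cong N^-\times\{(c,b)\in C\times B^-\mid c^{-1}b^{-1}c\in N^+HN^-\}
\]
displayed above is $N^-$-equivariant for the left-translation action of $N^-$ on the first factor, so the action of $N^-$ is free on it; as such open sets cover $\tilde Z$, the action of $N^-$ on $\tilde Z$, and hence on $\tilde Y$, is free, in particular locally free.

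I do not expect a genuine obstacle here, since every ingredient is already available. The only points needing care are the routine verification that \eqref{eq:z1} is equivariant for the two actions as stated, and the (standard) fact that $G\to N^-\backslash G$ is Zariski-locally trivial because $N^-$ is a connected unipotent group, which is what guarantees that the open sets $N^-C$ occurring above actually cover $N^-\backslash G$.
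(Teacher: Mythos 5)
Your proposal is correct and follows essentially the same route as the paper: pass to $\tilde Z$ via \eqref{eq:z1}, observe that $\tilde Z$ is open in $G\times B^-$ because $N^+HN^-$ is open in $G$, and obtain local freeness from the product decomposition of $\{(g,b)\in\tilde Z\mid g\in N^-C\}$ over patches $N^-C$. The only (harmless) additions are the explicit remark that such patches cover $\tilde Z$ and the resulting observation that the action is in fact free, which the lemma does not require.
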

Set $\Delta\Gn^-=\{(a,a)\mid a\in\Gn^-\}$.
We have obviously the following.
\begin{lemma}
We have
\[
(\Delta\Gn^-)^\perp\cap\Gk=
\{(h,-h+y)\mid y\in\Gn^-\}.
\]
In particular, $(\Delta\Gn^-)^\perp\cap\Gk$ is a Lie subalgebra of $\Gk$.
\end{lemma}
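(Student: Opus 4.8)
The plan is simply to unwind the two defining conditions and intersect them. First I would identify $(\Delta\Gn^-)^\perp$ inside $\Gg\oplus\Gg$. For $(a,b)\in\Gg\oplus\Gg$ and $c\in\Gn^-$ we have
\[
\rho((c,c),(a,b))=\kappa(c,a)-\kappa(c,b)=\kappa(c,a-b),
\]
so $(a,b)\in(\Delta\Gn^-)^\perp$ exactly when $a-b$ is $\kappa$-orthogonal to $\Gn^-$. Using the triangular decomposition $\Gg=\Gn^-\oplus\Gh\oplus\Gn^+$ together with the facts that $\kappa$ pairs $\Gn^+$ and $\Gn^-$ non-degenerately, kills $\Gh$ against $\Gn^\pm$, and vanishes on $\Gn^-\times\Gn^-$, a dimension count gives that the $\kappa$-orthogonal complement of $\Gn^-$ in $\Gg$ equals $\Gh\oplus\Gn^-=\Gb^-$. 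Hence $(\Delta\Gn^-)^\perp=\{(a,b)\in\Gg\oplus\Gg\mid a-b\in\Gb^-\}$.

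Next I would intersect this with $\Gk$. Writing a general element of $\Gk$ as $(h+x,-h+y)$ with $h\in\Gh$, $x\in\Gn^+$, $y\in\Gn^-$, its difference is $(h+x)-(-h+y)=2h+x-y$, whose $\Gn^+$-component is $x$; therefore it lies in $\Gb^-=\Gh\oplus\Gn^-$ if and only if $x=0$. This yields the asserted equality
\[
(\Delta\Gn^-)^\perp\cap\Gk=\{(h,-h+y)\mid h\in\Gh,\ y\in\Gn^-\},
\]
the inclusion $\supseteq$ being immediate since for such an element $2h-y\in\Gb^-$.

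Finally, for the subalgebra claim I would check closure under the bracket directly: for $(h_i,-h_i+y_i)$, $i=1,2$, one has
\[
[(h_1,-h_1+y_1),(h_2,-h_2+y_2)]=\bigl([h_1,h_2],\,[-h_1+y_1,-h_2+y_2]\bigr),
\]
whose first component is $0$ since $\Gh$ is abelian, and whose second component lies in $\Gn^-$ since $\Gn^-$ is $(\ad\Gh)$-stable and is itself a subalgebra; thus the bracket again has the shape $(0,y')$ with $y'\in\Gn^-$ and so lies in the set. Equivalently, the set is the semidirect product $\{(h,-h)\mid h\in\Gh\}\ltimes\{(0,y)\mid y\in\Gn^-\}$. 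There is no genuine obstacle here — this is exactly why the statement is labelled obvious — the only point meriting a second's attention being the standard identification of the $\kappa$-orthogonal complement of $\Gn^-$ with $\Gb^-$.
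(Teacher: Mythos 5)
Your proof is correct and is exactly the verification the paper leaves implicit (the lemma is stated with ``We have obviously the following'' and no proof is given): computing $\rho((c,c),(a,b))=\kappa(c,a-b)$, identifying the $\kappa$-orthogonal complement of $\Gn^-$ with $\Gb^-$, and reading off the condition $x=0$ on $(h+x,-h+y)\in\Gk$. The closure-under-bracket check at the end is likewise the standard one, so nothing further is needed.
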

For $(g_1,g_2)\in\tilde{Y}$ we have
\begin{align*}
T(G\times G)_{(g_1,g_2)}&=\{R_{(a_1,a_2)}\mid(a_1,a_2)\in\Gg\oplus\Gg\},\\
T^*(G\times G)_{(g_1,g_2)}&=\{R^*_{(u_1,u_2)}\mid(u_1,u_2)\in\Gg\oplus\Gg\},\\
\langle
R_{(a_1,a_2)},
R^*_{(u_1,u_2)}
\rangle&=\kappa(a_1,u_1)-\kappa(a_2,u_2).
\end{align*}
By \eqref{eq:z1} we have also
\[
(T\tilde{Y})
_{(g_1,g_2)}
=
\{
R_{(a,\Ad(g_2g_1^{-1})(a))}\mid a\in\Gg\}
\oplus
\{R_{(0,b)}\mid b\in\Gb^-\}
\]
for $(g_1,g_2)\in\tilde{Y}$.
By Lemma \ref{lem:z2} the natural map 
$\Gn^-\to(T\tilde{Y})_{(g_1,g_2)}$ is injective and is given by
\[
\Gn^-\ni c\mapsto R_{(c,c)}\in(T\tilde{Y})_{(g_1,g_2)}.
\]
Hence under the identification 
$\Gn^-\subset(T\tilde{Y})_{(g_1,g_2)}\subset T(G\times G)_{(g_1,g_2)}$ we have
\begin{align*}
(\Gn^-)^\perp
=&\{R^*_{(u_1,u_2)}\mid u_1-u_2\in\Gb^-\}
=\{R^*_{(u,u+v)}\mid u\in\Gg, v\in\Gb^-\},\\
((T\tilde{Y})
_{(g_1,g_2)})^\perp
=&
\{R^*_{(\Ad(g_2g_1^{-1})(y),y)}\mid y\in\Gn^-\}.
\end{align*}
\begin{lemma}
For $(g_1,g_2)\in\tilde{Y}$ we have
\[
{\rm rad}
\left(\tilde{\delta}_{(g_1,g_2)}|_{
(\Gn^-)^\perp\times
(\Gn^-)^\perp
}
\right)
=((T\tilde{Y})
_{(g_1,g_2)})^\perp.
\]
\end{lemma}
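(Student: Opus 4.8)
The plan is to deduce the asserted equality of subspaces of $(\Gn^-)^\perp$ from a single inclusion together with a dimension count, once one knows that $\tilde\delta$ is non-degenerate along $\tilde Y$. First I would observe that for $(g_1,g_2)\in\tilde Y$ the form $\tilde\delta_{(g_1,g_2)}$ is non-degenerate on the whole of $T^*(G\times G)_{(g_1,g_2)}$: indeed $g_1g_2^{-1}\in B^-=HN^-\subset N^+HN^-$, so both conditions of Lemma~\ref{lem:non-deg}(i) hold at $(g_1,g_2)$ exactly as in the proof of the Proposition above (the condition $\Ad((g_1,g_2))(\Gk)\cap\Delta\Gg=\{0\}$ being automatic for $(g_1,g_2)\in\Image\Phi$, and $\Ad((g_1,g_2))(\Delta\Gg)\cap\Gk=\{0\}$ being equivalent to $g_1g_2^{-1}\in N^+HN^-$). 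Writing $W=(\Gn^-)^\perp\subset T^*(G\times G)_{(g_1,g_2)}$, it follows that ${\rm rad}\bigl(\tilde\delta_{(g_1,g_2)}|_{W\times W}\bigr)=W\cap W^{\perp}$, where $W^{\perp}$ denotes the annihilator of $W$ with respect to the symplectic form $\tilde\delta_{(g_1,g_2)}$; moreover $\dim W^{\perp}=\dim T^*(G\times G)_{(g_1,g_2)}-\dim W=\dim\Gn^-$, since $\Gn^-$ embeds in $(T\tilde Y)_{(g_1,g_2)}$ with $\dim\Gn^-$-dimensional image by Lemma~\ref{lem:z2}.

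Because the orbit directions $\{R_{(c,c)}\mid c\in\Gn^-\}$ of the $N^-$-action lie in $(T\tilde Y)_{(g_1,g_2)}$, the conormal space $((T\tilde Y)_{(g_1,g_2)})^\perp$ is contained in $W$, and from its description above it is parametrized by $\Gn^-$, hence of dimension $\dim\Gn^-$. The lemma will therefore follow once we prove the single inclusion
\[
((T\tilde Y)_{(g_1,g_2)})^\perp\subset W^{\perp},
\]
i.e.\ that $\tilde\delta_{(g_1,g_2)}(R^*_u,R^*_w)=0$ whenever $R^*_u\in((T\tilde Y)_{(g_1,g_2)})^\perp$ and $R^*_w\in(\Gn^-)^\perp$: for then $((T\tilde Y)_{(g_1,g_2)})^\perp\subset W\cap W^{\perp}={\rm rad}(\tilde\delta_{(g_1,g_2)}|_{W\times W})$, and since the right-hand side is contained in $W^{\perp}$ it has dimension at most $\dim\Gn^-$, forcing the inclusion to be an equality.

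To prove this vanishing I would argue directly from $\tilde\delta_{g}(R^*_a,R^*_b)=\rho\bigl(a,(-\pi_{\Delta\Gg}+\Ad(g)\pi_{\Gk}\Ad(g^{-1}))(b)\bigr)$ with $g=(g_1,g_2)$. From the description of $(T\tilde Y)_{(g_1,g_2)}$ and the invariance of $\kappa$, a covector $R^*_{(u_1,u_2)}\in((T\tilde Y)_{(g_1,g_2)})^\perp$ satisfies $u_2\in\Gn^-$ and $u_1=\Ad(g_1g_2^{-1})(u_2)$; in particular $u_1\in\Gn^-$ (as $g_1g_2^{-1}\in B^-$ normalizes $\Gn^-$) and, crucially, $\Ad(g^{-1})(u_1,u_2)=(\Ad(g_2^{-1})(u_2),\Ad(g_2^{-1})(u_2))\in\Delta\Gg$. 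Using the $\Ad$-invariance of $\rho$ and $\rho(\Delta\Gg,\Delta\Gg)=\{0\}$, one then checks that the formula collapses to $\tilde\delta_{g}(R^*_u,R^*_w)=\rho(u,\pi_{\Gk}(w))$ for every $w$. Finally, if $R^*_w\in(\Gn^-)^\perp$ then $w_1-w_2\in\Gb^-$, so both components of $\pi_\Gk(w)$ lie in $\Gh\oplus\Gn^-$; since both components of $u$ lie in $\Gn^-$, this yields $\rho(u,\pi_\Gk(w))=0$ by $\kappa(\Gn^-,\Gh)=\kappa(\Gn^-,\Gn^-)=\{0\}$.

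I expect the delicate points to be the reduction step — recognizing that non-degeneracy of $\tilde\delta$ along $\tilde Y$ replaces the two-sided statement ``${\rm rad}=$ conormal'' by the one-sided inclusion above — and the observation that the conormal space is carried into $\Delta\Gg$ by $\Ad(g^{-1})$, which is what makes the Semenov-Tyan-Shansky formula collapse. Once these are in place, the remaining computation is a routine bookkeeping of $\kappa$-orthogonalities in the triangular decomposition $\Gg=\Gn^-\oplus\Gh\oplus\Gn^+$.
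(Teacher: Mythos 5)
Your proof is correct, but it takes a genuinely different route from the paper's. The paper computes the radical head-on: it pairs a general element $R^*_{(u,u+v)}$ of $(\Gn^-)^\perp$ (with $u\in\Gg$, $v\in\Gb^-$) against all of $(\Gn^-)^\perp$, reduces the vanishing condition to $x=y\in\Gn^-$ where $(x,y)=(-\pi_{\Delta\Gg}+\Ad(g_1,g_2)\pi_\Gk\Ad(g_1^{-1},g_2^{-1}))(u,u+v)$, and then uses the direct sum $\Gg\oplus\Gg=\Delta\Gg\oplus\Ad(g_1,g_2)(\Gk)$ (valid because $(g_1,g_2)\in\Image\Phi$) to split this into the two conditions $u\in\Gn^-$ and $v=\Ad(g_2g_1^{-1})(u)-u$; both inclusions of the asserted equality fall out of the same calculation, together with the explicit parametrization of the radical. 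You instead invoke the non-degeneracy of $\tilde{\delta}$ along $\tilde{Y}$ (from the proposition characterizing where $\delta$ is non-degenerate, since $B^-\subset N^+HN^-$) to write ${\rm rad}\bigl(\tilde{\delta}_{(g_1,g_2)}|_{W\times W}\bigr)=W\cap W^{\perp_{\tilde{\delta}}}$ and reduce everything to the single inclusion $((T\tilde{Y})_{(g_1,g_2)})^\perp\subset W^{\perp_{\tilde{\delta}}}$ plus a dimension count (both sides of the claimed equality having dimension $\dim\Gn^-$); the inclusion itself is verified by the observation that $\Ad((g_1,g_2)^{-1})$ carries the conormal directions into $\Delta\Gg$, which collapses the Semenov-Tyan-Shansky formula to $\rho(u,\pi_\Gk(w))$ and lets the orthogonality relations of the triangular decomposition finish. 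Your route is less computational but leans on the full non-degeneracy statement, which the paper's proof of this lemma does not use (it only needs the weaker direct-sum decomposition), whereas the paper's calculation is self-contained at the linear-algebra level and yields the description of the radical as a by-product. One incidental point: your description of the conormal space, $u_1=\Ad(g_1g_2^{-1})(u_2)$ with $u_2\in\Gn^-$, agrees with the set $\{R^*_{(u,\Ad(g_2g_1^{-1})(u))}\mid u\in\Gn^-\}$ obtained at the end of the paper's proof; the display for $((T\tilde{Y})_{(g_1,g_2)})^\perp$ given just before the lemma has the two group elements interchanged and appears to be a typo, so the superficial mismatch with your formula is not an error on your part.
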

\begin{proof}
For $u\in\Gg, v\in\Gb^-$ we have 
$R^*_{(u,u+v)}
\in{\rm{rad}}
\left(\tilde{\delta}_{(g_1,g_2)}|_{
(\Gn^-)^\perp\times
(\Gn^-)^\perp}
\right)$ 
if and only if 
$\tilde{\delta}_{(g_1,g_2)}(R^*_{(a,a+b)},R^*_{(u,u+v)})=0$
for any $a\in\Gg, b\in\Gb^-$.
Setting
\[
(-\pi_{\Delta\Gg}+\Ad(g_1,g_2)\pi_\Gk\Ad(g_1^{-1},g_2^{-1}))(u,u+v)
=(x,y)
\]
we have
\[
\tilde{\delta}_{(g_1,g_2)}(R^*_{(a,a+b)},R^*_{(u,u+v)})=
\kappa(a,x)-\kappa(a+b,y)
=\kappa(a,x-y)-\kappa(b,y).
\]
Hence 
$R^*_{(u,u+v)}
\in{\rm{rad}}
\left(\tilde{\delta}_{(g_1,g_2)}|_{
(\Gn^-)^\perp\times
(\Gn^-)^\perp}
\right)$ if and only if
$x=y\in\Gn^-$.
By $(g_1,g_2)\in\Phi(\Delta G\times K)$ we have
$\Gg\oplus\Gg=\Delta\Gg\oplus\Ad(g_1,g_2)(\Gk)$.
Therefore,
\begin{align*}
&R^*_{(u,u+v)}
\in{\rm{rad}}
\left(\tilde{\delta}_{(g_1,g_2)}|_{
(\Gn^-)^\perp\times
(\Gn^-)^\perp}
\right)\\
\Longleftrightarrow\,
&\pi_{\Delta\Gg}(u,u+v)=(y,y)\,\,(\exists y\in\Gn^-),\,\,
\pi_\Gk\Ad(g_1^{-1},g_2^{-1})(u,u+v)=0\\
\Longleftrightarrow\,
&u\in\Gn^-,\,\,
\Ad(g_1^{-1},g_2^{-1})(u,u+v)\in\Delta\Gg\\
\Longleftrightarrow\,
&u\in\Gn^-,\,\,
v=\Ad(g_2g_1^{-1})(u)-u.
\end{align*}
It follows that
\[
{\rm{rad}}
\left(\tilde{\delta}_{(g_1,g_2)}|_{
(\Gn^-)^\perp\times
(\Gn^-)^\perp}
\right)
=
\{R^*_{(u,\Ad(g_2g_1^{-1})(u))}\mid u\in\Gn^-\}
=((T\tilde{Y})
_{(g_1,g_2)})^\perp.
\]
\end{proof}
By Proposition \ref{prop:Hamil-red} and the above argument
we obtain the following.
\begin{proposition}
We have a natural Poisson structure of 
$N^-\backslash\tilde{Y}$
whose Poisson tensor is non-degenerate and defined as follows 
$($hence $N^-\backslash\tilde{Y}$ turns out to be a symplectic variety$)$:
Let 
$\varphi,\psi$ be functions on $N^-\backslash\tilde{Y}$, and let $\tilde{\varphi}, \tilde{\psi}$ be the corresponding $N^-$-invariant functions on $\tilde{Y}$.
Take extensions $\hat{\varphi}, \hat{\psi}$ of $\tilde{\varphi}, \tilde{\psi}$ to $G\times G$.
Then $\{\hat{\varphi},\hat{\psi}\}|_{\tilde{Y}}$ is $N^-$-invariant and does not depend on the choice of $\hat{\varphi}, \hat{\psi}$.
We define $\{\varphi,\psi\}$ to be the function on $N^-\backslash\tilde{Y}$ corresponding to $\{\hat{\varphi},\hat{\psi}\}|_{\tilde{Y}}$.
\end{proposition}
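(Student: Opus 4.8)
The plan is to realize this proposition as a direct application of the Hamiltonian-reduction statement Proposition~\ref{prop:Hamil-red} to the typical Manin triple $(\Gg\oplus\Gg,\Delta\Gg,\Gk)$ and the Semenov-Tyan-Shansky Poisson variety $A=G\times G$ with tensor $\tilde\delta$. In the notation of Proposition~\ref{prop:Hamil-red} I take $M=\Delta G$, $L=K$, and let $F$ be the diagonal copy $\{(x,x)\mid x\in N^-\}$ of $N^-$ inside $M=\Delta G$; this is a connected closed subgroup with Lie algebra $\Gf=\Delta\Gn^-$, and its left-multiplication action on $A$ is precisely the action $x(g_1,g_2)=(xg_1,xg_2)$ fixed above. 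Under this identification the infinitesimal embedding $\Gf\hookrightarrow(TA)_g$ of Proposition~\ref{prop:Hamil-red} becomes $c\mapsto R_{(c,c)}$, so that the subbundle $(A\times\Gf)^\perp$ has fibre $\{R^*_{(u_1,u_2)}\mid u_1-u_2\in\Gb^-\}=(\Gn^-)^\perp$, and $\hat\delta=\tilde\delta|_{(A\times\Gf)^\perp\times(A\times\Gf)^\perp}$ is exactly the restriction $\tilde\delta|_{(\Gn^-)^\perp\times(\Gn^-)^\perp}$ that enters the lemmas above. Finally I take $V=\tilde Y$.

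It then remains to check the three hypotheses of Proposition~\ref{prop:Hamil-red}, all of which have already been prepared. First, $\Gf^\perp\cap\Gl=(\Delta\Gn^-)^\perp\cap\Gk$ is a Lie subalgebra of $\Gk$, which is the content of the lemma stating that $(\Delta\Gn^-)^\perp\cap\Gk=\{(h,-h+y)\mid y\in\Gn^-\}$. Second, $V=\tilde Y$ is an $F$-stable smooth subvariety of $A$ on which $F$ acts locally freely: the $N^-$-invariance of $\tilde Y$ was observed above, and the smoothness together with the local freeness is Lemma~\ref{lem:z2}. Third, for every $g\in V$ one needs the inclusion ${\rm rad}(\hat\delta_g)\supset(T^*_VA)_g$; but the lemma immediately preceding this proposition establishes the stronger equality ${\rm rad}\bigl(\tilde\delta_{(g_1,g_2)}|_{(\Gn^-)^\perp\times(\Gn^-)^\perp}\bigr)=((T\tilde Y)_{(g_1,g_2)})^\perp=(T^*_{\tilde Y}(G\times G))_{(g_1,g_2)}$. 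Thus Proposition~\ref{prop:Hamil-red} yields on $N^-\backslash\tilde Y$ a Poisson structure whose bracket is computed by lifting $\varphi,\psi$ to $N^-$-invariant functions on $\tilde Y$, extending them arbitrarily to $G\times G$, and restricting the resulting $\tilde\delta$-bracket back to $\tilde Y$ — which is exactly the description in the statement.

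For the non-degeneracy I would simply invoke the second half of Proposition~\ref{prop:Hamil-red}: since the preceding lemma gives the full equality ${\rm rad}(\hat\delta_g)=(T^*_VA)_g$ rather than merely an inclusion, the induced Poisson tensor on $N^-\backslash\tilde Y$ is non-degenerate, and hence $N^-\backslash\tilde Y$ is a symplectic variety.

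I do not expect any serious obstacle at this stage: all the substantive work has been front-loaded into Lemma~\ref{lem:z2} (smoothness of $\tilde Y$ and local freeness of the $N^-$-action) and, above all, into the radical computation of the preceding lemma. The only point that needs a little care is the bookkeeping — matching the conventions of Proposition~\ref{prop:Hamil-red}, where $F$ acts on $A$ by left multiplication via its inclusion into $M$, with the ad hoc description of the $N^-$-action on $G\times G$ used in this subsection, and confirming that the fibre of $(A\times\Gf)^\perp$ is exactly the space $(\Gn^-)^\perp$ occurring in the tangent and cotangent computations preceding the proposition.
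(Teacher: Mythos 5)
Your proposal is correct and follows essentially the same route as the paper: the paper's own proof is precisely the one-line invocation of Proposition \ref{prop:Hamil-red} with $F$ the diagonal copy of $N^-$ in $\Delta G$ and $V=\tilde{Y}$, with the three hypotheses supplied by the preceding lemmas exactly as you indicate. Your bookkeeping identifications (the fibre of $(A\times\Gf)^\perp$ with $(\Gn^-)^\perp$, and the radical equality with the conormal fibre) are the right ones and match the paper's computations.
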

By considering the pull-back to $Y$ via $\Phi$ we also obtain the following.
\begin{proposition}
Consider the action of $N^-$ on $Y$ given by
\[
x((g,g),(k_1,k_2))=
((xg,xg),(k_1,k_2))\qquad
(x\in N^-, ((g,g),(k_1,k_2))\in Y).
\]
Then 
we have a natural Poisson structure of 
$N^-\backslash{Y}$
whose Poisson tensor is non-degenerate and defined as follows 
$($hence $N^-\backslash{Y}$ turns out to be a symplectic variety$)$:
Let 
$\varphi,\psi$ be functions on $N^-\backslash{Y}$, and let $\tilde{\varphi}, \tilde{\psi}$ be the corresponding $N^-$-invariant functions on ${Y}$.
Take extensions $\hat{\varphi}, \hat{\psi}$ of $\tilde{\varphi}, \tilde{\psi}$ to $\Delta G\times K$.
Then $\{\hat{\varphi},\hat{\psi}\}|_{{Y}}$ is $N^-$-invariant and does not depend on the choice of $\hat{\varphi}, \hat{\psi}$.
We define $\{\varphi,\psi\}$ to be the function on $N^-\backslash{Y}$ corresponding to $\{\hat{\varphi},\hat{\psi}\}|_{{Y}}$.
\end{proposition}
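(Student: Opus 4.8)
The plan is to deduce this proposition from the preceding one by transporting the Hamiltonian reduction along the local Poisson isomorphism $\Phi$. First I would check that $\Phi$ is $N^-$-equivariant, where $N^-$ acts on $G\times G$ as in Subsection \ref{subsec:special} (that is, by left translation through the subgroup $\Delta N^-$ of $\Delta G$): this is the immediate identity
\[
\Phi(x((g,g),(k_1,k_2)))=(xgk_1,xgk_2)=x\,\Phi((g,g),(k_1,k_2))\qquad(x\in N^-),
\]
so $\Phi$ carries the $N^-$-action on $Y$ appearing in the statement to the $N^-$-action on $\tilde{Y}$ used in the preceding proposition.

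Second, I would observe that $\Phi^{-1}(\tilde{Y})=Y$. Indeed, writing $(g_1,g_2)=\Phi((g,g),(k_1,k_2))=(gk_1,gk_2)$, one has $g_1^{-1}g_2=k_1^{-1}k_2\in N^+HN^-$ automatically, while $g_1g_2^{-1}=gk_1k_2^{-1}g^{-1}$; hence the two defining conditions of $\tilde{Y}$ reduce precisely to $gk_1k_2^{-1}g^{-1}\in B^-$. Since $\Phi$ is a local isomorphism and, by Lemma \ref{lem:z2}, $\tilde{Y}$ is a smooth $N^-$-stable subvariety of $G\times G$ on which $N^-$ acts locally freely, it follows that $Y$ shares these properties and that $\Phi$ restricts to an $N^-$-equivariant local isomorphism $\Phi|_{Y}\colon Y\to\tilde{Y}$.

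Third, I would use that, by construction, the Poisson tensor $\delta$ of $\Delta G\times K$ is the pull-back of $\tilde{\delta}$ along $\Phi$. Consequently, around any point where $\Phi$ restricts to an isomorphism onto an open subset, $\Phi$ identifies the Poisson variety $\Delta G\times K$ with $G\times G$, the subvariety $Y$ with $\tilde{Y}$, and the $N^-$-actions; under this identification the recipe in the statement — take the $N^-$-invariant function $\tilde{\varphi}$ corresponding to $\varphi$, extend it arbitrarily to $\hat{\varphi}$ on $\Delta G\times K$, form $\{\hat{\varphi},\hat{\psi}\}$, and restrict to $Y$ — becomes verbatim the recipe of the preceding proposition (extending instead to $G\times G$). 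Hence the bracket defined in the statement is the pull-back, along the induced local isomorphism $N^-\backslash Y\to N^-\backslash\tilde{Y}$, of the bracket produced there, and well-definedness, independence of the choice of extension, $N^-$-invariance of $\{\hat{\varphi},\hat{\psi}\}|_{Y}$, and non-degeneracy of the Poisson tensor on $N^-\backslash Y$ are all inherited from the preceding proposition.

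I do not expect a genuine obstacle here; the only point requiring a little care is that the two Hamiltonian reductions use extensions to different ambient spaces ($\Delta G\times K$ versus $G\times G$), which is reconciled exactly by the local-isomorphism property of $\Phi$ together with the fact that $\Phi$ is a Poisson morphism. As an alternative route, one could bypass $\tilde{Y}$ and apply Proposition \ref{prop:H-red0} directly with $X=\Delta G\times K$, $S=N^-$ and the subvariety $Y$, checking its two hypotheses — the $N^-$-invariance of $\hat{\delta}$ and the equality $\rad(\hat{\delta}_y)=(T^*_YX)_y$ for $y\in Y$ — by transporting them along $\Phi$ to the statements already proved for $\tilde{Y}$.
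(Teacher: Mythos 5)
Your proposal is correct and is exactly the paper's argument: the paper derives this proposition from the preceding one on $N^-\backslash\tilde{Y}$ in a single sentence ("By considering the pull-back to $Y$ via $\Phi$ we also obtain the following"), and you have simply supplied the routine verifications — the $N^-$-equivariance of $\Phi$, the identity $\Phi^{-1}(\tilde{Y})=Y$, and the reconciliation of the two ambient spaces via the local-isomorphism and Poisson-morphism properties of $\Phi$ — all of which are accurate.
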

Note that 
\begin{equation}
N^-\backslash Y
\cong
\{(N^-g,(k_1,k_2))\in (N^-\backslash G)\times K\mid
gk_1k_2^{-1}g^{-1}\in B^-\}.
\end{equation}

Fix $t\in H$ and set
\begin{align*}
Y_t&=\{((g,g),(k_1,k_2))\in \Delta G\times K\mid
gk_1k_2^{-1}g^{-1}\in tN^-
\}\subset U\subset\Delta G\times K.
\end{align*}
Then by a similar argument we have the following.
\begin{proposition}
Consider the action of $B^-$ on $Y_t$ given by
\[
x((g,g),(k_1,k_2))=
((xg,xg),(k_1,k_2))\,\,
(x\in B^-, ((g,g),(k_1,k_2))\in Y_t).
\]
Then 
we have a natural Poisson structure of 
$B^-\backslash{Y}_t$
whose Poisson tensor is non-degenerate and defined as follows 
$($hence $B^-\backslash{Y}_t$ turns out to be a symplectic variety$)$:
Let 
$\varphi,\psi$ be functions on $B^-\backslash{Y}_t$, and let $\tilde{\varphi}, \tilde{\psi}$ be the corresponding $B^-$-invariant functions on ${Y}_t$.
Take extensions $\hat{\varphi}, \hat{\psi}$ of $\tilde{\varphi}, \tilde{\psi}$ to $\Delta G\times K$.
Then $\{\hat{\varphi},\hat{\psi}\}|_{{Y}_t}$ is $B^-$-invariant and does not depend on the choice of $\hat{\varphi}, \hat{\psi}$.
We define $\{\varphi,\psi\}$ to be the function on $B^-\backslash{Y}_t$ corresponding to $\{\hat{\varphi},\hat{\psi}\}|_{{Y}_t}$.
\end{proposition}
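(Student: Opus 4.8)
The plan is to run the same Hamiltonian-reduction argument as for $N^-\backslash Y$, with $N^-$ replaced by $B^-$. First I would set $\tilde Y_t=\Phi(Y_t)\subset G\times G$, so that
\[
\tilde Y_t=\{(g_1,g_2)\in G\times G\mid g_1g_2^{-1}\in tN^-,\ g_1^{-1}g_2\in N^+HN^-\}
\]
and $Y_t=\Phi^{-1}(\tilde Y_t)$. Let $B^-$ act on $G\times G$ by $x(g_1,g_2)=(xg_1,xg_2)$; since $tN^-$ is stable under conjugation by $B^-$, the set $\tilde Y_t$ is $B^-$-stable and $\Phi$ is $B^-$-equivariant (for the action of $B^-$ on $Y_t$ via $\Delta B^-$). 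Introducing $\tilde Z_t=\{(g,b)\in G\times tN^-\mid g^{-1}b^{-1}g\in N^+HN^-\}$, which is open in $G\times tN^-$ because $N^+HN^-$ is open in $G$, one has $\tilde Y_t\cong\tilde Z_t$ via $(g_1,g_2)\leftrightarrow(g_1,g_1g_2^{-1})$ (with $B^-$ acting on $\tilde Z_t$ by $x(g,b)=(xg,xbx^{-1})$); hence $\tilde Y_t$ is smooth, and the product description of $\tilde Z_t$ over a slice for $B^-\backslash G$ shows, exactly as in Lemma \ref{lem:z2}, that $B^-$ acts freely on $\tilde Y_t$.

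Next I would apply Proposition \ref{prop:Hamil-red} with $A=G\times G$, $M=\Delta G$, $L=K$, $F=\Delta B^-$ (so $\Gf=\Delta\Gb^-=\{(a,a)\mid a\in\Gb^-\}$) and $V=\tilde Y_t$. One computes
\[
\Gf^\perp=\{(u_1,u_2)\mid u_1-u_2\in\Gn^-\},\qquad \Gf^\perp\cap\Gk=\{(0,y)\mid y\in\Gn^-\},
\]
the latter being a Lie subalgebra of $\Gk$, so Lemma \ref{lem:F-inv} gives that $\hat\delta=\tilde\delta|_{(A\times\Gf)^\perp\times(A\times\Gf)^\perp}$ is $\Delta B^-$-invariant. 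The crux is the identity ${\rm rad}(\hat\delta_{(g_1,g_2)})=(T^*_{\tilde Y_t}(G\times G))_{(g_1,g_2)}$ for $(g_1,g_2)\in\tilde Y_t$. Arguing as in the treatment of $\tilde Y$, one finds $(T\tilde Y_t)_{(g_1,g_2)}=\{R_{(a_1,a_2)}\mid a_1-\Ad(g_1g_2^{-1})(a_2)\in\Gn^-\}$, whose annihilator is $\{R^*_{(u,\Ad(g_2g_1^{-1})(u))}\mid u\in\Gb^-\}$. On the other side, from $\tilde\delta_{(g_1,g_2)}(R^*_w,R^*_{w'})=\rho(w,(-\pi_{\Delta\Gg}+\Ad(g_1,g_2)\pi_\Gk\Ad(g_1^{-1},g_2^{-1}))(w'))$ and $(\Gf^\perp)^\perp=\Gf$, one sees that $R^*_{w'}$ lies in ${\rm rad}(\hat\delta_{(g_1,g_2)})$ iff $w'\in\Gf^\perp$ and $(-\pi_{\Delta\Gg}+\Ad(g_1,g_2)\pi_\Gk\Ad(g_1^{-1},g_2^{-1}))(w')\in\Delta\Gb^-$; the transversality $\Gg\oplus\Gg=\Delta\Gg\oplus\Ad(g_1,g_2)(\Gk)$, valid because $(g_1,g_2)\in\Phi(\Delta G\times K)$, forces $w'\in\Ad(g_1,g_2)(\Delta\Gg)$, say $w'=(\Ad(g_1)(c),\Ad(g_2)(c))$, together with $\pi_{\Delta\Gg}(w')\in\Delta\Gb^-$.

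The main obstacle I foresee is closing this computation, i.e.\ reconciling the two descriptions. The point is that for $w'=(\Ad(g_1)(c),\Ad(g_2)(c))$ the two surviving conditions $\pi_{\Delta\Gg}(w')\in\Delta\Gb^-$ and $w'\in\Gf^\perp$ collapse to the single requirement $\Ad(g_1)(c)\in\Gb^-$: here the precise hypothesis $g_1g_2^{-1}\in tN^-$ (and not merely $g_1g_2^{-1}\in B^-$) is used, for $g_2g_1^{-1}\in t^{-1}N^-$ implies $\Ad(g_2g_1^{-1})(\Gb^-)\subset\Gb^-$ with $u-\Ad(g_2g_1^{-1})(u)\in\Gn^-$ for every $u\in\Gb^-$ (the semisimple factor $t^{-1}$ fixes $\Gh$ pointwise and the unipotent factor pushes $\Gh$ into $\Gn^-$). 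Taking $u=\Ad(g_1)(c)$, this identifies ${\rm rad}(\hat\delta_{(g_1,g_2)})$ with $\{R^*_{(u,\Ad(g_2g_1^{-1})(u))}\mid u\in\Gb^-\}=((T\tilde Y_t)_{(g_1,g_2)})^\perp$. Proposition \ref{prop:Hamil-red} then furnishes a non-degenerate Poisson structure on $\Delta B^-\backslash\tilde Y_t$ of the asserted shape; since $\Phi$ is a local isomorphism respecting the Poisson tensors and the $B^-$-actions and $Y_t=\Phi^{-1}(\tilde Y_t)$, pulling this structure back yields the Poisson structure on $B^-\backslash Y_t$ described in the statement.
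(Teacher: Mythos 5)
Your proposal is correct and is essentially the argument the paper intends: the paper proves this proposition by saying ``by a similar argument'' with reference to the detailed treatment of $N^-\backslash Y$, and you have carried out exactly that adaptation, with $F=\Delta B^-$, $\Gf^\perp\cap\Gk=\Gk^-$, and the radical computation closing because the constraint $g_1g_2^{-1}\in tN^-$ is precisely complementary to enlarging $N^-$ to $B^-$. All the key verifications (stability of $tN^-$ under $B^-$-conjugation, smoothness and local freeness via $\tilde Z_t$, the identification ${\rm rad}(\hat{\delta})=((T\tilde Y_t))^\perp$) check out.
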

Note that we have
\begin{equation}
B^-\backslash Y_t
\cong
\{(B^-g,(k_1,k_2))\in (B^-\backslash G)\times K\mid
gk_1k_2^{-1}g^{-1}\in tN^-\}.
\end{equation}

\section{Quantized enveloping algebras}

\subsection{Lie algebras}
In the rest of this paper we will use the notation of Section \ref{subsec:special}.
In particular, $\Gg$ is a finite-dimensional simple Lie algebra over $\BC$, and $G$ is a connected algebraic group with Lie algebra $\Gg$.
We further assume that $G$ is simply-connected and the symmetric bilinear form 
\begin{equation}
\label{eq:SBMh*}
(\,,\,):\Gh^*\times\Gh^*\to\BC
\end{equation}
induced by $\kappa$ satisfies 
$(\beta,\beta)/2=1$ for short roots $\beta$.
We denote by $\Delta\subset\Gh^*$, $Q\subset\Gh^*$, $\Lambda\subset\Gh^*$ and $W\subset GL(\Gh^*)$ the set of roots, the root lattice $\sum_{\alpha\in\Delta}\BZ\alpha$, the weight lattice and the Weyl group respectively.
By our normalization of \eqref{eq:SBMh*} we have
\[
(\Lambda,Q)\subset\BZ,\qquad
(\Lambda,\Lambda)\subset\frac1{|\Lambda/Q|}\BZ.
\]
For $\beta\in\Delta$ we set
\[
\Gg_{\beta}
=\{x\in\Gg\mid [h,x]=\beta(h)x\,\,(h\in \Gh)\}.
\]
We choose a system of positive roots $\Delta^+\subset\Gh^*$ so that $
\Gn^\pm
=\bigoplus_{\beta\in\Delta^+}\Gg_{\pm\beta}$.
Let $\{\alpha_i\}_{i\in I}$, $\{s_i\}_{i\in I}\subset W$ be the corresponding  sets of simple roots and  simple reflections  respectively.
Set 
\[
Q^+
=\sum_{\alpha\in\Delta^+}\BZ_{\geqq0}\alpha
=\bigoplus_{i\in I}\BZ_{\geqq0}\alpha_i
\subset \Gh^*.
\]
We denote the longest element of $W$ by $w_0$.
For each $i\in I$ we take ${e}_i\in\Gg_{\alpha_i}, {f}_i\in\Gg_{-\alpha_i}, h_i\in\Gh$ such that $[{e}_i,{f}_i]=h_i$ and $\alpha_i(h_i)=2$.

Define subalgebras $\Gk^0, \Gk^+, \Gk^-$ of $\Gk$ by
\[
\Gk^0=\{(h,-h)\mid h\in\Gh\},\quad
\Gk^+=\{(x,0)\mid x\in\Gn^+\},\quad
\Gk^-=\{(0,y)\mid y\in \Gn^-\}.
\]
Then we have $\Gk=\Gk^+\oplus\Gk^0\oplus\Gk^-$.
For $i\in I$ set
\[
x_i=(e_i,0)\in\Gk^+,\quad y_i=(0, f_i)\in\Gk^-,\quad
t_i=(
h_i,-h_i)\in\Gk^0.
\]
We denote by $K^0$, $K^\pm$ the connected closed subgroups of $K$ with Lie algebras $\Gk^0$, $\Gk^\pm$ respectively.

\subsection{Quantized enveloping algebra of $\Gg$}
For $n\in\BZ$ and $m\in\BZ_{\geqq0}$ we set
\begin{align*}
[n]_t&=\frac{t^n-t^{-n}}{t-t^{-1}}\in\BZ[t,t^{-1}],
\qquad
[m]_t!=[m]_t[m-1]_t\cdots[2]_t[1]_t\in\BZ[t,t^{-1}],\\
\begin{bmatrix}n\\m\end{bmatrix}_t&=
[n]_t[n-1]_t\cdots[n-m+1]_t/[m]_t!\in\BZ[t,t^{-1}].
\end{align*}

The quantized enveloping algebra $U=U_q(\Gg)$ of $\Gg$ is an associative algebra over  $\BF=\BC(q^{1/|\Lambda/Q|})$ with identity element $1$ generated by the elements $K_\lambda\,(\lambda\in \Lambda),\,E_i, F_i\,(i\in I)$ satisfying the following defining relations:
\begin{align}
&K_0=1,\quad 
K_\lambda K_\mu=K_{\lambda+\mu}
&(\lambda,\mu\in \Lambda),
\label{eq:def1}\\
&K_\lambda E_iK_\lambda^{-1}=q^{(\lambda,\alpha_i)}E_i&(\lambda\in \Lambda, i\in I),
\label{eq:def2a}\\
&K_\lambda F_iK_\lambda^{-1}=q^{-(\lambda,\alpha_i)}F_i
&(\lambda\in \Lambda, i\in I),
\label{eq:def2b}\\
&E_iF_j-F_jE_i=\delta_{ij}\frac{K_i-K_i^{-1}}{q_i-q_i^{-1}}
&(i, j\in I),
\label{eq:def3}\\
&\sum_{n=0}^{1-a_{ij}}(-1)^nE_i^{(1-a_{ij}-n)}E_jE_i^{(n)}=0
&(i,j\in I,\,i\ne j),
\label{eq:def4}\\
&\sum_{n=0}^{1-a_{ij}}(-1)^nF_i^{(1-a_{ij}-n)}F_jF_i^{(n)}=0
&(i,j\in I,\,i\ne j),
\label{eq:def5}
\end{align}
where $q_i=q^{(\alpha_i,\alpha_i)/2}, K_i=K_{\alpha_i}, a_{ij}=2(\alpha_i,\alpha_j)/(\alpha_i,\alpha_i)$ for $i, j\in I$, and
\[
E_i^{(n)}=
E_i^n/[n]_{q_i}!,
\qquad
F_i^{(n)}=
F_i^n/[n]_{q_i}!
\]
for $i\in I$ and $n\in\BZ_{\geqq0}$.
Algebra homomorphisms $\Delta:U\to U\otimes U, \varepsilon:U\to\BF$ and an algebra anti-automorphism $S:U\to U$ are defined by:
\begin{align}
&\Delta(K_\lambda)=K_\lambda\otimes K_\lambda,\\
&\Delta(E_i)=E_i\otimes 1+K_i\otimes E_i,\quad
\Delta(F_i)=F_i\otimes K_i^{-1}+1\otimes F_i,
\nonumber\\
&\varepsilon(K_\lambda)=1,\quad
\varepsilon(E_i)=\varepsilon(F_i)=0,\\
&S(K_\lambda)=K_\lambda^{-1},\quad
S(E_i)=-K_i^{-1}E_i, \quad S(F_i)=-F_iK_i,
\end{align}
and $U$ is endowed with a Hopf algebra structure with the comultiplication $\Delta$, the counit $\varepsilon$ and the antipode $S$.

We define subalgebras $U^0, U^{\geqq0}, U^{\leqq0}, U^{+}, U^{-}$ of $U$ by 
\begin{align}
U^0&=\langle K_\lambda\mid \lambda\in \Lambda\rangle,\\
U^{\geqq0}&=\langle K_\lambda, E_i\mid \lambda\in \Lambda, i\in I\rangle,\\
U^{\leqq0}&=\langle K_\lambda, F_i\mid \lambda\in \Lambda, i\in I\rangle,\\
U^{+}&=\langle E_i\mid i\in I\rangle,\\
U^{-}&=\langle F_i\mid i\in I\rangle.
\end{align}
The following result is standard.
\begin{proposition}
\label{prop:Str-of-U}
\begin{itemize}
\item[\rm(i)]
$\{K_\lambda\mid \lambda\in \Lambda\}$ is an $\BF$-basis of $U^0$.
\item[\rm(ii)]
The linear maps 
\begin{gather*}
U^-\otimes U^0\otimes U^+\to U \gets U^+\otimes U^0\otimes U^-,\\
U^+\otimes U^0\to U^{\geqq0} \gets U^0\otimes U^+,\qquad
U^-\otimes U^0\to U^{\leqq0} \gets U^0\otimes U^-
\end{gather*}
induced by the multiplication are all isomorphisms of vector spaces.
\end{itemize}
\end{proposition}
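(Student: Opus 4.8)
The plan is the standard one: exploit the natural grading of $U$ to reduce the whole proposition to part~(i) together with a single non-degeneracy statement for a bilinear pairing, which is the only input that is not purely formal. Concretely, $U$ carries a $Q$-grading $U=\bigoplus_{\nu\in Q}U_{\nu}$ with $E_{i}\in U_{\alpha_{i}}$, $F_{i}\in U_{-\alpha_{i}}$, $K_{\lambda}\in U_{0}$, since the relations \eqref{eq:def1}--\eqref{eq:def5} are all homogeneous; then $U^{+}=\bigoplus_{\nu\in Q^{+}}U^{+}_{\nu}$, $U^{-}=\bigoplus_{\nu\in Q^{+}}U^{-}_{-\nu}$ and every multiplication map in the statement is graded. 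Each of them is surjective by a routine reordering: $U^{-}U^{0}U^{+}$ contains the generators, and to see it is a subalgebra it suffices to check $E_{i}\cdot(U^{-}U^{0})\subset U^{-}U^{0}U^{+}$ and $(U^{0}U^{+})\cdot F_{i}\subset U^{-}U^{0}U^{+}$, the other cases being immediate from \eqref{eq:def1}--\eqref{eq:def2b}; for the first one pushes $E_{i}$ rightward through a word in the $F_{j}$'s using \eqref{eq:def3}, each correction term having strictly smaller $|\nu|$ or lying already in $U^{-}U^{0}$, and one finishes by induction on $|\nu|$. The cases of $U^{\geqq0}$, $U^{\leqq0}$ are the same but easier, and reordering also gives $U^{\geqq0}=U^{0}U^{+}=U^{+}U^{0}$, $U^{\leqq0}=U^{0}U^{-}=U^{-}U^{0}$. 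Hence once $U^{-}\otimes U^{0}\otimes U^{+}\to U$ is shown to be injective, the ``parabolic'' isomorphisms in~(ii) follow by restricting it to $1\otimes U^{0}\otimes U^{+}$ and to $U^{-}\otimes U^{0}\otimes 1$ and reordering, while the reversed full ordering $U^{+}\otimes U^{0}\otimes U^{-}\to U$ is handled symmetrically. So it remains to prove~(i) and this one injectivity.

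For~(i) I would evaluate on Verma modules. For each $\lambda\in\Lambda$ the rules $E_{i}\mapsto 0$, $K_{\mu}\mapsto q^{(\lambda,\mu)}$ define an algebra homomorphism $U^{\geqq0}\to\BF$, giving a module $M(\lambda)=U\otimes_{U^{\geqq0}}\BF$ with a cyclic vector $v_{\lambda}$ such that $K_{\mu}v_{\lambda}=q^{(\lambda,\mu)}v_{\lambda}$; its non-vanishing (indeed freeness of the resulting $U^{-}$-action) is part of the standard package established simultaneously with the triangular decomposition, e.g.\ via the skew-derivations $r_{i}$ on $U^{-}$. Granting this, a relation $\sum_{j}c_{j}K_{\mu_{j}}=0$ in $U^{0}$ with distinct $\mu_{j}$ forces $\sum_{j}c_{j}q^{(\lambda,\mu_{j})}=0$ for all $\lambda\in\Lambda$; since $\kappa$ is non-degenerate and $\Lambda$ is the full weight lattice, $(\,,\,)$ separates the points of $\Lambda$, so the characters $\lambda\mapsto q^{(\lambda,\mu_{j})}$ of the group $\Lambda$ (valued in $\BF^{\times}$) are pairwise distinct, hence $\BF$-linearly independent by independence of characters, whence every $c_{j}=0$.

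For the injectivity of $U^{-}\otimes U^{0}\otimes U^{+}\to U$ I would realize $U$ as the Drinfeld double of $U^{\geqq0}$. There is a suitable Hopf pairing $\langle\,,\,\rangle\colon U^{\leqq0}\times U^{\geqq0}\to\BF$, given on generators by $\langle F_{i},E_{j}\rangle\propto\delta_{ij}$, $\langle K_{\lambda},K_{\mu}\rangle=q^{\pm(\lambda,\mu)}$ and $0$ on the remaining pairs, normalized so that the cross relations of the quantum double are exactly \eqref{eq:def3}; the double construction then provides a canonical linear isomorphism $U^{-}\otimes U^{0}\otimes U^{+}\cong U$ \emph{provided} one knows that $U^{\geqq0}\cong U^{+}\otimes U^{0}$ and $U^{\leqq0}\cong U^{0}\otimes U^{-}$ as vector spaces and that the pairing restricts to a non-degenerate pairing $U^{+}_{\nu}\times U^{-}_{-\nu}\to\BF$ for every $\nu\in Q^{+}$. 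Both of these reduce to the single assertion that the natural maps onto $U^{+}$ and onto $U^{-}$ from the algebra defined by the quantum Serre relations among the $E_{i}$, respectively the $F_{i}$, are isomorphisms and that Lusztig's bilinear form on that algebra is non-degenerate.

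This last statement is the main obstacle: it is not formal, and is precisely what the cited work of Lusztig \cite{L2} (see also De~Concini--Kac \cite{DK}) supplies, namely that the radical of the canonical bilinear form on the free algebra on the $\theta_{i}$ equals the two-sided ideal generated by the quantum Serre relations, so that $\mathbf{f}$ is the Serre algebra and carries a non-degenerate form. Alternatively one may bypass the form and work with the PBW basis $\{F_{\beta_{1}}^{(n_{1})}\cdots F_{\beta_{N}}^{(n_{N})}\}$ of $U^{-}$ coming from Lusztig's braid-group operators $T_{i}$, and its mirror for $U^{+}$: on these bases the pairing is triangular with non-zero diagonal, and they carry a basis of $U^{-}\otimes U^{0}\otimes U^{+}$ to a spanning set of the expected size in each graded component, forcing bijectivity. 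Either route reduces the proposition to the results recalled in Section~3.
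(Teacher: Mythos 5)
The paper gives no proof of this proposition at all: it is introduced with the phrase ``The following result is standard,'' the details being deferred to the literature (Lusztig \cite{L2}, \cite{Lbook}, De Concini--Kac \cite{DK}). Your proposal is a correct outline of that standard argument, and its main virtue is that it correctly isolates the single genuinely non-formal input, namely the non-degeneracy of Lusztig's bilinear form on the algebra $\mathbf{f}$ (equivalently, the identification of the radical of the form on the free algebra with the ideal of quantum Serre relations), which you defer to \cite{L2} exactly as the paper implicitly does; everything else (the $Q$-grading, surjectivity by reordering, independence of the characters $\lambda\mapsto q^{(\lambda,\mu)}$ for part (i), the Drinfeld-double mechanism for injectivity) is formal and is carried out correctly. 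Two caveats on logical order, neither of which is an error but both worth making explicit: the non-vanishing of the Verma module $M(\lambda)$ invoked in part (i) is normally obtained from the same construction that yields the triangular decomposition (an explicit $U$-action on $\mathbf{f}\otimes\BF[\Lambda]\otimes\mathbf{f}$), so it is not an independent shortcut --- you flag this yourself; and the ``alternative'' route via the PBW bases is not logically independent either, since the statement that the PBW monomials form bases of $U^{\pm}$ (recalled in Section 3 of the paper) is itself usually proved after, and by means of, the triangular decomposition. The Drinfeld-double route combined with Lusztig's non-degeneracy theorem is the version that closes the loop, and with that choice your reduction is complete and consistent with the references the paper relies on.
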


For $\gamma\in Q$ we set
\[
U^\pm_{\gamma}
=\{x\in U^\pm\mid
K_\lambda xK_\lambda^{-1}=q^{(\lambda,\gamma)}x\,\,(\lambda\in \Lambda)\}.
\]
We have $U^\pm_{\pm\gamma}=\{0\}$ unless $\gamma\in Q^+$, and 
\[
U^\pm=\bigoplus_{\gamma\in Q^+}U^\pm_{\pm\gamma},
\qquad
\dim U^\pm_{\pm\gamma}<\infty\quad(\gamma\in Q^+)
.
\]

For $i\in I$ we can define an algebra automorphism $T_i$ of $U$ by
\begin{align*}
&T_i(K_\mu)=K_{s_i\mu}\qquad(\mu\in \Lambda),\\
&T_i(E_j)=
\begin{cases}
\sum_{k=0}^{-a_{ij}}(-1)^kq_i^{-k}E_i^{(-a_{ij}-k)}E_jE_i^{(k)}\qquad
&(j\in I,\,\,j\ne i),\\
-F_iK_i\qquad
&(j=i),
\end{cases}\\
&T_i(F_j)=
\begin{cases}
\sum_{k=0}^{-a_{ij}}(-1)^kq_i^{k}F_i^{(k)}F_jF_i^{(-a_{ij}-k)}\qquad
&(j\in I,\,\,j\ne i),\\
-K_i^{-1}E_i\qquad
&(j=i).
\end{cases}
\end{align*}
For $w\in W$ we define an algebra automorphism $T_w$ of $U$ by $T_w=T_{i_1}\cdots T_{i_n}$ where $w=s_{i_1}\cdots s_{i_n}$ is a reduced expression.
The automorphism $T_w$ does not depend on the choice of a reduced expression (see Lusztig \cite{Lbook}).

We fix a reduced expression 
\[
w_0=s_{i_1}\cdots s_{i_N}
\]
of $w_0$, and set
\[
\beta_k=s_{i_1}\cdots s_{i_{k-1}}(\alpha_{i_k})\qquad
(1\leqq k\leqq N).
\]
Then we have $\Delta^+=\{\beta_k\mid1\leqq k\leqq N\}$.
For $1\leqq k\leqq N$ set 
\begin{equation}
\label{eq:root vector}
E_{\beta_k}=T_{i_1}\cdots T_{i_{k-1}}(E_{i_k}),\quad
F_{\beta_k}=T_{i_1}\cdots T_{i_{k-1}}(F_{i_k}).
\end{equation}
Then $\{E_{\beta_{N}}^{m_N}\cdots E_{\beta_{1}}^{m_1}\mid
m_1,\dots, m_N\geqq0\}$ (resp. $\{F_{\beta_{N}}^{m_N}\cdots F_{\beta_{1}}^{m_1}\mid
m_1,\dots, m_N\geqq0\}$)
is an $\BF$-basis of $U^+$ (resp. $U^-$), called the PBW-basis (see Lusztig \cite{L2}).
For $1\leqq k\leqq N,\, m\geqq0$ we also set 
\begin{equation}
E^{(m)}_{\beta_k}=E^{m}_{\beta_k}/[m]_{q_{\beta_k}}!,\quad
F^{(m)}_{\beta_k}=F^{m}_{\beta_k}/[m]_{q_{\beta_k}}!,
\end{equation}
where $q_\beta=q^{(\beta,\beta)/2}$ for $\beta\in\Delta^+$.

There exists a bilinear form
\begin{equation}
\label{eq:Drinfeld-paring}
\tau:U^{\geqq0}\times U^{\leqq0}\to\BF,
\end{equation}
called the Drinfeld paring, which is characterized by
\begin{align}
&\tau(x,y_1y_2)=(\tau\otimes\tau)(\Delta(x),y_1\otimes y_2)
&(x\in U^{\geqq0},\,y_1,y_2\in U^{\leqq0}),
\label{eq:DP1}\\
&\tau(x_1x_2,y)=(\tau\otimes\tau)(x_2\otimes x_1,\Delta(y))
&(x_1, x_2\in U^{\geqq0},\,y\in U^{\leqq0}),
\label{eq:DP2}\\
&\tau(K_\lambda,K_\mu)=q^{-(\lambda,\mu)}
&(\lambda,\mu\in \Lambda),
\label{eq:DP3}\\
&\tau(K_\lambda, F_i)=\tau(E_i,K_\lambda)=0
&(\lambda\in \Lambda,\,i\in I),
\label{eq:DP4}\\
&\tau(E_i,F_j)=\delta_{ij}/(q_i^{-1}-q_i)
&(i,j\in I).
\label{eq:DP5}
\end{align}
\begin{proposition}
[\cite{KR}, \cite{KT}, \cite{LS}]
\label{prop:DPPBW}
We have
\begin{align*}
&\tau(E_{\beta_{N}}^{m_N}\cdots E_{\beta_{1}}^{m_1}K_\lambda,
F_{\beta_{N}}^{n_N}\cdots F_{\beta_{1}}^{n_1}K_\mu)\\
=&q^{-(\lambda,\mu)}
\prod_{k=1}^N
\delta_{m_k,n_k}
(-1)^{m_k}
[m_k]_{q_{\beta_k}}!
q_{\beta_k}^{m_k(m_k-1)/2}
(q_{\beta_k}-q_{\beta_k}^{-1})^{-m_k}.
\end{align*}
\end{proposition}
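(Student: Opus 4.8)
The plan is to compute the Drinfeld pairing on PBW monomials by exploiting the two "multiplicativity" axioms \eqref{eq:DP1} and \eqref{eq:DP2} together with the $T_i$-invariance of $\tau$ (up to the obvious twist), reducing everything to the rank-one case where $\tau(E_i^m, F_i^n)$ can be evaluated directly by induction from \eqref{eq:DP5}. First I would establish the orthogonality statement: if $x\in U^+_\gamma$ and $y\in U^-_{-\delta}$ with $\gamma\ne\delta$, then $\tau(x,y)=0$; this follows by a weight argument from \eqref{eq:DP1}--\eqref{eq:DP4}, since pairing $K_\lambda$ against things forces the weights to match. This immediately kills all the cross terms and explains the product of Kronecker deltas $\prod_k\delta_{m_k,n_k}$, modulo the more delicate fact that distinct PBW monomials of the \emph{same} total weight are also orthogonal.

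The heart of the argument is a \emph{Levendorskii--Soibelman type} straightening/orthogonality lemma: with respect to the PBW ordering $E_{\beta_N}^{m_N}\cdots E_{\beta_1}^{m_1}$, the pairing is ``upper triangular,'' and in fact the monomials $E_{\beta_N}^{m_N}\cdots E_{\beta_1}^{m_1}$ and $F_{\beta_N}^{n_N}\cdots F_{\beta_1}^{n_1}$ pair to zero unless $(m_1,\dots,m_N)=(n_1,\dots,n_N)$. I would prove this by induction on $N$ (the length of $w_0$), peeling off the innermost factor: using \eqref{eq:DP2} to split $E_{\beta_1}^{m_1}=E_{i_1}^{m_1}$ off on the left and \eqref{eq:DP1} to split $F_{\beta_1}^{n_1}=F_{i_1}^{n_1}$ off, one reduces to the rank-one computation for the index $i_1$ plus a pairing of the remaining monomials, which after applying $T_{i_1}^{-1}$ (compatibly with $\tau$, as in Lusztig \cite{Lbook}, \cite{L2}) becomes a pairing of PBW monomials for the sub-reduced-word $s_{i_2}\cdots s_{i_N}$; the commutation relations needed to move $E_{i_1}^{m_1}$ past the higher root vectors only produce terms of strictly larger ``height'' in the tail, which vanish by the induction hypothesis. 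The rank-one base case is the identity
\begin{equation*}
\tau(E_i^m,F_i^n)=\delta_{m,n}\,(-1)^m\,[m]_{q_i}!\,q_i^{m(m-1)/2}\,(q_i^{-1}-q_i)^{-m},
\end{equation*}
which I would check by induction on $m$ using \eqref{eq:DP1}, $\Delta(E_i)=E_i\otimes 1+K_i\otimes E_i$, \eqref{eq:DP3}, \eqref{eq:DP4}, \eqref{eq:DP5}; note the sign here is $(-1)^m(q_i^{-1}-q_i)^{-m}=(q_i-q_i^{-1})^{-m}$, matching the stated formula.

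Finally I would assemble the pieces: once orthogonality is known, iterating \eqref{eq:DP1}/\eqref{eq:DP2} shows $\tau(E_{\beta_N}^{m_N}\cdots E_{\beta_1}^{m_1},F_{\beta_N}^{m_N}\cdots F_{\beta_1}^{m_1})=\prod_{k=1}^N\tau(E_{\beta_k}^{m_k},F_{\beta_k}^{m_k})$, and by $T$-invariance of $\tau$ each factor equals the rank-one value for the simple index $i_k$ with $q_i$ replaced by $q_{\beta_k}$, giving the displayed product; the $K$-factors contribute $\tau(K_\lambda,K_\mu)=q^{-(\lambda,\mu)}$ by \eqref{eq:DP3}, and the mixed terms $\tau(E\text{'s},K_\mu)$, $\tau(K_\lambda,F\text{'s})$ vanish by \eqref{eq:DP4}. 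The main obstacle is the orthogonality of distinct same-weight PBW monomials: making the inductive peeling-off rigorous requires controlling the Levendorskii--Soibelman commutation relations among the root vectors $E_{\beta_k}$ and checking that $T_i$ intertwines $\tau$ with the pairing for the rank-reduced data, which is where I would lean most heavily on \cite{Lbook}, \cite{L2}, \cite{KT}, \cite{LS}.
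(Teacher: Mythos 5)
The paper offers no proof of this proposition: it is quoted from the cited references \cite{KR}, \cite{KT}, \cite{LS}, so there is no internal argument to compare yours against. Your strategy --- weight-space orthogonality from \eqref{eq:DP1}--\eqref{eq:DP4}, a Levendorskii--Soibelman-type orthogonality of distinct equal-weight PBW monomials proved by peeling off $E_{i_1}^{m_1}$ and transporting the remainder by $T_{i_1}^{-1}$, reduction to the product $\prod_k\tau(E_{\beta_k}^{m_k},F_{\beta_k}^{m_k})$, and a rank-one base case --- is exactly the standard route taken in those references, and the treatment of the Cartan factors via \eqref{eq:DP3} and \eqref{eq:DP4} is correct.

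There is, however, a concrete sign error in your base case, and your check that it ``matches the stated formula'' fails. By \eqref{eq:DP5}, $\tau(E_i,F_i)=(q_i^{-1}-q_i)^{-1}=-(q_i-q_i^{-1})^{-1}$, which is what the proposition asserts at $m=1$ (the factor $(-1)^{m_k}$ is genuinely there). Your formula $\delta_{m,n}(-1)^m[m]_{q_i}!\,q_i^{m(m-1)/2}(q_i^{-1}-q_i)^{-m}$ evaluates at $m=1$ to $+(q_i-q_i^{-1})^{-1}$, contradicting \eqref{eq:DP5}; and your own simplification $(-1)^m(q_i^{-1}-q_i)^{-m}=(q_i-q_i^{-1})^{-m}$, though algebraically valid, shows precisely that your expression differs from the proposition's $(-1)^{m_k}(q_{\beta_k}-q_{\beta_k}^{-1})^{-m_k}$ by an overall $(-1)^m$. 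The correct rank-one value is $\delta_{m,n}[m]_{q_i}!\,q_i^{m(m-1)/2}(q_i^{-1}-q_i)^{-m}$, i.e.\ your leading $(-1)^m$ must be dropped; the induction on $m$ via $\Delta(F_i^m)$ does yield this (for instance $\tau(E_i^2,F_i^2)=(1+q_i^2)(q_i^{-1}-q_i)^{-2}=q_i[2]_{q_i}(q_i^{-1}-q_i)^{-2}$, consistent with the proposition). Beyond that, the two points you defer to the literature --- orthogonality of distinct equal-weight PBW monomials and the compatibility $\tau(T_i^{-1}x,T_i^{-1}y)=\tau(x,y)$ for $x\in U^+\cap T_i(U^+)$, $y\in U^-\cap T_i(U^-)$ --- are indeed the substantive content and are not dispatched by the convexity remark alone, but leaning on \cite{Lbook}, \cite{LS}, \cite{KT} for them is no more than what the paper itself does.
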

\subsection{Quantized coordinate algebra of $G$}
We denote by $C$ the subspace of $U^*=\Hom_\BF(U,\BF)$ spanned by the matrix coefficients of finite dimensional $U$-modules $E$ such that 
\[
E=\bigoplus_{\lambda\in \Lambda}E_\lambda
\quad\mbox{ with }\quad
E_\lambda=\{v\in E\mid K_\mu v=q^{(\lambda,\mu)}v\,\,(\forall \mu\in \Lambda)\}.
\]
Then $C$ is endowed with a structure of Hopf algebra via
\begin{align*}
&\langle \varphi\psi,u\rangle=
\langle\varphi\otimes\psi,\Delta(u)\rangle
\qquad
&(\varphi,\psi\in C,\,\,u\in U),\\
&\langle 1,u\rangle=
\varepsilon(u)
\qquad
&(u\in U),\\
&\langle \Delta(\varphi),u\otimes u'\rangle=
\langle\varphi,uu'\rangle
\qquad
&(\varphi\in C,\,\,u, u'\in U),\\
&\varepsilon(\varphi)=
\langle\varphi,1\rangle,
\qquad
&(\varphi\in C),\\
&\langle S(\varphi),u\rangle=
\langle\varphi,S(u)\rangle
\qquad
&(\varphi\in C,\,\,u\in U),
\end{align*}
where $\langle\,\,,\,\,\rangle:C\times U\to\BF$ is the canonical paring.
$C$ is also endowed with a structure of $U$-bimodule by
\[
\langle u'\varphi u'',u\rangle=\langle \varphi, u''uu'\rangle
\qquad
(\varphi\in C, u, u', u''\in U).
\]
The Hopf algebra $C$ is a $q$-analogue of the coordinate algebra $\BC[G]$ of $G$ (see \cite{L2}, \cite{T}).

Set 
\[
(U^{\pm})^\bigstar=\bigoplus_{\gamma\in Q^+}\Hom_\BF(U^\pm_{\pm\gamma},\BF)\subset \Hom_\BF(U,\BF).
\]
For $\lambda\in\Lambda$ define an algebra homomorphism $\chi_\lambda:U^0\to\BF$ by $\chi_\lambda(K_\mu)=q^{(\lambda,\mu)}$.
Under the identification $U^-\otimes U^0\otimes U^+\cong U$ of vector spaces we have
\begin{equation}
\label{eq:CF}
C\subset (U^-)^\bigstar\otimes\left(\bigoplus_{\lambda\in\Lambda}\BF\chi_\lambda\right)
\otimes (U^+)^\bigstar
\subset U^*.
\end{equation}

\subsection{Ring of differential operators}
In general for a Hopf algebra $\CH$ over $\BC$ we use the following notation for the comultiplication $\Delta:\CH\to\CH\otimes\CH$:
\[
\Delta(u)=\sum_{(u)}u_{(0)}\otimes u_{(1)}
\qquad(u\in\CH).
\]

We have an $\BF$-algebra structure of
$
D=C\otimes_{\BF} U
$, 
called the Heisenberg double of $C$ and $U$ (see e.g. \cite{Lu}).
It is given by
\[
(\varphi\otimes u)(\varphi'\otimes u')
=\sum_{(u)}\varphi(u_{(0)}\varphi')\otimes u_{(1)}u'\qquad
(\varphi, \varphi'\in C, u, u'\in U)
\]
In our case the algebra $D$ is  an analogue of the ring of differential operators on $G$.
We will identify $U$ and $C$ with subalgebras of $D$ by the embeddings
$U\ni u\mapsto 1\otimes u\in D$ and $C\ni\varphi\mapsto\varphi\otimes1\in D$ respectively.

\subsection{Quantized enveloping algebra of $\Gk$}
The quantized enveloping algebra $V=U_q(\Gk)$ of $\Gk$ is an associative algebra over  $\BF$ with identity element $1$ generated by the elements $Z_\lambda\,(\lambda\in \Lambda),\,X_i, Y_i\,(i\in I)$ satisfying the following defining relations:
\begin{align}
&Z_0=1,\quad 
Z_\lambda Z_\mu=Z_{\lambda+\mu}
&(\lambda,\mu\in \Lambda),
\label{eq:def1V}\\
&Z_\lambda X_iZ_\lambda^{-1}=q^{(\lambda,\alpha_i)}X_i&(\lambda\in \Lambda, i\in I),
\label{eq:def2aV}\\
&Z_\lambda Y_iZ_\lambda^{-1}=q^{(\lambda,\alpha_i)}Y_i
&(\lambda\in \Lambda, i\in I),
\label{eq:def2bV}\\
&X_iY_j-Y_jX_i=0
&(i, j\in I),
\label{eq:def3V}\\
&\sum_{n=0}^{1-a_{ij}}(-1)^nX_i^{(1-a_{ij}-n)}X_jX_i^{(n)}=0
&(i,j\in I,\,i\ne j),
\label{eq:def4V}\\
&\sum_{n=0}^{1-a_{ij}}(-1)^nY_i^{(1-a_{ij}-n)}Y_jY_i^{(n)}=0
&(i,j\in I,\,i\ne j),
\label{eq:def5V}
\end{align}
where 
\[
X_i^{(n)}=
X_i^n/[n]_{q_i}!,
\qquad
Y_i^{(n)}=
Y_i^n/[n]_{q_i}!.
\]

We define subalgebras $V^0, V^{\geqq0}, V^{\leqq0}, V^{+}, V^{-}$ of $V$ by 
\begin{align}
V^0&=\langle Z_\lambda\mid \lambda\in \Lambda\rangle,\\
V^{\geqq0}&=\langle Z_\lambda, X_i\mid \lambda\in \Lambda, i\in I\rangle,\\
V^{\leqq0}&=\langle Z_\lambda, Y_i\mid \lambda\in \Lambda, i\in I\rangle,\\
V^{+}&=\langle X_i\mid i\in I\rangle,\\
V^{-}&=\langle Y_i\mid i\in I\rangle.
\end{align}

Similarly to Proposition \ref{prop:Str-of-U} we have the following.
\begin{proposition}
\label{prop:Str-of-V}
\begin{itemize}
\item[\rm(i)]
$\{Z_\lambda\mid \lambda\in \Lambda\}$ is an $\BF$-basis of $V^0$.
\item[\rm(ii)]
The linear maps 
\begin{gather*}
V^-\otimes V^0\otimes V^+\to V \gets V^+\otimes V^0\otimes V^-,\\
V^+\otimes V^0\to V^{\geqq0} \gets V^0\otimes V^+,\qquad
V^-\otimes V^0\to V^{\leqq0} \gets V^0\otimes V^-
\end{gather*}
induced by the multiplication are all isomorphisms of vector spaces.
\end{itemize}
\end{proposition}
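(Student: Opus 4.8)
The plan is to reduce the whole statement to the triangular decomposition of $U$ already recorded in Proposition \ref{prop:Str-of-U}, exploiting the crucial simplification that in $V$ the generators $X_i$ and $Y_i$ \emph{commute} (relation \eqref{eq:def3V}), so that no commutator corrections arise in the straightening process. \textbf{Step 1 (spanning).} Using \eqref{eq:def2aV}, \eqref{eq:def2bV} to move the $Z_\mu$'s past the $X_i$'s and $Y_i$'s, and \eqref{eq:def3V} to interchange any $X_i$ with any $Y_j$, every monomial in the generators $Z_\lambda, X_i, Y_i$ is rewritten, \emph{without producing new terms}, as a scalar multiple of a product (word in the $Y_i$'s)$\cdot Z_\lambda\cdot$(word in the $X_i$'s), the Serre relations \eqref{eq:def4V}, \eqref{eq:def5V} being used only inside the $X$- and $Y$-parts. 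Hence $V^-\otimes V^0\otimes V^+\to V$ is surjective, and so is $V^+\otimes V^0\otimes V^-\to V$ (move the $X_i$'s to the left instead).

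\textbf{Step 2 (injectivity via an auxiliary module).} Let $W^+$ (resp.\ $W^-$) be the $\BF$-algebra on generators $X_i$ (resp.\ $Y_i$) subject only to the Serre relations \eqref{eq:def4V} (resp.\ \eqref{eq:def5V}); each is a copy of $U_q(\Gn^-)$ and hence carries a PBW basis by Lusztig \cite{L2}. On $\CW:=W^-\otimes_\BF\BF[\Lambda]\otimes_\BF W^+$ I define operators for the generators of $V$: $Y_i$ acts by left multiplication in the first factor; $Z_\mu$ sends $w^-\otimes Z_\lambda\otimes w^+$ to $q^{(\mu,\gamma^-)}\,w^-\otimes Z_{\lambda+\mu}\otimes w^+$, where $\gamma^-\in Q^+$ is the weight of $w^-$; and $X_i$ sends $w^-\otimes Z_\lambda\otimes w^+$ to $q^{-(\lambda,\alpha_i)}\,w^-\otimes Z_\lambda\otimes X_iw^+$. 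A direct check verifies all of \eqref{eq:def1V}--\eqref{eq:def5V}; relation \eqref{eq:def3V} holds because the $X$- and $Y$-actions operate on different tensor factors. Thus $\CW$ is a left $V$-module, and evaluating the orbit map $V\to\CW$, $v\mapsto v\cdot(1\otimes1\otimes1)$, on a straightened product from Step 1 returns (word in $Y_i$)$\otimes Z_\lambda\otimes$(word in $X_i$). Since the PBW monomials span $W^\pm$, the straightened products span $V$ while their images contain a basis of $\CW$; therefore they are linearly independent in $V$, the map $V\to\CW$ is bijective, and $V^-\otimes V^0\otimes V^+\to V$ is an isomorphism. The mirror construction with $W^+\otimes\BF[\Lambda]\otimes W^-$ gives the other isomorphism in (ii).

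\textbf{Step 3 (the remaining assertions).} Step 2 shows that $W^\pm\to V^\pm$ and $\BF[\Lambda]\to V^0$ are injective, so $V^\pm$ is presented by the Serre relations alone and $\{Z_\lambda\mid\lambda\in\Lambda\}$ is an $\BF$-basis of $V^0$; this is (i). For the ``half'' decompositions: the multiplication $V^0\otimes V^+\to V$ is injective by Step 2, its image is closed under multiplication by \eqref{eq:def2aV} and contains the $Z_\lambda$ and the $X_i$, hence equals $V^{\geqq0}$, giving $V^0\otimes V^+\xrightarrow{\sim}V^{\geqq0}$; composing with the reordering bijection $V^+\otimes V^0\to V^0\otimes V^+$ (a diagonal twist on each weight space, again by \eqref{eq:def2aV}) yields $V^+\otimes V^0\xrightarrow{\sim}V^{\geqq0}$. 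The statements for $V^{\leqq0}$ follow verbatim with \eqref{eq:def2bV} in place of \eqref{eq:def2aV}. (Alternatively: the generators and relations defining $V^{\geqq0}$, resp.\ $V^{\leqq0}$, coincide with those defining $U^{\geqq0}$ — note \eqref{eq:def2bV} carries the same sign as \eqref{eq:def2a} — so one may simply transport the relevant parts of Proposition \ref{prop:Str-of-U} along these isomorphisms.)

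\textbf{Main obstacle.} There is no genuinely deep point. The only nontrivial import is the PBW theorem for $U_q(\Gn^\pm)$ from \cite{L2}; the rest is the bookkeeping in Step 2, namely checking that the displayed operators respect \eqref{eq:def1V}--\eqref{eq:def5V}. This verification is strictly simpler than its analogue for $U$ precisely because \eqref{eq:def3V} has vanishing right-hand side, so the auxiliary module is just a ``twisted'' tensor product rather than a genuine Verma-type module; the only care required is tracking the weight exponents in the $Z_\mu$- and $X_i$-actions correctly.
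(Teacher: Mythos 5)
Your proof is correct. The paper offers no argument for this proposition at all --- it simply says ``Similarly to Proposition \ref{prop:Str-of-U}'' --- so there is nothing to diverge from: your faithful-module construction is the standard technique by which such triangular decompositions are established, and your observation that the vanishing right-hand side of \eqref{eq:def3V} reduces the auxiliary module to a twisted tensor product (no Verma-type lowest-weight structure needed) is exactly the right simplification. One cosmetic point in Step 2: take the straightened products with $w^\pm$ ranging over PBW monomials, so that their images are precisely the basis $\{b^-\otimes Z_\lambda\otimes b^+\}$ of $\CW$ rather than merely ``containing'' one; a spanning set of $V$ whose image under a linear map is linearly independent is itself linearly independent, which closes the argument. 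Equivalently, and perhaps more cleanly, the composite of the multiplication map $\mu:\CW\to V$ with the orbit map $V\to\CW$ is the identity on $\CW$, so $\mu$ is injective, and Step 1 makes it bijective.
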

Moreover, we have algebra isomorphisms
\begin{align*}
&\jmath^{\leqq0}:V^{\leqq0}\to U^{\leqq0}
\qquad(Y_i\mapsto F_i,\,Z_\lambda\mapsto K_{-\lambda}),\\
&\jmath^{\geqq0}:V^{\geqq0}\to U^{\geqq0}
\qquad(X_i\mapsto E_i,\,Z_\lambda\mapsto K_{\lambda}).
\end{align*}

We define a bilinear form
\begin{equation}
\sigma:U\times V\to\BF
\end{equation}
by
\begin{multline*}
\sigma(u_+u_0S(u_-),v_-v_+v_0)=
\tau(u_+,\jmath^{\leqq0}(v_-))
\tau(u_0,\jmath^{\leqq0}(v_0))
\tau(\jmath^{\geqq0}(v_+),u_-)\\
\qquad(u_\pm\in U^\pm, u_0\in U^0, v_\pm\in V^\pm, v_0\in V^0).
\end{multline*}
The following result is a consequence of  Gavarini \cite[Theorem 6.2]{Gav}.
\begin{proposition}
\label{prop:invariance}
We have
\[
\sigma(u,vv')=(\sigma\otimes\sigma)(\Delta(u),v\otimes v')
\qquad(u\in U, v, v'\in V).
\]
\end{proposition}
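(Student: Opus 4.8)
The conceptual point is that $\Gk$ is the Lie bialgebra dual to $\Delta\Gg\cong\Gg$ relative to the Manin triple $(\Gg\oplus\Gg,\Delta\Gg,\Gk)$ of Section~\ref{subsec:special}, so that $V=U_q(\Gk)$ is a quantization of the dual group and $\sigma\colon U\times V\to\BF$ is, up to normalization, the canonical Hopf pairing between $U$ and its quantized dual. The asserted identity is then one of the two Hopf-pairing axioms. Accordingly, the plan is to deduce it from Gavarini \cite[Theorem~6.2]{Gav}, which provides such a pairing (compatible with comultiplication on the relevant side); the remaining task is to identify $\sigma$ with that pairing.

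For this identification I would use the triangular decompositions of Propositions~\ref{prop:Str-of-U} and \ref{prop:Str-of-V} together with the isomorphisms $\jmath^{\geqq0}$, $\jmath^{\leqq0}$. Writing $u=u_+u_0S(u_-)$ and $v=v_-v_+v_0$ in normal form, the definition exhibits $\sigma(u,v)$ as a product of three values of the Drinfeld pairing $\tau$, which are computed on PBW bases in Proposition~\ref{prop:DPPBW}. Comparing these with the corresponding formulas in \cite{Gav} on the generators $Z_\lambda,X_i,Y_i$ against $K_\lambda,E_i,F_i$ should show that $\sigma$ differs from Gavarini's pairing at most by a multiplicative bicharacter factor arising from the placement of the Cartan generators and from the sign discrepancy between \eqref{eq:def2bV} and \eqref{eq:def2b}; such a factor does not affect compatibility with $\Delta$.

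Alternatively, and this is the route I would write out in detail, one argues directly. Set $\Phi\colon V\to U^{*}$, $\Phi(v)(u)=\sigma(u,v)$, and equip $U^{*}$ with the convolution product dual to $\Delta$. Since $\Phi(1)=\varepsilon$ (immediate from the definition of $\sigma$ and $\varepsilon\circ S=\varepsilon$), the assertion is exactly that $\Phi$ is an algebra homomorphism, and for this it suffices to verify $\Phi(gv')=\Phi(g)\Phi(v')$ when $g$ runs over the generators $Z_\lambda,X_i,Y_i$ and $v'\in V$ is arbitrary. For each such $g$ one rewrites $gv'$ in normal form (using $Z_\lambda$-conjugation, \eqref{eq:def2aV} and \eqref{eq:def2bV}, and \eqref{eq:def3V}), evaluates both sides on a normal-form element $u=u_+u_0S(u_-)$ by means of the defining properties \eqref{eq:DP1}--\eqref{eq:DP5} of $\tau$, and matches the outcome against the part of $\Delta u$ singled out by $\Phi(g)$, using $\Delta(E_i)=E_i\otimes1+K_i\otimes E_i$, $\Delta(F_i)=F_i\otimes K_i^{-1}+1\otimes F_i$, $\Delta(K_\lambda)=K_\lambda\otimes K_\lambda$ and the consequence $\Delta(S(F_i))=S(F_i)\otimes1+K_i\otimes S(F_i)$ (so that $S(u_-)$ behaves under comultiplication like an element of $U^{+}$). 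Properties \eqref{eq:DP1} and \eqref{eq:DP2} are precisely what lets one push $\Delta u$ through the two halves of $\sigma$.

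I expect the main obstacle to be the case where $g=X_i$ or $g=Y_i$ meets the opposite part of $v'$: after normal ordering one must reconcile \eqref{eq:def3V} in $V$ with \eqref{eq:def3} in $U$, that is, check that the Cartan ``defect'' $\delta_{ij}(K_i-K_i^{-1})/(q_i-q_i^{-1})$ produced when $E$'s are commuted past $F$'s inside $\Delta u$ is exactly cancelled by the Cartan-twist terms coming from \eqref{eq:DP2} (respectively \eqref{eq:DP1}). If one instead quotes Gavarini, the corresponding difficulty is the conventions-matching of the second paragraph, in particular keeping track of the antipode $S$ and of the fact that $\jmath^{\leqq0}$ is applied to both $v_-$ and $v_0$ in the definition of $\sigma$.
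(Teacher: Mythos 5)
Your first route is exactly what the paper does: its entire ``proof'' is the single sentence that the statement is a consequence of Gavarini's Theorem~6.2, so the citation-plus-convention-matching of your opening two paragraphs is the intended justification, and your remark that the discrepancies ($Z_\lambda\mapsto K_{-\lambda}$ under $\jmath^{\leqq0}$, the sign difference between \eqref{eq:def2bV} and \eqref{eq:def2b}, the antipode sitting on $u_-$) contribute at most a bicharacter twist that cannot spoil compatibility with $\Delta$ is the substance of that matching. Your second, direct route is genuinely different from the paper and would make the statement self-contained. Its skeleton is sound: $\Phi\colon V\to U^*$ is well defined because $\sigma$ is defined on the triangular decompositions of Propositions~\ref{prop:Str-of-U} and~\ref{prop:Str-of-V}, $\Phi(1)=\varepsilon$, and multiplicativity checked for a generator $g$ against arbitrary $v'$ propagates to all of $V$ by induction on word length; normal-ordering $gv'$ in $V$ is also easy since $X$'s and $Y$'s commute there. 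What that route buys is independence from Gavarini; what it costs is that the step you defer --- evaluating $(\sigma\otimes\sigma)(\Delta(u),g\otimes v')$ for $u=u_+u_0S(u_-)$ in normal form via \eqref{eq:DP1}--\eqref{eq:DP2} and, in particular, verifying that the Cartan defect $\delta_{ij}(K_i-K_i^{-1})/(q_i-q_i^{-1})$ arising inside $\Delta(u)$ is cancelled by the Cartan-twist terms --- is precisely the content of Gavarini's theorem, so as written your direct argument is a credible plan rather than a completed verification. Either route is acceptable; the first is the one the paper actually takes.
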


\subsection{$\BA$-forms}
We fix a subring $\BA$ of $\BF$ containing $\BC[q^{\pm1/|\Lambda/Q|}]$.
We denote by $U_{\BA}^L$ the Lusztig ${\BA}$-form of $U$, i.e., $U_{\BA}^L$ is the ${\BA}$-subalgebra of  $U$ generated by the elements
\[
E_i^{(m)},\, F_i^{(m)},\, K_\lambda\qquad
(i\in I,\, m\geqq0,\,\lambda\in \Lambda).
\]
Set 
\begin{align*}
&U_{\BA}^{L,\pm}=U_{\BA}^L\cap U^\pm,\qquad
U_{\BA}^{L,0}=U_{\BA}^L\cap U^0,\\
&U_{\BA}^{L,\geqq0}=U_{\BA}^L\cap U^{L,\geqq0},\qquad
U_{\BA}^{L,\leqq0}=U_{\BA}^L\cap U^{L,\leqq0}.
\end{align*}
Then $U_{\BA}^{L}, U_{\BA}^{L,0}, U_{\BA}^{L,\geqq0}, U_{\BA}^{L,\leqq0}$ are endowed with structures of Hopf algebras over ${\BA}$ via the Hopf algebra structure of $U$, and the multiplication of $U_{\BA}^L$ induces isomorphisms
\begin{align*}
&U_{\BA}^{L}\simeq
U_{\BA}^{L,-}\otimes
U_{\BA}^{L,0}\otimes
U_{\BA}^{L,+}
\simeq
U_{\BA}^{L,+}\otimes
U_{\BA}^{L,0}\otimes
U_{\BA}^{L,-},\\
&U_{\BA}^{L,\geqq0}\simeq
U_{\BA}^{L,0}\otimes
U_{\BA}^{L,+}
\simeq
U_{\BA}^{L,+}\otimes
U_{\BA}^{L,0},\\
&U_{\BA}^{L,\leqq0}\simeq
U_{\BA}^{L,0}\otimes
U_{\BA}^{L,-}
\simeq
U_{\BA}^{L,-}\otimes
U_{\BA}^{L,0}
\end{align*}
of ${\BA}$-modules.
Fix a subset $\Lambda_0$ of $\Lambda$ such that $\Lambda_0\to\Lambda/2Q$ is bijective.
Then $U_{\BA}^{L,+}$, $U_{\BA}^{L,-}$, $U_{\BA}^{L,0}$ are free ${\BA}$-modules with bases
\begin{align*}
&\{E_{\beta_{N}}^{(m_N)}\cdots E_{\beta_{1}}^{(m_1)}\mid
m_1,\dots, m_N\geqq0\},\\
&\{F_{\beta_{N}}^{(m_N)}\cdots F_{\beta_{1}}^{(m_!)}\mid
m_1,\dots, m_N\geqq0\},\\
&
\left\{
K_\lambda
\prod_{i\in I}
\begin{bmatrix}{K_i}\\{n_i}
\end{bmatrix}
\mid
\lambda\in\Lambda_0, \,
n_i\geqq0
\right\}
\end{align*}
respectively, 
where
\[
\begin{bmatrix}{K_i}\\{m}\end{bmatrix}
=\prod_{s=0}^{m-1}
\frac{q_i^{-s}K_i-q_i^sK_i^{-1}}
{q_i^{s+1}-q_i^{-s-1}}
\qquad(m\geqq0).
\]

We denote by $V_{\BA}$ the ${\BA}$-subalgebra of $V$ generated by the elements
\[
X_i^{(m)},\, Y_i^{(m)},\, Z_{\lambda},\,
 \begin{bmatrix}Z_i\\m\end{bmatrix}\qquad
(i\in I,\, m\geqq0, \lambda\in\Lambda),
\]
where
$Z_i=Z_{\alpha_i}$ for $i\in I$ and
\[
\begin{bmatrix}{Z_i}\\{m}\end{bmatrix}
=\prod_{s=0}^{m-1}
\frac{q_i^{-s}Z_i-q_i^sZ_i^{-1}}
{q_i^{s+1}-q_i^{-s-1}}
\qquad(m\geqq0).
\]
Set 
\begin{align*}
&V_{\BA}^{\pm}=V_{\BA}\cap V^\pm,\qquad
V_{\BA}^{0}=V_{\BA}\cap V^0,\\
&V_{\BA}^{\geqq0}=V_{\BA}\cap V^{\geqq0},
\qquad
V_{\BA}^{\leqq0}=V_{\BA}\cap V^{\leqq0}.
\end{align*}
Then the multiplication of $V_{\BA}$ induces isomorphisms
\begin{align*}
&V_{\BA}\simeq
V_{\BA}^{-}\otimes
V_{\BA}^{0}\otimes
V_{\BA}^{+}
\simeq
V_{\BA}^{+}\otimes
V_{\BA}^{0}\otimes
V_{\BA}^{-},\\
&V_{\BA}^{\geqq0}\simeq
V_{\BA}^{0}\otimes
V_{\BA}^{+}
\simeq
V_{\BA}^{+}\otimes
V_{\BA}^{0},\\
&V_{\BA}^{\leqq0}\simeq
V_{\BA}^{0}\otimes
V_{\BA}^{-}
\simeq
V_{\BA}^{-}\otimes
V_{\BA}^{0}
\end{align*}
of ${\BA}$-modules, and we have
\begin{align*}
&\jmath^{\geqq0}(V_{\BA}^{\geqq0})=U^{L,\geqq0}_{\BA},\qquad
\jmath^{\geqq0}(V_{\BA}^{+})=U^{L,+}_{\BA},\qquad
\jmath^{\geqq0}(V_{\BA}^{0})=U^{L,0}_{\BA},
\\
&\jmath^{\leqq0}(V_{\BA}^{\leqq0})=U^{L,\leqq0}_{\BA},\qquad
\jmath^{\leqq0}(V_{\BA}^{-})=U^{L,-}_{\BA},\qquad
\jmath^{\leqq0}(V_{\BA}^{0})=U^{L,0}_{\BA}.
\end{align*}

Set
\begin{align}
&U_{\BA}=\{u\in U\mid\sigma(u,V_{\BA})\subset {\BA}\},\\
&
U_{\BA}^\pm=U^\pm\cap U_{\BA},\qquad
U_{\BA}^{0}=U^0\cap U_{\BA},\\
&U_{\BA}^{\geqq0}=U^{\geqq0}\cap U_{\BA},\qquad
U_{\BA}^{\leqq0}=U^{\leqq0}\cap U_{\BA}.
\end{align}
Then we have
\begin{align}
\label{eq:UA+}
&U^+_{\BA}=\{x\in U^+\mid \tau(x,U_{\BA}^{L,-})\in {\BA}\},\\
\label{eq:UA-}
&U^-_{\BA}=\{y\in U^-\mid \tau(U_{\BA}^{L,+},y)\in {\BA}\},\\
\label{eq:UA0}
&U^0_{\BA}=\sum_{\lambda\in \Lambda}{\BA} K_\lambda,
\end{align}
and the multiplication of $U$ induces isomorphisms
\begin{align*}
&U_{\BA}\simeq
U_{\BA}^{+}\otimes
U_{\BA}^{0}\otimes
U_{\BA}^{-},\\
&U_{\BA}^{\geqq0}\simeq
U_{\BA}^{0}\otimes
U_{\BA}^{+}
\simeq
U_{\BA}^{+}\otimes
U_{\BA}^{0},\\
&U_{\BA}^{\leqq0}\simeq
U_{\BA}^{0}\otimes
U_{\BA}^{-}
\simeq
U_{\BA}^{-}\otimes
U_{\BA}^{0}
\end{align*}
of ${\BA}$-modules.

For $i\in I$ we set
\begin{equation}
A_{i}=(q_{i}-q_{i}^{-1})E_{i},\qquad
B_{i}=(q_{i}-q_{i}^{-1})F_{i}.
\end{equation}
For $1\leqq k\leqq N$ we also set
\begin{equation}
A_{\beta_k}=(q_{\beta_k}-q_{\beta_k}^{-1})E_{\beta_k},\qquad
B_{\beta_k}=(q_{\beta_k}-q_{\beta_k}^{-1})F_{\beta_k}.
\end{equation}
By Proposition \ref{prop:DPPBW} we have the following.
\begin{lemma}
\label{lem:DP-L}
$\{A_{\beta_{N}}^{m_N}\cdots A_{\beta_{1}}^{m_1}\mid
m_1,\dots, m_N\geqq0\}$ (resp. $\{B_{\beta_{N}}^{m_N}\cdots B_{\beta_{1}}^{m_1}\mid
m_1,\dots, m_N\geqq0\}$)
is an ${\BA}$-basis of $U_{\BA}^+$ (resp. $U_{\BA}^-$).
In particular, we have $U_{\BA}^{\pm}\subset U_{\BA}^{L, \pm}$, and $U_{\BA}\subset U_{\BA}^{L}$.
\end{lemma}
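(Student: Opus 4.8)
The plan is to compute the Drinfeld pairing $\tau$ on the PBW monomials rescaled by the factors $(q_\beta-q_\beta^{-1})$, and to show that the resulting ``Gram matrix'' over the PBW basis is, in a suitable ordering, upper-triangular (in fact diagonal) with entries that are \emph{units} in $\BA$. First I would recall from Proposition \ref{prop:DPPBW} that
\[
\tau\bigl(E_{\beta_N}^{m_N}\cdots E_{\beta_1}^{m_1},\,F_{\beta_N}^{n_N}\cdots F_{\beta_1}^{n_1}\bigr)
=\prod_{k=1}^N\delta_{m_k,n_k}(-1)^{m_k}[m_k]_{q_{\beta_k}}!\,q_{\beta_k}^{m_k(m_k-1)/2}(q_{\beta_k}-q_{\beta_k}^{-1})^{-m_k}.
\]
Multiplying through by the rescaling factors contributed by the $A_{\beta_k}$'s and $B_{\beta_k}$'s, i.e.\ by $\prod_k(q_{\beta_k}-q_{\beta_k}^{-1})^{m_k}$ on each side, exactly cancels the negative powers of $(q_{\beta_k}-q_{\beta_k}^{-1})$, leaving
\[
\tau\bigl(A_{\beta_N}^{m_N}\cdots A_{\beta_1}^{m_1},\,B_{\beta_N}^{n_N}\cdots B_{\beta_1}^{n_1}\bigr)
=\prod_{k=1}^N\delta_{m_k,n_k}(-1)^{m_k}[m_k]_{q_{\beta_k}}!\,q_{\beta_k}^{m_k(m_k-1)/2}(q_{\beta_k}-q_{\beta_k}^{-1})^{m_k}.
\]
Since $q_{\beta_k}$ is a unit in $\BA$ (it is an integer power of $q^{\pm1/|\Lambda/Q|}$), and since $[m]_{q_{\beta_k}}!(q_{\beta_k}-q_{\beta_k}^{-1})^m=(q_{\beta_k}^m-q_{\beta_k}^{-m})\cdots(q_{\beta_k}-q_{\beta_k}^{-1})$ is a product of elements of $\BA$, the diagonal entry is a (nonzero) element of $\BA$; moreover it is a unit precisely after inverting the relevant $q^{m}-q^{-m}$, but for the \emph{membership} statement we only need it to lie in $\BA$ and to be nonzero.

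Next I would use the characterizations \eqref{eq:UA+}, \eqref{eq:UA-} of $U_\BA^\pm$ as the pairing duals of $U_\BA^{L,\mp}$ together with the fact (from the list of $\BA$-bases for the Lusztig form) that $U_\BA^{L,-}$ has $\BA$-basis $\{F_{\beta_N}^{(n_N)}\cdots F_{\beta_1}^{(n_1)}\}$ and $U_\BA^{L,+}$ has $\BA$-basis $\{E_{\beta_N}^{(m_N)}\cdots E_{\beta_1}^{(m_1)}\}$. Pairing a candidate element $x=\sum c_{\mathbf m}A_{\beta_N}^{m_N}\cdots A_{\beta_1}^{m_1}\in U^+$ (with $c_{\mathbf m}\in\BF$) against the divided-power PBW basis of $U_\BA^{L,-}$, the orthogonality $\delta_{m_k,n_k}$ together with $[m_k]_{q_{\beta_k}}!\,F_{\beta_k}^{(m_k)}=F_{\beta_k}^{m_k}$ shows that $\tau(x,F_{\beta_N}^{(n_N)}\cdots F_{\beta_1}^{(n_1)})=c_{\mathbf n}\prod_k(-1)^{n_k}q_{\beta_k}^{n_k(n_k-1)/2}(q_{\beta_k}^{n_k}-q_{\beta_k}^{-n_k})\cdots(q_{\beta_k}-q_{\beta_k}^{-1})/[n_k]_{q_{\beta_k}}!$. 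Here the factor $[n_k]_{q_{\beta_k}}!$ in the denominator (coming from the divided power) cancels against the same factor hidden in $(q_{\beta_k}^{n_k}-q_{\beta_k}^{-n_k})\cdots(q_{\beta_k}-q_{\beta_k}^{-1})$, so the coefficient relating $\tau(x,\cdot)$ to $c_{\mathbf n}$ is a \emph{unit} of $\BA$. Hence $\tau(x,U_\BA^{L,-})\subset\BA$ if and only if every $c_{\mathbf n}\in\BA$, i.e.\ $x\in U_\BA^+$ iff $x\in\bigoplus_{\mathbf m}\BA\,A_{\beta_N}^{m_N}\cdots A_{\beta_1}^{m_1}$. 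This gives the claimed $\BA$-basis of $U_\BA^+$, and symmetrically for $U_\BA^-$ using \eqref{eq:UA-}.

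Finally, for the inclusions $U_\BA^\pm\subset U_\BA^{L,\pm}$ I would simply observe that $A_{\beta_k}=(q_{\beta_k}-q_{\beta_k}^{-1})E_{\beta_k}$ lies in $U_\BA^{L,+}$ (the Lusztig form contains $E_{\beta_k}=[1]_{q_{\beta_k}}!E_{\beta_k}^{(1)}$ and is an $\BA$-algebra), so every monomial $A_{\beta_N}^{m_N}\cdots A_{\beta_1}^{m_1}\in U_\BA^{L,+}$, whence $U_\BA^+\subset U_\BA^{L,+}$; symmetrically $U_\BA^-\subset U_\BA^{L,-}$. Combined with \eqref{eq:UA0} (which gives $U_\BA^0=\sum_\lambda\BA K_\lambda\subset U_\BA^{L,0}$) and the triangular decomposition $U_\BA\simeq U_\BA^+\otimes U_\BA^0\otimes U_\BA^-$, this yields $U_\BA\subset U_\BA^L$. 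I expect the only real bookkeeping obstacle to be tracking the precise power-of-$q$ units and the cancellation of $[m]_{q_\beta}!$ against the telescoping product $(q_\beta^m-q_\beta^{-m})\cdots(q_\beta-q_\beta^{-1})$ — once that identity is recorded, everything else is a direct application of Proposition \ref{prop:DPPBW} and the defining properties \eqref{eq:UA+}--\eqref{eq:UA0}.
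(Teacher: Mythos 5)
Your approach is the paper's: the paper proves this lemma simply by asserting ``By Proposition \ref{prop:DPPBW} we have the following,'' and your plan of pairing the rescaled PBW monomials against the divided-power bases of $U_\BA^{L,\mp}$ via \eqref{eq:UA+}--\eqref{eq:UA-} is exactly the intended argument; the ``in particular'' part is also handled correctly. One correction is needed, though: your displayed value of $\tau(x,F_{\beta_N}^{(n_N)}\cdots F_{\beta_1}^{(n_1)})$ carries a spurious factor $\prod_k(q_{\beta_k}-q_{\beta_k}^{-1})^{n_k}$ --- you seem to have started from your Gram entry against the $B$-monomials instead of against $F_{\beta_N}^{n_N}\cdots F_{\beta_1}^{n_1}$ before dividing by the $[n_k]_{q_{\beta_k}}!$; computing directly, $\tau(A_{\beta_N}^{n_N}\cdots A_{\beta_1}^{n_1},F_{\beta_N}^{(n_N)}\cdots F_{\beta_1}^{(n_1)})=\prod_k(-1)^{n_k}q_{\beta_k}^{n_k(n_k-1)/2}$, since the factors $(q_{\beta_k}-q_{\beta_k}^{-1})^{\pm n_k}$ cancel and $[m_k]_{q_{\beta_k}}!/[n_k]_{q_{\beta_k}}!=1$ on the diagonal. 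This matters: as literally written your coefficient is $(q_{\beta_k}-q_{\beta_k}^{-1})^{n_k}$ times a unit, which is not a unit in a general $\BA\supset\BC[q^{\pm1/|\Lambda/Q|}]$ (e.g.\ $\BA=\BA_1$), so the direction $\tau(x,U_\BA^{L,-})\subset\BA\Rightarrow c_{\mathbf{n}}\in\BA$ would fail; with the corrected diagonal entry, which is a unit of $\BC[q^{\pm1/|\Lambda/Q|}]$ itself, your argument goes through as stated.
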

It follows that $U_\BA$ coincides with the $\BA$-form of $U$ considered in De Concini-Procesi \cite{DP}.
In particular, we have the following.
\begin{proposition}
\begin{itemize}
\item[\rm(i)]
$U_{\BA}^{0}$,  $U_{\BA}^+$, $U_{\BA}^-$, $U_{\BA}^{\geqq0}$,  $U_{\BA}^{\leqq0}$, $U_{\BA}$ are ${\BA}$-subalgebras of $U$.
\item[\rm(ii)]
$U_{\BA}^0$,  
$U_{\BA}^{\geqq0}$,  $U_{\BA}^{\leqq0}$, $U_{\BA}$ 
are Hopf algebras over ${\BA}$.
\end{itemize}
\end{proposition}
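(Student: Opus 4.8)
The plan is to recognize $U_\BA$ together with its triangular pieces as the integral form of $U$ introduced by De Concini--Kac and studied by De Concini--Procesi, and then to read off (i) and (ii) from the known structure of that form; indeed the sentence preceding the statement already records this identification, so the proposition is, as advertised, a corollary. The bridge is Lemma \ref{lem:DP-L}, which equips $U_\BA^+$ and $U_\BA^-$ with $\BA$-bases given by the PBW monomials in the renormalized root vectors $A_{\beta_k}=(q_{\beta_k}-q_{\beta_k}^{-1})E_{\beta_k}$ and $B_{\beta_k}=(q_{\beta_k}-q_{\beta_k}^{-1})F_{\beta_k}$, while \eqref{eq:UA0} gives $U_\BA^0=\sum_{\lambda\in\Lambda}\BA K_\lambda$; combined with the triangular decompositions of $U_\BA$, $U_\BA^{\geqq0}$ and $U_\BA^{\leqq0}$ displayed just above, this matches $U_\BA$ with the $\BA$-form of \cite{DP}, whence the assertions. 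For completeness I would also spell out how to argue directly.

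For (i): the algebra claim for $U_\BA^0$ is immediate from $K_\lambda K_\mu=K_{\lambda+\mu}$. For $U_\BA^+$ one must see that the $\BA$-span of the monomials $A_{\beta_N}^{m_N}\cdots A_{\beta_1}^{m_1}$ is closed under multiplication, and this comes down to the Levendorskii--Soibelman straightening identity, which for $1\leqq i<j\leqq N$ rewrites $A_{\beta_j}A_{\beta_i}-q^{-(\beta_i,\beta_j)}A_{\beta_i}A_{\beta_j}$ as a $\BZ[q^{\pm1}]$-linear combination of monomials $A_{\beta_{i+1}}^{c_{i+1}}\cdots A_{\beta_{j-1}}^{c_{j-1}}$ (see \cite{DK}) --- the renormalization by $q_{\beta_k}-q_{\beta_k}^{-1}$ being exactly what forces the coefficients into $\BZ[q^{\pm1}]\subset\BA$; the case of $U_\BA^-$ is symmetric. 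For $U_\BA^{\geqq0}$ one uses $U_\BA^{\geqq0}\simeq U_\BA^0\otimes U_\BA^+$ together with $K_\lambda A_iK_\lambda^{-1}=q^{(\lambda,\alpha_i)}A_i$, whose coefficient lies in $\BA$, to rewrite a product of an element of $U_\BA^0$ with one of $U_\BA^+$ inside $U_\BA^0\otimes U_\BA^+$, and $U_\BA^{\leqq0}$ is treated the same way. Finally, for $U_\BA$ one invokes $U_\BA\simeq U_\BA^+\otimes U_\BA^0\otimes U_\BA^-$ and the cross-relation $A_iB_j-B_jA_i=\delta_{ij}(q_i-q_i^{-1})(K_i-K_i^{-1})$, again with coefficients in $\BA$, to move the $B$'s past the $A$'s, so that $U_\BA$ is closed under multiplication.

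For (ii): granting that $U_\BA^0$, $U_\BA^{\geqq0}$, $U_\BA^{\leqq0}$ and $U_\BA$ are $\BA$-subalgebras, it remains only to check that $\Delta$, $\varepsilon$ and $S$ preserve them; $\varepsilon$ is trivial, and since $\Delta$ and $S$ are an algebra homomorphism and an anti-homomorphism respectively, it suffices to test on generators --- $\Delta(K_\lambda)=K_\lambda\otimes K_\lambda$, $\Delta(A_i)=A_i\otimes1+K_i\otimes A_i$, $\Delta(B_i)=B_i\otimes K_i^{-1}+1\otimes B_i$, $S(K_\lambda)=K_\lambda^{-1}$, $S(A_i)=-K_i^{-1}A_i$, $S(B_i)=-B_iK_i$ --- each of which visibly lands in the relevant $\BA$-subalgebra or its tensor square. (Here $U_\BA^0=\sum_{\lambda\in\Lambda}\BA K_\lambda$ contains no divided powers of the $K_\lambda$, which is what distinguishes $U_\BA$ from the Lusztig form $U_\BA^L$.) The hard part, if one wanted a genuinely self-contained proof, would be the integral straightening identity underlying (i); but this is classical (\cite{DK}, \cite{DP}), and since Lemma \ref{lem:DP-L} already provides the PBW $\BA$-basis of $U_\BA$, the efficient route is simply to identify $U_\BA$ with the De Concini--Procesi form and quote \cite{DP}.
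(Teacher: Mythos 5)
Your proposal follows the paper's own route: the paper gives no argument beyond observing, via Lemma \ref{lem:DP-L} and \eqref{eq:UA0}--\eqref{eq:UA-}, that $U_{\BA}$ coincides with the De Concini--Procesi $\BA$-form and then citing \cite{DP}, which is exactly your main line. Your supplementary direct sketch (Levendorskii--Soibelman straightening for closure of $U_{\BA}^{\pm}$, the cross-relation $A_iB_j-B_jA_i=\delta_{ij}(q_i-q_i^{-1})(K_i-K_i^{-1})$, and checking $\Delta$, $\varepsilon$, $S$ on generators) is sound and consistent with what \cite{DP} actually proves, so there is nothing to correct.
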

Let $\iota:U_\BA\to U^L_\BA$ be the inclusion.
We denote by
\begin{equation}
\label{eq:sigma-A}
\sigma_{\BA}:U_{\BA}\times V_{\BA}\to {\BA}.
\end{equation}
the bilinear form induced by $\sigma:U\times V\to\BF$.

We set
\begin{align}
C_{\BA}&=
\{\varphi\in C\mid
\langle\varphi,U^L_{\BA}\rangle\subset\BA\},\\
D_\BA&=C_\BA\otimes_{\BA} U_\BA\subset D.
\end{align}
Then $C_{\BA}$ is a Hopf algebra over $\BA$ as well as a $U^L_{\BA}$-bimodule, and $D_\BA$ is an $\BA$-subalgebra of $D$.
It easily follows that
\begin{equation}
\left(
\bigoplus_{\lambda\in\Lambda}\BF\chi_\lambda
\right)
\cap\Hom_\BA(U_\BA^{L,0},\BA)=
\bigoplus_{\lambda\in\Lambda}\BA\chi_\lambda.
\end{equation}
Hence by \eqref{eq:CF} we have
\begin{equation}
\label{eq:CA}
C_\BA=
\left(
(U_\BA^{L,-})^\bigstar\otimes\left(\bigoplus_{\lambda\in\Lambda}\BA\chi_\lambda\right)
\otimes (U_\BA^{L,+})^\bigstar
\right)
\cap C
\subset \Hom_\BA(U_\BA^L,\BA),
\end{equation}
where $(U_\BA^{L,\pm})^\bigstar=\Hom_\BA(U_\BA^{L,\pm},\BA)\cap(U^{\pm})^\bigstar$.

\subsection{Specialization}
For $z\in\BC^\times$ set
\[
\BA_z=\{{f}/{g}\mid
f,g\in\BC[q^{\pm1/|\Lambda/Q|}], g(z)\ne0\}\subset\BF,
\]
and define an algebra homomorphism
\[
\pi_z:\BA_z\to\BC
\]
by $\pi_z(q^{1/|\Lambda/Q|})=z$.
We set
\begin{align*}
&U^L_z=\BC\otimes_{\BA_z} U^L_{\BA_z}, \qquad
V_z=\BC\otimes_{\BA_z} V_{\BA_z},\qquad
U_z=\BC\otimes_{\BA_z} U_{\BA_z},\\
&C_z=\BC\otimes_{\BA_z} C_{\BA_z}, \qquad
D_z=\BC\otimes_{\BA_z} D_{\BA_z}.
\end{align*}
with respect to $\pi_z$.
Then $U^L_z$, $U_z$, $C_z$ are Hopf algebras over $\BC$, and $V_z$, $D_z$ are $\BC$-algebras.
We denote by
\begin{align*}
&\pi^{U^L}_z:U_{\BA_z}^L\to U_z^L,\qquad
\pi^{V}_z:V_{\BA_z}\to V_z,\qquad
\pi^U_z:U_{\BA_z}\to U_z,\\
&\pi^C_z:C_{\BA_z}\to C_z,\qquad
\pi^D_z:D_{\BA_z}\to D_z
\end{align*}
the natural homomorphisms.
We also define 
$U_z^{L,\pm}$, $U_z^{L,0}$, $U_z^{L,\geqq0}$, $U_z^{L,\leqq0}$, 
$V_z^{\pm}$, $V_z^{0}$, $V_z^{\geqq0}$, $V_z^{\leqq0}$, 
$U_z^{\pm}$, $U_z^{0}$, $U_z^{\geqq0}$, $U_z^{\leqq0}$ similarly.
The bilinear form $\sigma_{\BA_z}:U_{\BA_z}\times V_{\BA_z}\to\BA_z$ induces a bilinear form 
\begin{equation}
\label{eq:sigma-zeta}
\sigma_z:U_z\times V_z\to\BC.
\end{equation}
Set 
\[
J_z=\{v\in V_z\mid\sigma_z(U_z,v)=\{0\}\},\qquad
J_z^0=J_z\cap V^0_z.
\]
\begin{lemma}
\label{lem:J}
$J_z$ is a two-sided ideal of $V_z$, and we have
$J_z=V_z^-V_z^+J_z^0$.
In particular, we have
$J_z\cap V^{\geqq0}=V_z^+J_z^0$, and $J_z\cap V^{\leqq0}=V_z^-J_z^0$.
\end{lemma}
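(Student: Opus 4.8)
The plan is to exploit the isomorphisms $V_z\simeq V_z^-\otimes V_z^0\otimes V_z^+$ and $U_z\simeq U_z^+\otimes U_z^0\otimes U_z^-$ together with the multiplicativity of $\sigma_z$ (Proposition \ref{prop:invariance}, specialized) and the factorized form of $\sigma$ through the Drinfeld pairing $\tau$. Because $\sigma(u_+u_0S(u_-),v_-v_+v_0)$ is a product $\tau(u_+,\jmath^{\leqq0}(v_-))\,\tau(u_0,\jmath^{\leqq0}(v_0))\,\tau(\jmath^{\geqq0}(v_+),u_-)$, the three ``triangular'' parts pair only with their opposite partners. Concretely, writing $v=v_-v_0v_+$ (reordered using the commutation relations \eqref{eq:def2aV}, \eqref{eq:def2bV}, which only introduce scalars from $V_z^0$), one sees that $\sigma_z(U_z,v)=0$ is controlled by whether the component of $v$ in each triangular factor is annihilated by the relevant piece of $U_z$. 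First I would record this as a sublemma: $\sigma_z(u_+u_0S(u_-),\,v)$, for $v$ written in the PBW form of $V_z$, splits as a product over the three factors, so that the radical on the $V$-side is a ``tensor product of radicals.''

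The heart of the argument is then a rank computation for the Drinfeld pairing after specialization. By Proposition \ref{prop:DPPBW} the pairing $\tau$ between PBW monomials is diagonal, with the $k$-th factor equal to $(-1)^{m_k}[m_k]_{q_{\beta_k}}!\,q_{\beta_k}^{m_k(m_k-1)/2}(q_{\beta_k}-q_{\beta_k}^{-1})^{-m_k}$ up to the $q^{-(\lambda,\mu)}$ factor from $U^0$. Using the $A_{\beta_k},B_{\beta_k}$ normalization (which absorbs the $(q_{\beta_k}-q_{\beta_k}^{-1})^{-1}$ poles, cf.\ Lemma \ref{lem:DP-L}) the pairing between the $\BA$-bases $\{A_{\beta_N}^{m_N}\cdots A_{\beta_1}^{m_1}\}$ and $\{B_{\beta_N}^{n_N}\cdots B_{\beta_1}^{n_1}\}$ is diagonal with entries $\prod_k \delta_{m_k,n_k}(-1)^{m_k}[m_k]_{q_{\beta_k}}!\,q_{\beta_k}^{m_k(m_k-1)/2}$, which lie in $\BA_z$ and specialize under $\pi_z$ to a unit for each monomial (since $z$ is a primitive $\ell$-th root of $1$ and the relevant $[m]$'s that vanish are exactly the ones excluded, or: for the purposes of \emph{this} lemma it suffices that $\sigma_z$ restricted to $U_z^+\times \jmath^{\leqq0}(V_z^-)$ is a perfect pairing, and likewise on the $V^+$ side). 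Hence on the $\pm$ parts $\sigma_z$ has trivial radical, and the entire radical $J_z$ sits inside the $V^0$-direction: $J_z = V_z^- V_z^+ J_z^0$ where $J_z^0 = \{v_0\in V_z^0\mid \tau_z(U_z^0, \jmath^{\leqq0}(v_0))=0\}$. The key obstacle is handling the specialization carefully: one must check that no unwanted degeneracy is created by $\pi_z$ on the $\pm$-parts — i.e.\ that the diagonal entries of the $A/B$-pairing matrix really are units in $\BA_z$ — which is where the choice of $\ell$ (odd, satisfying Lusztig's conditions so that $[m]_{q_{\beta_k}}!$ does not acquire extra zeros beyond the expected ones) enters.

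Granting the above, the ideal statements follow formally. That $J_z$ is two-sided: $\sigma_z$ is a Hopf pairing in the sense of Proposition \ref{prop:invariance}, so the radical on the $V$-side is automatically a left ideal, and by the analogous identity $\sigma(uu',v)=(\sigma\otimes\sigma)(u'\otimes u,\Delta v)$ it is also a right ideal; I would spell out the one-line computation $\sigma_z(U_z, v\,v') = (\sigma_z\otimes\sigma_z)(\Delta U_z, v\otimes v') = 0$ when $v\in J_z$, using that $\Delta U_z\subset U_z\otimes U_z$. Finally, intersecting the decomposition $J_z = V_z^- V_z^+ J_z^0$ with $V^{\geqq0}$ and $V^{\leqq0}$ gives $J_z\cap V^{\geqq0}=V_z^+ J_z^0$ and $J_z\cap V^{\leqq0}=V_z^- J_z^0$, using the PBW factorizations of Proposition \ref{prop:Str-of-V} and the fact that $J_z^0\subset V_z^0$ is central-ish so monomials remain sorted. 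The only genuinely delicate point, as noted, is the non-degeneracy of the specialized Drinfeld pairing on the nilpotent parts; everything else is bookkeeping with the triangular decompositions.
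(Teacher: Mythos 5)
Your argument follows essentially the same route as the paper's proof: the paper passes to the quotient $V'_z=V_z/V_z^-V_z^+J_z^0\simeq V_z^-\otimes V_z^+\otimes(V_z^0/J_z^0)$ and checks that the induced form has trivial radical, which is exactly your ``radical of a tensor product of pairings'' observation combined with non-degeneracy of the specialized pairing on the $\pm$-parts. One correction, though, to the very step you single out as delicate: the $\BA$-basis of $V_\BA^\mp$ consists of \emph{divided} powers, so $\jmath^{\leqq0}$ carries the basis of $V_\BA^-$ to $\{F_{\beta_N}^{(n_N)}\cdots F_{\beta_1}^{(n_1)}\}$, and the diagonal entries of its pairing with $\{A_{\beta_N}^{m_N}\cdots A_{\beta_1}^{m_1}\}$ are $\prod_k\delta_{m_k,n_k}(-1)^{m_k}q_{\beta_k}^{m_k(m_k-1)/2}$; the factor $[m_k]_{q_{\beta_k}}!$ in your formula is cancelled against the denominator of the divided power and should not appear. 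These entries are units in $\BA_z$ for \emph{every} $z\in\BC^\times$, so no condition on $\ell$, and no root-of-unity considerations, enter at all --- which is as it must be, since the lemma is stated and used for arbitrary $z$ (in particular at $z=1$). With that fixed, the non-degeneracy of $\sigma_z$ on $U_z^+\times\jmath^{\leqq0}(V_z^-)$ and on $\jmath^{\geqq0}(V_z^+)\times U_z^-$ is immediate (it is essentially the content of \eqref{eq:UA+}, \eqref{eq:UA-} and Lemma \ref{lem:DP-L}), and the rest of your argument goes through. Two small remarks: the single identity of Proposition \ref{prop:invariance} already yields that $J_z$ is a two-sided ideal, since $\sigma_z(u,vv')=\sum\sigma_z(u_{(0)},v)\sigma_z(u_{(1)},v')$ vanishes whenever either factor lies in $J_z$, so the ``analogous identity'' with $\Delta$ on the $V$-side is not needed; and the containment $V_z^-V_z^+J_z^0\subset J_z$, required to upgrade your inclusion to the stated equality, is exactly this ideal property applied to $J_z^0\subset J_z$.
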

\begin{proof}
By Proposition \ref{prop:invariance} $J_z$ is a two-sided ideal.
Set 
$
V'_z=V_z/V_z^-V_z^+J_z^0$.
Since the multiplication of $V_z$ induces an isomorphism
$V_z\simeq V_z^-\otimes V_z^+\otimes V_z^0$,
we have
\[
V'_z\simeq
(V_z^-\otimes V_z^+\otimes V_z^0)/
(V_z^-\otimes V_z^+\otimes J_z^0)
\simeq
V_z^-\otimes V_z^+\otimes (V_z^0/J_z^0).
\]
Let
$\sigma_z':U_z\times V'_z\to\BC$ be the bilinear form induced by $\sigma_z$.
Then we see easily from the definition of $\sigma$ and Proposition \ref{prop:DPPBW} that $\{v\in V'_z\mid\sigma'_z(U_z,v)=\{0\}\}=\{0\}$.
Hence $J_z=V_z^-V_z^+J_z^0$.
\end{proof}
We define an algebra $\overline{V}_z$ by
$
\overline{V}_z=V_z/J_z,
$
and denote by 
$
\overline{\pi}^V_z:V_{\BA_z}\to \overline{V}_z
$
the canonical homomorphism.
Let
$
\overline{\sigma}_z:U_z\times \overline{V}_z\to\BC
$
be the bilinear form induced by \eqref{eq:sigma-zeta}.
Denote the images of $V_z^0$, $V_z^\pm$, $V_z^{\geqq0}$, $V_z^{\leqq0}$ under $V_z\to\overline{V}_z$ by $\overline{V}_z^0$, $\overline{V}_z^\pm$, $\overline{V}_z^{\geqq0}$, $\overline{V}_z^{\leqq0}$ respectively.
Then the multiplication of $\overline{V}_z$ induces isomorphisms
\begin{gather*}
\overline{V}_z
\simeq
\overline{V}_z^-\otimes \overline{V}_z^+\otimes \overline{V}_z^0,\\
\overline{V}_z^{\geqq0}
\simeq
\overline{V}_z^+\otimes \overline{V}_z^0,\qquad
\overline{V}_z^{\leqq0}
\simeq
\overline{V}_z^-\otimes \overline{V}_z^0.
\end{gather*}

Let $\lambda\in\Lambda$.
By abuse of notation we also denote by $\chi_\lambda:U^{L,0}_z\to\BC$ the algebra homomorphism induced by $\chi_\lambda:U\to\BF$.
We see easily the following
\begin{lemma}
\label{lem:chi-ind}
$\{\chi_\lambda\mid\lambda\in\Lambda\}$ is a linearly independent subset of $(U_z^{L,0})^*$.
\end{lemma}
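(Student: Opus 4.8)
The plan is to prove that the characters $\chi_\lambda$ ($\lambda\in\Lambda$), viewed as $\BC$-valued functions on $U^{L,0}_z$, are linearly independent. The natural approach is to evaluate a hypothetical linear relation $\sum_{\lambda}c_\lambda\chi_\lambda=0$ (a finite sum, $c_\lambda\in\BC$) on a sufficiently large supply of elements of $U^{L,0}_z$ and conclude that all $c_\lambda$ vanish.

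First I would record the structure of $U^{L,0}_z$: it is the specialization of the free $\BA_z$-module $U^{L,0}_{\BA_z}$ with $\BA_z$-basis $\{K_\mu\prod_{i\in I}\binom{K_i}{n_i}\mid \mu\in\Lambda_0,\ n_i\geqq0\}$, and in particular the images of $K_\mu$ for $\mu\in\Lambda_0$ and of the divided binomials $\binom{K_i}{n_i}$ all live in $U^{L,0}_z$. Then I would compute $\chi_\lambda$ on these: $\chi_\lambda(K_\mu)=z^{(\lambda,\mu)}$ (a root of unity times an integral power of $z$, well-defined since $(\lambda,\mu)\in\frac1{|\Lambda/Q|}\BZ$) and $\chi_\lambda(\binom{K_i}{n_i})=\prod_{s=0}^{n_i-1}\frac{z_i^{-s}z^{(\lambda,\alpha_i)}-z_i^{s}z^{-(\lambda,\alpha_i)}}{z_i^{s+1}-z_i^{-s-1}}$, where $z_i=z^{(\alpha_i,\alpha_i)/2}$. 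The key point is that $\chi_\lambda$ is determined by the single scalar tuple $(z^{(\lambda,\alpha_i)})_{i\in I}$ together with the value on a representative $K_\mu$ that pins down the class of $\lambda$ modulo the sublattice on which all $\chi$-values agree.

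The cleanest way to finish is then to argue as follows. Suppose $\sum_{\lambda\in S}c_\lambda\chi_\lambda=0$ for a finite set $S\subset\Lambda$ with all $c_\lambda\neq0$; among all such relations pick one with $|S|$ minimal, and if $|S|\geqq2$ pick $\lambda\neq\lambda'$ in $S$. There exists $\mu\in\Lambda$ with $z^{(\lambda-\lambda',\mu)}\neq1$: indeed the pairing $\Lambda\times\Lambda\to\BC^\times$, $(\nu,\mu)\mapsto z^{(\nu,\mu)}$, is nondegenerate because $\zeta$ (hence $z$) is a primitive $\ell$-th root of $1$ with $\ell$ coprime to $|\Lambda/Q|$ under the standing hypotheses on $\ell$, so $(\nu,\mu)\in\ell\cdot\frac1{|\Lambda/Q|}\BZ$ for all $\mu\in\Lambda$ forces $\nu\in\ell\Lambda'$ for the appropriate lattice — and one checks $\lambda-\lambda'\notin\ell\Lambda$ may fail, so one must instead pass to $\Lambda/\ell\Lambda$; this is the point requiring care. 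Applying the relation to $K_\mu$ and to $K_{\mu+\lambda}$-type shifted elements and subtracting $z^{(\lambda,\mu)}$ times the original relation kills the $\lambda$-term and produces a shorter relation with the $\lambda'$-coefficient nonzero (since $z^{(\lambda'-\lambda,\mu)}\neq1$), contradicting minimality. Hence $|S|\leqq1$, and a single $\chi_\lambda$ is obviously nonzero (it is an algebra homomorphism sending $1\mapsto1$), so $S=\emptyset$.

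The main obstacle is the arithmetic of the pairing modulo $\ell$: one must verify that for distinct $\lambda,\lambda'\in\Lambda$ the characters $K_\mu\mapsto z^{(\lambda,\mu)}$ and $K_\mu\mapsto z^{(\lambda',\mu)}$ of the torus part are actually distinct, i.e.\ that $\lambda\mapsto(z^{(\lambda,\cdot)})$ is injective on $\Lambda$. If $z$ has order $\ell$ this amounts to $(\lambda-\lambda',\Lambda)\subset\ell\BZ$ (after clearing the $|\Lambda/Q|$ denominator) implying $\lambda=\lambda'$, which may genuinely fail when $\ell$ shares a factor with the index of connection; the resolution is to use not only $\chi_\lambda(K_\mu)$ but also $\chi_\lambda(\binom{K_i}{n_i})$ for small $n_i$, whose values as rational functions of $z^{(\lambda,\alpha_i)}$ separate the finitely many residues modulo $\ell$ that occur. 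Once this separation statement is in hand, the minimal-counterexample argument above closes immediately, and I expect the whole proof to be short. Alternatively — and this is probably what the authors do — one cites that the group-like elements $K_\lambda$ of the Hopf algebra $U^{L,0}_z$ are linearly independent (group-likes in any Hopf algebra are linearly independent) and that $\chi_\lambda$ is recovered from the grouplike $K_\lambda\in U^{L,0}_z$ by $\chi_\lambda=$ evaluation against a dual basis; but absent a statement of grouplike independence in the excerpt, I would give the elementary argument above.
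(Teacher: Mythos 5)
The paper offers no argument for this lemma (it is introduced with ``We see easily''), so the comparison is with what a correct proof must contain. Your overall skeleton is right: the $\chi_\lambda$ are algebra homomorphisms $U^{L,0}_z\to\BC$, and \emph{pairwise distinct} characters of a commutative algebra are linearly independent by exactly the Dedekind/Artin minimal-relation argument you sketch (equivalently, they are distinct group-like elements of the dual coalgebra). Hence the entire content of the lemma is the pairwise distinctness of the $\chi_\lambda$ for distinct $\lambda\in\Lambda$, and this is precisely where your argument has a genuine gap.

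Your proposed mechanism for distinctness is that the values $\chi_\lambda(K_\mu)=z^{(\lambda,\mu)}$ pin $\lambda$ down up to a finite quotient, and that the values $\chi_\lambda\bigl(\begin{bmatrix}K_i\\ n_i\end{bmatrix}\bigr)$ for small $n_i$, ``as rational functions of $z^{(\lambda,\alpha_i)}$'', separate the remaining finitely many residues. This cannot work. Any quantity that is a function of the tuple $\bigl(z^{(\lambda,\mu)}\bigr)_{\mu}$ is periodic in $\lambda$ along a finite-index sublattice of $\Lambda$ whenever $z$ is a root of unity, so it cannot distinguish the infinitely many distinct $\lambda$ lying in a single congruence class --- and it is these, not ``finitely many residues'', that still need separating. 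The case $z=1$, where the lemma is actually used (it underlies the inclusion \eqref{eq:CC} and hence $C_1\cong\BC[G]$), makes the failure stark: there $z^{(\lambda,\mu)}=1$ for all $\lambda,\mu$ and your mechanism separates nothing; note also that the lemma is stated for arbitrary $z\in\BC^\times$, before the hypotheses on $\ell$ are imposed. Relatedly, your displayed product formula for $\chi_\lambda\bigl(\begin{bmatrix}K_i\\ n_i\end{bmatrix}\bigr)$ breaks down exactly where it is needed: when $z_i^{2(s+1)}=1$ the corresponding factor reads $0/0$, because one must first compute $\chi_\lambda\bigl(\begin{bmatrix}K_i\\ m\end{bmatrix}\bigr)=\begin{bmatrix}(\lambda,\alpha_i^\vee)\\ m\end{bmatrix}_{q_i}$ inside $\BA_z$ and only then specialize. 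The correct separation statement is that for any fixed $v\in\BC^\times$ the values $\begin{bmatrix}n\\ m\end{bmatrix}_{v}$ for all $m\geqq0$ determine the integer $n$: at $v=1$ already $m=1$ gives $\begin{bmatrix}n\\ 1\end{bmatrix}_{1}=n$, and at $v$ a root of unity one must take $m$ at least the order of $v^2$ and use the $q$-Lucas factorization, whose integral part $\binom{\lfloor n/d\rfloor}{\cdot}$ is genuinely \emph{not} a function of $v^{n}$. Since $\lambda\mapsto\bigl((\lambda,\alpha_i^\vee)\bigr)_{i\in I}$ is injective on $\Lambda$, this yields $\chi_\lambda\ne\chi_{\lambda'}$ for $\lambda\ne\lambda'$ and for every $z\in\BC^\times$, after which your minimal-relation argument closes the proof. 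Your fallback via group-like elements does not repair the gap, since linear independence of group-likes again presupposes that they are pairwise distinct.
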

\begin{lemma}
\label{lem:perfect}
The bilinear form $\overline{\sigma}_z$ is perfect in the sense that
\begin{align}
\label{eq:perfect1}
&u\in U_z, \quad\overline{\sigma}_z(u,\overline{V}_z)=\{0\}
\quad\Longrightarrow \quad u=0,\\
\label{eq:perfect2}
&v\in \overline{V}_z, \quad\overline{\sigma}_z(U_z,v)=\{0\}
\quad\Longrightarrow \quad v=0.
\end{align}
\end{lemma}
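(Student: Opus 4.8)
The plan is to deduce perfectness of $\overline\sigma_z$ from two ingredients: the factorization of $J_z$ provided by Lemma \ref{lem:J}, and an explicit understanding of $\sigma_z$ on the ``$\pm$'' and ``$0$'' pieces coming from Proposition \ref{prop:DPPBW}. Recall that by definition $\sigma(u_+u_0S(u_-),v_-v_+v_0)=\tau(u_+,\jmath^{\leqq0}(v_-))\,\tau(u_0,\jmath^{\leqq0}(v_0))\,\tau(\jmath^{\geqq0}(v_+),u_-)$, so $\sigma$ is essentially a triple tensor product of copies of the Drinfeld pairing $\tau$ restricted to the $\pm$ and $0$ parts. Under the triangular decompositions $U_z\simeq U_z^+\otimes U_z^0\otimes U_z^-$ and $\overline V_z\simeq \overline V_z^-\otimes\overline V_z^+\otimes\overline V_z^0$ (the latter from Lemma \ref{lem:J}), the induced form $\overline\sigma_z$ factors as the product of the three pairings
\[
\tau^+_z:U_z^+\times \overline V_z^-\to\BC,\qquad
\tau^-_z:U_z^-\times \overline V_z^+\to\BC,\qquad
\tau^0_z:U_z^0\times \overline V_z^0\to\BC,
\]
obtained by specializing and then passing to the quotient by $J_z^0$. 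So it suffices to show each of these three pairings is perfect (on the appropriate quotients): perfectness of a tensor product of pairings is equivalent to perfectness of each factor, and this reduction is exactly what Lemma \ref{lem:J} is designed to supply — indeed $J_z=V_z^-V_z^+J_z^0$ means the degeneracy of $\sigma_z$ on the $V$-side is entirely concentrated in the $V^0$-factor, so after quotienting by $J_z^0$ the ``0''-pairing $\tau^0_z$ becomes nondegenerate by construction, and nothing else needs quotienting.

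First I would treat the $\pm$ pairings. By Lemma \ref{lem:DP-L} the integral forms $U_\BA^\pm$ have the monomial $\BA$-bases $\{A^{m_N}_{\beta_N}\cdots A^{m_1}_{\beta_1}\}$ and $\{B^{m_N}_{\beta_N}\cdots B^{m_1}_{\beta_1}\}$, and via $\jmath^{\geqq0},\jmath^{\leqq0}$ the integral forms $V_\BA^\pm$ have the divided-power bases $\{X^{(m_N)}_{\beta_N}\cdots\}$, $\{Y^{(m_N)}_{\beta_N}\cdots\}$. Proposition \ref{prop:DPPBW} computes $\tau$ on PBW monomials: it is ``diagonal'' in the multi-exponents $(m_k)$, with the value on a matching pair of monomials equal to $\prod_k (-1)^{m_k}[m_k]_{q_{\beta_k}}!\,q_{\beta_k}^{m_k(m_k-1)/2}(q_{\beta_k}-q_{\beta_k}^{-1})^{-m_k}$. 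Rewriting $A_{\beta_k},B_{\beta_k}$ against $X^{(m)}_{\beta_k},Y^{(m)}_{\beta_k}$ absorbs the $[m_k]_{q_{\beta_k}}!$ and the $(q_{\beta_k}-q_{\beta_k}^{-1})^{-m_k}$ factors, leaving $\tau(A^{m_N}_{\beta_N}\cdots, Y^{(n_N)}_{\beta_N}\cdots)=\prod_k\delta_{m_k,n_k}(-1)^{m_k}q_{\beta_k}^{m_k(m_k-1)/2}$, a matrix over $\BA$ that is diagonal with entries that are monomials in $q$, hence units in $\BA_z$ for every $z\in\BC^\times$. Therefore the specialized pairing $\tau^\pm_z$ between $U_z^\pm$ and $V_z^\pm$ is represented in the specialized bases by a diagonal matrix with nonzero entries, so it is perfect; and since $J_z\cap V^{\geqq0}=V_z^+J_z^0$, $J_z\cap V^{\leqq0}=V_z^-J_z^0$ (Lemma \ref{lem:J}), passing to $\overline V_z$ does not disturb the $\pm$ factors at all.

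For the ``0'' pairing, by \eqref{eq:UA0} we have $U_z^0=\bigoplus_\lambda\BC\bar K_\lambda$ and the form is $\tau(K_\lambda,K_\mu)=q^{-(\lambda,\mu)}$, whose specialization is $\zeta^{-(\lambda,\mu)}$ after identifying $V^0$-elements via $\jmath$. This form need not be nondegenerate on $U_z^0\times V_z^0$ — its kernel on the $V$-side is precisely $J_z^0$ — but that is exactly what we quotient out: by definition of $\overline V_z$ and $J_z^0$, the induced $\tau^0_z:U_z^0\times\overline V_z^0\to\BC$ is nondegenerate on the $V$-side, and Lemma \ref{lem:chi-ind} (linear independence of the $\chi_\lambda$ on $U_z^{L,0}$, hence on $U_z^0$) gives nondegeneracy on the $U$-side. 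Combining the three perfect factor-pairings via the tensor decompositions then yields \eqref{eq:perfect1} and \eqref{eq:perfect2}. The main obstacle I anticipate is the bookkeeping of the ordering conventions in the triangular decompositions of $U_z$ versus $\overline V_z$ (they appear in opposite orders, $+\,0\,-$ versus $-\,+\,0$) so that $\sigma$ really does split as the product of the three pairings with no cross terms surviving specialization — this is where the precise form of the formula defining $\sigma$, together with \eqref{eq:DP4} ($\tau(K_\lambda,F_i)=\tau(E_i,K_\mu)=0$), must be used carefully; once that bookkeeping is done, everything else is the routine diagonal-matrix argument sketched above.
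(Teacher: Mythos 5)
Your proof is correct and takes essentially the same route as the paper: the paper's own proof consists of observing that \eqref{eq:perfect2} is immediate from the definition of $J_z$, reducing \eqref{eq:perfect1} to the $U_z^0\times V_z^0$ pairing ``from the definition of $\sigma$ and Proposition \ref{prop:DPPBW}'' (i.e.\ exactly your diagonal-with-unit-entries argument on the $\pm$ factors), and then invoking Lemma \ref{lem:chi-ind} together with $U_z^0=\bigoplus_\lambda\BC K_\lambda$ and $V_z^0\cong U_z^{L,0}$. The ordering/antipode bookkeeping you flag is the only point needing care and works out as you expect, since $S$ restricted to $U^-_{-\gamma}$ is an isomorphism onto $U^-_{-\gamma}K_\gamma$ and so does not disturb the block-diagonal structure.
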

\begin{proof}
\eqref{eq:perfect2} is clear from the definition.
We see easily from the definition of $\sigma$ and Proposition \ref{prop:DPPBW} that the proof of \eqref{eq:perfect1} is reduced to showing
\[
u\in U^0_z, \quad{\sigma}_z(u,{V}^0_z)=\{0\}
\quad\Longrightarrow \quad u=0.
\]
This follows from Lemma \ref{lem:chi-ind} in view of 
\[
U_z^0=\bigoplus_{\lambda\in\Lambda}\BC K_\lambda,\qquad
V_z^0
\cong
U^{L,0}_z.
\]
\end{proof}

Set 
\begin{gather*}
I_z^0=\jmath^{\geqq0}(J_z^0)\subset U^{L,0}_z,\\
I_z^{\geqq0}=U_z^{L,+}I_z^0
\subset U^{L,\geqq0}_z,\qquad
I_z^{\leqq0}=U_z^{L,-}I_z^0
\subset U^{L,\leqq0}_z,\\
I_z=U_z^{L,-}U_z^{L,+}I_z^0
\subset U^{L}_z.
\end{gather*}
The we have
\begin{equation}
\label{eq:Iz0}
I_z^0=\{
u\in U_z^{L,0}\mid\chi_\lambda(u)=0\,\,(\lambda\in\Lambda)\}.
\end{equation}
\begin{lemma}
$I_z^0$, 
$I_z^{\geqq0}$, 
$I_z^{\leqq0}$, 
$I_z$
are Hopf ideals of 
$U^{L,0}_z$, 
$U^{L,\geqq0}_z$, 
$U^{L,\leqq0}_z$,
$U^{L}_z$  respectively.
\end{lemma}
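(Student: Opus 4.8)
The plan is to verify each of the four ideals separately, reducing everything to the base case $I_z^0 \subset U_z^{L,0}$. First I would use the description \eqref{eq:Iz0}, namely $I_z^0 = \bigcap_{\lambda\in\Lambda}\Ker\chi_\lambda$, together with Lemma \ref{lem:chi-ind} which says the $\chi_\lambda$ are linearly independent. Since each $\chi_\lambda: U_z^{L,0}\to\BC$ is an algebra homomorphism, $I_z^0$ is an ideal; it remains to check it is a coideal and is stable under the antipode. Because $U_z^{L,0}$ is commutative and cocommutative (it is a specialization of the group-algebra-like piece generated by the $K_\lambda$ and the binomials $\begin{bmatrix}K_i\\m\end{bmatrix}$), and because each $\chi_\lambda$ is an algebra homomorphism to the base field, one checks that $\Delta(I_z^0)\subset I_z^0\otimes U_z^{L,0}+U_z^{L,0}\otimes I_z^0$ by noting $\chi_\lambda\otimes\chi_\mu$ applied to $\Delta(u)$ equals $\chi_{\lambda+\mu}(u)$ (this is exactly the statement that $\{\chi_\lambda\}$ forms a group under convolution, which follows from $\Delta(K_\nu)=K_\nu\otimes K_\nu$ and the coalgebra structure of the divided-power binomials), and $\varepsilon=\chi_0$ so $\varepsilon(I_z^0)=0$, while $S$ interchanges $\chi_\lambda$ with $\chi_{-\lambda}$ so $S(I_z^0)\subset I_z^0$.

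Next I would handle $I_z^{\geqq0}=U_z^{L,+}I_z^0$ and $I_z^{\leqq0}=U_z^{L,-}I_z^0$. Here the key structural input is the triangular decomposition $U_z^{L,\geqq0}\simeq U_z^{L,+}\otimes U_z^{L,0}$ (and its opposite), which shows $I_z^{\geqq0}$ is a two-sided ideal: on the left it is stable since $U_z^{L,+}\cdot U_z^{L,+}I_z^0\subset U_z^{L,+}I_z^0$, and on the right one must commute elements of $U_z^{L,0}$ past $U_z^{L,+}$, which only rescales by the weight characters and so preserves the subspace $U_z^{L,+}I_z^0$; then $I_z^0\cdot U_z^{L,\geqq0}\subset I_z^{\geqq0}$ because $I_z^0$ is an ideal in $U_z^{L,0}$ and absorbing across the $E$'s again only involves weight twists. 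For the coideal property I would use that $\Delta(E_i)=E_i\otimes 1+K_i\otimes E_i$, so $\Delta(U_z^{L,+})\subset U_z^{L,\geqq0}\otimes U_z^{L,+}$ (with a $U^{L,0}$ factor on the left), combine this with the coideal property of $I_z^0$ already established, and conclude $\Delta(I_z^{\geqq0})\subset I_z^{\geqq0}\otimes U_z^{L,\geqq0}+U_z^{L,\geqq0}\otimes I_z^{\geqq0}$; the counit kills $E_i$ and $I_z^0$, and the antipode sends $E_i\mapsto -K_i^{-1}E_i$ and preserves $I_z^0$, so $S(I_z^{\geqq0})\subset I_z^{\geqq0}$. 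The case of $I_z^{\leqq0}$ is entirely symmetric.

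Finally, for $I_z=U_z^{L,-}U_z^{L,+}I_z^0$ inside $U_z^L$, I would invoke the full triangular decomposition $U_z^L\simeq U_z^{L,-}\otimes U_z^{L,+}\otimes U_z^{L,0}$ to see $I_z$ is a subspace of the right shape, then check two-sidedness by the same commutation-through-weights argument (multiplying on the left by $F$'s is immediate; multiplying on the right requires moving $U^{L,0}$ and then $U^{L,+}$ past, each step preserving the subspace by weight considerations and, for the $F$-past-$E$ step, using the defining relation \eqref{eq:def3} which produces only $U^{L,0}$-terms that land in $I_z^0$ after pairing—this is where a little care is needed). The coideal and antipode properties follow by assembling the $\pm$ pieces using $\Delta(F_i)=F_i\otimes K_i^{-1}+1\otimes F_i$ and the already-proven statements for the three factors. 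The main obstacle I anticipate is precisely the right-ideal verification for $I_z$ and $I_z^{\geqq0}$: one has to be sure that commuting an element of $I_z^0$ to the correct slot in the triangular decomposition does not introduce components outside the claimed ideal, and the cleanest way around this is to observe that $I_z^0$ is not merely an ideal but the kernel of a family of characters, hence is stable under the (weight-rescaling) adjoint-type twists that arise, so that $U^{L,+}I_z^0=I_z^0 U^{L,+}$ as subspaces of $U_z^{L,\geqq0}$, and similarly for $I_z$; granting this, all the ideal and Hopf-ideal assertions follow formally.
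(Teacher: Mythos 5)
Your argument is correct in outline, but it takes a genuinely different route from the paper at the one step that is actually nontrivial. Both you and the paper reduce everything to two facts: that $I_z^0$ is a Hopf ideal of $U_z^{L,0}$ (which you and the paper both get from \eqref{eq:Iz0}, the convolution identity $(\chi_\lambda\otimes\chi_\mu)\circ\Delta=\chi_{\lambda+\mu}$, $\varepsilon=\chi_0$ and $\chi_\lambda\circ S=\chi_{-\lambda}$), and the commutation $I_z^0U_z^{L,\pm}\subset U_z^{L,\pm}I_z^0$, after which the assertions for $I_z^{\geqq0}$, $I_z^{\leqq0}$, $I_z$ follow formally from the triangular decompositions, exactly as you describe. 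For the commutation step the paper does \emph{not} argue inside $U^L_z$ at all: it transports the statement through $\jmath^{\geqq0}$, $\jmath^{\leqq0}$ to the claim $J_z^0V_z^{\pm}\subset V_z^{\pm}J_z^0$, which is immediate from Lemma \ref{lem:J} ($J_z$ is a two-sided ideal, being the radical of the pairing $\sigma_z$ via Proposition \ref{prop:invariance}, and $J_z\cap V_z^{\geqq0}=V_z^+J_z^0$, $J_z\cap V_z^{\leqq0}=V_z^-J_z^0$). You instead prove the commutation directly: conjugating $U_z^{L,0}$ past a weight vector of weight $\gamma$ is the algebra automorphism $K_\lambda\mapsto q^{(\lambda,\gamma)}K_\lambda$, and $I_z^0$ is stable under it because $\chi_\lambda\circ T_\gamma=\chi_{\lambda+\gamma}$ with $\lambda+\gamma\in\Lambda$. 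This is valid, and it even gives the slightly stronger equality $U_z^{L,+}I_z^0=I_z^0U_z^{L,+}$; but be aware that it is not a mere ``rescaling'': on the divided-power elements $\begin{bmatrix}K_i\\m\end{bmatrix}$ the twist performs the substitution $K_i\mapsto q_i^{(\gamma,\alpha_i^\vee)}K_i$, so you must invoke the standard fact that the shifted Gaussian binomials $\begin{bmatrix}K_i;c\\m\end{bmatrix}$ lie in the Lusztig form in order to know the twist preserves $U_z^{L,0}$ at all. The paper's detour through $V_z$ buys exactly the avoidance of this integrality check, at the cost of relying on the already-established invariance of the Drinfeld-type pairing; your version is more self-contained but needs that one extra classical input spelled out. (One small inaccuracy in your last paragraph: in the $F$-past-$E$ step for $I_z$ the $U_z^{L,0}$-terms produced by \eqref{eq:def3} need not land in $I_z^0$; it suffices that they multiply $I_z^0$ into itself, since $I_z^0$ is an ideal of $U_z^{L,0}$.)
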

\begin{proof}
From \eqref{eq:Iz0} we see easily that $I_z^0$ is a Hopf ideal of $U^{L,0}_z$.
It remains to show 
$I_z^0U_z^{L,\pm}\subset U_z^{L,\pm}I_z^0$.
Using $\jmath^{\geqq0}$, $\jmath^{\leqq0}$ we see that this is equivalent to  $J_z^0V_z^{\pm}\subset V_z^{\pm}J_z^0$.
This follows from Lemma \ref{lem:J}.
\end{proof} 
We define a Hopf algebra $\overline{U}^L_z$ by
$
\overline{U}^L_z=U^L_z/I_z,
$
and denote by 
$\overline{\pi}^{U^L}_z:U^L_{\BA_z}\to \overline{U}^L_z
$
the canonical homomorphism.
Denote the images of $U^{L,0}_z$, $U_z^{L,\pm}$, $U_z^{L,\geqq0}$, $U_z^{L,\leqq0}$ under $U^L_z\to\overline{U}^L_z$ by $\overline{U}^{L,0}_z$, $\overline{U}_z^{L,\pm}$, $\overline{U}_z^{L,\geqq0}$, $\overline{U}_z^{L,\leqq0}$ respectively.
We also denote by
\begin{equation}
\overline{\jmath}_z^{\geqq0}:\overline{V}_z^{\geqq0}\to\overline{U}_z^{L,\geqq0},
\qquad
\overline{\jmath}_z^{\leqq0}:\overline{V}_z^{\leqq0}\to\overline{U}_z^{L,\leqq0}
\end{equation}
the algebra isomorphisms induced by $\jmath^{\geqq0}$ and $\jmath^{\leqq0}$.

By \eqref{eq:CA} and Lemma \ref{lem:chi-ind} we have
\begin{equation}
\label{eq:CC}
C_z\subset
(U_z^{L,-})^\bigstar\otimes\left(\bigoplus_{\lambda\in\Lambda}\BC\chi_\lambda\right)
\otimes (U_z^{L,+})^\bigstar
\subset (U_z^L)^*,
\end{equation}
where $(U_z^{L,\pm})^\bigstar=
\BC\otimes_\BA(U_\BA^{L,\pm})^\bigstar
\subset
\Hom_\BC(U_z^{L,\pm},\BC)$.
Hence the natural paring
$\langle\,,\,\rangle:C_z\times U^L_z\to\BC$
descends to 
\[
\langle\,,\,\rangle:C_z\times \overline{U}^L_z\to\BC,
\]
by which the canonical map $C_z\to (\overline{U}^L_z)^*$ is injective.
Moreover, $C_z$ turns out to be a $\overline{U}^L_z$-bimodule.

\subsection{Specialization to $1$}

For an algebraic groups $S$ over $\BC$ with Lie algebra $\Gs$ we will identify the coordinate algebra $\BC[S]$ of $S$ with a subspace of the dual space $U(\Gs)^*$ of the enveloping algebra $U(\Gs)$ by the canonical Hopf paring 
\[
\langle\,,\,\rangle:\BC[S]\otimes U(\Gs)\to \BC
\]
given by
\[
\langle\varphi,u\rangle=(L_u(\varphi))(1)
\qquad(\varphi\in\BC[S], u\in U(\Gs)).
\]
Here, $U(\Gs)\ni u\mapsto L_u\in\End_\BC(\BC[S])$ is the algebra homomorphism given by
\[
(L_a(\varphi))(g)=\frac{d}{dt}\varphi(g\exp(ta))|_{t=0}
\qquad
(a\in\Gg, g\in S, \varphi\in\BC[S]).
\]

We see easily that $J_1$ is generated by the elements
$\pi^V_1(Z_\lambda)-1\in V_1$ for $\lambda\in \Lambda$.
From this we see easily the following.
\begin{lemma}
\label{lem:barv1}
\begin{itemize}
\item[{\rm(i)}]
We have an isomorphism $\overline{V}_1\cong U(\Gk)$ of algebras satisfying
\begin{align*}
&\overline{\pi}^{V}_1({X}_i)\leftrightarrow x_i,\qquad
\overline{\pi}^{V}_1({Y}_i)\leftrightarrow y_i,\\
&\overline{\pi}^{V}_1\left({\begin{bmatrix}Z_i\\m\end{bmatrix}}\right)\leftrightarrow
\begin{pmatrix}t_i\\m\end{pmatrix}:=t_i(t_i-1)\cdots(t_i-m+1)/m!.
\end{align*}
\item[\rm(ii)]
We have an isomorphism $\overline{U}^L_1\cong U(\Gg)$ of Hopf algebras satisfying
\begin{align*}
&\overline{\pi}^{U^L}_1({E}_i)\leftrightarrow e_i,\qquad
\overline{\pi}^{U^L}_1({F}_i)\leftrightarrow f_i,\\
&\overline{\pi}^{U^L}_1\left({\begin{bmatrix}K_i\\m\end{bmatrix}}\right)\leftrightarrow
\begin{pmatrix}h_i\\m\end{pmatrix}:=h_i(h_i-1)\cdots(h_i-m+1)/m!.
\end{align*}
\end{itemize}
\end{lemma}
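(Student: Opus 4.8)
The plan is to prove both parts in parallel by first reducing the computation of $\overline{V}_1$ and $\overline{U}^L_1$ to quotients modulo the ``torus'', and then matching the resulting presentations against the Serre-type presentations of $U(\Gk)$ and $U(\Gg)$ relative to the triangular decompositions $\Gk=\Gk^-\oplus\Gk^0\oplus\Gk^+$ and $\Gg=\Gn^-\oplus\Gh\oplus\Gn^+$. As already observed, $J_1$ is the two-sided ideal of $V_1$ generated by the $\pi^V_1(Z_\lambda)-1$; since \eqref{eq:def2aV}, \eqref{eq:def2bV} specialize at $q=1$ to relations in which $Z_\lambda$ is central, this ideal meets $V^0_1$ in the ideal generated there by the $Z_\lambda-1$, so $\overline{V}_1=V_1/(Z_\lambda-1)$. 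For $\overline{U}^L_1$ one uses \eqref{eq:Iz0}: at $q=1$ one has $\chi_\lambda(K_\mu)=q^{(\lambda,\mu)}|_{q=1}=1$, hence $K_\mu-1\in I^0_1$; combined with the fact that $I^0_1=\jmath^{\geqq0}(J^0_1)$ and with $I^0_1 U^{L,\pm}_1\subset U^{L,\pm}_1 I^0_1$, one gets $\overline{U}^L_1=U^L_1/(K_\mu-1)$.

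For (ii), this last quotient is the classical $q\to1$ specialization of the Lusztig form, whose structure is recorded in \cite{L2}, \cite{DK}. The divided powers give $U^{L,\pm}_1\cong U(\Gn^\pm)$ via $E^{(m)}_i\mapsto e_i^m/m!$, $F^{(m)}_i\mapsto f_i^m/m!$; the subalgebra $U^{L,0}_1$, using the explicit $\BA$-basis $\{K_\lambda\prod_i\begin{bmatrix}K_i\\n_i\end{bmatrix}\}$ and the degeneration at $q=1$ of the $q$-Vandermonde product rule for $\begin{bmatrix}K_i\\a\end{bmatrix}\begin{bmatrix}K_i\\b\end{bmatrix}$ to the product rule in $U(\Gh)$, satisfies $U^{L,0}_1/(K_\mu-1)\cong U(\Gh)$ with $\begin{bmatrix}K_i\\m\end{bmatrix}\mapsto\binom{h_i}{m}$; and the mixed relations \eqref{eq:def2a}--\eqref{eq:def5} become, after setting $q=1$ and $K_\mu=1$, precisely the Serre relations of $U(\Gg)$ — the only ones needing a limit computation being \eqref{eq:def3}, whose right-hand side $\delta_{ij}(K_i-K_i^{-1})/(q_i-q_i^{-1})=\delta_{ij}\begin{bmatrix}K_i\\1\end{bmatrix}$ maps to $\delta_{ij}h_i$, and the brackets $[\begin{bmatrix}K_i\\1\end{bmatrix},E_j]$, $[\begin{bmatrix}K_i\\1\end{bmatrix},F_j]$, which limit to $a_{ij}E_j$, $-a_{ij}F_j$. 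Thus there is a surjection $U(\Gg)\twoheadrightarrow\overline{U}^L_1$, and comparing it against the triangular decompositions (the target being $U^{L,-}_1\otimes(U^{L,0}_1/I^0_1)\otimes U^{L,+}_1\cong U(\Gn^-)\otimes U(\Gh)\otimes U(\Gn^+)$) shows it is bijective. The coproduct formulas $\Delta(E_i)\mapsto e_i\otimes1+1\otimes e_i$, $\Delta(F_i)\mapsto f_i\otimes1+1\otimes f_i$, $\Delta(\begin{bmatrix}K_i\\m\end{bmatrix})\mapsto\sum_{a+b=m}\binom{h_i}{a}\otimes\binom{h_i}{b}$ then upgrade it to an isomorphism of Hopf algebras.

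For (i) the argument is entirely parallel, using the presentation \eqref{eq:def1V}--\eqref{eq:def5V} of $V$ and the triangular decomposition $\overline{V}_z\cong\overline{V}^-_z\otimes\overline{V}^+_z\otimes\overline{V}^0_z$ recorded above, together with $\overline{V}^\pm_1\cong U(\Gn^\pm)=U(\Gk^\pm)$ and $\overline{V}^0_1\cong U(\Gh)=U(\Gk^0)$. The point is that, after setting $Z_\lambda=1$, the relations of $V$ specialize to exactly those of $U(\Gk)$: \eqref{eq:def3V} gives $[x_i,y_j]=0$, reflecting $[\Gk^+,\Gk^-]=\{0\}$ (they lie in complementary factors of $\Gg\oplus\Gg$); \eqref{eq:def4V}, \eqref{eq:def5V} give the Serre relations among the $x_i$ and among the $y_i$; and \eqref{eq:def2aV}, \eqref{eq:def2bV} — which \emph{both} carry the scalar $q^{(\lambda,\alpha_i)}$ with the \emph{same} sign — yield $[t_i,x_j]=a_{ij}x_j$ and $[t_i,y_j]=a_{ij}y_j$, in exact agreement with $[(h_i,-h_i),(e_j,0)]=(a_{ij}e_j,0)$ and $[(h_i,-h_i),(0,f_j)]=(0,a_{ij}f_j)$ in $\Gk$. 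Matching the induced surjection $U(\Gk)\twoheadrightarrow\overline{V}_1$ against PBW bases gives the stated isomorphism. I expect the main obstacle to be the (individually routine) computations at $q=1$: verifying that $U^{L,0}_1/(K_\mu-1)$ (resp.\ $V^0_1/(Z_\lambda-1)$) is genuinely the polynomial algebra $U(\Gh)$ on the $\begin{bmatrix}K_i\\1\end{bmatrix}$ (resp.\ the $\begin{bmatrix}Z_i\\1\end{bmatrix}$) with no residual relations, for which one leans on Lemma \ref{lem:chi-ind} and the explicit $\BA$-bases, together with the $0/0$-type evaluations such as $\lim_{q\to1}(q^{(\alpha_i,\alpha_j)}-1)/(q_i-q_i^{-1})=(\alpha_i,\alpha_j)/(\alpha_i,\alpha_i)$ needed to extract the brackets $[\begin{bmatrix}K_i\\1\end{bmatrix},E_j]$ and $[\begin{bmatrix}Z_i\\1\end{bmatrix},X_j]$; everything else is assembling the structure theory of the integral forms from the cited references.
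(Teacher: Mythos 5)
Your argument is correct and is exactly the standard one that the paper leaves implicit: after noting that $J_1$ (resp.\ $I_1$) is generated by the $Z_\lambda-1$ (resp.\ $K_\lambda-1$), the paper simply asserts the lemma "we see easily," relying on the classical $q\to1$ specialization of the integral forms from \cite{L2}, \cite{DK}, \cite{Gav}. Your verification of the specialized relations (in particular the sign agreement in \eqref{eq:def2aV}--\eqref{eq:def2bV} matching $[t_i,x_j]=a_{ij}x_j$, $[t_i,y_j]=a_{ij}y_j$, and the identification of $\begin{bmatrix}K_i\\1\end{bmatrix}$ with $h_i$) together with the triangular/PBW comparison is precisely the content being invoked, so no further comment is needed.
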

In the rest of this paper we will occasionally identify $\overline{V}_1$ and $\overline{U}^L_1$ with $U(\Gk)$ and $U(\Gg)$ respectively.

From the identification  $\overline{U}^L_1=U(\Gg)$ we have the following.

\begin{lemma}
The canonical paring 
\[
\langle\,,\,\rangle:C_1\times\overline{U}^L_1\to\BC
\]
induces an isomorphism
\begin{equation}
\label{eq:ISOM-F}
C_1\cong\BC[G]\,(\,\subset U(\Gg)^*\cong(\overline{U}^L_1)^*)
\end{equation} 
of Hopf algebras.
\end{lemma}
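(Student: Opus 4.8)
The plan is to identify the Hopf algebra $C_1$ concretely by tracking what happens to matrix coefficients and characters under the specialization $q\to 1$. First I would recall from \eqref{eq:CC} (specialized at $z=1$) that $C_1$ sits inside $(U_1^{L,-})^\bigstar\otimes\left(\bigoplus_{\lambda\in\Lambda}\BC\chi_\lambda\right)\otimes(U_1^{L,+})^\bigstar$, and that the natural pairing descends to $\langle\,,\,\rangle\colon C_1\times\overline{U}^L_1\to\BC$ making $C_1\hookrightarrow(\overline{U}^L_1)^*$. Using Lemma \ref{lem:barv1}(ii) to identify $\overline{U}^L_1=U(\Gg)$, this already realizes $C_1$ as a Hopf subalgebra of $U(\Gg)^*$; it remains to show its image is exactly $\BC[G]$ under the embedding $\BC[G]\subset U(\Gg)^*$ from the subsection "Specialization to $1$."

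The key step is the inclusion $\BC[G]\subseteq C_1$. Here I would argue representation-theoretically: $C$ is by definition spanned by matrix coefficients of finite-dimensional type-$\mathbf 1$ $U$-modules with weights in $\Lambda$, these are $\BA$-forms (good filtrations / Weyl modules exist over $U^L_\BA$ by Lusztig's theory recalled in Section 3), and the specialization $\pi^{U^L}_1$ sends the Lusztig $\BA$-form of the Weyl module $V_\BA(\mu)$ to the Weyl module of $\Gg$ (or of $G$, since $G$ is simply connected) with highest weight $\mu$. The matrix coefficients of these $\Gg$-modules span precisely $\BC[G]$ because $G$ is simply connected and semisimple (Peter--Weyl / algebraic Peter--Weyl), so every such matrix coefficient lies in $C_1$. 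The reverse inclusion $C_1\subseteq\BC[G]$ follows because the specialization of a matrix coefficient of a $U^L_\BA$-lattice in a type-$\mathbf 1$ module is a matrix coefficient of the specialized $\Gg$-module, which is a finite-dimensional $\Gg$-module integrating to $G$ (again using simple connectedness so that all dominant integral weights occur and nothing more), hence again lies in $\BC[G]$; one checks the weight-character part $\bigoplus_\lambda\BC\chi_\lambda$ matches the regular functions on the torus $H$ correctly under the normalization $(\Lambda,Q)\subset\BZ$.

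Once both inclusions are established, I would verify that the identification $C_1\xrightarrow{\sim}\BC[G]$ is a Hopf algebra map: multiplication, comultiplication, counit and antipode on $C_1$ are all induced from the Hopf structure on $C$ by base change along $\pi_1$, and under the pairing with $U(\Gg)$ these go over to the usual Hopf operations on $\BC[G]\subset U(\Gg)^*$ — this is a formal compatibility once the underlying vector-space isomorphism is in hand, since the pairing $C_1\times\overline{U}^L_1\to\BC$ is a Hopf pairing and $U(\Gg)\to\BC[G]^*$ is the standard one. The main obstacle I anticipate is the bookkeeping around the torus part: one must be careful that $C_z$ involves the characters $\chi_\lambda$ for $\lambda\in\Lambda$ (not just $Q$), that Lemma \ref{lem:chi-ind} guarantees these remain independent after specialization, and that the flatness of the $\BA$-forms (so that specialization of a lattice in a Weyl module is again the expected Weyl module, with no collapse in characters) holds for the chosen $\ell$ at $z=1$ — but at $z=1$ there is no root-of-unity subtlety, so this reduces to the classical fact that $U^L_{\BA_1}$ specializes to $U(\Gg)$ with the correct integral structure, which is covered by Lusztig's results cited in Section 3.
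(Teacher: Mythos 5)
The paper gives no proof of this lemma at all: it is asserted as an immediate consequence of the identification $\overline{U}^L_1\cong U(\Gg)$ from Lemma \ref{lem:barv1}(ii), the injectivity of $C_1\to(\overline{U}^L_1)^*$ already established via \eqref{eq:CC} and Lemma \ref{lem:chi-ind}, and the standard fact (going back to Lusztig \cite{L2} and recalled in \cite{T}) that $C$ is a flat $q$-deformation of $\BC[G]$. Your proposal supplies exactly the argument the author is implicitly invoking, and it is correct: injectivity into $U(\Gg)^*$ reduces the problem to the two inclusions, the inclusion $\BC[G]\subseteq C_1$ follows from specialization of $U^L_{\BA_1}$-stable lattices in the type-$\mathbf 1$ highest weight modules together with algebraic Peter--Weyl for the simply connected group $G$, and the Hopf compatibility is formal since the pairing is a Hopf pairing. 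One point you pass over a little quickly is the reverse inclusion $C_1\subseteq\BC[G]$: an arbitrary element of $C_{\BA_1}$ is not literally presented as a matrix coefficient of a lattice, so you either need that over the discrete valuation ring $\BA_1$ the integral part of a coefficient space is spanned by matrix coefficients of a stable lattice (an elementary-divisors argument), or, more cheaply, you can observe that the image of $C_1$ in $U(\Gg)^*$ is a sum of finite-dimensional $U(\Gg)$-subbimodules on which $\Gh$ acts by weights in $\Lambda$, and any such functional integrates to a regular function on the simply connected group $G$. Also note that the weight-lattice bookkeeping you worry about at the end is exactly what \eqref{eq:CC} and Lemma \ref{lem:chi-ind} are set up to handle, so no extra work is needed there. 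With either repair of the reverse inclusion your proof is complete and is the natural expansion of the paper's unproved assertion.
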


In \cite{DP} De Concini-Procesi proved an isomorphism
\begin{equation}
\label{eq:DP}
U_1\cong \BC[K]
\end{equation}
of Poisson Hopf algebras.
They established \eqref{eq:DP} by giving a correspondence between generators of both sides and proving the compatibility after a lengthy calculation.
Later Gavarini \cite{Gav} gave a more natural approach to the isomorphism \eqref{eq:DP}  using the Drinfeld paring.
Namely we have the following.
\begin{proposition}[Gavarini \cite{Gav}]
\label{prop:isom}
The bilinear form 
$
\overline{\sigma}_1:U_1\times\overline{V}_1\to\BC
$
induces a Hopf
 algebra isomorphism
\begin{equation}
\label{eq:ISOM}
\Upsilon:U_1\to\BC[K]\,(\,\subset U(\Gk)^*\simeq\overline{V}_1^*).
\end{equation}
\end{proposition}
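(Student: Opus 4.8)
The plan is to let $\Upsilon$ be the map $u\mapsto\overline\sigma_1(u,-)$, which a priori lands in $\overline V_1^{*}$ and, via the identification $\overline V_1\cong U(\Gk)$ of Lemma~\ref{lem:barv1}(i), in $U(\Gk)^{*}$; one then checks in turn that $\Upsilon$ is injective, that its image is exactly $\BC[K]$, that it is a coalgebra homomorphism, and that it is an algebra homomorphism. Granting all of this, compatibility with the counits is automatic, and compatibility with the antipodes is the standard fact that a bialgebra map between Hopf algebras intertwines the antipodes; hence $\Upsilon$ is a Hopf algebra isomorphism $U_1\to\BC[K]$.

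Injectivity of $\Upsilon$ is precisely the perfectness \eqref{eq:perfect1} of $\overline\sigma_1$ established in Lemma~\ref{lem:perfect}. To identify the image I would pass to the triangular decompositions: specializing $U_\BA\cong U_\BA^{+}\otimes U_\BA^{0}\otimes U_\BA^{-}$ gives $U_1\cong U_1^{+}\otimes U_1^{0}\otimes U_1^{-}$, one has $\overline V_1\cong\overline V_1^{-}\otimes\overline V_1^{+}\otimes\overline V_1^{0}$, and the defining formula for $\sigma$ expresses $\overline\sigma_1$ on the matching factors through the Drinfeld pairing $\tau$. By Proposition~\ref{prop:DPPBW}, evaluated on the PBW monomials in the $A_{\beta_k}$ and $B_{\beta_k}$ and specialized at $q=1$, this pairing is diagonal with nonzero diagonal entries $\prod_k(-1)^{m_k}m_k!$; consequently $\Upsilon$ identifies $U_1^{\pm}$ with the weight-graded dual of $\overline V_1^{\mp}\cong U(\Gn^{\mp})$, which is $\BC[N^{\mp}]$, and identifies $U_1^{0}$ (spanned by the images of the $K_\lambda$) with $\bigoplus_{\lambda\in\Lambda}\BC\chi_\lambda=\BC[H]$. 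Here one uses that the relevant weight spaces are finite dimensional, with $\dim U^{\pm}_{\pm\gamma}=\dim\BC[N^{\mp}]_\gamma$. Assembling the three factors and using $K=K^{+}K^{0}K^{-}$ together with $K^{\pm}\cong N^{\pm}$, $K^{0}\cong H$, this shows that $\Upsilon$ is a linear bijection of $U_1$ onto $\BC[K]\subset U(\Gk)^{*}$.

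The coalgebra property follows by specializing Proposition~\ref{prop:invariance} to $z=1$ and pushing it down to $\overline V_1$: the resulting identity $\overline\sigma_1(u,vv')=\sum_{(u)}\overline\sigma_1(u_{(0)},v)\,\overline\sigma_1(u_{(1)},v')$ says exactly that $\Delta_{\BC[K]}\circ\Upsilon=(\Upsilon\otimes\Upsilon)\circ\Delta_{U_1}$, since the coproduct of $\BC[K]\subset U(\Gk)^{*}$ is dual to the product of $U(\Gk)=\overline V_1$. The remaining point, and the main obstacle, is the algebra property: what is needed is the companion identity $\overline\sigma_1(uu',v)=\sum_{(v)}\overline\sigma_1(u,v_{(0)})\,\overline\sigma_1(u',v_{(1)})$ with respect to the coproduct of $U(\Gk)=\overline V_1$, equivalently --- as $U(\Gk)$ is generated by the primitive elements of $\Gk$ --- the twisted Leibniz rule $\overline\sigma_1(uu',z)=\overline\sigma_1(u,z)\,\varepsilon(u')+\varepsilon(u)\,\overline\sigma_1(u',z)$ for $z\in\Gk$. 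This is the second half of the statement that $\sigma$ is a skew Hopf pairing between $U$ and $V$ --- the analogue for $\sigma$ of identity \eqref{eq:DP2} for $\tau$ --- and it is the essential input, available from Gavarini's work \cite{Gav}. As a cross-check, or as an alternative route avoiding this identity, one may instead invoke the De Concini-Procesi isomorphism \eqref{eq:DP} and verify that it agrees with $\Upsilon$ on the algebra generators $K_\lambda$, $A_i$, $B_i$ of $U_1$.
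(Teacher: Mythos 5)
The paper offers no proof of this proposition: it is stated with the attribution to Gavarini \cite{Gav} and used as a black box, so there is no internal argument to compare yours against. Judged on its own terms, your outline is sound in three of its four steps, and each of those three genuinely can be carried out with material already in the paper: injectivity is exactly \eqref{eq:perfect1} of Lemma \ref{lem:perfect}; the identification of the image with $\BC[K^+]\otimes\BC[K^-]\otimes\BC[K^0]\cong\BC[K]$ follows from the triangular decompositions, Proposition \ref{prop:DPPBW} (whether the nonzero diagonal entries are $\pm1$ or $\pm m_k!$ depends on whether you pair the $A_\beta$-monomials against divided-power or ordinary monomials in $U(\Gn^\mp)$ --- immaterial), Lemma \ref{lem:chi-ind} for the torus factor, and the standard fact that the weight-graded dual of $U(\Gn^\mp)$ is $\BC[N^\mp]$; and the coalgebra property is precisely Proposition \ref{prop:invariance} specialized at $z=1$ and pushed down to $\overline{V}_1$. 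Your reduction of the algebra property to a Leibniz rule on the primitive generators of $U(\Gk)$ is also legitimate, since the first adjunction identity lets one induct on products in the $\overline{V}_1$-variable.

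The one place where your argument is not self-contained is the step you yourself flag: the identity $\overline{\sigma}_1(uu',v)=\sum_{(v)}\overline{\sigma}_1(u,v_{(0)})\,\overline{\sigma}_1(u',v_{(1)})$. This is not available inside the paper --- $V_z$ is introduced only as an algebra, with no coproduct, and Proposition \ref{prop:invariance} records only the other adjunction --- and it does not follow formally from the definition of $\sigma$ as a product of three Drinfeld pairings on triangular factors: the content is precisely what happens to the cross terms when a product $uu'$ is rewritten in the form $u_+u_0S(u_-)$. Citing \cite{Gav} for this is defensible, since the proposition itself carries that attribution, but be aware that this is where all the real work is hiding; your proposed alternative via \eqref{eq:DP} merely trades one external citation (\cite{Gav}) for another (\cite{DP}). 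A self-contained proof would require either establishing the full skew-pairing property of $\sigma$ or verifying multiplicativity of $\Upsilon$ directly on the generators $K_\lambda$, $A_i$, $B_i$ against the PBW basis of $\overline{V}_1$ at $q=1$.
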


The enveloping algebra $U(\Gk^\pm)$ has the direct sum decomposition
\[
U(\Gk^\pm)=\bigoplus_{\beta\in Q^+}U(\Gk^\pm)_{\pm\beta},
\]
where
\[
U(\Gk^\pm)_{\pm\beta},
=\{x\in U(\Gk^\pm)\mid [(h,-h),x]=\beta(h)x\,\,(h\in\Gh)\}
\]
for $\beta\in Q^+$
(note that we have an isomorphism $\Gh\ni h\leftrightarrow(h,-h)\in\Gk^0$).
Then we have 
\[
\BC[K^\pm]=\bigoplus_{\beta\in Q^+}(U(\Gk^\pm)_{\pm\beta})^*
\subset U(\Gk^\pm)^*.
\]
Moreover, 
we have 
\[
\BC[K^0]=\bigoplus_{\lambda\in \Lambda}\BC\hat{\chi}_\lambda\subset U(\Gk_0)^*,
\]
where $\hat{\chi}_\lambda:U(\Gk^0)\to\BC$ is the algebra homomorphism given by $\hat{\chi}_\lambda(h,-h)=\lambda(h)\,\,(h\in\Gh)$.
The isomorphism
\[
K^+\times K^-\times K^0\simeq K\qquad
((g_+,g_-,g_0)\leftrightarrow g_+g_-g_0)
\]
of algebraic varieties induced by the product of the group $K$ gives an identification
\begin{equation}
\label{eq:M-docomp}
\BC[K^+]\otimes\BC[K^-]\otimes\BC[K^0]
\simeq\BC[K]
\end{equation}
of vector spaces.
On the other hand the multiplication of the algebra $U(\Gk)$ induces an identification 
\[
U(\Gk^+)\otimes U(\Gk^-)\otimes U(\Gk^0)\simeq U(\Gk).
\]
Then the canonical embedding $\BC[K]\subset U(\Gk)^*$ is given by
\begin{align*}
\BC[K]&\simeq
\BC[K^+]\otimes\BC[K^-]\otimes\BC[K^0]
\subset
U(\Gk^+)^*\otimes U(\Gk^-)^*\otimes U(\Gk^0)^*\\
&\subset
(U(\Gk^+)\otimes U(\Gk^-)\otimes U(\Gk^0))^*
=U(\Gk)^*.
\end{align*}

For $i\in I$ we define $a_i\in \BC[K^-]\subset U(\Gk^-)^*$,  $b_i\in \BC[K^+]\subset U(\Gk^+)^*$ by
\begin{align*}
&\langle a_i, U(\Gk^-)_{-\beta}\rangle=0\quad
(\beta\ne\alpha_i),\qquad
\langle a_i,y_i\rangle=-1,\\
&\langle b_i, U(\Gk^+)_{\beta}\rangle=0\quad
(\beta\ne\alpha_i),\qquad
\langle b_i,x_i\rangle=1.
\end{align*}
We identify $\BC[K^\pm], \BC[K^0]$ with subalgebras of $\BC[K]$ via \eqref{eq:M-docomp}, and regard $a_i, b_i, \hat{\chi}_\lambda \,\,(i\in I, \lambda\in \Lambda)$ as elements of $\BC[K]$.
By the above argument we see easily the following.
\begin{lemma}
Under the identification \eqref{eq:ISOM} we have
\[
\pi_1^U(A_{i})\leftrightarrow a_i,\quad
\pi_1^U(B_{i})\leftrightarrow b_i\hat{\chi}_{-\alpha_i},\quad
\pi_1^U(K_{\lambda})\leftrightarrow \hat{\chi}_{\lambda}\qquad
(i\in I, \lambda\in \Lambda).
\]
\end{lemma}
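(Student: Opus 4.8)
The plan is to compute, for each of $A_i$, $B_i$, $K_\lambda$, its image under $\Upsilon$ explicitly as a linear functional on $\overline{V}_1\cong U(\Gk)$, and to compare it with $a_i$, $b_i\hat\chi_{-\alpha_i}$, $\hat\chi_\lambda$ on a PBW basis. By the construction of \eqref{eq:ISOM}, for $w\in U_\BA$ the functional $\Upsilon(\pi_1^U(w))$ sends $\overline{\pi}_1^V(v)\in U(\Gk)$ to $\pi_1(\sigma_\BA(w,v))$, and $U(\Gk)$ carries the PBW basis $\{\overline{\pi}_1^V(v_-v_+v_0)\}$ with $v_-,v_+,v_0$ ranging over the PBW monomials of $V_\BA^-$, $V_\BA^+$, $V_\BA^0$; so it suffices to evaluate both sides on this basis.

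First I would rewrite each generator in the triangular form $u_+u_0S(u_-)$ occurring in the definition of $\sigma$: one has $A_i=A_i\cdot1\cdot S(1)$, $K_\lambda=1\cdot K_\lambda\cdot S(1)$, and, using $S(F_i)=-F_iK_i$ together with $K_i^{-1}F_iK_i=q_i^2F_i$, also $B_i=1\cdot K_i^{-1}\cdot S(-q_i^{-2}B_i)$. Substituting into the defining formula for $\sigma$ and using $\tau(1,\cdot)=\varepsilon=\tau(\cdot,1)$, the $Q$-grading compatibility of $\tau$, the values \eqref{eq:DP3}--\eqref{eq:DP5}, the identities $\jmath^{\geqq0}(X_i)=E_i$, $\jmath^{\leqq0}(Y_i)=F_i$, $\jmath^{\leqq0}(Z_\mu)=K_{-\mu}$, $\jmath^{\leqq0}\!\left(\begin{bmatrix}Z_j\\n_j\end{bmatrix}\right)=\prod_{s=0}^{n_j-1}(q_j^{-s}K_j^{-1}-q_j^sK_j)/(q_j^{s+1}-q_j^{-s-1})$, the multiplicativity of $\tau(K_\mu,-)$ on $U^0$, the relation $q^{(\alpha_i,\alpha_j)}=q_j^{a_{ji}}$, and the fact that the only weight-$\alpha_i$ (resp.\ $-\alpha_i$) PBW monomial of $V_\BA^+$ (resp.\ $V_\BA^-$) is $X_i$ (resp.\ $Y_i$), one computes, writing $v_0=Z_\mu\prod_j\begin{bmatrix}Z_j\\n_j\end{bmatrix}$: the number $\pi_1(\sigma_\BA(A_i,v_-v_+v_0))$ equals $-1$ if $v_+=1$, all $n_j=0$, $v_-=Y_i$, and $0$ otherwise; $\pi_1(\sigma_\BA(K_\lambda,v_-v_+v_0))$ equals $\prod_j\binom{\langle\lambda,\alpha_j^\vee\rangle}{n_j}$ if $v_-=v_+=1$ and $0$ otherwise; $\pi_1(\sigma_\BA(B_i,v_-v_+v_0))$ equals $\prod_j\binom{-a_{ji}}{n_j}$ if $v_-=1$, $v_+=X_i$, and $0$ otherwise. (Here the scalars $q^{\pm(\lambda,\mu)}$ and $q_i^{\pm2}$ become $1$ under $\pi_1$, $\prod_s[\langle\lambda,\alpha_j^\vee\rangle-s]_{q_j}/[s+1]_{q_j}$ specialises to $\binom{\langle\lambda,\alpha_j^\vee\rangle}{n_j}$, and $\prod_s(-[s+a_{ji}]_{q_j}/[s+1]_{q_j})$ specialises to $(-1)^{n_j}\binom{a_{ji}+n_j-1}{n_j}=\binom{-a_{ji}}{n_j}$.)

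Next I would evaluate $a_i$, $b_i\hat\chi_{-\alpha_i}$, $\hat\chi_\lambda$ on the same basis. The decisive structural point is that $\Gk^+$ and $\Gk^-$ commute, since $[(x,0),(0,y)]=0$ in $\Gg\oplus\Gg$; consequently each PBW vector $\overline{\pi}_1^V(v_-v_+v_0)$ of $U(\Gk)$, written in the order $U(\Gk^-)U(\Gk^+)U(\Gk^0)$ imposed by $\sigma$, equals the element obtained by transposing the two commuting factors, which lies in the order $U(\Gk^+)U(\Gk^-)U(\Gk^0)$ used to embed $\BC[K]=\BC[K^+]\otimes\BC[K^-]\otimes\BC[K^0]\subset U(\Gk)^*$, where the pairing is the tensor-product pairing. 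Hence $\langle a_i,\overline{\pi}_1^V(v_-v_+v_0)\rangle=\varepsilon(\overline{\pi}_1^V(v_+))\,\langle a_i,\overline{\pi}_1^V(v_-)\rangle\,\varepsilon(\overline{\pi}_1^V(v_0))$ is $-1$ precisely in the case found above for $A_i$; $\langle\hat\chi_\lambda,\overline{\pi}_1^V(v_0)\rangle=\prod_j\binom{\hat\chi_\lambda(t_j)}{n_j}=\prod_j\binom{\langle\lambda,\alpha_j^\vee\rangle}{n_j}$ since $\hat\chi_\lambda(t_j)=\lambda(h_j)=\langle\lambda,\alpha_j^\vee\rangle$ (and $\hat\chi_\lambda$ is $\varepsilon$ on the $\pm$-factors), matching $K_\lambda$; and $\langle b_i\hat\chi_{-\alpha_i},\overline{\pi}_1^V(v_-v_+v_0)\rangle$ vanishes unless $v_-=1$ and $\overline{\pi}_1^V(v_+)=x_i$, in which case it is $\prod_j\binom{\hat\chi_{-\alpha_i}(t_j)}{n_j}=\prod_j\binom{-a_{ji}}{n_j}$, matching $B_i$. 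Comparing the three cases finishes the proof.

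The step I expect to be the main obstacle is the bookkeeping rather than any single idea: reconciling the $V^-V^+V^0$ ordering built into $\sigma$ with the $U(\Gk^+)U(\Gk^-)U(\Gk^0)$ ordering of the embedding $\BC[K]\subset U(\Gk)^*$, and, for $B_i$, checking that the Cartan twist $K_i^{-1}$ produced by writing $B_i$ in the form $u_+u_0S(u_-)$ specialises to exactly the character $\hat\chi_{-\alpha_i}$ — this rests on $q^{(\alpha_i,\alpha_j)}=q_j^{a_{ji}}$ and on the compatibility of the various $\BA$-form normalisations (the renormalised generators $A_i,B_i$, the divided powers in $V_\BA$, and $\begin{bmatrix}Z_i\\m\end{bmatrix}\mapsto\binom{t_i}{m}$) so that no spurious scalars survive at $q=1$.
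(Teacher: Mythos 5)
Your proof is correct and follows exactly the route the paper intends: the paper itself gives no argument beyond ``by the above argument we see easily the following,'' and the verification it has in mind is precisely your computation of $\overline{\sigma}_1$ on PBW monomials via the Drinfeld-pairing values \eqref{eq:DP3}--\eqref{eq:DP5} together with the triangular description of the embedding $\BC[K]\subset U(\Gk)^*$. The two delicate points you single out --- reconciling the $V^-V^+V^0$ ordering in the definition of $\sigma$ with the $U(\Gk^+)U(\Gk^-)U(\Gk^0)$ ordering of \eqref{eq:M-docomp} via $[\Gk^+,\Gk^-]=0$, and the Cartan factor in $B_i=K_i^{-1}S(-q_i^{-2}B_i)$ specialising to $\hat{\chi}_{-\alpha_i}$ --- are both handled correctly, and the scalar bookkeeping ($\tau(A_i,F_i)=-1$, $q_i^{-2}\mapsto 1$, $\begin{bmatrix}\langle\lambda,\alpha_j^\vee\rangle\\ n_j\end{bmatrix}_{q_j}\mapsto\binom{\langle\lambda,\alpha_j^\vee\rangle}{n_j}$) checks out.
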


Let
$
\iota_1:U_1\to\overline{U}^L_1
$
be the homomorphism induced by the inclusion $\iota:U_{\BA_1}\to U^L_{\BA_1}$.
By Lemma \ref{lem:DP-L} we see easily the following.
\begin{lemma}
\label{lem:iota1}
For $x\in U_1$ we have $\iota_1(x)=\varepsilon(x)1$.
\end{lemma}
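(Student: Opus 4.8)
The claim is that $\iota_1 \colon U_1 \to \overline{U}^L_1$ satisfies $\iota_1(x) = \varepsilon(x)\cdot 1$ for all $x \in U_1$. I would prove this by reducing to generators of $U_\BA$, exploiting the direct-sum decomposition $U_\BA \simeq U_\BA^+ \otimes U_\BA^0 \otimes U_\BA^-$ from the excerpt, and then checking the three types of generators separately using the explicit bases provided by Lemma \ref{lem:DP-L} and equation \eqref{eq:UA0}.

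First I would observe that since $\iota_1$ is an algebra homomorphism and $\varepsilon$ is an algebra homomorphism, it suffices to verify $\iota_1(x) = \varepsilon(x)\cdot 1$ on a generating set of $U_1$ as a $\BC$-algebra; equivalently, since $\iota_1 = \pi^{U^L}_1 \circ \iota$ at the level of $\BA_1$-forms, it suffices to show that $\iota(x) - \varepsilon(x)\cdot 1$ lies in the kernel of $\pi^{U^L}_1$ (i.e.\ in $(q^{1/|\Lambda/Q|} - 1)U^L_{\BA_1}$, after also quotienting by $I_1$) for each generator $x$ of $U_{\BA_1}$. By Lemma \ref{lem:DP-L} and \eqref{eq:UA0}, $U_{\BA_1}$ is generated over $\BA_1$ by the elements $A_i = (q_i - q_i^{-1})E_i$, $B_i = (q_i - q_i^{-1})F_i$, and $K_\lambda$ for $i \in I$, $\lambda \in \Lambda$ (together with whatever is needed from $U_\BA^0$; since $U_\BA^0 = \sum_\lambda \BA K_\lambda$ this is just the $K_\lambda$).

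Next, the three checks. For $x = A_i$: inside $U^L_{\BA_1}$ we have $A_i = (q_i - q_i^{-1})E_i^{(1)}$, and $q_i - q_i^{-1}$ specializes to $0$ under $\pi_1$; hence $\pi^{U^L}_1(\iota(A_i)) = 0 = \varepsilon(A_i)\cdot 1$ since $\varepsilon(E_i) = 0$. Similarly for $x = B_i$, using $\varepsilon(F_i) = 0$ and again $q_i - q_i^{-1} \mapsto 0$. For $x = K_\lambda$: under $\overline{\pi}^{U^L}_1$ composed with $\overline{U}^L_1 \cong U(\Gg)$ from Lemma \ref{lem:barv1}(ii), $K_\lambda$ specializes to $1$ (the group-like element becomes the identity in the enveloping algebra, consistent with $\overline{\pi}^{U^L}_1\!\left(\begin{bmatrix}K_i\\1\end{bmatrix}\right) \leftrightarrow h_i$ and the fact that $K_i - 1$ is divisible by $q_i - q_i^{-1}$ in $U^L_{\BA_1}$ modulo $I_1$); and $\varepsilon(K_\lambda) = 1$, so the two agree. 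Assembling these via multiplicativity, and using the PBW-type factorization so that a general element of $U_{\BA_1}$ is an $\BA_1$-combination of monomials in the $A_{\beta_k}$, $B_{\beta_k}$, $K_\lambda$, gives the result over $\BA_1$, hence over $\BC$ after applying $\pi^U_1$.

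\textbf{The main obstacle.} The delicate point is the bookkeeping for the divided powers and the $K$-part: one must be sure that \emph{every} element of $U_{\BA_1}$, not merely the displayed generators, is captured — i.e.\ that higher root vectors $A_{\beta_k}^{m}$ with $m \geq 2$ and products involving several $A$'s and $B$'s all still map into $(q^{1/|\Lambda/Q|}-1)U^L_{\BA_1}$ plus $I_1$. This follows because each factor $A_{\beta_k}$ or $B_{\beta_k}$ individually contributes a factor $q_{\beta_k} - q_{\beta_k}^{-1}$ which specializes to $0$, so any monomial containing at least one such factor dies; the only surviving monomials are the pure $K_\lambda$'s, which map to $1 = \varepsilon(K_\lambda)$. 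The one subtlety worth a line of care is that $A_{\beta_k} = (q_{\beta_k} - q_{\beta_k}^{-1}) E_{\beta_k}$ must genuinely lie in $U^L_{\BA_1}$ with $E_{\beta_k}$ — not only $E^{(1)}_{\beta_k}$ — an integral element; this is exactly the content of Lemma \ref{lem:DP-L} (the inclusion $U_\BA^\pm \subset U_\BA^{L,\pm}$), so no new work is needed.
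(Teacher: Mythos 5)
Your argument is correct and is essentially the paper's own: the paper's entire proof is the remark ``By Lemma \ref{lem:DP-L} we see easily the following'', i.e.\ every PBW basis monomial of $U_{\BA_1}$ containing a factor $A_{\beta_k}$ or $B_{\beta_k}$ carries the scalar $q_{\beta_k}-q_{\beta_k}^{-1}$, which vanishes under $\pi_1$, while $K_\lambda\mapsto 1=\varepsilon(K_\lambda)$ in $\overline{U}^L_1$ (since $K_\lambda-1\in I_1^0$ by \eqref{eq:Iz0}). Note only that the load-bearing step is the linearity argument over the $\BA$-basis of Lemma \ref{lem:DP-L} in your final paragraph, not the initial reduction to the simple-root elements $A_i, B_i, K_\lambda$: these do \emph{not} generate $U_{\BA_1}$ as an algebra for $\Gg\neq\Gsl_2$ (the non-simple $A_\beta, B_\beta$ cannot be reached without dividing by quantities vanishing at $q=1$, which is exactly why Poisson brackets are needed later to generate $\BC[K]$), so that reduction should simply be dropped in favor of the basis argument you already give.
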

From this we obtain the following easily.
\begin{lemma}
\label{lem:com-D}
$D_1$ is a commutative algebra.
In particular, it is identified as an algebra with the coordinate algebra $\BC[G]\otimes\BC[K]$ of $G\times K$.
\end{lemma}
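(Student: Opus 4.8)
The plan is to show that the Heisenberg double multiplication of $D_1$ degenerates into the componentwise multiplication of $C_1\otimes_\BC U_1$, after which the commutativity of $\BC[G]$ and $\BC[K]$ finishes the argument.

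First I would keep careful track of the $\BA$-forms. Recall that the product of $D=C\otimes_\BF U$ is built from the left action of $U$ on $C$ coming from the $U$-bimodule structure of $C$. Since $U_\BA\subset U^L_\BA$ by Lemma \ref{lem:DP-L}, and $C_\BA$ is a $U^L_\BA$-bimodule, the product of $D_\BA=C_\BA\otimes_\BA U_\BA$ uses the restriction of this action along $\iota:U_\BA\to U^L_\BA$; tensoring with $\BC$ over $\BA_1$, the product of $D_1$ uses the $U_1$-action on $C_1$ obtained by restricting the $U^L_1$-bimodule structure of $C_1$. This compatibility of the Heisenberg double structure with the passage to $\BA$-forms and with specialization is the point that must be checked with care, and I expect it to be the main (and essentially the only) obstacle.

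Next I would observe that the $U^L_1$-action on $C_1$ factors through $\overline{U}^L_1$: by \eqref{eq:CC} the canonical pairing $C_1\times U^L_1\to\BC$ descends to $C_1\times\overline{U}^L_1\to\BC$ and embeds $C_1$ into $(\overline{U}^L_1)^*$, so the bimodule structure on $C_1$, being dual to the multiplication of $\overline{U}^L_1$, factors through $\overline{U}^L_1$. Hence the $U_1$-action on $C_1$ entering the product of $D_1$ is the one obtained by composing with $\iota_1:U_1\to\overline{U}^L_1$. By Lemma \ref{lem:iota1} we have $\iota_1(x)=\varepsilon(x)1$, so $x\in U_1$ acts on $C_1$ as the scalar $\varepsilon(x)$, where $\varepsilon$ is the counit of $U_1$.

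Plugging this into the Heisenberg double formula $(\varphi\otimes u)(\varphi'\otimes u')=\sum_{(u)}\varphi\,(u_{(0)}\varphi')\otimes u_{(1)}u'$, for $\varphi,\varphi'\in C_1$ and $u,u'\in U_1$ we obtain
\[
(\varphi\otimes u)(\varphi'\otimes u')=\sum_{(u)}\varepsilon(u_{(0)})\,\varphi\varphi'\otimes u_{(1)}u'=\varphi\varphi'\otimes uu',
\]
so $D_1\cong C_1\otimes_\BC U_1$ as algebras with componentwise product. By \eqref{eq:ISOM-F} we have $C_1\cong\BC[G]$, and by \eqref{eq:DP} (equivalently Proposition \ref{prop:isom}) we have $U_1\cong\BC[K]$; both are commutative, hence $D_1$ is commutative and is identified as an algebra with $\BC[G]\otimes\BC[K]=\BC[G\times K]$.
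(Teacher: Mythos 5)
Your proof is correct and follows exactly the route the paper intends: the statement is deduced from Lemma \ref{lem:iota1} by observing that the $U_1$-action on $C_1$ entering the Heisenberg double product factors through $\iota_1$ and hence through the counit, so the product degenerates to the componentwise one on $C_1\otimes U_1\cong\BC[G]\otimes\BC[K]$. Your explicit attention to the compatibility of the $\BA$-forms and the specialization is a sound elaboration of what the paper leaves implicit.
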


\subsection{Specialization to roots of $1$}
\label{section:roots}
From now on,
we fix an integer $\ell>1$ satisfying
\begin{itemize}
\item[(a)]
$\ell$ is odd,
\item[(b)]
$\ell$ is prime to 3 if $\Gg$ is of type $G_2$,
\item[(c)]
$\ell$ is prime to $|\Lambda/Q|$,
\end{itemize}
and a primitive $\ell$-th root $\zeta\in\BC$ of $1$.
Note that $\pi_\zeta:\BA_\zeta\to\BC$ sends $q$  to $\zeta^{|\Lambda/Q|}$, which is also a primitive $\ell$-th root of $1$ by our assumption (c).

\begin{remark}
{\rm
Denote by $U^{DK}_{\BC[q^{\pm1/|\Lambda/Q|}]}$ the De Concini-Kac 
$\BC[q^{\pm1/|\Lambda/Q|}]$-form of $U$
(see \cite{DK}). 
Namely $U^{DK}_{\BC[q^{\pm1/|\Lambda/Q|}]}$ is the ${\BC[q^{\pm1/|\Lambda/Q|}]}$-subalgebra of $U$ generated by
$\{K_\lambda, E_i, F_i\mid \lambda\in\Lambda, i\in I\}$.
Then we have $U_\zeta\simeq\BC\otimes_{\BC[q^{\pm1/|\Lambda/Q|}]} U^{DK}_{\BC[q^{\pm1/|\Lambda/Q|}]}$
with respect to $q^{1/|\Lambda/Q|}\mapsto\zeta$.}
\end{remark}
We denote by 
$\tilde{\xi}:U^L_\zeta\to U^L_1$
Lusztig's Frobenius morphism (see \cite{L2}).
Namely, $\tilde{\xi}$ is an algebra homomorphism given by
\begin{align}
\label{eq:TXL1}
\tilde{\xi}(
{\pi}^{U^L}_\zeta({E_i^{(n)}}))
&=
\begin{cases}
{\pi}^{U^L}_1({E_i^{(n/\ell)}})\quad&(\ell\,|\,n)\\
0&(\ell\not|\,n),
\end{cases}\\
\label{eq:TXL2}
\tilde{\xi}({\pi}^{U^L}_\zeta({F_i^{(n)}}))
&=
\begin{cases}
{\pi}^{U^L}_1({F_i^{(n/\ell)}})\quad&(\ell\,|\,n)\\
0&(\ell\not|\,n),
\end{cases}
\\
\label{eq:TXL3}
\tilde{\xi}\left({\pi}^{U^L}_\zeta
\left(
{\begin{bmatrix}{K_i}\\{m}\end{bmatrix}}
\right)\right)
&=
\begin{cases}
{\pi}^{U^L}_1
\left(
{\begin{bmatrix}{K_i}\\{m/\ell}\end{bmatrix}}
\right)
\quad&(\ell\,|\,m)\\
\,\,\,0&(\ell\not|\,m),
\end{cases}
\\
\label{eq:TXL4}
\tilde{\xi}({\pi}^{U^L}_\zeta({K}_\lambda))
&=
{\pi}^{U^L}_1({K}_\lambda)\quad(\lambda\in \Lambda).
\end{align}
It is a Hopf algebra homomorphism.
Moreover, for any $\beta\in\Delta^+$ we have
\begin{align}
\label{eq:TXL1a}
\tilde{\xi}(
{\pi}^{U^L}_\zeta({E_\beta^{(n)}}))
&=
\begin{cases}
{\pi}^{U^L}_1({E_\beta^{(n/\ell)}})\quad&(\ell\,|\,n)\\
0&(\ell\not|\,n),
\end{cases}\\
\label{eq:TXL2a}
\tilde{\xi}({\pi}^{U^L}_\zeta({F_\beta^{(n)}}))
&=
\begin{cases}
{\pi}^{U^L}_1({F_\beta^{(n/\ell)}})\quad&(\ell\,|\,n)\\
0&(\ell\not|\,n).
\end{cases}
\end{align}
\begin{lemma}
\label{lem:TXL}
We have $\tilde{\xi}(I_\zeta)\subset I_1$.
\end{lemma}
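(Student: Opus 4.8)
The plan is to work from the explicit generators of $I_\zeta$ and push them through the Frobenius morphism $\tilde{\xi}$ using the formulas \eqref{eq:TXL1}--\eqref{eq:TXL4} together with the characterization \eqref{eq:Iz0} of the degree-zero part. Recall that $I_\zeta = U_\zeta^{L,-}U_\zeta^{L,+}I_\zeta^0$ where $I_\zeta^0=\jmath^{\geqq0}(J_\zeta^0)$, and by \eqref{eq:Iz0} we have $I_\zeta^0=\{u\in U_\zeta^{L,0}\mid\chi_\mu(u)=0\ (\mu\in\Lambda)\}$. Since $\tilde{\xi}$ is an algebra homomorphism, it suffices to show $\tilde{\xi}(U_\zeta^{L,-})\subset U_1^{L,-}\subset I_1$-compatible factors, $\tilde{\xi}(U_\zeta^{L,+})\subset U_1^{L,+}$, and crucially $\tilde{\xi}(I_\zeta^0)\subset I_1^0$; then $\tilde{\xi}(I_\zeta)\subset U_1^{L,-}U_1^{L,+}I_1^0=I_1$. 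The first two inclusions are immediate from \eqref{eq:TXL1}--\eqref{eq:TXL2} (the image of a divided power $E_\beta^{(n)}$ lands in $U_1^{L,+}$, being either $0$ or $E_\beta^{(n/\ell)}$), so the whole matter reduces to the zero-part statement $\tilde{\xi}(I_\zeta^0)\subset I_1^0$.

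For the degree-zero reduction, I would use the explicit $\BA$-basis of $U_{\BA}^{L,0}$ given earlier, namely the elements $K_\lambda\prod_{i\in I}\begin{bmatrix}K_i\\ n_i\end{bmatrix}$ with $\lambda\in\Lambda_0$ and $n_i\geqq0$. Combined with \eqref{eq:TXL3}--\eqref{eq:TXL4}, $\tilde{\xi}$ sends $\pi^{U^L}_\zeta\!\left(K_\lambda\prod_i\begin{bmatrix}K_i\\ n_i\end{bmatrix}\right)$ to $\pi^{U^L}_1\!\left(K_\lambda\prod_i\begin{bmatrix}K_i\\ n_i/\ell\end{bmatrix}\right)$ when every $n_i$ is divisible by $\ell$, and to $0$ otherwise. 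So I need: if $u\in U_\zeta^{L,0}$ satisfies $\chi_\mu(u)=0$ for all $\mu\in\Lambda$, then $\tilde{\xi}(u)$ satisfies $\chi_\mu(\tilde{\xi}(u))=0$ for all $\mu\in\Lambda$, i.e.\ $\tilde{\xi}(u)\in I_1^0$. The key computational input is the compatibility of $\chi_\mu$ with Frobenius: one should check $\chi_\mu\circ\tilde{\xi}=\chi_{?}$ on each basis element, where evaluating $\chi_\mu$ on $\begin{bmatrix}K_i\\ m\end{bmatrix}$ at $q\mapsto\zeta^{|\Lambda/Q|}$ yields a $q$-binomial $\begin{bmatrix}(\mu,\alpha_i)/\cdots\\ m\end{bmatrix}_{q_i}$ specialized at the root of unity; by Lucas-type congruences for $q$-binomials at a primitive $\ell$-th root of $1$, this vanishes unless $\ell\,|\,m$, and when $\ell\,|\,m$ it reduces (in the classical specialization to $1$) to the corresponding ordinary binomial $\binom{(\mu,\alpha_i^\vee)}{m/\ell}$, matching $\chi_\mu$ applied to $\begin{bmatrix}K_i\\ m/\ell\end{bmatrix}$ in $U_1^{L,0}=U(\Gh)$-land. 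Thus $\chi_\mu\circ\tilde{\xi}$ agrees, on the basis, with a fixed character, forcing $\tilde{\xi}(I_\zeta^0)\subset I_1^0$.

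The main obstacle is the last congruence bookkeeping: carefully matching the specialization of the quantum binomial $\begin{bmatrix}K_i\\ m\end{bmatrix}$ at the root of unity against the Frobenius image, and confirming that the vanishing/non-vanishing pattern of $\chi_\mu$ exactly mirrors the divisibility dichotomy in \eqref{eq:TXL3}. This is where the hypotheses on $\ell$ in Section \ref{section:roots} (oddness, coprimality to $|\Lambda/Q|$ and to $3$ in type $G_2$) are used, and one must be attentive that $\pi_\zeta(q)=\zeta^{|\Lambda/Q|}$ is itself a primitive $\ell$-th root of $1$. Once this single arithmetic fact is in place, the rest is a formal consequence of $\tilde{\xi}$ being an algebra homomorphism and of the triangular decomposition $I_z=U_z^{L,-}U_z^{L,+}I_z^0$. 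I would also remark that since $I_1$ is exactly the kernel of $\overline{\pi}^{U^L}_1:U^L_1\to\overline{U}^L_1=U(\Gg)$, an alternative phrasing of the whole lemma is that the composite $U^L_\zeta\xrightarrow{\tilde{\xi}}U^L_1\to U(\Gg)$ kills $I_\zeta$, hence factors through $\overline{U}^L_\zeta$ — a viewpoint that will be convenient in the sequel.
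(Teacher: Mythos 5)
Your overall strategy is the same as the paper's: reduce via the triangular decomposition $I_z=U_z^{L,-}U_z^{L,+}I_z^0$ to the single inclusion $\tilde{\xi}(I_\zeta^0)\subset I_1^0$, expand against the basis $K_\nu\prod_{i}\begin{bmatrix}K_i\\ m_i\end{bmatrix}$, and invoke a Lucas-type congruence for $q$-binomials at a root of unity. The reduction step is fine. The problem is the arithmetic identity you rely on. You claim that $\chi_\mu$ applied to $\begin{bmatrix}K_i\\ m\end{bmatrix}$ in $U_\zeta^{L,0}$, i.e.\ $\begin{bmatrix}(\mu,\alpha_i^\vee)\\ m\end{bmatrix}_{q_i}$ specialized at the root of unity, ``vanishes unless $\ell\mid m$'' and otherwise equals $\binom{(\mu,\alpha_i^\vee)}{m/\ell}$. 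That is false for general $\mu$: already for $(\mu,\alpha_i^\vee)=1$ and $m=1$ the value is $1$. The Lucas congruence $\begin{bmatrix}n\\ m\end{bmatrix}_\epsilon=\binom{n_1}{m_1}\begin{bmatrix}n_0\\ m_0\end{bmatrix}_\epsilon$ (where $n=\ell n_1+n_0$, $m=\ell m_1+m_0$) kills the terms with $\ell\nmid m$ only when the top entry $n$ is itself divisible by $\ell$. So $\chi_\mu\circ\tilde{\xi}$ does \emph{not} agree with $\chi_\mu|_{U_\zeta^{L,0}}$, and your unidentified ``$\chi_?$'' cannot be $\chi_\mu$; as written, the step fails.

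The correct identification --- and the one-line device the paper uses --- is $\chi_\mu\circ\tilde{\xi}=\chi_{\ell\mu}|_{U_\zeta^{L,0}}$. For $\lambda=\ell\mu$ the top entries become $\ell(\mu,\alpha_i^\vee)$, the factor $q^{(\ell\mu,\nu)}$ specializes to $1$ at $q^{1/|\Lambda/Q|}=\zeta$, and Lucas now gives exactly the dichotomy you want: the product of $q$-binomials is $0$ unless every $m_i$ is divisible by $\ell$, in which case it equals $\prod_i\binom{(\mu,\alpha_i^\vee)}{m_i/\ell}=\chi_\mu\bigl(\tilde{\xi}(K_\nu\prod_i\begin{bmatrix}K_i\\ m_i\end{bmatrix})\bigr)$. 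With this substitution your argument closes: $u\in I_\zeta^0$ gives in particular $\chi_{\ell\mu}(u)=0$ for all $\mu\in\Lambda$, hence $\chi_\mu(\tilde{\xi}(u))=0$ for all $\mu$, i.e.\ $\tilde{\xi}(u)\in I_1^0$ by \eqref{eq:Iz0}. Note that only the sublattice $\ell\Lambda$ of test characters is used; the vanishing of $\chi_\lambda(u)$ for the remaining $\lambda$ plays no role. Your closing remark (that the lemma amounts to $\xi\circ\overline{\pi}^{U^L}_\zeta$ killing $I_\zeta$, so that $\xi$ descends to $\overline{U}^L_\zeta$) is correct and is indeed how the map $\xi$ is introduced right after the lemma.
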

\begin{proof}
It is sufficient to show
$\tilde{\xi}(I_\zeta^0)\subset I_1^0$.
For $z\in\BC^\times$, $m=(m_i)_{i\in I}\in\BZ_{\geqq0}^I$, and $\nu\in\Lambda_0$
set
\[
K_{m,\nu}(z)=
\pi_z^{U^L}
\left(
K_\nu
\prod_{i\in I}
\begin{bmatrix}{K_i}\\{m_i}\end{bmatrix}
\right)
\in U_z^{L,0}.
\]
Any element $u$ of $U_z^{L,0}$ is uniquely written as a finite sum
\[
u=\sum_{m,\nu}c_{m,\nu}K_{m,\nu}(z)
\qquad(c_{m,\nu}\in\BC).
\]
Then we have $u\in I_z^0$ if and only if
\[
\left.\sum_{m,\nu}c_{m,\nu}q^{(\lambda,\nu)}
\begin{bmatrix}{(\lambda,\alpha_i^\vee)}\\{m_i}\end{bmatrix}_{q_i}
\right|_{q^{1/|\Lambda/Q|}=z}=0
\qquad(\forall\lambda\in \Lambda).
\]
Hence it is sufficient to show that
\begin{equation}
\label{eq:J1}
\left.\sum_{m,\nu}c_{m,\nu}q^{(\lambda,\nu)}
\begin{bmatrix}{(\lambda,\alpha_i^\vee)}\\{m_i}\end{bmatrix}_{q_i}
\right|_{q^{1/|\Lambda/Q|}=\zeta}=0
\qquad(\forall\lambda\in \Lambda)
\end{equation}
implies
\begin{equation}
\label{eq:J2}
\sum_{m,\nu}c_{\ell m,\nu}
\begin{pmatrix}{(\mu,\alpha_i^\vee)}\\{m_i}\end{pmatrix}=0
\qquad(\forall\mu\in \Lambda).
\end{equation}
Indeed \eqref{eq:J2} follows by setting $\lambda=\ell\mu$ in \eqref{eq:J1}.
\end{proof}
We denote by
\begin{equation}
{\xi}:\overline{U}^L_\zeta\to \overline{U}^L_1\,(=U(\Gg))
\end{equation}
the Hopf algebra homomorphism
induced by $\tilde{\xi}$.
By Lusztig \cite{L2} we  have the following.
\begin{proposition}
\label{prop:CGtoF}
There exists a unique linear map 
\begin{equation}
{}^t\xi:C_1\,(=\BC[G])\to C_\zeta
\end{equation}
satisfying
\begin{equation}
\label{eq:tx}
\langle{}^t\xi(\varphi),v\rangle=
\langle \varphi,\xi(v)\rangle\qquad
(\varphi\in C_1,\,\, v\in \overline{U}^L_\zeta).
\end{equation}
It is an injective Hopf algebra homomorphism whose image is contained in the center of $C_\zeta$.
\end{proposition}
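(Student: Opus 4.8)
The plan is to build ${}^t\xi$ as the transpose of the Frobenius homomorphism $\xi\colon\overline{U}^L_\zeta\to\overline{U}^L_1=U(\Gg)$ and then verify, in turn, uniqueness, that this transpose lands in $C_\zeta$, that it is a Hopf-algebra morphism, that it is injective, and that its image is central. Uniqueness is immediate: the canonical map $C_\zeta\to(\overline{U}^L_\zeta)^*$ is injective (as noted after \eqref{eq:CC}, via Lemma~\ref{lem:chi-ind}), so any ${}^t\xi(\varphi)$ satisfying \eqref{eq:tx} is determined by its pairing against $\overline{U}^L_\zeta$. Conversely, the linear-algebra transpose of $\xi$ gives a map ${}^t\xi\colon(\overline{U}^L_1)^*\to(\overline{U}^L_\zeta)^*$ obeying \eqref{eq:tx}; restricting it to $C_1=\BC[G]\subset(\overline{U}^L_1)^*$, what remains to be shown is that ${}^t\xi(C_1)\subset C_\zeta$.

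For the inclusion ${}^t\xi(\BC[G])\subset C_\zeta$, which is the substantive point, I would represent $\varphi\in\BC[G]=C_1$ as a matrix coefficient $\varphi=c^E_{f,e}$ of a finite-dimensional rational $G$-module $E$, i.e.\ of a finite-dimensional $U(\Gg)=\overline{U}^L_1$-module with weights in $\Lambda$; then ${}^t\xi(\varphi)$ is the matrix coefficient $c^{E^{[1]}}_{f,e}$ of the Frobenius twist $E^{[1]}$ --- the space $E$ made into a $\overline{U}^L_\zeta$-module, hence a $U^L_\zeta$-module via $\overline{\pi}^{U^L}_\zeta$, through $\xi$. By \eqref{eq:TXL1}--\eqref{eq:TXL3} together with \eqref{eq:TXL1a}--\eqref{eq:TXL2a}, $E^{[1]}$ is integrable of type $1$: the $E_i^{(n)},F_i^{(n)}$ act as divided powers of the locally nilpotent operators $e_i,f_i$ when $\ell\mid n$ and as $0$ otherwise, while $U^{L,0}_\zeta$ acts on the $\mu$-weight space of $E$ through the character $\chi_{\ell\mu}$ (using the $q$-analogue of Lucas's congruence for $\begin{bmatrix}K_i\\m\end{bmatrix}$ at $\zeta$, or equivalently \eqref{eq:TXL3}). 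Choosing a Kostant $\BZ$-lattice in $E$ with $f,e$ integral and using that $\xi$ carries the $\BZ$-span of the divided powers in $U^L_{\BA_\zeta}$ into the Kostant $\BZ$-form of $U(\Gg)$, one sees that $c^{E^{[1]}}_{f,e}$ pairs $\BA_\zeta$-integrally with $U^L_{\BA_\zeta}$, so it lies in $C_{\BA_\zeta}$ and hence ${}^t\xi(\varphi)\in C_\zeta$. This step is, dually, Lusztig's construction of the quantum Frobenius on the coordinate algebra, and one may instead simply cite \cite{L2}.

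Granting this, the rest is formal. Since $\xi$ is a morphism of Hopf algebras, so is ${}^t\xi\colon C_1\to C_\zeta$: for instance $\langle{}^t\xi(\varphi\psi),v\rangle=\langle\varphi\otimes\psi,\Delta\xi(v)\rangle=\langle\varphi\otimes\psi,(\xi\otimes\xi)\Delta(v)\rangle=\langle{}^t\xi(\varphi){}^t\xi(\psi),v\rangle$, and compatibility with the unit, counit, comultiplication and antipode follows likewise from $\xi$ commuting with these; injectivity of $C_\zeta\hookrightarrow(\overline{U}^L_\zeta)^*$ then promotes each of these pairing identities to an identity in $C_\zeta$. Moreover ${}^t\xi$ is injective, because $\xi$ is surjective: by \eqref{eq:TXL1}--\eqref{eq:TXL3} and Lemma~\ref{lem:barv1} the image of $\xi$ contains $e_i,f_i,h_i$ and hence all of $U(\Gg)$, so the transpose is already injective on $(\overline{U}^L_1)^*$.

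The remaining obstacle is centrality of the image, which I would reduce to the identity $(\xi\otimes\id)\circ\Delta=(\xi\otimes\id)\circ\Delta^{\op}$ on $\overline{U}^L_\zeta$, equivalently $\sum_{(v)}\xi(v_{(0)})\otimes v_{(1)}=\sum_{(v)}\xi(v_{(1)})\otimes v_{(0)}$ in $U(\Gg)\otimes\overline{U}^L_\zeta$. Both sides are algebra homomorphisms $\overline{U}^L_\zeta\to U(\Gg)\otimes\overline{U}^L_\zeta$, so it suffices to check the identity on the generators $E_i^{(n)},F_i^{(n)},K_\lambda$; on the Cartan part it is trivial because $\Delta$ is cocommutative there, and on $E_i^{(n)},F_i^{(n)}$ it follows from the coproduct formulas together with the two specialization facts $q_i^\ell=1$ and $K_i^\ell=1$ in $\overline{U}^L_\zeta$ --- the latter since $\chi_\lambda(K_i^\ell-1)=\zeta^{\ell|\Lambda/Q|(\lambda,\alpha_i)}-1=0$ for all $\lambda$, so $K_i^\ell-1\in I_\zeta^0$ by \eqref{eq:Iz0} --- which force every occurring power of $q_i$ and of $K_i$ to equal $1$, so the two sides visibly coincide. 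Given the identity, for $\varphi\in\BC[G]$, $\psi\in C_\zeta$ and $v\in\overline{U}^L_\zeta$ one computes $\langle{}^t\xi(\varphi)\psi,v\rangle=\langle\varphi\otimes\psi,(\xi\otimes\id)\Delta(v)\rangle=\langle\varphi\otimes\psi,(\xi\otimes\id)\Delta^{\op}(v)\rangle=\langle\psi\,{}^t\xi(\varphi),v\rangle$, whence ${}^t\xi(\varphi)\psi=\psi\,{}^t\xi(\varphi)$ in $C_\zeta$. I expect the genuinely unavoidable input to be the inclusion ${}^t\xi(\BC[G])\subset C_\zeta$ (the existence of the quantum Frobenius on the coordinate ring); the Hopf-morphism property, injectivity and centrality are then short formal consequences.
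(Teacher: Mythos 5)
Your proposal is correct in outline, but it is worth noting at once that the paper offers no proof of this proposition at all: it is stated with the single sentence ``By Lusztig \cite{L2} we have the following,'' so the comparison is between your reconstruction and the literature rather than with an argument in the text. The formal parts of your reconstruction are sound and are exactly what one would write: uniqueness from the injectivity of $C_\zeta\to(\overline{U}^L_\zeta)^*$ noted after \eqref{eq:CC}; the Hopf-morphism property by transposing the fact that $\xi$ is a Hopf algebra map; injectivity from the surjectivity of $\xi$ (its image contains $e_i=\xi(\overline{\pi}^{U^L}_\zeta(E_i^{(\ell)}))$, $f_i$, $h_i$, hence all of $U(\Gg)$); and, most substantively, centrality from the identity $(\xi\otimes\id)\circ\Delta=(\xi\otimes\id)\circ\Delta^{\op}$ on $\overline{U}^L_\zeta$, which you correctly reduce to generators and to the two specializations $q_i^{\ell}=1$ and $K_i^{\ell}=1$ in $\overline{U}^L_\zeta$ (the latter via \eqref{eq:Iz0}). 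This is precisely the mechanism that makes the image of the quantum Frobenius central, and it is the same computation that underlies Proposition \ref{prop:center} later in the paper.

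The one place where your sketch is thinner than it looks is the inclusion ${}^t\xi(\BC[G])\subset C_\zeta$. Because $C_\zeta$ is \emph{defined} as $\BC\otimes_{\BA_\zeta}C_{\BA_\zeta}$ with $C_{\BA_\zeta}\subset C$ a subspace of the generic coordinate algebra, showing that the Frobenius-twisted matrix coefficient $c^{E^{[1]}}_{f,e}$ pairs $\BA_\zeta$-integrally with $U^L_{\BA_\zeta}$ does not by itself place it in $C_\zeta$: the characterization \eqref{eq:CC} is only an inclusion, so integrality of a functional on $U^L_{\BA_\zeta}$ is necessary but not sufficient. One must actually exhibit an element of $C_{\BA_\zeta}$ --- for instance a matrix coefficient of the Weyl module of highest weight $\ell\lambda$ with respect to a $U^L_{\BA_\zeta}$-stable lattice --- whose specialization at $\zeta$ equals $c^{E^{[1]}}_{f,e}$, and the fact that the Frobenius pullback of the simple $\Gg$-module $V(\lambda)$ is realized inside that specialization is itself a theorem of \cite{L2}. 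Since you explicitly offer to cite \cite{L2} for this step, and the paper does exactly that for the entire proposition, this is not so much a gap as the place where the citation genuinely carries the load; just be aware that the lattice argument as you phrased it proves integrality, not membership in $C_\zeta$.
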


\begin{lemma}
There exists an algebra homomorphism
\begin{equation}
\eta:\overline{V}_\zeta\to\overline{V}_1
\end{equation}
such that 
\begin{align*}
\eta(v)
&=(\overline{\jmath}_1^{\geqq0})^{-1}(\xi(\overline{\jmath}_\zeta^{\geqq0}(v)))
&(v\in \overline{V}_\zeta^{\geqq0}),\\
\eta(v)&=(\overline{\jmath}_1^{\leqq0})^{-1}(\xi(\overline{\jmath}_\zeta^{\leqq0}(v)))
&(v\in \overline{V}_\zeta^{\leqq0}).
\end{align*}
\end{lemma}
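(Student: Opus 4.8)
The plan is to construct $\eta$ on the two "halves" $\overline{V}^{\geqq0}_\zeta$ and $\overline{V}^{\leqq0}_\zeta$ separately using the prescribed formulas, observe that the two definitions agree on the overlap $\overline{V}^0_\zeta$, and then check that the resulting map respects the defining relations of $\overline{V}_\zeta$ so that it extends to an algebra homomorphism on all of $\overline{V}_\zeta$. First I would note that $\overline{\jmath}^{\geqq0}_\zeta$, $\overline{\jmath}^{\leqq0}_\zeta$, $\overline{\jmath}^{\geqq0}_1$, $\overline{\jmath}^{\leqq0}_1$ are all algebra isomorphisms (onto $\overline{U}^{L,\geqq0}_\zeta$, $\overline{U}^{L,\leqq0}_\zeta$, $\overline{U}^{L,\geqq0}_1$, $\overline{U}^{L,\leqq0}_1$ respectively), and $\xi\colon\overline{U}^L_\zeta\to\overline{U}^L_1$ is an algebra homomorphism restricting to maps $\overline{U}^{L,\geqq0}_\zeta\to\overline{U}^{L,\geqq0}_1$ and $\overline{U}^{L,\leqq0}_\zeta\to\overline{U}^{L,\leqq0}_1$, since by \eqref{eq:TXL1}, \eqref{eq:TXL2}, \eqref{eq:TXL1a}, \eqref{eq:TXL2a} Lusztig's Frobenius sends $E_\beta^{(n)}$ and $F_\beta^{(n)}$ into $U^{L,+}_1$ and $U^{L,-}_1$ respectively (or to $0$). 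Hence the two composites $\eta^{\geqq0}:=(\overline{\jmath}^{\geqq0}_1)^{-1}\circ\xi\circ\overline{\jmath}^{\geqq0}_\zeta$ and $\eta^{\leqq0}:=(\overline{\jmath}^{\leqq0}_1)^{-1}\circ\xi\circ\overline{\jmath}^{\leqq0}_\zeta$ are well-defined algebra homomorphisms $\overline{V}^{\geqq0}_\zeta\to\overline{V}^{\geqq0}_1$ and $\overline{V}^{\leqq0}_\zeta\to\overline{V}^{\leqq0}_1$.

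Next I would verify compatibility on $\overline{V}^0_\zeta$. Both $\overline{\jmath}^{\geqq0}_\zeta$ and $\overline{\jmath}^{\leqq0}_\zeta$ restrict on $\overline{V}^0_\zeta$ to the same map $Z_\lambda\mapsto K_\lambda$, $Z_\lambda\mapsto K_{-\lambda}$ — more precisely, from the formulas $\jmath^{\geqq0}(Z_\lambda)=K_\lambda$ and $\jmath^{\leqq0}(Z_\lambda)=K_{-\lambda}$ one sees that they differ by the automorphism $K_\lambda\mapsto K_{-\lambda}$ of $\overline{U}^{L,0}_1$; the same holds at $\zeta$. Since $\xi$ on $\overline{U}^{L,0}_\zeta$ is given by \eqref{eq:TXL3}, \eqref{eq:TXL4} and these formulas are manifestly compatible with $K_\lambda\mapsto K_{-\lambda}$, the two composites $\eta^{\geqq0}$ and $\eta^{\leqq0}$ agree on $\overline{V}^0_\zeta$. (Concretely, after the identifications $\overline{V}_1\cong U(\Gk)$, $\overline{U}^L_1\cong U(\Gg)$ of Lemma \ref{lem:barv1}, both restrictions send $\binom{Z_i}{m}\mapsto\binom{t_i}{m}$ if $\ell\mid m$ in the sense $m\mapsto m/\ell$ and to $0$ otherwise, and $Z_\lambda\mapsto Z_\lambda$.)

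Finally I would check that the map $\eta$ defined by gluing $\eta^{\geqq0}$ and $\eta^{\leqq0}$ along $\overline{V}^0_\zeta$ extends to an algebra homomorphism on $\overline{V}_\zeta$. Using the triangular decomposition $\overline{V}_\zeta\simeq\overline{V}^-_\zeta\otimes\overline{V}^+_\zeta\otimes\overline{V}^0_\zeta$ one defines $\eta$ on a general element as the product $\eta^{\leqq0}(v_-)\,\eta^{\geqq0}(v_+)\,\eta^{\leqq0}(v_0)$; I then have to verify that this respects all the defining relations \eqref{eq:def1V}--\eqref{eq:def5V}. The relations internal to $\overline{V}^{\geqq0}_\zeta$ or $\overline{V}^{\leqq0}_\zeta$ — namely \eqref{eq:def1V}, \eqref{eq:def2aV}, \eqref{eq:def2bV}, \eqref{eq:def4V}, \eqref{eq:def5V} — are automatic since $\eta^{\geqq0}$ and $\eta^{\leqq0}$ are already algebra homomorphisms. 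The only genuinely new relation is the commuting relation \eqref{eq:def3V}, $X_iY_j-Y_jX_i=0$: I must show $\eta(X_i)\eta(Y_j)=\eta(Y_j)\eta(X_i)$ in $\overline{V}_1\cong U(\Gk)$. Since $\xi(\overline{\jmath}^{\geqq0}_\zeta(X_i))\in\{e_i,0\}$ and $\xi(\overline{\jmath}^{\leqq0}_\zeta(Y_j))\in\{-K_j^{-1}E_j\text{-type terms},\ldots\}$ — more to the point, $\eta^{\geqq0}(X_i)$ lands in $U(\Gn^+)$-part and $\eta^{\leqq0}(Y_j)$ in the $U(\Gn^-)$-part of $U(\Gk)$, and in $\Gk=\Gk^+\oplus\Gk^0\oplus\Gk^-$ we have $[\Gk^+,\Gk^-]=0$ by construction of $\Gk$ (indeed $[(e_i,0),(0,f_j)]=0$ in $\Gg\oplus\Gg$) — the images commute. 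I expect this last verification of \eqref{eq:def3V} to be the only real point: everything else is either a formal consequence of $\xi$ being an algebra map compatible with triangular decompositions, or the bookkeeping of the isomorphisms $\jmath$, and the main obstacle is simply making the gluing along $\overline{V}^0_\zeta$ precise, which the observation in the previous paragraph handles.
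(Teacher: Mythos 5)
Your proposal is correct and follows essentially the same route as the paper: define $\eta$ on the triangular decomposition by multiplying the two half-maps $(\overline{\jmath}_1^{\leqq0})^{-1}\circ\xi\circ\overline{\jmath}_\zeta^{\leqq0}$ and $(\overline{\jmath}_1^{\geqq0})^{-1}\circ\xi\circ\overline{\jmath}_\zeta^{\geqq0}$, and reduce the homomorphism property to the commutativity $[\overline{V}^+,\overline{V}^-]=0$. Your explicit check that the two half-maps agree on $\overline{V}^0_\zeta$ is a point the paper leaves implicit (it sidesteps the overlap by using $\overline{V}^-_\zeta\otimes\overline{V}^{\geqq0}_\zeta$), and is a worthwhile addition; the only blemish is the garbled aside about $\xi(\overline{\jmath}^{\leqq0}_\zeta(Y_j))$ being of ``$-K_j^{-1}E_j$-type,'' which you immediately supersede and which does not affect the argument.
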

\begin{proof}
It is sufficient to show that the linear map $\eta:\overline{V}_\zeta\to\overline{V}_1$ defined by
\[
\eta(v_-v_{\geqq0})
=(\overline{\jmath}_1^{\leqq0})^{-1}(\xi(\overline{\jmath}_\zeta^{\leqq0}(v_-)))
(\overline{\jmath}_1^{\geqq0})^{-1}(\xi(\overline{\jmath}_\zeta^{\geqq0}(v_{\geqq0})))
\]
for $v_-\in \overline{V}^-_\zeta, v_{\geqq0}\in\overline{V}^{\geqq0}_\zeta$
is an algebra homomorphism.
This follows easily from 
$[\overline{V}^+_\zeta, \overline{V}^-_\zeta]=0$.
\end{proof}
 By Gavarini \cite[Theorem 7.9]{Gav} we have the following.
\begin{proposition}
\label{prop:UU}
There exists a unique linear map 
\begin{equation}
{}^t\eta:U_1\to U_\zeta
\end{equation}
satisfying
\begin{equation}
\label{eq:te}
\overline{\sigma}_\zeta({}^t\eta(u),v)=
\overline{\sigma}_1(u,\eta(v))\qquad
(u\in U_1,\,\, v\in \overline{V}_\zeta).
\end{equation}
It is an injective Hopf algebra homomorphism whose image is contained in the center of $U_\zeta$.
Moreover, for any $\beta\in\Delta^+$ we have
\[
{}^t\eta(\pi_1^U(A_\beta))=\pi_\zeta^U(A_\beta^\ell),\qquad
{}^t\eta(\pi_1^U(B_\beta))=\pi_\zeta^U(B_\beta^\ell).
\]
\end{proposition}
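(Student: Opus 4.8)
The plan is to obtain ${}^t\eta$ as the transpose of $\eta$ through the perfect pairing $\overline{\sigma}_\zeta$ of Lemma~\ref{lem:perfect}, in complete parallel with the construction of ${}^t\xi$ in Proposition~\ref{prop:CGtoF} and of $\Upsilon$ in Proposition~\ref{prop:isom}. For $u\in U_1$ the assignment $v\mapsto\overline{\sigma}_1(u,\eta(v))$ is a linear form on $\overline{V}_\zeta$; if two elements of $U_\zeta$ represent it via $\overline{\sigma}_\zeta$, their difference is annihilated by $\overline{V}_\zeta$ and so vanishes by \eqref{eq:perfect1}. Hence ${}^t\eta(u)$ is unique once it exists, and this uniqueness forces linearity in $u$ as well as \eqref{eq:te}; so the whole point is existence. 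To prove existence I would use that, by the very definition of $\sigma$, both the pairing $\overline{\sigma}_z$ and the map $\eta$ are compatible with the triangular decompositions of $U_z$ and $\overline{V}_z$: the pairing restricts to the Drinfeld form $\tau$ between the positive and negative parts (and between the toral parts), while $\eta$ corresponds on each factor, via $\overline{\jmath}_z^{\geqq0}$ and $\overline{\jmath}_z^{\leqq0}$, to Lusztig's Frobenius $\xi$. Existence therefore reduces to three separate computations, one for each triangular block.

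For the positive block, \eqref{eq:TXL2a} shows that $\xi$ sends $F_\beta^{(n)}$ to $F_\beta^{(n/\ell)}$ when $\ell\mid n$ and to $0$ otherwise, so the form $v\mapsto\overline{\sigma}_1(\pi_1^U(x),\eta(v))$ attached to a PBW monomial $x$ in the $A_\beta$ is supported on the $\ell$-divisible PBW monomials, where Proposition~\ref{prop:DPPBW} makes $\tau$ diagonal. The heart of the matter is the single-vector identity $\tau(A_\beta^{m},F_\beta^{(m)})=(-1)^{m}q_\beta^{m(m-1)/2}$, valid over $\BF$ and obtained from Proposition~\ref{prop:DPPBW} after the factor $[m]_{q_\beta}!$ is cancelled by the rescaling defining $A_\beta$; it yields $\pi_\zeta\bigl(\tau(A_\beta^{\ell m},F_\beta^{(\ell m)})\bigr)=(-1)^{m}=\pi_1\bigl(\tau(A_\beta^{m},F_\beta^{(m)})\bigr)$, using $q_\beta^\ell=1$ at $\zeta$. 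Combining this with the product form of Proposition~\ref{prop:DPPBW} and the multiplicativity of $\xi$ on PBW monomials shows that the form attached to $\pi_1^U(\prod_k A_{\beta_k}^{m_k})$ is represented by $\pi_\zeta^U(\prod_k A_{\beta_k}^{\ell m_k})$; in particular ${}^t\eta(\pi_1^U(A_\beta))=\pi_\zeta^U(A_\beta^\ell)$, and the negative block is symmetric. For the toral block the corresponding input is the congruence, already underlying \eqref{eq:TXL3}, that the image under $\pi_\zeta$ of $\begin{bmatrix}\ell a\\ m\end{bmatrix}_{q_i}$ is $\binom{a}{m/\ell}$ if $\ell\mid m$ and $0$ otherwise, valid under our standing hypotheses on $\ell$; this gives ${}^t\eta(\pi_1^U(K_\lambda))=\pi_\zeta^U(K_{\ell\lambda})$. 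Assembling the three blocks over a PBW-type basis of $U_1$ defines ${}^t\eta$, and the factorization of $\overline{\sigma}$ yields \eqref{eq:te}.

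With the construction in hand, the remaining assertions are formal. That ${}^t\eta$ is a coalgebra homomorphism follows from $\eta$ being an algebra homomorphism together with Proposition~\ref{prop:invariance} at $z=1$ and the non-degeneracy of $\overline{\sigma}_\zeta$; that it is an algebra homomorphism follows dually, once one checks that $\eta$ is moreover a coalgebra homomorphism (induced by the Hopf algebra homomorphism $\xi$) and uses the companion identity $\sigma(uu',v)=\sum_{(v)}\sigma(u,v_{(0)})\sigma(u',v_{(1)})$; compatibility with $\varepsilon$ and $S$ is then automatic, so ${}^t\eta$ is a Hopf algebra homomorphism. For injectivity, $\xi$ — hence $\eta$ — is surjective, so $\overline{\sigma}_1(u,\eta(\overline{V}_\zeta))=\overline{\sigma}_1(u,\overline{V}_1)$; if ${}^t\eta(u)=0$ this set is $\{0\}$, and $u=0$ by \eqref{eq:perfect1} at $z=1$. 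Finally, $U_1\cong\BC[K]$ is generated by the elements corresponding to $a_i$, $b_i\hat{\chi}_{-\alpha_i}$, $\hat{\chi}_\lambda$, i.e.\ by $\pi_1^U(A_i)$, $\pi_1^U(B_i)$, $\pi_1^U(K_\lambda)$, so ${}^t\eta(U_1)$ is generated by $\pi_\zeta^U(A_i^\ell)$, $\pi_\zeta^U(B_i^\ell)$, $\pi_\zeta^U(K_{\ell\lambda})$; the first two lie in the center of $U_\zeta$ by the De Concini--Kac structure theory \cite{DK}, and $\pi_\zeta^U(K_{\ell\lambda})$ is central because $q^{\ell(\lambda,\alpha_i)}$ specializes to $1$ at $\zeta$ (as $(\lambda,\alpha_i)\in\BZ$ and $\zeta^{|\Lambda/Q|}$ is an $\ell$-th root of $1$).

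The step I expect to be the main obstacle is the computation of the second paragraph: controlling $\BA_\zeta$-integrality throughout the comparison of $\tau$ at $\zeta$ with $\tau$ at $1$ through $\xi$ — so that the divided-power normalizations, the rescalings $A_\beta=(q_\beta-q_\beta^{-1})E_\beta$, and the PBW pairing formula of Proposition~\ref{prop:DPPBW} conspire to reduce the monomial case to the single-vector identity — and likewise verifying the toral $q$-binomial congruence. Everything else is a formal consequence of the non-degeneracy of $\overline{\sigma}_\zeta$ and the Hopf-compatibility of $\sigma$ and $\eta$.
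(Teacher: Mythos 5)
The paper offers no proof of this statement: it is quoted verbatim from Gavarini \cite[Theorem 7.9]{Gav}, so there is no in-paper argument to compare against. Your reconstruction follows what is essentially Gavarini's own route --- define ${}^t\eta$ as the transpose of the Frobenius-induced map $\eta$ through the perfect pairing $\overline{\sigma}$, reduce existence to the three triangular blocks using the factorized definition of $\sigma$, and verify each block by the explicit PBW formula of Proposition \ref{prop:DPPBW} --- and the computations you single out (the cancellation $\tau(A_\beta^m,F_\beta^{(m)})=(-1)^mq_\beta^{m(m-1)/2}$, the specialization $q_\beta^{\ell m(\ell m-1)/2}\mapsto 1$, the $q$-Lucas congruence for the toral block) are the correct ones and do go through under the standing hypotheses on $\ell$. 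Your uniqueness and injectivity arguments via \eqref{eq:perfect1} are also fine. Two caveats. First, for the multiplicativity of ${}^t\eta$ you need a coalgebra structure on $V$ and the companion identity $\sigma(uu',v)=(\sigma\otimes\sigma)(u\otimes u',\Delta(v))$, neither of which is set up anywhere in this paper (only Proposition \ref{prop:invariance}, the other half of the Hopf-pairing axioms, is stated); you flag this, and since the paper itself defers the whole proposition to Gavarini this is a dependence on external material rather than an error, but as written your proof is not self-contained on this point. Second, and this is a genuine slip: in the centrality step you assert that $U_1\cong\BC[K]$ is \emph{algebra}-generated by the elements corresponding to $a_i$, $b_i\hat{\chi}_{-\alpha_i}$, $\hat{\chi}_\lambda$. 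That is false --- $\BC[K^\pm]$ is a polynomial ring in $N=|\Delta^+|$ variables, and the non-simple-root coordinates are not polynomials in the simple ones; the paper's generation statement (used in the proof of Theorem \ref{thm:main}) is generation as a \emph{Poisson} algebra, which does not help here since ${}^t\eta$ is only an algebra map. The fix is immediate from what you already proved: your block computation shows that ${}^t\eta$ sends the full PBW basis of $U_1$ to the monomials $\pi_\zeta^U\bigl(\prod_kA_{\beta_k}^{\ell m_k}\cdot K_{\ell\lambda}\cdot\prod_kB_{\beta_k}^{\ell n_k}\bigr)$, and each of $E_\beta^\ell$, $F_\beta^\ell$, $K_{\ell\lambda}$ is central in $U_\zeta$ by De Concini--Kac, so the image lies in the center; you should argue centrality directly from that spanning set rather than from a generating set.
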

Let
$
\iota_\zeta:U_\zeta\to\overline{U}^L_\zeta
$
be the homomorphisms induced by $\iota:U_\zeta\to \overline{U}^L_\zeta$.
We see easily the following.
\begin{lemma}
\label{lem:iota}
\begin{itemize}
\item[\rm(i)]
For $x\in U_\zeta$ we have $\xi(\iota_\zeta(x))=\varepsilon(x)1$.
\item[\rm(ii)]
For $y\in U_1$ we have
$\iota_\zeta({}^t\eta(y))=\varepsilon(y)1$.
\end{itemize}
\end{lemma}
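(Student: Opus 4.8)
The plan is to reduce both statements to the known behavior of the Frobenius map $\tilde\xi$ on PBW generators, together with Lemma \ref{lem:iota1} and Lemma \ref{lem:DP-L}. For part (i), recall that $\iota_\zeta$ is induced by the inclusion $\iota\colon U_{\BA_\zeta}\to U^L_{\BA_\zeta}$ at the level of $\BA_\zeta$-forms, so it suffices to evaluate $\xi\circ\iota_\zeta$ on a generating set of $U_\zeta$. By Lemma \ref{lem:DP-L} the algebra $U_{\BA}^\pm$ has an $\BA$-basis given by ordered monomials in the $A_{\beta_k}=(q_{\beta_k}-q_{\beta_k}^{-1})E_{\beta_k}$ (resp.\ $B_{\beta_k}$), and $U_\BA^0=\sum_\lambda \BA K_\lambda$; hence after specialization $U_\zeta$ is generated by the $\pi^U_\zeta(A_\beta)$, $\pi^U_\zeta(B_\beta)$, $\pi^U_\zeta(K_\lambda)$. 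First I would observe that $\iota_\zeta$ sends $\pi^U_\zeta(A_\beta)$ to $\pi^{U^L}_\zeta(A_\beta)=(\zeta^{(\beta,\beta)/2}-\zeta^{-(\beta,\beta)/2})\,\pi^{U^L}_\zeta(E_\beta)$ modulo $I_\zeta$, and similarly for $B_\beta$; then by \eqref{eq:TXL1a}--\eqref{eq:TXL2a} applied with $n=1$ (so $\ell\not|\,n$ since $\ell>1$), $\tilde\xi$ kills $\pi^{U^L}_\zeta(E_\beta)$ and $\pi^{U^L}_\zeta(F_\beta)$. Therefore $\xi(\iota_\zeta(\pi^U_\zeta(A_\beta)))=0=\varepsilon(\pi^U_\zeta(A_\beta))1$ and likewise for $B_\beta$, while $\xi(\iota_\zeta(\pi^U_\zeta(K_\lambda)))=\overline{\pi^{U^L}_1}(K_\lambda)=1=\varepsilon(\pi^U_\zeta(K_\lambda))1$ by \eqref{eq:TXL4} together with the identification $\overline U^L_1=U(\Gg)$ in which $\pi^{U^L}_1(K_\lambda)$ maps to $1$. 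Since $\xi\circ\iota_\zeta$ and $u\mapsto\varepsilon(u)1$ are both algebra homomorphisms agreeing on generators, they coincide.

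For part (ii), the cleanest route is to use the defining adjunction \eqref{eq:te} for ${}^t\eta$ and the perfectness of the pairings. Take $y\in U_1$; I want to show $\iota_\zeta({}^t\eta(y))=\varepsilon(y)1$ in $\overline U^L_\zeta$. Since $C_\zeta$ separates points of $\overline U^L_\zeta$ (the pairing $\langle\,,\,\rangle\colon C_\zeta\times\overline U^L_\zeta\to\BC$ is nondegenerate on the second factor, as noted after \eqref{eq:CC}), it is enough to check that $\langle\varphi,\iota_\zeta({}^t\eta(y))\rangle=\varepsilon(y)\langle\varphi,1\rangle=\varepsilon(y)\varepsilon(\varphi)$ for all $\varphi\in C_\zeta$; and by surjectivity considerations it suffices to test against $\varphi$ in the image of ${}^t\xi$, i.e.\ $\varphi={}^t\xi(\psi)$ with $\psi\in C_1=\BC[G]$. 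Using \eqref{eq:tx} we get $\langle {}^t\xi(\psi),\iota_\zeta({}^t\eta(y))\rangle=\langle\psi,\xi(\iota_\zeta({}^t\eta(y)))\rangle$, so part (ii) for a general $\psi$ would follow once we know $\xi(\iota_\zeta({}^t\eta(y)))=\varepsilon(y)1$ — but that is exactly part (i) applied to the element ${}^t\eta(y)\in U_\zeta$, giving $\xi(\iota_\zeta({}^t\eta(y)))=\varepsilon({}^t\eta(y))1=\varepsilon(y)1$ since ${}^t\eta$ is a Hopf algebra homomorphism (Proposition \ref{prop:UU}) and hence counit-preserving. The only gap in this argument is whether testing against $\psi\in{}^t\xi(C_1)$ alone suffices; if not, one falls back on computing directly on generators, using the last displayed formula of Proposition \ref{prop:UU}: ${}^t\eta(\pi^U_1(A_\beta))=\pi^U_\zeta(A_\beta^\ell)$, so $\iota_\zeta$ sends this to a scalar multiple of $\pi^{U^L}_\zeta(E_\beta^\ell)=\pi^{U^L}_\zeta(E_\beta^{(\ell)})\cdot[\ell]_{q_{\beta}}!$, which is killed modulo $I_\zeta$ unless we are in degree $0$; and ${}^t\eta(\pi^U_1(K_\lambda))=\pi^U_\zeta(K_\lambda)$ maps to $1$ as in part (i).

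I expect the main obstacle to be the second (degree-zero) bookkeeping: checking that $\iota_\zeta$ followed by passage to $\overline U^L_\zeta$ genuinely kills all the positive/negative PBW generators after the $\ell$-th power substitution — one must be careful that $E_\beta^\ell$ differs from $E_\beta^{(\ell)}$ by $[\ell]_{q_\beta}!$, which is a \emph{nonzero} scalar at $q=\zeta$ precisely because $(\beta,\beta)/2$ is coprime to $\ell$ under the running hypotheses on $\ell$, so this does not accidentally make the element vanish for the wrong reason; the vanishing comes entirely from \eqref{eq:TXL1a}--\eqref{eq:TXL2a} with $\ell\not|\,\ell\cdot 1$... wait, $\ell\,|\,\ell$, so here in fact $\tilde\xi(\pi^{U^L}_\zeta(E_\beta^{(\ell)}))=\pi^{U^L}_1(E_\beta)$, which is then zero \emph{in} $\overline U^L_1$ only after noting $E_\beta\in I_1$? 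No — rather, the correct point is that $\pi^U_1(A_\beta)$ already satisfies $\iota_1(\pi^U_1(A_\beta))=0$ by Lemma \ref{lem:iota1}, and ${}^t\eta$ intertwines $\iota$'s, so the reduction to Lemma \ref{lem:iota1} via part (i) and the adjunction is really the whole content; the generator computation is only a safety net. Accordingly I would present part (i) by the generator argument above and part (ii) by the adjunction argument, invoking part (i) and Lemma \ref{lem:iota1}.
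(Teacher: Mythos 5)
Your part (i) is fine and is essentially the expected argument: $U_\zeta$ is generated by the images of the $A_\beta$, $B_\beta$, $K_\lambda$ (Lemma \ref{lem:DP-L}), the first two are sent by $\iota_\zeta$ to scalar multiples of $E_\beta=E_\beta^{(1)}$ and $F_\beta=F_\beta^{(1)}$, which $\tilde\xi$ kills by \eqref{eq:TXL1a}--\eqref{eq:TXL2a} since $\ell\nmid 1$, while $K_\lambda\mapsto K_\lambda\equiv 1$ in $\overline{U}^L_1$; both $\xi\circ\iota_\zeta$ and $\varepsilon(\cdot)1$ are algebra homomorphisms, so agreement on generators suffices.

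Part (ii) is where the genuine problems lie. Your primary route does not work, and the gap you flag is real: testing against $\Image({}^t\xi)$ cannot separate points of $\overline{U}^L_\zeta$ --- for instance $\langle{}^t\xi(\psi),\overline{\pi}^{U^L}_\zeta(E_i)\rangle=\langle\psi,\xi(\overline{\pi}^{U^L}_\zeta(E_i))\rangle=0$ for all $\psi$, yet $\overline{\pi}^{U^L}_\zeta(E_i)\neq0$ --- and the paper only records injectivity of $C_\zeta\to(\overline{U}^L_\zeta)^*$, not the separation property in the other variable that even the full $C_\zeta$ would require. Your fallback generator computation is the right strategy but is executed incorrectly at both kinds of generators. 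For $A_\beta$: statement (ii) involves no $\xi$ at all, and $E_\beta^{(\ell)}\notin I_\zeta$ (recall $I_\zeta=U^{L,-}_\zeta U^{L,+}_\zeta I^0_\zeta$), so neither \eqref{eq:TXL1a} nor ``killed modulo $I_\zeta$'' is the mechanism. The correct point is that $A_\beta^\ell=(q_\beta-q_\beta^{-1})^\ell\,[\ell]_{q_\beta}!\,E_\beta^{(\ell)}$ with $E_\beta^{(\ell)}\in U^{L}_{\BA_\zeta}$ and with the scalar vanishing under $\pi_\zeta$: you assert $[\ell]_{q_\beta}!$ is nonzero at the specialization, but it is zero, since $q_\beta$ specializes to a primitive $\ell$-th root of $1$ and hence $[\ell]_{q_\beta}\mapsto0$. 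Thus $\iota_\zeta(\pi^U_\zeta(A_\beta^\ell))$ is already $0$ in $U^L_\zeta$. For $K_\lambda$: one has ${}^t\eta(\pi^U_1(K_\lambda))=\pi^U_\zeta(K_{\ell\lambda})$, not $\pi^U_\zeta(K_\lambda)$, and this matters because $\overline{\pi}^{U^L}_\zeta(K_\lambda)\neq1$ in general; only $K_{\ell\lambda}-1$ lies in $I^0_\zeta$, since $\chi_\mu(K_{\ell\lambda}-1)=\zeta^{\ell|\Lambda/Q|(\mu,\lambda)}-1=0$ for all $\mu$. With your formula the intermediate claim would actually be false. Finally, ``${}^t\eta$ intertwines $\iota$'s'' is not available as an input: that intertwining, combined with Lemma \ref{lem:iota1}, is precisely what (ii) asserts, so invoking it is circular.
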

\begin{proposition}
\label{prop:center}
The image of the linear map
\[
{}^t\xi\otimes{}^t\eta:
D_1(=C_1\otimes U_1)\to D_\zeta(=C_\zeta\otimes U_\zeta)
\]
is contained in the center of $D_\zeta$.
In particular, ${}^t\xi\otimes{}^t\eta$ 
is an algebra homomorphism.
\end{proposition}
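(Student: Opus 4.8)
The plan is to prove that for every $\varphi\in C_1$ and $u\in U_1$ the two elements ${}^t\xi(\varphi)\otimes1$ and $1\otimes{}^t\eta(u)$ are central in $D_\zeta=C_\zeta\otimes U_\zeta$; the first assertion then follows at once since ${}^t\xi(\varphi)\otimes{}^t\eta(u)=({}^t\xi(\varphi)\otimes1)(1\otimes{}^t\eta(u))$ is a product of central elements, and that ${}^t\xi\otimes{}^t\eta$ is an algebra homomorphism follows formally, as explained at the end. Since $D_\zeta$ is spanned by elements $\psi\otimes w=(\psi\otimes1)(1\otimes w)$ with $\psi\in C_\zeta$, $w\in U_\zeta$, it suffices in each case to check commutation with $C_\zeta\otimes1$ and with $1\otimes U_\zeta$ separately, and from the Heisenberg double multiplication one reads off immediately that $(\psi\otimes1)(\psi'\otimes1)=\psi\psi'\otimes1$ and $(1\otimes w)(1\otimes w')=1\otimes ww'$, so that $C_\zeta\otimes1$ and $1\otimes U_\zeta$ are subalgebras of $D_\zeta$ isomorphic to $C_\zeta$ and $U_\zeta$ respectively. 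I use throughout that after specialization the $U_\zeta$-action on $C_\zeta$ entering the Heisenberg double product factors through $\iota_\zeta:U_\zeta\to\overline{U}^L_\zeta$ and the $\overline{U}^L_\zeta$-module structure of $C_\zeta$; for $v\in\overline{U}^L_\zeta$, $\psi\in C_\zeta$ the corresponding (left) action, written $v\psi$, is determined by $\langle v\psi,w\rangle=\langle\psi,wv\rangle$ for $w\in\overline{U}^L_\zeta$.

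First I would treat ${}^t\xi(\varphi)\otimes1$. It commutes with $C_\zeta\otimes1$ because ${}^t\xi(\varphi)$ is central in $C_\zeta$ by Proposition~\ref{prop:CGtoF}. The key point is that the $U_\zeta$-action on ${}^t\xi(\varphi)$ is trivial, i.e.\ $\iota_\zeta(u)\,{}^t\xi(\varphi)=\varepsilon(u)\,{}^t\xi(\varphi)$ for $u\in U_\zeta$: using \eqref{eq:tx}, that $\xi$ is an algebra homomorphism, and Lemma~\ref{lem:iota}(i),
\[
\langle \iota_\zeta(u)\,{}^t\xi(\varphi),w\rangle
=\langle{}^t\xi(\varphi),w\,\iota_\zeta(u)\rangle
=\langle\varphi,\xi(w)\,\xi(\iota_\zeta(u))\rangle
=\varepsilon(u)\,\langle\varphi,\xi(w)\rangle
=\varepsilon(u)\,\langle{}^t\xi(\varphi),w\rangle
\]
for all $w\in\overline{U}^L_\zeta$, and since the canonical map $C_\zeta\hookrightarrow(\overline{U}^L_\zeta)^*$ is injective the identity in $C_\zeta$ follows. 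Consequently, for $u\in U_\zeta$,
\[
(1\otimes u)({}^t\xi(\varphi)\otimes1)
=\sum_{(u)}\bigl(\iota_\zeta(u_{(0)})\,{}^t\xi(\varphi)\bigr)\otimes u_{(1)}
=\sum_{(u)}\varepsilon(u_{(0)})\,{}^t\xi(\varphi)\otimes u_{(1)}
={}^t\xi(\varphi)\otimes u
=({}^t\xi(\varphi)\otimes1)(1\otimes u),
\]
so ${}^t\xi(\varphi)\otimes1$ is central.

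Next I would treat $1\otimes{}^t\eta(u)$. It commutes with $1\otimes U_\zeta$ because ${}^t\eta(u)$ is central in $U_\zeta$ by Proposition~\ref{prop:UU}. For commutation with $\varphi\otimes1$, $\varphi\in C_\zeta$, I would use that ${}^t\eta$ is a coalgebra homomorphism, so that in Sweedler notation $\Delta({}^t\eta(u))=\sum_{(u)}{}^t\eta(u_{(0)})\otimes{}^t\eta(u_{(1)})$, together with Lemma~\ref{lem:iota}(ii), which gives $\iota_\zeta({}^t\eta(u_{(0)}))=\varepsilon(u_{(0)})1$ and hence $\iota_\zeta({}^t\eta(u_{(0)}))\,\varphi=\varepsilon(u_{(0)})\varphi$; then
\[
(1\otimes{}^t\eta(u))(\varphi\otimes1)
=\sum_{(u)}\bigl(\iota_\zeta({}^t\eta(u_{(0)}))\,\varphi\bigr)\otimes{}^t\eta(u_{(1)})
=\sum_{(u)}\varepsilon(u_{(0)})\,\varphi\otimes{}^t\eta(u_{(1)})
=\varphi\otimes{}^t\eta(u)
=(\varphi\otimes1)(1\otimes{}^t\eta(u)),
\]
so $1\otimes{}^t\eta(u)$ is also central.

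Finally, to deduce that ${}^t\xi\otimes{}^t\eta$ is an algebra homomorphism: by Lemma~\ref{lem:iota1} the $U_1$-action on $C_1$ is trivial, so the multiplication of $D_1$ is the componentwise one on $C_1\otimes U_1$; since ${}^t\xi$ and ${}^t\eta$ are algebra homomorphisms preserving units and the central elements ${}^t\xi(\varphi)\otimes1$, $1\otimes{}^t\eta(u)$ commute with everything, one rearranges $\bigl({}^t\xi(\varphi)\otimes{}^t\eta(u)\bigr)\bigl({}^t\xi(\varphi')\otimes{}^t\eta(u')\bigr)$ into ${}^t\xi(\varphi\varphi')\otimes{}^t\eta(uu')=({}^t\xi\otimes{}^t\eta)\bigl((\varphi\otimes u)(\varphi'\otimes u')\bigr)$. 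All the substantive inputs are already available — the adjunction properties \eqref{eq:tx} and \eqref{eq:te}, the centrality of the images of ${}^t\xi$ and ${}^t\eta$ in $C_\zeta$ and $U_\zeta$, and Lemma~\ref{lem:iota}; the only point that requires a genuine argument rather than a direct citation is the triviality of the $U_\zeta$-action on the image of ${}^t\xi$ in the second paragraph, and I expect the main care to go into the bookkeeping of exactly which action enters the Heisenberg double product together with the use of the injectivity $C_\zeta\hookrightarrow(\overline{U}^L_\zeta)^*$.
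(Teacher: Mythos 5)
Your proposal is correct and follows essentially the same route as the paper: the centrality of ${}^t\xi(\varphi)\otimes 1$ is obtained from the pairing computation $\langle{}^t\xi(\varphi),w\,\iota_\zeta(u)\rangle=\varepsilon(u)\langle\varphi,\xi(w)\rangle$ via Lemma \ref{lem:iota}(i) and the injectivity of $C_\zeta\hookrightarrow(\overline{U}^L_\zeta)^*$, and the centrality of $1\otimes{}^t\eta(u)$ from the Heisenberg double formula together with Lemma \ref{lem:iota}(ii), exactly as in the paper's proof. Your extra bookkeeping (splitting the check over $C_\zeta\otimes 1$ and $1\otimes U_\zeta$, citing Propositions \ref{prop:CGtoF} and \ref{prop:UU} for the within-factor commutation, and spelling out the algebra-homomorphism deduction via Lemma \ref{lem:com-D}) only makes explicit what the paper leaves implicit.
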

\begin{proof}
Let $\varphi\in C_1$ and $x\in U_\zeta$. 
For $u\in\overline{U}^L_\zeta$ we have
\begin{align*}
&\sum_{(x)}
\langle \iota_\zeta(x_{(0)})\cdot{}^t\xi(\varphi),u\rangle
x_{(1)}=
\sum_{(x)}
\langle {}^t\xi(\varphi),u\iota_\zeta(x_{(0)})\rangle
x_{(1)}\\
=&\sum_{(x)}
\langle \varphi,\xi(u\iota_\zeta(x_{(0)}))\rangle x_{(1)}
=\langle \varphi,\xi(u)\rangle x
=\langle
{}^t\xi(\varphi),u
\rangle
x,
\end{align*}
and hence $x{}^t\xi(\varphi)={}^t\xi(\varphi)x$ in $D_\zeta$.
It follows that ${}^t\xi(\varphi)$ is contained in the center for any
$\varphi\in C_1$.

Let $y\in U_1$.
For $\psi\in C_\zeta$ we have
\[
({}^t\eta(y))\psi
=\sum_{(y)}
(\iota_\zeta({}^t\eta(y_{(0)}))\cdot \psi){}^t\eta(y_{(1)})
=\sum_{(y)}
\varepsilon(y_{(0)})\psi{}^t\eta(y_{(1)})
=\psi({}^t\eta(y)),
\]
and hence ${}^t\eta(y)$ is contained in the center for any
$y\in U_1$.
\end{proof}

\section{Poisson structure arising from quantized enveloping algebras}
The following result is well-known (see \cite{DP}).
\begin{proposition}
\label{prop:Poisson-alg}
Let $\BB$ be a commutative algebra over $\BC$.
We assume that we are given $\hbar\in\BB$ such that $\BB/\hbar\BB\cong\BC$.

Let $\CR$ be a (not necessarily commutative) $\BB$-algebra such that $\hbar:\CR\to \CR$ is injective.
Then the center $Z(\CR/\hbar \CR)$ of $\CR/\hbar \CR$ is endowed with a structure of Poisson algebra by
\[
\{\overline{b}_1,\overline{b}_2\}
=\overline
{\left(\frac{b_1b_2-b_2b_1}\hbar\right)}
\qquad(b_1, b_2\in \CR, \overline{b}_1, \overline{b}_2\in Z(\CR/\hbar \CR)).
\]

Assume moreover that $\CR$ is a Hopf algebra and that there exists a Hopf subalgebra $H$ of $\CR/\hbar \CR$ such that $H\subset Z(\CR/\hbar \CR)$ and $\{H,H\}\subset H$.
Then $H$ is naturally a Poisson Hopf algebra.
\end{proposition}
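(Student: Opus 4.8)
\medskip

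\noindent\textbf{Proof proposal.}
The plan is the standard ``quasi-classical limit'' argument: everything is checked after lifting to $\CR$ and dividing by $\hbar$, which is permitted because $\hbar\colon\CR\to\CR$ is injective.

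First I would check that $\{\,,\,\}$ is well defined on $Z:=Z(\CR/\hbar\CR)$ and takes values in $Z$. If $\overline b_1,\overline b_2\in Z$ then $[b_1,b_2]:=b_1b_2-b_2b_1\in\hbar\CR$, so there is a unique $c\in\CR$ with $\hbar c=[b_1,b_2]$; replacing $b_1$ by $b_1+\hbar r$ changes $c$ by $[r,b_2]\in\hbar\CR$ (centrality of $\overline b_2$), and similarly in the second argument, so $\overline c$ depends only on $\overline b_1,\overline b_2$, and bilinearity is clear. That $\overline c\in Z$ follows from a short Jacobi computation: for any $b_3\in\CR$ one rewrites $\hbar[c,b_3]=[[b_1,b_2],b_3]=[b_1,[b_2,b_3]]+[b_2,[b_3,b_1]]$, and since $[b_2,b_3],[b_3,b_1]\in\hbar\CR$ while $\overline b_1,\overline b_2$ are central, each summand lies in $\hbar^2\CR$, whence $[c,b_3]\in\hbar\CR$ by injectivity.

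Next I would verify the Poisson axioms by the same device. Axiom (a) is immediate from $[b_1,b_1]=0$. For the Leibniz rule (c) I would divide the associative identity $[b_1,b_2b_3]=[b_1,b_2]b_3+b_2[b_1,b_3]$ by $\hbar$ and reduce modulo $\hbar$. For the Jacobi identity (b), choosing representatives $d_{ij}\in\CR$ of the brackets $\{\overline b_i,\overline b_j\}$ (so $\hbar d_{ij}=[b_i,b_j]$) and representatives $e_i$ of the corresponding iterated brackets, one obtains $\hbar^2 e_i=[b_i,[b_j,b_k]]$ for $(i,j,k)$ a cyclic permutation of $(1,2,3)$, so that $\hbar^2(e_1+e_2+e_3)$ equals the Jacobi sum in the associative algebra $\CR$, which vanishes; applying injectivity of $\hbar$ twice gives $e_1+e_2+e_3=0$. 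This shows that $Z$ is a Poisson algebra.

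For the Hopf assertion I would run the same construction on the $\BB$-algebra $\CR\otimes_\BB\CR$, using the base-change isomorphism $(\CR\otimes_\BB\CR)/\hbar(\CR\otimes_\BB\CR)\cong(\CR/\hbar\CR)\otimes_\BC(\CR/\hbar\CR)$ (valid since $\BB/\hbar\BB\cong\BC$) and the standard fact that, over a field, $Z\bigl((\CR/\hbar\CR)\otimes_\BC(\CR/\hbar\CR)\bigr)=Z\otimes_\BC Z$. A direct expansion of commutators of the form $[\,\tilde a'\otimes\tilde a'',\ \tilde b'\otimes\tilde b''\,]$, together with the centrality and commutativity of the relevant elements, shows that the bracket thus obtained restricts on $H\otimes_\BC H$ to the tensor-product Poisson bracket of two copies of $H$. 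Now $\Delta_\CR\colon\CR\to\CR\otimes_\BB\CR$ is a $\BB$-algebra homomorphism and, since $H$ is a commutative Hopf subalgebra of $Z$, it carries lifts of elements of $H$ to elements reducing into $H\otimes_\BC H\subset Z\otimes_\BC Z$; applying $\Delta_\CR$ to $\hbar c=[\tilde a,\tilde b]$ (where $\overline c=\{a,b\}$) gives $\hbar\,\Delta_\CR(c)=[\Delta_\CR\tilde a,\Delta_\CR\tilde b]$, which by the definition of the bracket on $\CR\otimes_\BB\CR$ together with the preceding identification says exactly $\Delta_H(\{a,b\})=\{\Delta_H a,\Delta_H b\}$. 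Hence $\Delta_H$ is a morphism of Poisson algebras, so $H$ is a Poisson Hopf algebra, the counit and antipode being automatically compatible. The hard part here is precisely this compatibility step: one has to match the ``commutator over $\hbar$'' bracket on $\CR\otimes_\BB\CR$ with the tensor-product Poisson bracket on $H\otimes_\BC H$, and for the construction to even make sense on $\CR\otimes_\BB\CR$ one needs $\hbar$ to remain injective there, which holds in the cases of interest because $\CR$ is flat over $\BB$.
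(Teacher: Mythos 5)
The paper gives no proof of this proposition at all: it is stated as ``well-known'' with a citation to De Concini--Procesi, so there is nothing to compare your argument against line by line. Your proof is the standard quasi-classical-limit argument and is correct: the well-definedness and centrality of the bracket, the Jacobi identity via $\hbar^2(e_1+e_2+e_3)=0$ and two applications of injectivity, and the reduction of the comultiplication compatibility to the identity $\hbar\,\Delta_\CR(c)=[\Delta_\CR\tilde a,\Delta_\CR\tilde b]$ in $\CR\otimes_\BB\CR$ are all sound. The one place where you hedge --- injectivity of $\hbar$ on $\CR\otimes_\BB\CR$, which you justify by flatness ``in the cases of interest'' --- deserves a comment: when $\hbar$ is a non-zero-divisor in $\BB$ (as in the application, where $\BB=\BA_\zeta$ is a domain), this injectivity already follows from the stated hypotheses. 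Indeed, tensoring $0\to\CR\xrightarrow{\hbar}\CR\to\CR/\hbar\CR\to0$ with $\CR$ over $\BB$ shows that $\Ker(\hbar\colon\CR\otimes_\BB\CR\to\CR\otimes_\BB\CR)$ is a quotient of $\Tor_1^{\BB}(\CR/\hbar\CR,\CR)$, and $\CR/\hbar\CR$ is a direct sum of copies of $\BB/\hbar\BB$ as a $\BB$-module, so this Tor group is a direct sum of copies of $\Tor_1^{\BB}(\BB/\hbar\BB,\CR)=\Ker(\hbar\colon\CR\to\CR)=\{0\}$. So no flatness assumption is needed, and your argument proves the proposition in the generality in which it is stated (for $\BB$ a domain); everything else in your write-up is complete.
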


We will apply this fact to the situation $\BB=\BA_\zeta$, $\hbar=\ell(q^\ell-q^{-\ell})$, and $\CR=C_{\BA_\zeta}, U_{\BA_\zeta}, D_{\BA_\zeta}$.
Note that we have $\BA_\zeta/\ell(q^\ell-q^{-\ell})\BA_\zeta\cong\BC$ by
\[
\Ker\pi_\zeta=\BA_\zeta(q^{1/|\Lambda/Q|}-\zeta)
=\BA_\zeta\ell(q^\ell-q^{-\ell}).
\]
The cases $\CR=C_{\BA_\zeta}, U_{\BA_\zeta}$ is already known.
Namely, we have the following.
\begin{theorem}
[\cite{DL}]
\label{th:DL}
The Hopf subalgebra $\Image{}^t\xi$ of $Z(C_\zeta)$ is closed under the Poisson bracket given in Proposition \ref{prop:Poisson-alg}.
Moreover, the isomorphism $\Image{}^t\xi\cong \BC[G]$ is that of Poisson Hopf algebras, where the Poisson Hopf algebra structure of $\BC[G]$ is the one for $\BC[\Delta G]\cong\BC[G]$ attached to the Manin triple $(\Gg\oplus\Gg,\Delta\Gg,\Gk)$.
\end{theorem}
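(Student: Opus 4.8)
Since the statement is quoted from De~Concini--Lyubashenko, the plan is to indicate the mechanism rather than reproduce their computation. Put $\hbar=\ell(q^\ell-q^{-\ell})$, so that $C_\zeta=C_{\BA_\zeta}/\hbar C_{\BA_\zeta}$ and, by Proposition~\ref{prop:Poisson-alg}, $Z(C_\zeta)$ carries the bracket $\{\overline{b}_1,\overline{b}_2\}=\overline{(b_1b_2-b_2b_1)/\hbar}$. Two things have to be established, and I would extract both from one explicit computation: (i) that $\{\Image{}^t\xi,\Image{}^t\xi\}\subset\Image{}^t\xi$; and (ii) that under the isomorphism $\Image{}^t\xi\cong\BC[G]$ of Proposition~\ref{prop:CGtoF} and \eqref{eq:ISOM-F} this bracket becomes the Poisson--Lie bracket of $\Delta G\cong G$ attached to $(\Gg\oplus\Gg,\Delta\Gg,\Gk)$, i.e.\ the one of Proposition~\ref{prop:deltaM} taken with $M=\Delta G$ and $L=K$. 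Granting (i) and the already known centrality of $\Image{}^t\xi$ (Proposition~\ref{prop:CGtoF}), the Hopf-compatibility assertion is then automatic from the last part of Proposition~\ref{prop:Poisson-alg}, because ${}^t\xi$ is a Hopf embedding and the comultiplication of $C_{\BA_\zeta}$ is an algebra homomorphism.

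For the computation, recall that $\BC[G]$ is spanned, hence generated as an algebra, by matrix coefficients $c^V_{f,v}$ of finite-dimensional $G$-modules $V$, that $\Image{}^t\xi$ is generated by the corresponding coefficients ${}^t\xi(c^V_{f,v})$ on $C_\zeta$ (which arise via $\xi$ from $U^L_\zeta$-modules inflated along the quantum Frobenius from $U(\Gg)$-modules), and that one may lift the latter to natural matrix coefficients $F,G$ in $C_{\BA_\zeta}$. The commutation of two matrix coefficients in $C_{\BA_\zeta}$ is governed by the braiding of $U_{\BA_\zeta}$-modules -- equivalently by the Drinfeld pairing $\tau$ of \eqref{eq:Drinfeld-paring} -- and has ``RTT'' form; here $FG-GF\in\hbar C_{\BA_\zeta}$ already by centrality, and feeding in the structure of the quantum Frobenius one checks that $(FG-GF)/\hbar$ reduces modulo $\hbar$ to an element of $\Image{}^t\xi$, namely a $\BC$-combination of Frobenius-type matrix coefficients, which establishes (i). The limiting value is then the Sklyanin bracket $\{c^V,c^W\}$ associated to the classical $r$-matrix $r$ of $(\Gg\oplus\Gg,\Delta\Gg,\Gk)$, reflecting the standard fact that the non-commutativity of the quantized coordinate algebra encodes the Drinfeld--Jimbo Poisson structure on $G$.

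It remains to identify this Sklyanin bracket with the bracket of Proposition~\ref{prop:deltaM}. Since a Poisson--Lie structure on a connected algebraic group is determined by its Lie cobracket $\Gg\to\bigwedge^2\Gg$, it is enough to read off, along $1\in G$, the linear term of $\{f,g\}$ for functions $f,g$ vanishing at $1$ and to check that the resulting alternating map coincides with the cobracket produced from $\rho$ and the splitting $\Gg\oplus\Gg=\Delta\Gg\oplus\Gk$; this is a rank-one check essentially dual to the relation \eqref{eq:def3}. I expect the main obstacle to be not conceptual but one of bookkeeping: tracking the constants through the braiding and through the quantum Frobenius -- which sends $E_i,F_i$ to their $\ell$-th powers, so that the factor $\ell$ built into $\hbar$ exactly compensates the $\ell$ coming from $d(x^\ell)=\ell x^{\ell-1}\,dx$ -- so as to land on the \emph{unnormalized} Semenov-Tyan-Shansky/Drinfeld bracket rather than a scalar multiple of it. This is the kind of lengthy generators-and-relations verification carried out in \cite{DP} and \cite{DL}, which the pairing-theoretic reformulation of Gavarini (Propositions~\ref{prop:isom} and \ref{prop:UU}) makes manageable.
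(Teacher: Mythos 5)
First, note that the paper itself offers no proof of this theorem: it is imported verbatim from De Concini--Lyubashenko \cite{DL} (the surrounding text reads ``The cases $\CR=C_{\BA_\zeta}, U_{\BA_\zeta}$ [are] already known''), so there is no in-paper argument to compare yours against. Judged on its own terms, your sketch has the right architecture, and several of its reductions are genuinely correct and worth keeping: centrality of $\Image{}^t\xi$ (Proposition \ref{prop:CGtoF}) does give $FG-GF\in\hbar C_{\BA_\zeta}$ for lifts $F,G$, so the bracket of Proposition \ref{prop:Poisson-alg} is at least defined on $\Image{}^t\xi$; the Hopf compatibility is indeed automatic from the last clause of Proposition \ref{prop:Poisson-alg} once closure is known; and a multiplicative Poisson structure on a connected group is determined by its Lie cobracket, so the identification with the Manin-triple bracket legitimately reduces to a computation at $1\in G$.

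As a proof, however, the proposal has a genuine gap exactly where the content of \cite{DL} lies: the claim that $(FG-GF)/\hbar$ reduces modulo $\hbar$ to an element of $\Image{}^t\xi$ (rather than merely to some element of $Z(C_\zeta)$), and that the resulting expression is the Sklyanin bracket for the $r$-matrix of $(\Gg\oplus\Gg,\Delta\Gg,\Gk)$, is asserted rather than argued. This is not a routine consequence of the RTT relations: one must expand the braiding --- equivalently the Drinfeld pairing $\tau$ of \eqref{eq:Drinfeld-paring}, via Proposition \ref{prop:DPPBW} --- to first order in $\hbar$ on Frobenius-type matrix coefficients and check that all non-Frobenius contributions are divisible by a further power of $\hbar$ or pair to zero; it is precisely here that the conditions on $\ell$ from Section \ref{section:roots} and the normalization $(\beta,\beta)/2=1$ enter. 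Likewise the closing ``rank-one check dual to \eqref{eq:def3}'' undersells the cobracket computation: one must also treat the Cartan directions, where the interplay between $q^{\ell^2(\lambda,\nu)}-1$ and the single $\ell$ in $\hbar=\ell(q^\ell-q^{-\ell})$ produces the coefficient $(\lambda,\nu)/2$ (compare the paper's own computation in the proof of Theorem \ref{thm:main}), and pin down the sign and the factor $\tfrac12$ appearing in Proposition \ref{prop:STS}. So your text is an accurate roadmap to \cite{DL}, but not a self-contained proof.
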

\begin{theorem}
[\cite{DP}, \cite{Gav}]
\label{th:DPG}
The Hopf subalgebra $\Image{}^t\eta$ of $Z(U_\zeta)$ is closed under the Poisson bracket given in Proposition \ref{prop:Poisson-alg}.
Moreover, the isomorphism $\Image{}^t\eta\cong \BC[K]$ is that of Poisson Hopf algebras, where the Poisson Hopf algebra structure of $\BC[K]$ is the one attached to the Manin triple $(\Gg\oplus\Gg,\Gk,\Delta\Gg)$.
\end{theorem}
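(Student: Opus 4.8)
The plan is to produce the Poisson bracket on $Z(U_\zeta)$ from Proposition \ref{prop:Poisson-alg} and then reduce the assertion to a finite computation on algebra generators; this is essentially the route of \cite{DP}, and, differently organized, of \cite{Gav}. Concretely, apply Proposition \ref{prop:Poisson-alg} with $\BB=\BA_\zeta$, $\hbar=\ell(q^{\ell}-q^{-\ell})$ and $\CR=U_{\BA_\zeta}$: multiplication by $\hbar$ is injective on the torsion-free $\BA_\zeta$-module $U_{\BA_\zeta}$ and $\BA_\zeta/\hbar\BA_\zeta\cong\BC$, so $Z(U_\zeta)=Z(U_{\BA_\zeta}/\hbar U_{\BA_\zeta})$ acquires a Poisson bracket. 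Since $\Image{}^t\eta$ is a Hopf \emph{sub}algebra of $Z(U_\zeta)$, the Leibniz identity (c) reduces simultaneously the closedness of $\Image{}^t\eta$ under $\{\,,\,\}$ and the identification of the induced structure with $\delta^M$ of Proposition \ref{prop:deltaM} for the Manin triple $(\Gg\oplus\Gg,\Gk,\Delta\Gg)$ (so $M=K$) to a verification on an algebra generating set. By Proposition \ref{prop:isom} and the lemma identifying $\pi^U_1(A_i),\pi^U_1(B_i),\pi^U_1(K_\lambda)$ with $a_i, b_i\hat\chi_{-\alpha_i}, \hat\chi_\lambda$, and by Proposition \ref{prop:UU}, the composite isomorphism $\BC[K]\cong U_1\cong\Image{}^t\eta$ (the second map induced by ${}^t\eta$) sends $a_i\mapsto\pi^U_\zeta(A^{\ell}_i)$, $b_i\hat\chi_{-\alpha_i}\mapsto\pi^U_\zeta(B^{\ell}_i)$, and $\hat\chi_\lambda$ to a central group-like element of $U_\zeta$ (the class of $K_\lambda^{\ell}$). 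Hence it remains to compute, in $U_{\BA_\zeta}$ and modulo $\hbar^{2}$, the commutators of the $A^{\ell}_\beta$, $B^{\ell}_\beta$ $(\beta\in\Delta^+)$ and $K^{\ell}_\lambda$, divide by $\hbar$, and match the outcome with the values of $\delta^M$ on $a_i,b_i,\hat\chi_\lambda$, which Proposition \ref{prop:deltaM} expresses through $\Ad$ on $\Gg\oplus\Gg$ and $\rho$.

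The Cartan-type brackets are immediate: from $K_\lambda A_\beta K_\lambda^{-1}=q^{(\lambda,\beta)}A_\beta$ one gets $[K^{\ell}_\lambda,A^{\ell}_\beta]=(q^{\ell^{2}(\lambda,\beta)}-1)A^{\ell}_\beta K^{\ell}_\lambda$, and since $\pi_\zeta(q^{\ell})=1$ one has $q^{\ell}-1\in\hbar\BA_\zeta$ with $(q^{\ell}-1)/\hbar$ a unit modulo $\hbar$; division by $\hbar$ then gives a scalar multiple of $(\lambda,\beta)A^{\ell}_\beta K^{\ell}_\lambda\in\Image{}^t\eta$, matching the coadjoint action of the Cartan part of $K$ on its unipotent part in $\delta^M$ (likewise for $B^{\ell}_\beta$, while $[K^{\ell}_\lambda,K^{\ell}_\mu]=0$). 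The brackets $\{A^{\ell}_\beta,A^{\ell}_{\beta'}\}$ and $\{B^{\ell}_\beta,B^{\ell}_{\beta'}\}$ come from the Levendorskii--Soibelman straightening relations among the PBW root vectors, refined to their $\ell$-th powers by De Concini--Kac \cite{DK} and De Concini--Lyubashenko \cite{DL}: $[A^{\ell}_\beta,A^{\ell}_{\beta'}]$ vanishes at $\zeta$ (the $E^{\ell}_\gamma$ are central in $U_\zeta$), and $[A^{\ell}_\beta,A^{\ell}_{\beta'}]/\hbar$ reduces modulo $\hbar$ to an $\BA_\zeta$-polynomial in the $A^{\ell}_\gamma$, hence lies in $\Image{}^t\eta$; symmetrically for the $B$'s. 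The laborious part is the mixed bracket $\{A^{\ell}_\beta,B^{\ell}_{\beta'}\}$: one expands $[E^{\ell}_\beta,F^{\ell}_{\beta'}]$ in $U_{\BA_\zeta}$ --- a straightening computation when $\beta\neq\beta'$, and for $\beta=\beta'$ via the De Concini--Kac formula for $[E^{\ell}_\beta,F^{\ell}_\beta]$, a polynomial in the $K_\mu$ --- and one matches its order-$\hbar$ term with the mixed term of $\delta^M$. Together with keeping track of the constants coming from the normalization of $\kappa$, the factor $\ell$ in $\hbar$, and the group-like twist relating $B_i$ to $b_i\hat\chi_{-\alpha_i}$, this produces $\delta^M$ exactly; the bookkeeping in this mixed bracket is the only genuinely delicate point.

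Alternatively, one can largely avoid the explicit commutator calculations by adopting Gavarini's viewpoint \cite{Gav}: the perfect pairing $\overline{\sigma}_\zeta\colon U_\zeta\times\overline{V}_\zeta\to\BC$ realizes $U_\zeta$ as a quantum-group dual of the $U_q(\Gk)$-type algebra $\overline{V}_\zeta$, and passing to the semiclassical limit turns this duality into the duality of the Poisson groups attached to the two orderings $(\Gg\oplus\Gg,\Delta\Gg,\Gk)$ and $(\Gg\oplus\Gg,\Gk,\Delta\Gg)$ of one and the same Manin triple; Theorem \ref{th:DL}, which settles the $C_\zeta$ side, then yields the present statement for $U_\zeta$ by the symmetry built into the Manin-triple construction.
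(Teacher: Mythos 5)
The paper does not actually prove Theorem \ref{th:DPG}: it is quoted from De Concini--Procesi \cite{DP} and Gavarini \cite{Gav}, so there is no internal proof to compare your attempt against. Judged on its own terms, your sketch is a fair reconstruction of the route taken in those references, and its setup is consistent with what the paper itself uses elsewhere: the specialization data $\BB=\BA_\zeta$, $\hbar=\ell(q^{\ell}-q^{-\ell})$, $\CR=U_{\BA_\zeta}$ and the generator dictionary $a_i\leftrightarrow\pi^U_\zeta(A_i^{\ell})$, $b_i\hat{\chi}_{-\alpha_i}\leftrightarrow\pi^U_\zeta(B_i^{\ell})$, $\hat{\chi}_\lambda\leftrightarrow\pi^U_\zeta(K_{\ell\lambda})$ are exactly the identifications the paper invokes in the proof of Theorem \ref{thm:main}, and your Cartan-type computation $(q^{\ell^2(\lambda,\beta)}-1)/\ell(q^{\ell}-q^{-\ell})\to(\lambda,\beta)/2$ is the same one the paper performs there. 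The reduction to an algebra generating set via the Leibniz rule is also sound, since the PBW root vectors $A_\beta,B_\beta,K_\lambda$ generate $U_1$ as an algebra.

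Two caveats. First, everything that makes the theorem nontrivial --- the Levendorskii--Soibelman straightening of $\ell$-th powers of root vectors, and above all the mixed commutator $[E_\beta^{\ell},F_{\beta'}^{\ell}]$ together with the normalization bookkeeping --- is deferred to \cite{DK}, \cite{DP}, \cite{DL}. That is defensible for a quoted result, but it means your text is an outline of the proof in \cite{DP} rather than a proof. Second, the closing claim that Theorem \ref{th:DL} ``yields the present statement by the symmetry built into the Manin-triple construction'' overreaches: Theorem \ref{th:DL} concerns the pairing of $C_\zeta$ with $U^L_\zeta$ and the triple $(\Gg\oplus\Gg,\Delta\Gg,\Gk)$, whereas the present statement concerns the different pairing $\overline{\sigma}_\zeta$ of $U_\zeta$ with $\overline{V}_\zeta$ and the triple with the roles of $\Delta\Gg$ and $\Gk$ exchanged. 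The two arguments are parallel in structure, but neither formally implies the other; Gavarini's proof is an independent computation with the Drinfeld pairing, not a corollary of the $C_\zeta$ case, so this paragraph should be presented as an alternative strategy to be carried out, not as a deduction.
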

In the rest of this paper we will deal with the case where $\CR=D_{\BA_\zeta}$.
The following is the main result of this paper.
\begin{theorem}
\label{thm:main}
The subalgebra $\Image({}^t\xi\otimes{}^t\eta)$ of $Z(D_\zeta)$ is closed under the Poisson bracket  given in Proposition \ref{prop:Poisson-alg}.
Moreover, under the identification
\[
\Image({}^t\xi\otimes{}^t\eta)\cong\BC[G]\otimes \BC[K]
\cong \BC[\Delta G]\otimes \BC[K]
\]
this Poisson algebra structure coincides with the one attached to the Manin triple $(\Gg\oplus\Gg,\Delta\Gg,\Gk)$ as in Proposition \ref{prop:PoissonML}.
\end{theorem}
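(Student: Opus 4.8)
The plan is first to reduce to the ``mixed'' part of the bracket and then to compute it by a commutator calculation in the Heisenberg double $D_{\BA_\zeta}$. By Theorem \ref{th:DL} and Theorem \ref{th:DPG} the Poisson bracket of $Z(D_\zeta)$ restricts on the Hopf subalgebras $\Image{}^t\xi$ and $\Image{}^t\eta$ to the Poisson brackets of $\BC[\Delta G]$ and $\BC[K]$ coming from $(\Gg\oplus\Gg,\Delta\Gg,\Gk)$; in particular both are already closed under the bracket. Since $D_1=C_1\otimes U_1$ is generated as a $\BC$-algebra by $C_1=\BC[G]$ and $U_1=\BC[K]$, the Leibniz rule reduces the theorem to the single assertion: for $\varphi\in C_1$ and $u\in U_1$, the element $\{{}^t\xi(\varphi),{}^t\eta(u)\}$ lies in $\Image({}^t\xi\otimes{}^t\eta)$, and under $\Image({}^t\xi\otimes{}^t\eta)\cong\BC[\Delta G]\otimes\BC[K]$ it coincides with the mixed term of the tensor $\delta$ of Proposition \ref{prop:PoissonML}, i.e.\ the bilinear form determined by \eqref{eq:PML3}, namely $\delta_{((g,g),(k_1,k_2))}(L^*_a,R^*_\xi)=\rho(a,\xi)$ with $a\in\Gk=(\Delta\Gg)^*$ and $\xi\in\Delta\Gg=\Gk^*$.

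To compute this I apply Proposition \ref{prop:Poisson-alg} with $\CR=D_{\BA_\zeta}$ and $\hbar=\ell(q^\ell-q^{-\ell})$. For $u$ it suffices to treat the algebra generators $\pi^U_1(A_i),\pi^U_1(B_i),\pi^U_1(K_\lambda)$ of $U_1\cong\BC[K]$ (the lemma preceding Lemma \ref{lem:iota1}); by Proposition \ref{prop:UU} their images under ${}^t\eta$ are $\pi^U_\zeta(A_i^\ell),\pi^U_\zeta(B_i^\ell),\pi^U_\zeta(K_{\ell\lambda})$, which I lift to $A_i^\ell,B_i^\ell,K_{\ell\lambda}\in U_{\BA_\zeta}\subset D_{\BA_\zeta}$. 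For $\varphi$ I lift ${}^t\xi(\varphi)$ to any $\hat\varphi\in C_{\BA_\zeta}$ (its reduction modulo $\hbar$ being characterised by \eqref{eq:tx}). By Proposition \ref{prop:center} the images are central in $D_\zeta$, so $[A_i^\ell,\hat\varphi]$, $[B_i^\ell,\hat\varphi]$, $[K_{\ell\lambda},\hat\varphi]$ are divisible by $\hbar$ in $D_{\BA_\zeta}$, and $\{{}^t\xi(\varphi),{}^t\eta(u)\}$ is, up to sign, the class modulo $\hbar$ of $\hbar^{-1}$ times these commutators. Using the multiplication rule of the Heisenberg double and, via $K_iA_iK_i^{-1}=q_i^2A_i$, the $q$-binomial expansion
\[
\Delta(A_i^\ell)=A_i^\ell\otimes 1+K_i^\ell\otimes A_i^\ell+\sum_{0<k<\ell}\binom{\ell}{k}_{q_i^2}A_i^kK_i^{\ell-k}\otimes A_i^{\ell-k},
\]
one finds that $[A_i^\ell,\hat\varphi]$ equals $A_i^\ell\rightharpoonup\hat\varphi$, plus $\bigl((K_i^\ell\rightharpoonup\hat\varphi)-\hat\varphi\bigr)A_i^\ell$, plus the sum over $0<k<\ell$ of $\binom{\ell}{k}_{q_i^2}\bigl((A_i^kK_i^{\ell-k})\rightharpoonup\hat\varphi\bigr)A_i^{\ell-k}$, where $\rightharpoonup$ is the left $U^L_{\BA_\zeta}$-action on $C_{\BA_\zeta}$ restricted along $\iota$; the group-like case reads $[K_{\ell\lambda},\hat\varphi]=\bigl((K_{\ell\lambda}\rightharpoonup\hat\varphi)-\hat\varphi\bigr)K_{\ell\lambda}$, and the $B_i^\ell$-case is analogous.

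Now I extract the quasi-classical limit. Write $A_i^m=(q_i-q_i^{-1})^m[m]_{q_i}!\,E_i^{(m)}$ with $E_i^{(m)}\in U^L_{\BA_\zeta}$. Since $(\alpha_i,\alpha_i)/2\in\BZ$ is prime to $\ell$, $q_i^\ell-q_i^{-\ell}=\hbar/\ell$ up to a unit of $\BA_\zeta$, hence $[\ell]_{q_i}$, $[\ell]_{q_i}!$ and each $\binom{\ell}{k}_{q_i^2}$ with $0<k<\ell$ vanish to first order in $\hbar$ at $q=\zeta$. Pairing $\hat\varphi$ against $U^L_{\BA_\zeta}$ and using \eqref{eq:tx} together with $\xi(\overline{E_i^{(k)}})=0$ for $\ell\nmid k$ (from \eqref{eq:TXL1a}) gives $(A_i^kK_i^{\ell-k})\rightharpoonup\hat\varphi\in\hbar C_{\BA_\zeta}$ for $0<k<\ell$, so the middle sum lies in $\hbar^2D_{\BA_\zeta}$; and $\overline{K_i^\ell}=1$ in $\overline{U}^L_\zeta$ (as $\chi_\mu(K_{\ell\alpha_i})=1$) gives $(K_i^\ell\rightharpoonup\hat\varphi)-\hat\varphi\in\hbar C_{\BA_\zeta}$. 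Thus only the $k=\ell$ and $k=0$ terms matter modulo $\hbar^2$; reducing modulo $\hbar$ and invoking Lusztig's Frobenius ($\xi(\overline{E_i^{(\ell)}})=e_i$ by \eqref{eq:TXL1a}, and $\xi$ of the ``$\hbar$-linear part'' of $K_i^\ell$ being an element of $\Gh$ read off from \eqref{eq:TXL3}--\eqref{eq:TXL4}), together with $\overline{U}^L_1=U(\Gg)$ and $C_1=\BC[G]$, one identifies $\hbar^{-1}[A_i^\ell,\hat\varphi]\bmod\hbar$ with a constant multiple of ${}^t\xi(L_{e_i}\varphi)$ plus a constant multiple of ${}^t\xi(L_h\varphi)\,\pi^U_\zeta(A_i^\ell)$, where $L_\bullet$ denotes left-invariant differential operators on $G$. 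The $B_i^\ell$ and $K_{\ell\lambda}$ computations run identically, producing $L_{f_i}$ and left-invariant $\Gh$-fields. In particular each bracket is a $\BC[K]$-linear combination of left- and right-invariant vector fields on $G$ applied to $\varphi$, hence lies in $\Image({}^t\xi\otimes{}^t\eta)\cong\BC[G]\otimes\BC[K]$; this proves the closure assertion.

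It remains to match these derivations with the mixed term of $\delta$. Using the explicit form of $\Upsilon$ on generators (Proposition \ref{prop:isom}) one writes $da_i$, $d(b_i\hat\chi_{-\alpha_i})$, $d\hat\chi_\lambda$ in the right-invariant frame of $K$ and $dc$ (for $c\in\BC[G]=\BC[\Delta G]$) in the left-invariant frame of $\Delta G$; then \eqref{eq:PML3}, the formula $\rho((a,b),(a',b'))=\kappa(a,a')-\kappa(b,b')$ and the normalisation of $\kappa$ express $\{c,a_i\}$, $\{c,b_i\hat\chi_{-\alpha_i}\}$, $\{c,\hat\chi_\lambda\}$ as precisely the same combinations of $L_{e_i}c,L_{f_i}c,L_hc$ with the same $\BC[K]$-coefficients obtained above, the scalar factors $(q_i-q_i^{-1})^\ell$ and $\hbar^{-1}[\ell]_{q_i}!|_{q=\zeta}$ matching. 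The main obstacle is the third step: controlling the first-order behaviour in $\hbar$ of the Heisenberg-double commutator — isolating the ``$\hbar$-linear parts'' of $A_i^\ell$, $K_i^\ell$ and $\Delta(A_i^\ell)$ in $D_{\BA_\zeta}$ and making Lusztig's Frobenius on the Lusztig form mesh with Gavarini's map ${}^t\eta$ on the De Concini--Kac form — so that the limit $\hbar\to 0$ reproduces exactly the classical left/right-invariant vector fields of Semenov-Tyan-Shansky's formula. Once these leading terms are pinned down, the matching is a finite calculation with $\rho$.
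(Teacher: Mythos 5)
Your overall route is the paper's own: reduce to the mixed bracket $\{{}^t\xi(\varphi),{}^t\eta(u)\}$ via Theorems \ref{th:DL}, \ref{th:DPG} and Leibniz, compute the commutator of lifts in the Heisenberg double $D_{\BA_\zeta}$ using the coproduct and the $q$-binomial expansion, discard the middle terms by the vanishing of $\begin{bmatrix}\ell\\k\end{bmatrix}_{q_i}$ at $\zeta$ together with $\xi(E_i^{(k)})=\xi(F_i^{(k)})=0$ for $\ell\nmid k$, and identify the $\hbar$-linear term through Lusztig's Frobenius before matching with \eqref{eq:PML3}. The paper packages exactly this computation as Lemma \ref{lemma:key} (your commutator $[\Phi,\hat\varphi]$ appears there in the dual form $\Phi\otimes1-\sum_{(\Phi)}\Phi_{(1)}\otimes\iota(\Phi_{(0)})$ modulo $\hbar(\cdot)+\CI\otimes\CJ$), and then carries out the cases $\Phi=(q_i-q_i^{-1})^\ell F_i^\ell$ and $\Phi=K_{\ell\lambda}$ explicitly, obtaining the constants $(\alpha_i,\alpha_i)/2$ and $-\tfrac12(\lambda,\nu)$; so the ``main obstacle'' you flag at the end is precisely the part the paper actually executes, and your plan leaves it as a plan.

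There is one genuine gap in the reduction step. You assert that it suffices to treat $\pi^U_1(A_i),\pi^U_1(B_i),\pi^U_1(K_\lambda)$ because they are \emph{algebra} generators of $U_1\cong\BC[K]$. They are not: under \eqref{eq:M-docomp} the factor $\BC[K^+]$ is a polynomial ring in $N=|\Delta^+|$ variables, of which the $a_i$ supply only the $|I|$ simple-root coordinates (equivalently, $U_{\BA}^+$ has the PBW basis in all the $A_{\beta_k}$, not just the $A_i$). These elements generate $\BC[K]$ only as a \emph{Poisson} algebra, as \cite{DP} states. Consequently the Leibniz rule alone does not propagate the identity $\{h,\varphi\}=\{h,\varphi\}'$ from your three families to all of $\Image({}^t\eta)$; you additionally need that validity for $\varphi_1,\varphi_2$ implies validity for $\{\varphi_1,\varphi_2\}$. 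This is the step the paper supplies: it first upgrades \eqref{eq:formula2} to \eqref{eq:formula1} by Leibniz in the first argument, and then uses the Jacobi identity together with Theorem \ref{th:DPG} (which guarantees that $\{\varphi_1,\varphi_2\}$ computed in $Z(D_\zeta)$ agrees with the Manin-triple bracket of $\BC[K]$, so that both sides of \eqref{eq:formula2} transform the same way under taking brackets in the second argument). Without this observation your induction over generators does not close. The remaining small imprecision --- lifting $a_i$ to $A_i^\ell$ rather than to $(q_i-q_i^{-1})^\ell E_i^\ell K_i^{-\ell}$ --- is harmless once $K_{\ell\lambda}$ is also treated, since the two choices differ by a factor already covered.
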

Set 
\begin{align*}
\CJ&=\Ker(\xi\circ \overline{\pi}^{U^L}_\zeta)\subset U_{\BA_\zeta}^L,\\
\CI&=\{x\in U_{\BA_\zeta}\mid\langle x,V_{\BA_\zeta}\rangle\subset\ell(q^\ell-q^{-\ell}){\BA_\zeta}\}.
\end{align*}
\begin{lemma}
\label{lemma:key}
Let $h\in\Image({}^t\xi)$ and $\varphi\in\Image({}^t\eta)$.
Take $p\in\BC[G]$ and $\Phi\in U_{\BA_\zeta}$ such that $h={}^t\xi(p)$
and $\varphi=\pi_\zeta^U({\Phi})$ respectively.
Assume
\[
\Phi\otimes1-\sum_{(\Phi)}\Phi_{(1)}\otimes\iota(\Phi_{(0)})
\in\ell(q^{\ell}-q^{-\ell})
\sum_r\Psi_r\otimes X_r+
\CI\otimes\CJ\subset U_{\BA_\zeta}\otimes U^L_{\BA_\zeta}
\]
with 
$\Psi_r\in U_{\BA_\zeta}, X_r\in U^L_{\BA_\zeta}$.
Then we have
\[
\{h,\varphi\}=
\sum_r
{}^t\xi(
(\xi\circ \overline{\pi}^{U^L}_\zeta)({X_r})\cdot p)\otimes
\pi_\zeta^U({\Psi_r})
\]
with respect to the Poisson structure of $Z(D_\zeta)$ given in  Proposition \ref{prop:Poisson-alg}.
\end{lemma}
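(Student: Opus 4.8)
\noindent\emph{Proof plan.} Write $\hbar=\ell(q^\ell-q^{-\ell})$. The plan is to unwind the Poisson bracket of Proposition~\ref{prop:Poisson-alg} into a single commutator in the Heisenberg double $D_{\BA_\zeta}$ and to read it off through the pairing $C_\zeta\times\overline{U}^L_\zeta\to\BC$. First choose a lift $\tilde h\in C_{\BA_\zeta}$ of $h={}^t\xi(p)$ (i.e.\ $\pi^C_\zeta(\tilde h)=h$); then $\tilde h\otimes1$ and $1\otimes\Phi$ are lifts of $h$ and $\varphi$ to $D_{\BA_\zeta}$, and since $h$ and $\varphi$ are central in $D_\zeta$ by Proposition~\ref{prop:center}, the element $\tilde h\Phi-\Phi\tilde h$ lies in $\hbar D_{\BA_\zeta}$ and $\{h,\varphi\}=\pi^D_\zeta\bigl((\tilde h\Phi-\Phi\tilde h)/\hbar\bigr)$. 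By the multiplication of $D_{\BA_\zeta}=C_{\BA_\zeta}\otimes U_{\BA_\zeta}$ (in which $U_{\BA_\zeta}$ acts on $C_{\BA_\zeta}$ through $\iota$) one has $\tilde h\Phi=\tilde h\otimes\Phi$ and $\Phi\tilde h=\sum_{(\Phi)}\bigl(\iota(\Phi_{(0)})\cdot\tilde h\bigr)\otimes\Phi_{(1)}$.

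To identify the resulting element of $D_\zeta=C_\zeta\otimes U_\zeta$ I would pair its first ($C_\zeta$-)tensor factor against an arbitrary $\overline v\in\overline{U}^L_\zeta$; by \eqref{eq:CC} this determines it. Fixing $v\in U^L_{\BA_\zeta}$ over $\overline v$ and using the bimodule identity $\langle\iota(\Phi_{(0)})\cdot\tilde h,v\rangle=\langle\tilde h,v\,\iota(\Phi_{(0)})\rangle$, the pairing of $\tilde h\Phi-\Phi\tilde h$ with $v$ equals $\langle\tilde h,v\rangle\Phi-\sum_{(\Phi)}\langle\tilde h,v\,\iota(\Phi_{(0)})\rangle\Phi_{(1)}$, which is exactly the image of $\Phi\otimes1-\sum_{(\Phi)}\Phi_{(1)}\otimes\iota(\Phi_{(0)})$ under the map $\id\otimes\langle\tilde h,v(\,\cdot\,)\rangle$, where $\langle\tilde h,v(\,\cdot\,)\rangle\colon b\mapsto\langle\tilde h,vb\rangle$ on $U^L_{\BA_\zeta}$. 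Substituting the hypothesis then replaces $\Phi\otimes1-\sum_{(\Phi)}\Phi_{(1)}\otimes\iota(\Phi_{(0)})$ by $\hbar\sum_r\Psi_r\otimes X_r$ up to an error $\omega\in\CI\otimes\CJ$.

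The one delicate point is that $\bigl(\id\otimes\langle\tilde h,v(\,\cdot\,)\rangle\bigr)(\omega)$ lies in $\hbar^2 U_{\BA_\zeta}$, so that it disappears after dividing by $\hbar$ and applying $\pi^U_\zeta$. Writing $\omega=\sum_s a_s\otimes b_s$ with $a_s\in\CI$, $b_s\in\CJ$, I would argue: \emph{(i)} $\CI\subseteq\hbar U_{\BA_\zeta}$, because $\sigma_{\BA_\zeta}(a_s,V_{\BA_\zeta})\subseteq\hbar\BA_\zeta$ forces $\sigma_\zeta(\pi^U_\zeta(a_s),V_\zeta)=\{0\}$, whence $\pi^U_\zeta(a_s)=0$ by the perfectness of $\overline{\sigma}_\zeta$ (Lemma~\ref{lem:perfect}); and \emph{(ii)} $\langle\tilde h,vb_s\rangle\in\hbar\BA_\zeta$, because modulo $\hbar$ it reduces to $\langle p,\xi(\overline v)\,(\xi\circ\overline{\pi}^{U^L}_\zeta)(b_s)\rangle$ by Proposition~\ref{prop:CGtoF} and the multiplicativity of $\xi$, and $(\xi\circ\overline{\pi}^{U^L}_\zeta)(b_s)=0$ since $b_s\in\CJ$. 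Multiplying \emph{(i)} and \emph{(ii)} yields the factor $\hbar^2$. Hence $\langle\{h,\varphi\},\overline v\rangle=\sum_r\pi_\zeta(\langle\tilde h,vX_r\rangle)\,\pi^U_\zeta(\Psi_r)$, and since $\pi_\zeta(\langle\tilde h,vX_r\rangle)=\langle p,\xi(\overline v)\,(\xi\circ\overline{\pi}^{U^L}_\zeta)(X_r)\rangle=\langle(\xi\circ\overline{\pi}^{U^L}_\zeta)(X_r)\cdot p,\ \xi(\overline v)\rangle=\langle{}^t\xi\bigl((\xi\circ\overline{\pi}^{U^L}_\zeta)(X_r)\cdot p\bigr),\ \overline v\rangle$ (again Proposition~\ref{prop:CGtoF}, together with the left $U(\Gg)$-action on $\BC[G]=C_1$), letting $\overline v$ vary and using $C_\zeta\otimes U_\zeta\hookrightarrow(\overline{U}^L_\zeta)^*\otimes U_\zeta$ gives the asserted identity. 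I expect step \emph{(i)}--\emph{(ii)}, i.e.\ the vanishing of the $\CI\otimes\CJ$ contribution, to be the main obstacle; the rest is a mechanical unwinding of the Heisenberg multiplication and of the defining property of ${}^t\xi$.
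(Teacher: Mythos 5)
Your proposal is correct and follows essentially the same route as the paper: unwind the Heisenberg commutator $\tilde h\Phi-\Phi\tilde h$, pair against $\overline{U}^L_\zeta$ (the paper pairs both tensor factors, against $u\otimes v$, rather than only the $C$-factor), substitute the hypothesis, and kill the $\CI\otimes\CJ$ error using one factor of $\hbar$ from the $\CI$-side and the vanishing of $\langle{}^t\xi(p),\,\cdot\,\xi(\overline{\pi}^{U^L}_\zeta(Y_s))\rangle$ from the $\CJ$-side. The only cosmetic difference is your step \emph{(i)}: the paper does not need $\CI\subseteq\hbar U_{\BA_\zeta}$ (via Lemma \ref{lem:perfect}) because it pairs the $U_{\BA_\zeta}$-factor with $v\in V_{\BA_\zeta}$ and invokes the defining property $\langle\Xi_s,v\rangle\in\ell(q^\ell-q^{-\ell})\BA_\zeta$ directly; both variants are valid.
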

\begin{proof}
Take
$H\in C_{\BA_\zeta}$ such that
$h=\pi_\zeta^C({H})$.
For 
$u\in{U}^L_{\BA_\zeta}, v\in{V}_{\BA_\zeta}$ we see easily that
\begin{align*}
&\langle\{h,\varphi\},\overline{\pi}^{U^L}_\zeta(u)\otimes \overline{\pi}^{V}_\zeta(v)\rangle
\\
=&
\pi_\zeta
\left(
\langle 
H,u
(
\langle \Phi,v\rangle
1
-\sum_{(\Phi)}
\langle
\Phi_{(1)},v\rangle
\iota(\Phi_{(0)})
)
\rangle
/\ell(q^{\ell}-q^{-\ell})
\right)
.
\end{align*}
Write 
\begin{align*}
&\Phi\otimes1-\sum_{(\Phi)}\Phi_{(1)}\otimes\iota(\Phi_{(0)})
=\ell(q^{\ell}-q^{-\ell})
\sum_r\Psi_r\otimes X_r+
\sum_s\Xi_s\otimes Y_s,
\end{align*}
where $\Xi_s\in\CI, Y_s\in \CJ$.
Then we have
\begin{align*}
&\langle\{h,\varphi\},\overline{\pi}^{U^L}_\zeta(u)\otimes \overline{\pi}^{V}_\zeta(v)\rangle
\\
=&
\sum_r
\pi_\zeta(\langle \Psi_r,v\rangle)
\pi_\zeta\left(
\left\langle 
H,uX_r
\right\rangle
\right)
+
\sum_s
\pi_\zeta\left(
\frac{\langle \Xi_s,v\rangle}
{\ell(q^{\ell}-q^{-\ell})}
\right)
\pi_\zeta
\left(
\left\langle 
H,u
Y_s
\right\rangle
\right)
\\
=&
\sum_r
\langle \pi_\zeta^U(\Psi_r),\overline{\pi}^{V}_\zeta(v)\rangle
\left\langle 
h,\overline{\pi}^{U^L}_\zeta(u)\overline{\pi}^{U^L}_\zeta(X_r)
\right\rangle\\
&\qquad
+
\sum_s
\pi_\zeta\left(
\frac{\langle \Xi_s,v\rangle}
{\ell(q^{\ell}-q^{-\ell})}
\right)
\left\langle 
h,\overline{\pi}^{U^L}_\zeta(u)\overline{\pi}^{U^L}_\zeta(Y_s)
\right\rangle.
\end{align*}
By $h={}^t\xi(p)$ we have
\begin{align*}
&\left\langle 
h,\overline{\pi}^{U^L}_\zeta(u)\overline{\pi}^{U^L}_\zeta(X_r)
\right\rangle
=
\left\langle 
p,(\xi\circ \overline{\pi}^{U^L}_\zeta)(u)(\xi\circ \overline{\pi}^{U^L}_\zeta)(X_r)
\right\rangle\\
=&
\left\langle 
(\xi\circ \overline{\pi}^{U^L}_\zeta)(X_r)\cdot p,(\xi\circ \overline{\pi}^{U^L}_\zeta)(u)
\right\rangle
=
\left\langle 
{}^t\xi((\xi\circ \overline{\pi}^{U^L}_\zeta)(X_r)\cdot p),\overline{\pi}^{U^L}_\zeta(u)
\right\rangle.
\end{align*}
Similarly, we have
\[
\left\langle 
h,\overline{\pi}^{U^L}_\zeta(u)\overline{\pi}^{U^L}_\zeta(Y_s)
\right\rangle
=
\left\langle 
p\cdot(\xi\circ \overline{\pi}^{U^L}_\zeta)(u),(\xi\circ \overline{\pi}^{U^L}_\zeta)(Y_s)
\right\rangle=0.
\]
Now the assertion is clear.
\end{proof}

Now let us show Theorem \ref{thm:main}.
By Theorem \ref{th:DL} and Theorem \ref{th:DPG}
it is sufficient to show that for $h\in\Image({}^t\xi), \varphi\in\Image({}^t\eta)$ our Poisson bracket 
$\{h,\varphi\}$ defined above coincides with the one coming from the Manin triple.
In order to avoid confusion 
we denote by $\{\,,\,\}'$ the Poisson bracket of 
$\BC[G]\otimes\BC[K]$ coming from the Manin triple.
We need to show
\begin{equation}
\label{eq:formula2}
\{h,\varphi\}=\{h,\varphi\}'\qquad(\forall h\in\Image({}^t\xi))
\end{equation}
for any $\varphi\in \Image({}^t\eta)$.
If \eqref{eq:formula2} holds for $\varphi\in \Image({}^t\eta)$,
we have
\begin{equation}
\label{eq:formula1}
\{f,\varphi\}=\{f,\varphi\}'\qquad(\forall f\in\Image({}^t\xi\otimes{}^t\eta))
\end{equation}
by
\begin{align*}
\{h\psi,\varphi\}
=\{h,\varphi\}\psi+h\{\psi,\varphi\}
=\{h,\varphi\}'\psi+h\{\psi,\varphi\}'=\{h\psi,\varphi\}'
\end{align*}
for $h\in\Image({}^t\xi), \psi\in\Image({}^t\eta)$.
Hence for each $\varphi\in \Image({}^t\eta)$ \eqref{eq:formula2} is equivalent to \eqref{eq:formula1}.
Then it follows from the definition of the Poisson algebra  that \eqref{eq:formula2} for $\varphi=\varphi_1, \varphi=\varphi_2$ imply
those for $\varphi=\varphi_1\varphi_2$, $\varphi=\{\varphi_1,\varphi_2\}$.
Therefore it is sufficient to show 
\eqref{eq:formula2} in the cases where $\varphi$ belongs to a generator system of the Poisson algebra $\Image({}^t\eta)$.
By \cite{DP} the Poisson algebra $\BC[K]$ is generated by the elements of the form $\hat{\chi}_\lambda, a_i, b_i$ for $\lambda\in\Lambda, i\in I$.
Under the isomorphism $\BC[K]\cong\Image({}^t\eta)$ of Poisson algebras 
we have
\begin{align*}
\hat{\chi}_{\lambda}
&\longleftrightarrow
\pi_\zeta^U({K_{\ell\lambda}})
&(\lambda\in\Lambda),\\
a_i\hat{\chi}_{-\alpha_i}
&\longleftrightarrow
\pi_\zeta^U({(q_i-q_i^{-1})^\ell E_i^{\ell}K_i^{-\ell}})
&(i\in I),\\
b_i\hat{\chi}_{-\alpha_i}
&\longleftrightarrow
\pi_\zeta^U({(q_i-q_i^{-1})^\ell F_i^{\ell}})
&(i\in I).
\end{align*}
Hence we have only to show \eqref{eq:formula2} in the cases
\[
\varphi=\pi_\zeta^U({K_{\ell\lambda}}),\qquad
\varphi=\pi_\zeta^U({(q_i-q_i^{-1})^\ell E_i^\ell K_i^{-\ell}}),\qquad
\varphi=\pi_\zeta^U({(q_i-q_i^{-1})^\ell F_i^\ell})
\]
for $\lambda\in\Lambda, i\in I$.

For bases $\{X_r\}$ and $\{Y_r\}$ of 
$\Gg$ and $\Gk$ respectively such that
$\rho((X_r,X_r),Y_s)=\delta_{rs}$
we have
\[
\{h,\varphi\}'
=\sum_r L_{X_r}(h) R_{Y_r}(\varphi)
\qquad(h\in\BC[G], \varphi\in\BC[K]).
\]
From this we can easily deduce
\begin{align}
&\{h,\hat{\chi}_{\lambda}\}'
=
-\frac12L_{H_\lambda}(h)\hat{\chi}_{\lambda}
&(\lambda\in\Lambda),\\
&\{h,a_i\hat{\chi}_{-\alpha_i}\}'
=-\frac{(\alpha_i,\alpha_i)}2L_{e_i}(h)\hat{\chi}_{-\alpha_i}
&(i\in I)
,\\
&\{h,b_i\hat{\chi}_{-\alpha_i}\}'
=-\frac{(\alpha_i,\alpha_i)}2L_{f_i}(h)\hat{\chi}_{-\alpha_i}
&(i\in I),
\end{align}
where $H_\lambda\in\Gh$ is given by $\kappa(H_\lambda,H)=\lambda(H)\quad(H\in\Gh)$.

Let us show \eqref{eq:formula2} for $\varphi=\pi_\zeta^U({(q_i-q_i^{-1})^\ell F_i^{\ell}})$.
For
$\Phi=(q_i-q_i^{-1})^\ell F_i^{\ell}$ we have
\begin{align*}
&\Phi\otimes1-\sum_{(\Phi)}\Phi_{(1)}\otimes\iota(\Phi_{(0)})\\
=&
(q_i-q_i^{-1})^\ell 
\left(F_i^\ell\otimes1
-\sum_{r=0}^\ell
q_i^{r(\ell-r)}
\begin{bmatrix}
\ell\\
r
\end{bmatrix}_{q_i}
[r]!_{q_i}
F_i^{\ell-r}K_i^{-r}\otimes
F_i^{(r)}
\right)\\
\in&
(q_i-q_i^{-1})^\ell 
\left(F_i^\ell\otimes1
-(F_i^\ell\otimes1+[\ell]!_{q_i}K_i^{-\ell}\otimes F_i^{(\ell)})
\right)+\CI\otimes \CJ\\
=&
\ell(q^\ell-q^{-\ell})
\left(
-\frac{(q_i-q_i^{-1})^\ell 
[\ell]!_{q_i}}{\ell(q^\ell-q^{-\ell})}K_i^{-\ell}\otimes F_i^{(\ell)}
\right)
+\CI\otimes \CJ.
\end{align*}
Hence the assertion follows from 
\[
\left.\frac{(q_i-q_i^{-1})^\ell 
[\ell]!_{q_i}}{\ell(q^\ell-q^{-\ell})}\right|_{q^{1/|\Lambda/Q|}=\zeta}=\frac{(\alpha_i,\alpha_i)}2,
\]
which is easily checked.
The verification of \eqref{eq:formula2} for
$\varphi=\pi_\zeta^U({(q_i-q_i^{-1})^\ell E_i^{\ell}K_i^{-\ell}})$ is similar and omitted.

Let us finally show \eqref{eq:formula2} for
$\varphi=\pi_\zeta^U({K_{\ell\lambda}})$.
We need to show
\[
\{{}^t\xi(p),\varphi\}
=
-\frac12
{}^t\xi(L_{H_\lambda}(p))\otimes\varphi
\]
for $p\in\BC[G]$.
Take $H\in C_{\BA_\zeta}$ such that 
$\pi_\zeta^C({H})={}^t\xi(p)$.
For $z\in\BC^\times$ and $\nu\in\Lambda$ we set
\begin{align*}
(C_{\BA_z})_\nu
&=\{\varphi\in C_{\BA_z}
\mid
u\cdot\varphi=\chi_\nu(u)\varphi\,\,(u\in U_{\BA_z}^{L,0})\},\\
(C_z)_\nu
&=\{\varphi\in C_{z}
\mid
u\cdot\varphi=\chi_\nu(u)\varphi\,\,(u\in U_{z}^{L,0})\}.
\end{align*}
Then we have
\[
{}^t\xi(\BC[G]_\nu)\subset
(C_\zeta)_{\ell\nu}=\pi_\zeta^C((C_{\BA_\zeta})_{\ell\nu})
\qquad(\nu\in\Lambda),
\]
and hence 
we may assume $p\in\BC[G]_\nu$ and $H\in(C_{\BA_\zeta})_{\ell\nu}$.
For
$\Phi=K_{\ell\lambda}$
we have
\[
\Phi\otimes1-\sum_{(\Phi)}\Phi_{(1)}\otimes\iota(\Phi_{(0)})=K_\lambda^{\ell}\otimes 1-K_\lambda^{\ell}\otimes\iota(K_\lambda^{\ell})
=-K_\lambda^{\ell}\otimes(\iota(K_\lambda^{\ell})-1).
\]
Hence for 
$u\in{U}^L_{\BA_\zeta}, v\in{V}_{\BA_\zeta}$ we have
\begin{align*}
&\langle\{{}^t\xi(p),\varphi\},\overline{\pi}_\zeta^{U^L}({u})\otimes \overline{\pi}_\zeta^{V}({v})\rangle
\\
=&\pi_\zeta
\left(
\left\langle 
H,u
\left(
\langle \Phi,v\rangle
1
-\sum_{(\Phi)}
\langle
\Phi_{(1)},v\rangle
\iota(\Phi_{(0)})
\right)
\right\rangle
/\ell(q^{\ell}-q^{-\ell})
\right)
\\
=&
-
\pi_\zeta(
\langle K_{\ell\lambda},v\rangle
\left\langle 
H,u
(\iota(K_{\ell\lambda})-1)
\right\rangle
/\ell(q^{\ell}-q^{-\ell})
)\\
=&
-
\pi_\zeta(
\langle K_{\ell\lambda},v\rangle
\left\langle 
(\iota(K_{\ell\lambda})-1)\cdot H,u
\right\rangle
/\ell(q^{\ell}-q^{-\ell})
)\\
=&
-
\pi_\zeta(
(q^{\ell^2(\lambda,\nu)}-1)
/\ell(q^{\ell}-q^{-\ell})
)
\,
\pi_\zeta(
\langle K_{\ell\lambda},v\rangle
)
\pi_\zeta(
\left\langle 
H,u
\right\rangle
)\\
=&
-\frac{\ell(\lambda,\nu)}{2\ell}
\langle \varphi,\overline{\pi}_\zeta^{V}({v})\rangle
\left\langle 
{}^t\xi(p),\overline{\pi}_\zeta^{U^L}(u)
\right\rangle\\
=&
-\frac12
\langle
{}^t\xi(L_{H_\lambda}(p))\otimes\varphi,\overline{\pi}_\zeta^{U^L}({u})\otimes \overline{\pi}_\zeta^{V}({v})
\rangle
.
\end{align*}
The proof of Theorem \ref{thm:main} is complete.

\bibliographystyle{unsrt}

\begin{thebibliography}{99}
\bibitem{DeConcini}
C. De Concini,
\newblock
{\it Poisson algebraic groups and representations of quantum groups at roots of $1$},
\newblock
pp. 93--119 in:
First European Congress of Mathematics, Vol. I (Paris, 1992), 
Progr. Math., {\bf 119}, Boston etc. Birkh\"auser, 1994. 
\bibitem{DK}
C. De Concini, V. Kac,
\newblock
{\it Representations of quantum groups at roots of $1$},
\newblock
pp. 471--506 in:
A. Connes et al. (eds.), 
Operator Algebras, Unitary Representations, Enveloping Algebras, and Invariant Theory (Colloque Dixmier), Proc. Paris 1989, 
Progr. Math., {\bf92}, Boston etc. Birkh\"auser, 1990.
\bibitem{DL}
C. De Concini, V. Lyubashenko,
\newblock
{\it Quantum function algebra at roots of $1$},
\newblock
Adv. Math., {\bf 108} (1994), 205-262.

\bibitem{DP}
C. De Concini, C. Procesi,
\newblock
{\it Quantum groups},
\newblock
pp. 31--140 in:
L. Boutet de Monvel et al.,
$D$-modules, Representation Theory and Quantum Groups,
Proc. Venezia 1992,
Lecture Notes in Mathematics {\bf 1565}, Berlin etc. Springer, 1993.
\bibitem{Drinfeld} 
V. Drinfeld,
\newblock
{\it Quantum groups},
\newblock 
Proceedings of the International Congress of Mathematicians, Vol. {\bf 1}, {\bf2 } (Berkeley, Calif., 1986), 798--820, Amer. Math. Soc., Providence, RI, 1987.
\bibitem{Gav}
F. Gavarini,
\newblock
{\it Quantization of Poisson groups},
\newblock
Pacific J. Math. {\bf186} (1998), 217--266.
\bibitem{KR}
A. N. Kirilllov, N. Reshetikhin,
\newblock
{\it $q$-Weyl group and a multiplicative formula for universal $R$-matrices},
Comm. Math. Phys. {\bf134} (1990), 421--431.
\bibitem{KT}
S. M. Khoroshkin, V. N. Tolstoy,
\newblock
{\it Universal $R$-matrix for quantized (super)algebras},
Comm. Math. Phys. {\bf141} (1991), 599--617.
\bibitem{L2}
G. Lusztig, 
\newblock
{\it Quantum groups at roots of 1},
Geom. Dedicata {\bf35} (1990), 89--114.
\bibitem{Lbook}
G. Lusztig, 
\newblock
{\it Introduction to quantum groups},
\newblock 
Progr. Math., {\bf110}, Boston etc. Birkh\"auser, 1993.
\bibitem{LS}
S. Z. Levendorskii, Ya. S. Soibelman,
\newblock
{\it Quantum Weyl group and multiplicative formula for the $R$-matrix of a simple Lie algebra},
Funct. Analysis and its appl. {\bf25} (1991), 143--145.
\bibitem{Lu}
J. H. Lu,
\newblock
{\it On the Drinfeld double and the Heisenberg double of a Hopf algebra},
Duke Math. J. 74 (1994), no. 3, 763--776. 
\bibitem{STS1}
M. A. Semenov-Tyan-Shansky,
\newblock
{\it Dressing transformations and Poisson group actions},
\newblock 
 Publ. Res. Inst. Math. Sci. {\bf21} (1985), 1237--1260.
 \bibitem{STS2}
M. A. Semenov-Tyan-Shansky,
\newblock
{\it Poisson Lie groups, quantum duality principle, and the quantum double},
\newblock 
Mathematical aspects of conformal and topological field theories and quantum groups (South Hadley, MA, 1992), 219--248, Contemp. Math., 175, Amer. Math. Soc., Providence, RI, 1994.
\bibitem{T}
T. Tanisaki, 
\newblock
{\it The Beilinson-Bernstein correspondence for quantized enveloping algebras},
\newblock 
 Math. Z. 250 (2005), no. 2, 299--361.
\bibitem{TDF}
T. Tanisaki, 
\newblock
{\it Differential operators on quantized flag manifolds at roots of unity},
\newblock 
Adv. Math. 230 (2012), no. 4--6, 2235--2294.
\bibitem{TDF2}
T. Tanisaki, 
\newblock
{\it Differential operators on quantized flag manifolds at roots of unity II},
\newblock 
arXiv:1101.5848.
\end{thebibliography}

\end{document}